\documentclass[a4paper,11pt]{amsart}

\raggedbottom
\usepackage{amssymb,amsthm,amsmath,amscd,amsfonts,bbm,mathrsfs}
\usepackage{hyperref}

\usepackage[cmtip,all]{xy}

\theoremstyle{plain}
\newtheorem{Lemma}{Lemma}
\newtheorem{Thm}[Lemma]{Theorem}
\newtheorem*{Thm*}{Theorem}
\newtheorem{Prop}[Lemma]{Proposition}
\newtheorem{Cor}[Lemma]{Corollary}
\theoremstyle{definition}
\newtheorem{Defn}[Lemma]{Definition}

\theoremstyle{remark}
\newtheorem{Remark}[Lemma]{Remark}
\newtheorem*{Remark*}{Remark}

\numberwithin{Lemma}{section}
\numberwithin{equation}{section}

\newcommand{\AAA}{\mathcal{A}}
\newcommand{\BBB}{\mathcal{B}}
\newcommand{\CCC}{\mathcal{C}}
\newcommand{\DDD}{\mathcal{D}}
\newcommand{\FFF}{\mathcal{F}}
\newcommand{\GGG}{\mathcal{G}}

\newcommand{\MMM}{\mathcal{M}}
\newcommand{\OOO}{\mathcal{O}}

\newcommand{\RRR}{\mathcal{R}}

\newcommand{\Fa}{\mathfrak{a}}
\newcommand{\Fb}{\mathfrak{b}}
\newcommand{\Fm}{\mathfrak{m}}

\newcommand{\FM}{\mathfrak{M}}
\newcommand{\FN}{\mathfrak{N}}
\newcommand{\FS}{\mathfrak{S}}

\newcommand{\DD}{{\mathbb{D}}}
\newcommand{\FF}{{\mathbb{F}}}

\newcommand{\QQ}{{\mathbb{Q}}}
\newcommand{\ZZ}{{\mathbb{Z}}}

\DeclareMathOperator{\Ann}{Ann}
\DeclareMathOperator{\Aug}{Aug}
\DeclareMathOperator{\Aut}{Aut}
\DeclareMathOperator{\BK}{BK}
\DeclareMathOperator{\BT}{BT}
\DeclareMathOperator{\Coker}{Coker}

\DeclareMathOperator{\Def}{Def}

\DeclareMathOperator{\D}{D}
\DeclareMathOperator{\DF}{DF}
\DeclareMathOperator{\DM}{DM}
\DeclareMathOperator{\DDF}{\DD F}
\DeclareMathOperator{\End}{End}

\DeclareMathOperator{\Filone}{Fil}
\DeclareMathOperator{\tFilone}{\tilde{Fil}}

\DeclareMathOperator{\pGr}{{\it p}Gr}
\DeclareMathOperator{\Hom}{Hom}
\DeclareMathOperator{\Image}{Im}

\DeclareMathOperator{\Ker}{Ker}
\DeclareMathOperator{\Lie}{Lie}
\DeclareMathOperator{\LF}{LF}

\DeclareMathOperator{\Nil}{Nil}
\DeclareMathOperator{\Rad}{Rad}
\DeclareMathOperator{\Spec}{Spec}

\DeclareMathOperator{\Set}{Set}
\DeclareMathOperator{\Tor}{Tor}
\DeclareMathOperator{\T}{T}
\DeclareMathOperator{\Win}{Win}

\DeclareMathOperator{\bal}{bal}
\DeclareMathOperator{\cris}{cris}

\DeclareMathOperator{\et}{et}

\DeclareMathOperator{\gr}{gr}

\DeclareMathOperator{\id}{id}

\DeclareMathOperator{\per}{per}

\DeclareMathOperator{\tor}{tor}

\renewcommand{\u}{\underline}

\newcommand{\pfd}{R}

\begin{document}

\title{Dieudonn\'e theory over semiperfect rings and perfectoid rings}
\author{Eike Lau}
\date{\today}

\begin{abstract}
The Dieudonn\'e crystal of a $p$-divisible group over a semiperfect ring
$R$ can be endowed with a window structure. 
If $R$ satisfies a boundedness condition, this construction gives an
equivalence of categories. As an application we obtain a classification
of $p$-divisible groups and commutative finite locally free $p$-group schemes 
over perfectoid rings by Breuil-Kisin-Fargues modules if $p\ge 3$.
\end{abstract}

\maketitle


\section{Introduction}

Let $p$ be a prime.
A semiperfect ring is an $\FF_p$-algebra $R$ such that the Frobenius endomorphism
$\phi_R:R\to R$ is surjective.
In the first part of this article
we study the classification of $p$-divisible groups over semiperfect rings
by Dieudonn\'e crystals and related objects.
This was initiated in \cite{Scholze-Weinstein}.
In the second part we draw conclusions for perfectoid rings.

\subsection*{Crystalline Dieudonn\'e windows.}
Every semiperfect ring $R$ has a universal $p$-adic divided power extension 
$A_{\cris}(R)$.
By a lemma of \cite{Scholze-Weinstein}, 
this ring carries a natural structure of a frame $\u A{}_{\cris}(R)$, 
which means that the Frobenius of $A_{\cris}(R)$ is divided by $p$
on the kernel of $A_{\cris}(R)\to R$.
This is not clear a priori because in general $A_{\cris}(R)$ has $p$-torsion.

The following result has been suggested in \cite{Scholze-Weinstein}.

\begin{Thm}
\label{Th:Intro-PhiR}
For all semiperfect rings $R$ there are natural functors 
\[
\Phi_R^{\cris}:\BT(\Spec R)\to\Win(\u A{}_{\cris}(R))
\]
from $p$-divisible groups over $R$ to windows over $\u A{}_{\cris}(R)$
which endow the Dieudonn\'e crystal of a $p$-divisible group with a window structure.
\end{Thm}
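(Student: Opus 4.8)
The plan is to obtain $\Phi_R^{\cris}(G)$ by evaluating the Dieudonn\'e crystal of $G$ on the divided power thickening $A_{\cris}(R)\twoheadrightarrow R$ and promoting the result to a window. For a $p$-divisible group $G$ over $R$ I set $P:=\DD(G)_{A_{\cris}(R)}$, the finite projective $A_{\cris}(R)$-module obtained by evaluating the Dieudonn\'e crystal on $A_{\cris}(R)\to R$. Functoriality of the crystal in the Frobenius of $R$ equips $P$ with a $\sigma$-linear endomorphism $F\colon P\to P$, and the Hodge filtration of $G$ gives a direct summand $\Filone\subseteq P_R:=P/JP$, with $J:=\Ker(A_{\cris}(R)\to R)$. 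I let $Q\subseteq P$ be the preimage of $\Filone$, so that $JP\subseteq Q\subseteq P$ and $P/Q$ is projective over $R$. With $F$ the crystal Frobenius, the only datum still to be produced is the divided Frobenius $F_1\colon Q\to P$, and constructing it is the heart of the argument.

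The window axioms demand $F|_Q=p\,F_1$ together with $F_1(ax)=\sigma_1(a)F(x)$ for $a\in J$, $x\in P$; since $\sigma(a)=p\,\sigma_1(a)$ on $J$ by the frame structure of $\u A{}_{\cris}(R)$, these are mutually compatible. Two things must be checked: that $F$ carries $Q$ into $pP$, and that the ensuing "division by $p$" can be performed canonically. The first is the strong divisibility (admissibility) of the Hodge filtration of a Dieudonn\'e crystal, which I import from crystalline Dieudonn\'e theory. The second is the genuine obstacle, because $A_{\cris}(R)$ — and with it $P$ — may have $p$-torsion, so the relation $F(x)=p\,F_1(x)$ does not by itself determine $F_1(x)$.

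To overcome this I would work before $p$-adic completion. Recall that $A_{\cris}(R)$ is the $p$-adic completion of the divided power envelope $D$ of the surjection $W(R^\flat)\twoheadrightarrow R$, where $R^\flat=\varprojlim_{\phi}R$ is the perfect inverse perfection; as $W(R^\flat)$ is $p$-torsion-free and $pW(R^\flat)\subseteq\Ker(W(R^\flat)\to R)$, the envelope $D$ embeds into $W(R^\flat)[1/p]$ and is itself $p$-torsion-free — it is only the completion $A_{\cris}(R)=\widehat D$ that acquires $p$-torsion. Evaluating the crystal on the PD-thickening $D\to R$ yields $(P_D,F_D)$ with $P=P_D\otimes_D\widehat D$, and on the torsion-free ring $D$ the admissibility $F_D(Q_D)\subseteq pP_D$ lets me define $F_1:=F_D/p$ on $Q_D$ unambiguously, the division taking place inside $P_D[1/p]$. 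Extending $F_1$ to $Q\subseteq P$ by $p$-adic continuity then gives a divided Frobenius over $A_{\cris}(R)$ that is insensitive to the $p$-torsion of the completion, precisely because it is the continuous extension of a map already pinned down on the dense torsion-free subring.

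It remains to verify the surviving window axioms and naturality. The identity $F_1(ax)=\sigma_1(a)F(x)$ holds on $Q_D$ by construction and extends by continuity; that $F_1(Q)$ generates $P$ follows from the corresponding statement over $D$, which in turn reflects the fact that $F$ becomes an isomorphism after inverting $p$ for the crystal of a $p$-divisible group. Functoriality in $G$ is inherited from that of the Dieudonn\'e crystal, and since the formation of $R^\flat$, $W(R^\flat)$, the envelope $D$, its completion, and the Frobenius $\sigma$ are all functorial in the semiperfect ring $R$, the construction is compatible with base change and yields the natural family of functors $\Phi_R^{\cris}$. The main obstacle throughout is the single point isolated above: making the divided Frobenius $F_1$ well defined in the presence of $p$-torsion, which the passage to the uncompleted, torsion-free PD-envelope resolves.
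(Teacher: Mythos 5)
Your construction founders on the key claim that the uncompleted PD envelope $D$ of $W(R^\flat)\to R$ is $p$-torsion free because it ``embeds into $W(R^\flat)[1/p]$''. This is false in general. The universal property does give a map $D\to W(R^\flat)[1/p]$ (any ideal of a $\QQ$-algebra has unique divided powers), but that map is injective only under additional hypotheses on $J=\Ker(R^\flat\to R)$, essentially that $J$ be generated by a regular sequence — this is exactly the complete intersection case treated in Remark~\ref{Rk:Acris-ci}, where $A_{\cris}(R)$ is indeed torsion free. In general the torsion is \emph{not} created by completion: the $p$-adic completion of a $p$-torsion-free ring is again $p$-torsion free (if $p\tilde x\in p^{n+1}D$ with $D$ torsion free then $\tilde x\in p^nD$), so if your claim about $D$ were correct it would follow that $A_{\cris}(R)$ is torsion free for \emph{every} semiperfect ring, contradicting the fact — emphasized in the introduction and the raison d'\^etre of this whole theorem — that $A_{\cris}(R)$ has $p$-torsion in general. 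Consequently $P_D$ can have $p$-torsion, the map $P_D\to P_D[1/p]$ is not injective, and ``$F_1:=F_D/p$ inside $P_D[1/p]$'' does not determine a map $Q_D\to P_D$. Your argument only recovers the already-known torsion-free case of \cite[Prop.\ 3.17]{Lau:Relation}.

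The paper's proof takes a genuinely different route around exactly this obstacle. It lifts $G$ along the projection $\pi:R^\flat\to R$ (possible by Illusie's deformation theory, since $p$-divisible groups lift under $\phi$), defines $\u M(G)=\pi^*\u M(G_1)$ using that the frame $\u W(R^\flat)=\u A{}_{\cris}(R^\flat)$ \emph{is} torsion free, and then — this is the real content — shows that the resulting $F_1$ is independent of the chosen lift $G_1$. That independence is proved by realizing any two lifts $G_1,G_2$ as specializations of $p$-divisible groups over a common \emph{perfect} ring, namely the perfection $B^{\per}$ of the universal deformation ring $B=R^\flat[[Q]]$, using the representability of the relative deformation functor on the category $\tilde\Nil_{R^\flat/R}$ (Lemmas~\ref{Le:deform-bc} and~\ref{Le:deform-bd-nil}). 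If you want to salvage your approach you would need to replace the appeal to torsion-freeness of $D$ by such a lifting-and-specialization argument; as written, the central step is a gap, not a gloss.
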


See Theorem~\ref{Th:PhiR}. The functor $\Phi_R^{\cris}$ 
is a variant of the functor $\Phi_R$ of \cite{Lau:Smoothness} 
from $p$-divisible groups to displays for an arbitrary $p$-adic ring $R$, 
and of the functor $\Phi_R$ of \cite{Lau:Relation} from $p$-divisible groups 
to Dieudonn\'e displays for a local Artin ring $R$ with perfect residue field.

Our main result on the functor $\Phi_R^{\cris}$ depends on the following 
boundedness condition: We call $R$ balanced if $\Ker(\phi_R)^p=0$,
and we call $R$ iso-balanced if there is 
a nilpotent ideal $\Fa\subseteq R$ such that $R/\Fa$ is balanced.
Every $f$-semiperfect ring in the sense of \cite{Scholze-Weinstein} is iso-balanced.

\begin{Thm}
\label{Th:Intro-PhiR-iso-bal}
If $R$ is iso-balanced, the functor $\Phi_R^{\cris}$ is an equivalence.
\end{Thm}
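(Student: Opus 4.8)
The plan is a d\'evissage: reduce first to the balanced case and then to a perfect ring, where the statement is classical Dieudonn\'e theory. I would dispose of full faithfulness first, as the easier half. By construction $\Phi_R^{\cris}(G)$ records the evaluation $\DD(G)_{A_{\cris}(R)}$ of the Dieudonn\'e crystal on the universal divided power thickening $A_{\cris}(R)\to R$, together with its Hodge filtration and its $\phi$-linear maps. Since $A_{\cris}(R)$ is the universal $p$-adic divided power extension of $R$, this single evaluation recovers the entire filtered Dieudonn\'e crystal of $G$; hence full faithfulness of $\Phi_R^{\cris}$ follows from full faithfulness of the crystalline Dieudonn\'e functor $G\mapsto\DD(G)$ over the $\FF_p$-algebra $R$. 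The real content is therefore \emph{essential surjectivity}: every window over $\u A{}_{\cris}(R)$ is the image of a $p$-divisible group.

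To reduce to the balanced case, let $\Fa\subseteq R$ be a nilpotent ideal with $R/\Fa$ balanced. Filtering $\Fa$ by its powers, I may assume $\Fa^2=0$; in characteristic $p$ a square-zero ideal carries the trivial, PD-nilpotent divided powers, so $A_{\cris}(R)\to A_{\cris}(R/\Fa)$ is a frame homomorphism whose kernel is topologically nilpotent and equipped with divided powers. Deformations of a $p$-divisible group along $R\to R/\Fa$ are then governed by Grothendieck--Messing theory, that is, by lifts of the Hodge filtration in the evaluated crystal. On the other side I would prove the matching statement that windows over $\u A{}_{\cris}(R)$ arise from windows over $\u A{}_{\cris}(R/\Fa)$ by lifting the filtration $\Filone$ across this frame homomorphism, using that $\sigma_1=\phi/p$ is topologically nilpotent on the kernel. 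Because $\Phi^{\cris}$ is crystalline it intertwines the two deformation theories, so the equivalence for $R/\Fa$ propagates to $R$.

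Assume now $R$ is balanced and write $R=R^\flat/J$, where $R^\flat=\varprojlim_{\phi}R$ is perfect and the projection $R^\flat\to R$ is surjective. Over the perfect ring one has $A_{\cris}(R^\flat)=W(R^\flat)$, and $\Phi_{R^\flat}^{\cris}$ is the classical equivalence between $p$-divisible groups over $R^\flat$ and Dieudonn\'e modules, namely windows over the Witt frame $\u W(R^\flat)$; this is the base of the argument. The structural map $W(R^\flat)\to A_{\cris}(R)$ is a frame homomorphism $\u W(R^\flat)\to\u A{}_{\cris}(R)$, and the task is to bridge a given window $P$ over $\u A{}_{\cris}(R)$ to this known perfect picture. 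Here the balanced hypothesis $(\Ker\phi_R)^p=0$ enters decisively: it bounds the $p$-torsion of $A_{\cris}(R)$ and the deviation of $\sigma_1$ from its torsion-free behaviour over $W(R^\flat)$ tightly enough that the window axioms force $P$ to be the Dieudonn\'e crystal of an honest group rather than a spurious filtered $F$-crystal, and in particular that $P$ prescribes an effective modification of the Hodge filtration of the group over $R^\flat$.

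The main obstacle is precisely this effectivity in the balanced case. Unlike the d\'evissage along $\Fa$, the thickening $R^\flat\to R$ is not nilpotent, so naive deformation theory does not apply and there is no base-change map in the required direction; moreover one cannot simply invert $p$ and argue with isocrystals, because $A_{\cris}(R)$ genuinely has $p$-torsion. I expect the decisive step to be a quantitative comparison of $A_{\cris}(R)$ with $W(R^\flat)$ under the boundedness supplied by balancedness, controlling this torsion sharply enough that the abstract window descends to a $p$-divisible group over $R$. Full faithfulness and the nilpotent d\'evissage are comparatively formal by contrast.
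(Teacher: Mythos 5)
Your proposal identifies the right skeleton (reduce to balanced, then to perfect), but both the full-faithfulness argument and the balanced case contain genuine gaps. On full faithfulness: you claim it follows from full faithfulness of the crystalline Dieudonn\'e functor $\DD_R$ over the $\FF_p$-algebra $R$, but $\DD_R$ is \emph{not} fully faithful over general semiperfect rings --- precisely because $A_{\cris}(R)$ has $p$-torsion --- and the paper's logic runs in the opposite direction: full faithfulness of $\DD_R$ up to isogeny is deduced as Corollary~\ref{Co:PhiR-iso-bal} \emph{from} the equivalence of $\Phi_R^{\cris}$. The extra datum $F_1$ in the window is exactly what compensates for the failure of $\DD_R$, so it cannot be discarded when proving full faithfulness. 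Relatedly, in your d\'evissage the appeal to ``$\sigma_1$ topologically nilpotent on the kernel of $A_{\cris}(R)\to A_{\cris}(R/\Fa)$'' cannot do the job you assign to it: that kernel surjects onto $\Fa$, hence is not contained in $\Filone A_{\cris}(R)$, so the deformation lemma for frames does not apply; and the statement you actually need --- that a window over $\u A{}_{\cris}(R)$ is a window over $\u A{}_{\cris}(R/\Fa)$ together with a lift of the Hodge filtration --- requires comparing two frames with the \emph{same} underlying ring. The paper manufactures such a pair by replacing both $A_{\cris}$-frames, up to crystalline homomorphisms, by auxiliary weak-lift frames $\u A(K_*)$ and $\FFF$ on the common ring $A(K_*)$ (Proposition~\ref{Pr:isobal-kappa-cryst} and the homomorphisms $a,b,c$ in the proof of Theorem~\ref{Th:PhiR-iso-bal}); without some such device the Grothendieck--Messing matching has nothing on the window side to match against.

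The heart of the theorem is the balanced case, and your proposal leaves it as an expectation (``a quantitative comparison of $A_{\cris}(R)$ with $W(R^\flat)$''). No direct comparison of this kind can work: the natural frame homomorphism goes $W(R^\flat)\to A_{\cris}(R)$ and cannot be crystalline, since $\BT(\Spec R^\flat)\not\simeq\BT(\Spec R)$. The paper's route passes through an intermediate object you never construct: a straight lift $A=W(R^\flat)/J'$ of $R$, whose existence uses the balanced hypothesis in the form $J^p=\phi(J)$ to show that the set of Witt vectors $(a_0,a_1,\dots)$ with $a_i\in\phi^i(J)$ is an ideal (Lemma~\ref{Le:exist-lift}). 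Two separate statements are then needed: (i) $\varkappa:\u A{}_{\cris}(R)\to\u A$ is crystalline (Proposition~\ref{Pr:str-kappa-cryst}), proved by showing $\sigma_1$ is nilpotent on the kernel modulo $p$ via $\sigma_1([a]^{[n]})=\frac{(pn)!}{p\cdot n!}[a]^{[pn]}$; and (ii) $\Phi_A$ is an equivalence (Theorem~\ref{Th:PhiA-equiv}), proved not by any torsion estimate but by showing that the square formed by $\phi^*$ on $\BT(\Spec R)$ and $\sigma^*$ on $\Win(\u A)$ is cartesian --- Grothendieck--Messing for the PD thickening $\phi:R\to R$ with pointwise nilpotent divided powers --- and then taking the limit over Frobenius, which identifies the limit categories with $\BT(\Spec R^\flat)$ and $\Win(\u W(R^\flat))$, where Gabber's theorem applies (Corollary~\ref{Co:Rflat-R-cart} and Lemma~\ref{Le:cat}). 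These two ideas, the lift as intermediary and the Frobenius-limit argument, are the actual content of the proof, and neither appears in your proposal.
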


See Theorem~\ref{Th:PhiR-iso-bal}.
Theorem~\ref{Th:Intro-PhiR-iso-bal} implies that for iso-balanced semiperfect rings
the crystalline Dieudonn\'e functor
\[
\DD_R:\BT(\Spec R)\to(\text{Dieudonn\'e crystals over }\Spec R)
\]
is fully faithful up to isogeny. 
When $R$ is $f$-semiperfect, this is proved in \cite{Scholze-Weinstein} using perfectoid spaces.

Assume that $R$ is a complete intersection in the sense that
$R$ is the quotient of a perfect ring by a regular sequence.
Then $A_{\cris}(R)$ is torsion free, and windows over $\u A{}_{\cris}(R)$
are equivalent to Dieudonn\'e crystals over $\Spec R$ with an admissible
filtration in the sense of \cite{Grothendieck:Montreal}; this filtration is unique
if it exists. 
Thus for complete intersections, 
Theorem~\ref{Th:Intro-PhiR-iso-bal} means that the functor $\DD_R$
is fully faithful and that its essential image consists of those Dieudonn\'e crystals 
which admit an admissible filtration.
Full faithfulness is already proved in \cite{Scholze-Weinstein}
as an easy consequence of full faithulness up to isogeny.

For a general semiperfect ring, $A_{\cris}(R)$ can have $p$-torsion, 
and the functor $\DD_R$ cannot be expected to be fully faithful.
The phenomenon that passing from Dieudonn\'e modules to windows 
can compensate this failure
is familiar from the classification of formal $p$-divisible groups over arbitrary
$p$-adic rings by nilpotent displays, and from the classification of arbitrary $p$-divisible
groups over local Artin rings by Dieudonn\'e displays.

\subsection*{Dieudonn\'e modules via lifts.}
The proof of Theorem~\ref{Th:Intro-PhiR-iso-bal} relies on another construction of 
Dieudonn\'e modules, which is independent of the functors $\Phi_R^{\cris}$.
A lift of an $\FF_p$-algebra $R$ is a $p$-adically complete and torsion free ring $A$ 
with $A/pA=R$ and with a Frobenius lift $\sigma:A\to A$. 
Then there is an evident frame structure $\u A$ and a functor
\[
\Phi_{A}:\BT(\Spec R)\to\Win(\u A).
\]
Here $\u A$-windows are equivalent to locally free Dieudonn\'e modules over $A$
in the usual sense. 
The functor $\Phi_A$ also induces a functor $\Phi_A^{\tor}$ from
commutative finite locally free $p$-group schemes over $R$ to $p$-torsion Dieudonn\'e
modules over $A$ which are of projective dimension $\le 1$ as $A$-modules.
In general the properties of $\Phi_A$ depend on the lift.

\begin{Thm}
\label{Th:Intro-PhiA}
If $R$ is a complete intersection or balanced semiperfect ring, 
there is a lift $A$ of $R$ such that the 
functors $\Phi_A$ and $\Phi_A^{\tor}$ are equivalences.
\end{Thm}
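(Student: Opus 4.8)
The plan is to exhibit the lift $A$ explicitly, to deduce full faithfulness of $\Phi_A$ from the rigidity of Dieudonn\'e crystals, and then to prove essential surjectivity by a deformation argument in which the hypothesis on $R$ is precisely what makes the relevant obstruction vanish; the functor $\Phi_A^{\tor}$ is treated afterward by two-term resolutions. \textbf{Construction of the lift.} Write $R=S/I$ with $S$ perfect: for a complete intersection this is the given presentation with $I=(x_1,\dots,x_r)$ regular, and in the balanced case one takes $S=\varprojlim_{\phi_R}R$ and $I=\Ker(S\to R)$. Put $A=W(S)/J$, where $J$ is generated by the Teichm\"uller representatives $[s]$ of the elements $s\in I$. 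Since $[s^p]=[s]^p$, the Frobenius of $W(S)$ carries each generator into $J$ and so descends to a lift $\sigma$; and $A/pA=S/I=R$ because $[s]\equiv s\pmod p$. This yields the frame $\u A=(A,pA,R,\sigma,\sigma/p)$, the divided Frobenius being legitimate once $A$ is $p$-torsion free. For a complete intersection the lifted sequence $[x_1],\dots,[x_r]$ is regular together with $p$, so $p$ is a nonzerodivisor on $A$. In the balanced case one checks directly that $J$ is $p$-saturated: using $V[s]=p[s^{1/p}]$ in $W(S)$, any class killed by $p$ is represented by Teichm\"uller lifts of elements of $I^{1/p}$, and the inclusion $(I^{1/p})^p\subseteq I$---a reformulation of $\Ker(\phi_R)^p=0$---forces such a class back into $J$. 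Hence $A$ is a genuine lift in either case.

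\textbf{Full faithfulness.} Because $A$ is $p$-adically complete and $p$-torsion free with $A/pA=R$ and $p\ge 3$, the ideal $pA$ carries its canonical divided powers, so $A\to R$ is a PD thickening; $\Phi_A(G)$ is then the value $\DD(G)_A$ of the Dieudonn\'e crystal equipped with its Hodge filtration and with $F_1=\sigma/p$ on the filtration step. A morphism of the resulting windows is a horizontal, filtration-preserving map of crystal values, and by the rigidity of the crystalline Dieudonn\'e functor across the thickening $A\to R$ (Grothendieck--Messing) it is induced by a unique morphism of $p$-divisible groups. Thus $\Phi_A$ is fully faithful; for a complete intersection this is essentially the full faithfulness already recorded in \cite{Scholze-Weinstein}.

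\textbf{Essential surjectivity.} This is the heart of the matter: from a window $\u M=(M,\Filone M,F,F_1)$ over $\u A$ one must construct a $p$-divisible group $G$ over $R$ with $\Phi_A(G)\cong\u M$. For a complete intersection the ring $A_{\cris}(R)$ is torsion free, and the universal PD map gives a frame morphism $\u A{}_{\cris}(R)\to\u A$ whose kernel is the PD ideal generated by the $[x_i]$; this ideal is $\sigma_1$-stable (indeed $\sigma_1([x_i])=(p-1)![x_i]^{[p]}$) and topologically nilpotent, so the morphism is crystalline and base change identifies $\Win(\u A)$ with $\Win(\u A{}_{\cris}(R))$, hence with the category of Dieudonn\'e crystals carrying an admissible filtration in the sense of \cite{Grothendieck:Montreal}. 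It therefore suffices to produce, from such a filtered crystal, a $p$-divisible group inducing it, which is the effectivity of the crystalline Dieudonn\'e functor over the complete-intersection base $R$. In the balanced case $A_{\cris}(R)$ may have torsion and one argues by deformation instead: the bound $\Ker(\phi_R)^p=0$ controls the kernel of Frobenius, and hence the deformation obstructions, so that $G$ can be built by Grothendieck--Messing lifting out of the perfect subquotient, where the window functor is the classical Dieudonn\'e equivalence. In both cases the unobstructedness of this lifting is exactly what the complete-intersection or balanced hypothesis secures, and establishing it is the main obstacle of the proof.

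\textbf{The torsion functor.} Every commutative finite locally free $p$-group scheme $N$ over $R$ embeds into a $p$-divisible group, giving an exact sequence $0\to N\to G\xrightarrow{f}G'\to 0$ with $G,G'$ $p$-divisible; set $\Phi_A^{\tor}(N)=\Coker(\Phi_A(f))$, a $p$-torsion Dieudonn\'e module of projective dimension $\le 1$ over $A$ as the cokernel of an injection of finite projective $A$-modules. Conversely any such module is the cokernel of an injection of locally free Dieudonn\'e modules, which by the essential surjectivity of $\Phi_A$ is $\Phi_A$ of an isogeny $f$, whose kernel $N=\Ker f$ is the required preimage; full faithfulness on the torsion level is a five-lemma argument over these two-term resolutions. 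Hence $\Phi_A^{\tor}$ is an equivalence as well.
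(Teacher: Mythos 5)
Your construction of the lift is essentially the one the paper uses (Teichm\"uller lifts of a regular sequence in the complete-intersection case, and a $p$-saturated ideal built from $\Ker(\phi_R)^p=0$ in the balanced case, where the paper takes the closed ideal of Witt vectors $(a_0,a_1,\dots)$ with $a_i\in\phi^i(J)$ rather than the bare ideal generated by the $[s]$). But the two substantive steps of your argument are not proofs. For full faithfulness you assert that a horizontal, filtration-preserving map of crystal values ``by the rigidity of the crystalline Dieudonn\'e functor across the thickening $A\to R$ (Grothendieck--Messing) is induced by a unique morphism of $p$-divisible groups.'' Grothendieck--Messing is a statement about deformations, not about full faithfulness of $\DD_R$; what you are invoking here \emph{is} full faithfulness of the (filtered) Dieudonn\'e functor, which for balanced rings is new content of the theorem and for general semiperfect rings is false without the window structure. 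For essential surjectivity in the complete-intersection case you reduce, via $\Win(\u A)\cong\Win(\u A{}_{\cris}(R))\cong\DF(\Spec R)$, to ``the effectivity of the crystalline Dieudonn\'e functor over the complete-intersection base $R$'' --- but that effectivity is exactly equivalent to the statement being proven (in the paper it is Corollary~\ref{Co:DR-ci}, \emph{deduced from} Theorem~\ref{Th:PhiA-equiv}, not available as an input). In the balanced case the appeal to ``deformation obstructions'' controlled by $\Ker(\phi_R)^p=0$ and ``lifting out of the perfect subquotient'' names no actual mechanism: there is no relevant perfect subquotient of $R$, and Grothendieck--Messing lifts of $p$-divisible groups are never obstructed, so obstruction vanishing is not the issue.

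The idea that is missing is the paper's specialisation along $R^\flat\to R$. One shows (Proposition~\ref{Pr:Cartesian-sigma}) that the square formed by $\phi^*$ on $\BT(\Spec R)$ and $\sigma^*$ on $\Win(\u A)$ is cartesian, by factoring $\sigma:\u A\to\u A$ through the frame $\u A/\phi$ and matching Grothendieck--Messing lifts of $G$ along $\phi:R\to R$ (this is where pointwise nilpotence of the divided powers on $\Ker\phi$, guaranteed by straightness of the lift, enters) with lifts of windows along $\iota$ via \cite[Lemma~4.2]{Lau:Frames}. Passing to the inverse limit over $\phi$ and using $\varprojlim(\u A,\sigma)=\u W(R^\flat)$ yields a cartesian square comparing $\Phi_A$ with $\Phi_{W(R^\flat)}$; the latter is an equivalence by Gabber's theorem for perfect rings, every $\u A$-window lifts to $\u W(R^\flat)$, and a purely categorical lemma then forces $\Phi_A$ to be an equivalence. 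None of this appears in your proposal, and without it both full faithfulness (for balanced $R$) and essential surjectivity are unproved. A smaller point: for $\Phi_A^{\tor}$, a finite locally free $p$-group scheme embeds into a $p$-divisible group only Zariski-locally on the base, so the two-term resolution argument must be combined with a descent step as in Lemma~\ref{Le:PhiA-tor}.
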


See Theorem~\ref{Th:PhiA-equiv} and Corollary~\ref{Co:PhiA-tor-equiv}.
When $R$ is perfect, then $A=W(R)$ is the unique lift of $R$, 
and Theorem~\ref{Th:Intro-PhiA} holds by a result of Gabber. 
The general case is reduced to the perfect case 
by a specialisation argument along $R^\flat\to R$,
where $R^\flat$ is the limit perfection of $R$.

We note that for an arbitrary $\FF_p$-algebra $R$ with a lift $(A,\sigma)$ the functor
$\Phi_A$ gives an equivalence between formal $p$-divisible groups and nilpotent windows
by \cite{Zink:Windows} and the extensions of \cite{Zink:Display} provided by 
\cite{Lau:Disp, Lau:Smoothness}. So the new aspect of Theorem~\ref{Th:Intro-PhiA}
is that it applies to all $p$-divisible groups.

\medskip
The functors $\Phi_R^{\cris}$ and $\Phi_A$ are related as follows.
For every lift $A$ of a semiperfect ring $R$ there is a natural homomorphism of frames 
\[
\varkappa:\u A{}_{\cris}(R)\to \u A,
\]
and the base change under $\varkappa$ of $\Phi_R^{\cris}(G)$ coincides with $\Phi_A(G)$.

\begin{Lemma}
\label{Le:Intro}
If $R$ is a complete intersection or balanced semiperfect ring, 
there is a lift $A$ of $R$ as in Theorem~\ref{Th:Intro-PhiA} such that
$\varkappa$ induces an equivalence of the window categories.
\end{Lemma}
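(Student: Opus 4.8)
The plan is to prove the statement entirely at the level of frames, independently of $p$-divisible groups: for a suitable lift $A$ I will show that the base change functor $\varkappa^*\colon\Win(\u A{}_{\cris}(R))\to\Win(\u A)$ is an equivalence, and that this $A$ is one of the lifts furnished by Theorem~\ref{Th:Intro-PhiA}. One must be careful to avoid circularity: since the intended application (Theorem~\ref{Th:Intro-PhiR-iso-bal}) deduces that $\Phi_R^{\cris}$ is an equivalence from $\Phi_A=\varkappa^*\circ\Phi_R^{\cris}$, this lemma cannot use $\Phi_R^{\cris}$ being an equivalence. The criterion I would invoke is the rigidity of windows along a crystalline frame homomorphism: writing $I:=\Ker(A_{\cris}(R)\to R)$ for the filtration ideal, if $\varkappa$ is surjective on underlying rings, the kernel $\Fb:=\Ker(\varkappa)$ satisfies $\Fb\subseteq I$, and $\sigma_1$ is topologically nilpotent on the divided power ideal $\Fb$, then $\varkappa^*$ is an equivalence.

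It is worth isolating why such a criterion works, as it tells us what the real content is. A projective module $P$ underlying an $\u A$-window lifts to a projective $A_{\cris}(R)$-module $P'$, uniquely up to isomorphism. The filtration then lifts automatically and uniquely: since both frames have the common quotient $R$ and any admissible $Q'\subseteq P'$ must contain $I P'$, the submodule $Q'$ is forced to be the preimage in $P'$ of the Hodge filtration over $R$, which is the very same $R$-datum carried by the $\u A$-window. Thus the only genuine content is the unique lifting of the pair $(F,F_1)$ along $\varkappa$, and this is governed by $\sigma_1$ acting on $\Fb$: topological nilpotence of $\sigma_1$ makes the relevant linearised operator invertible, yielding existence and uniqueness of the lift.

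The complete intersection case is then clean and serves as the model. Write $R=R_0/(f_1,\dots,f_r)$ with $R_0$ perfect and the $f_i$ a regular sequence, and take Teichm\"uller lifts $\hat f_i\in W(R_0)$, so that $\sigma(\hat f_i)=\hat f_i^{\,p}$; then $A:=W(R_0)/(\hat f_1,\dots,\hat f_r)$ (completed) is a torsion free lift with Frobenius lift, and it is the lift of Theorem~\ref{Th:Intro-PhiA}. Here $A_{\cris}(R)$ is the $p$-completed divided power envelope of $W(R_0)\to R$, and $\varkappa$ is induced by $W(R_0)\twoheadrightarrow A$, killing every $\gamma_n(\hat f_i)$ with $n\ge1$; hence $\varkappa$ is surjective with $\Fb$ the PD ideal generated by the $\hat f_i$, and $\Fb\subseteq I$. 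Because $A_{\cris}(R)$ is torsion free and $\sigma_1$ raises divided power degree (one computes $\sigma_1(\hat f_i)=(p-1)!\,\gamma_p(\hat f_i)$, and more generally $\sigma_1(\gamma_n(\hat f_i))$ is an integer multiple of $\gamma_{pn}(\hat f_i)$), iteration together with $p$-adic completeness gives topological nilpotence of $\sigma_1$ on $\Fb$, and the criterion applies.

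For the balanced case I would run the same scheme starting from $W(R^\flat)$, where $R^\flat$ is the limit perfection: setting $J=\Ker(R^\flat\to R)$, choose a $\sigma$-stable lift of $J$ to an ideal of $W(R^\flat)$ giving a torsion free lift $A$, realise $\varkappa$ as the induced map from the PD envelope, and describe $\Fb$ as the PD ideal annihilating that lift of $J$. The main obstacle lies precisely here: now $A_{\cris}(R)$ has $p$-torsion — exactly the phenomenon that makes $\DD_R$ fail to be fully faithful — so this torsion is part of $\Fb$ and one can no longer argue by torsion freeness. The hard part will be to show, using the hypothesis $\Ker(\phi_R)^p=0$, that $\Fb$ is nonetheless a topologically nilpotent PD ideal on which $\sigma_1$ contracts; the balanced condition is what bounds $J$, controls the $p$-torsion, and makes the divided power nilpotence uniform enough to survive $p$-adic completion. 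Once this is established the rigidity criterion yields the equivalence $\varkappa^*$, and since $A$ is the lift of Theorem~\ref{Th:Intro-PhiA}, the lemma follows.
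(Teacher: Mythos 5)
Your overall strategy is the paper's: reduce the lemma to the purely frame-theoretic assertion that $\varkappa:\u A{}_{\cris}(R)\to\u A$ is crystalline, and prove that by a deformation/nilpotence criterion on the kernel $N=\Ker(\varkappa)$ (in the paper: Proposition~\ref{Pr:str-kappa-cryst}, obtained from \cite[Thm.\ 3.2]{Lau:Frames} applied to the frames $A_{\cris}(R)/p^nN$ and a limit over $n$). Your complete intersection case is essentially the paper's argument, with one imprecision worth flagging: ``$\sigma_1$ raises divided power degree'' together with $p$-adic completeness does \emph{not} by itself give topological nilpotence, because the elements $\gamma_{p^kn}(\hat f_i)$ do not tend to $0$. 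What makes the argument work is the coefficient in $\sigma_1(\gamma_n(\hat f_i))=\frac{(pn)!}{p\cdot n!}\,\gamma_{pn}(\hat f_i)$: it is divisible by $p$ once $n\ge p$, so that $\sigma_1\circ\sigma_1=0$ on $N/pN$ and hence $\sigma_1$ is nilpotent on each $N/p^nN$.

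The genuine gap is the balanced case, which you explicitly leave open (``the hard part will be to show\ldots''), and moreover you misplace where the difficulty sits. In the paper the entire content of the balanced case is the \emph{construction of the lift} $A$ (Lemma~\ref{Le:exist-lift}): one must produce a $\sigma$-stable ideal $J'\subseteq W(R^\flat)$ with $J'\cap pW(R^\flat)=pJ'$ and $J'/pJ'=J$ such that $J$ is generated by elements $a$ with $[a]\in J'$ (a ``straight'' lift). The candidate is $J'=\{(a_0,a_1,\ldots): a_i\in\phi^i(J)\}$, and the balanced hypothesis $J^p=\phi(J)$, combined with the homogeneity of the Witt addition and multiplication polynomials, is exactly what makes this set an ideal; this is the only place the hypothesis $\Ker(\phi_R)^p=0$ enters. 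Once such a straight lift exists, the crystallinity of $\varkappa$ is proved \emph{verbatim} as in the complete intersection case: $N/pN$ is generated by the divided powers $a^{[n]}$ of elements with $[a]\in J'$, the same formula for $\sigma_1(a^{[n]})$ applies, and $\sigma_1^2=0$ on $N/pN$. The $p$-torsion of $A_{\cris}(R)$, which you identify as the main obstacle, is in fact harmless here: the argument only uses torsion-freeness of the lift $A$ (to get $N\cap p^nA_{\cris}(R)=p^nN$ and the $p$-adic completeness of $N$), not of $A_{\cris}(R)$. Since your complete intersection argument leans on torsion-freeness of $A_{\cris}(R)$, it does not transfer as written; and since you give no construction of the lift and no nilpotence proof in the balanced case, the lemma is not established in that case.
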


See Proposition~\ref{Pr:str-kappa-cryst}.
Theorem~\ref{Th:Intro-PhiA} and Lemma~\ref{Le:Intro} give
Theorem~ \ref{Th:Intro-PhiR-iso-bal} when $R$ is balanced or a complete intersection,
and the general case follows by a deformation argument, 
using a weak version of lifts for iso-balanced rings, 
for which an analogue of Lemma~\ref{Le:Intro} holds; 
see Proposition~\ref{Pr:isobal-kappa-cryst}.

\subsection*{Breuil-Kisin-Fargues modules.}

Now let $\pfd$ be a perfectoid ring in the sense of \cite{BMS}.
This class of rings includes all perfect rings and all 
bounded open integrally closed subrings of perfectoid Tate rings
in the sense of \cite{Fontaine:Perfectoides}.
Let $\pfd^\flat$ be the tilt of $\pfd$, which is a perfect ring, 
and $A_{\inf}=W(\pfd^\flat)$.

The kernel of the natural homomorphism $\theta:A_{\inf}\to \pfd$ is generated
by a non-zero divisor $\xi$. In the following,
a Breuil-Kisin-Fargues module for $\pfd$ is a finite projective $A_{\inf}$-module
$\FM$ with a linear map $\varphi:\FM^\sigma\to\FM$ whose cokernel
is annihilated by $\xi$.
As an application of Theorem~\ref{Th:Intro-PhiR-iso-bal} we obtain:

\begin{Thm}
\label{Th:Intro-Perfectoid}
If $p\ge 3$, for each perfectoid ring $\pfd$
the category $\BT(\Spec \pfd)$ is equivalent to the category of 
Breuil-Kisin-Fargues modules for $\pfd$.
\end{Thm}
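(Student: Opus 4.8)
The plan is to reduce to the semiperfect ring $\pfd/p$, where Theorem~\ref{Th:Intro-PhiR-iso-bal} applies, and then to lift the resulting equivalence from $\pfd/p$ to $\pfd$ by crystalline deformation theory. First I would assemble the structural input. Because $\pfd$ is perfectoid, the Frobenius of $\pfd/p$ is surjective, so $\pfd/p$ is semiperfect; its limit perfection is $(\pfd/p)^\flat=\pfd^\flat$ and $W(\pfd^\flat)=A_{\inf}$. The ring $\pfd/p$ is $f$-semiperfect, and therefore iso-balanced, so Theorem~\ref{Th:Intro-PhiR-iso-bal} gives an equivalence
\[
\Phi_{\pfd/p}^{\cris}:\BT(\Spec \pfd/p)\xrightarrow{\ \sim\ }\Win(\u A{}_{\cris}(\pfd/p)).
\]
Here $A_{\cris}(\pfd/p)$ is the $p$-adic divided power envelope of $\ker(A_{\inf}\to\pfd/p)=(\xi,p)$, so that $A_{\inf}$ maps to $A_{\cris}(\pfd/p)$ while $\theta$ realises $\pfd$ as $A_{\inf}/(\xi)$.

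The second step passes from $\pfd/p$ to $\pfd$, and this is where $p\ge 3$ is used: then the ideal $(p)$ carries divided powers, each reduction $\pfd/p\to\pfd/p^n$ is a nilpotent divided power thickening, $\pfd=\varprojlim_n\pfd/p^n$, and there is a compatible divided power map $A_{\cris}(\pfd/p)\to\pfd$. Since lifts of a $p$-divisible group from $\pfd/p$ to $\pfd$ are highly non-unique, the functor $\BT(\Spec\pfd)\to\BT(\Spec\pfd/p)$ is far from an equivalence, and the role of $A_{\inf}$ is to record the missing lift. Concretely, Grothendieck--Messing theory presents a $p$-divisible group over $\pfd$ as a $p$-divisible group $G_0$ over $\pfd/p$ together with an admissible lift of its Hodge filtration to $\pfd$. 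The same package is the category of windows over the frame $\u A_{\inf}$ whose ring is $A_{\inf}$, whose ideal is $(\xi)$, and whose quotient is $A_{\inf}/(\xi)=\pfd$; unwinding the window axioms identifies $\Win(\u A_{\inf})$ with Breuil--Kisin--Fargues modules for $\pfd$, the window filtration $\xi M\subseteq M_1\subseteq M$ translating into the condition that $\Coker(\varphi)$ be annihilated by $\xi$. The problem thus becomes the construction of an equivalence $\BT(\Spec\pfd)\simeq\Win(\u A_{\inf})$ lying over the equivalence $\Phi_{\pfd/p}^{\cris}$ of the first step.

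For the last step I would compare the two frames directly. The inclusion $A_{\inf}\to A_{\cris}(\pfd/p)$ together with $\theta$ yields a morphism of frames $\u A_{\inf}\to\u A{}_{\cris}(\pfd/p)$ covering the reduction $\pfd\to\pfd/p$, so base change sends a Breuil--Kisin--Fargues module $\FM$ to a window over $A_{\cris}(\pfd/p)$, namely $\Phi_{\pfd/p}^{\cris}(G_0)$ for the special fibre $G_0$, while the reduction $\FM/\xi\FM$ with its induced Frobenius records exactly the Hodge filtration of $G$ over $\pfd$. Matching the fibres of the two deformation problems---the admissible Hodge filtrations over $\pfd$ on the $\BT$ side and the reductions modulo $\xi$ on the module side---then upgrades $\Phi_{\pfd/p}^{\cris}$ to the desired equivalence, the compatibility being forced by the fact that $\Phi_{\pfd/p}^{\cris}$ respects the crystal structure and base change along $A_{\cris}(\pfd/p)\to\pfd$. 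The main obstacle is precisely this descent from $A_{\cris}(\pfd/p)$ to $A_{\inf}$: the envelope is much larger than $A_{\inf}=W(\pfd^\flat)$ and in general has $p$-torsion, whereas a Breuil--Kisin--Fargues module must be finite projective over the torsion free ring $A_{\inf}$ with a $\xi$-integral Frobenius. One must therefore show that the window produced over $A_{\cris}(\pfd/p)$, together with the lift recorded over $\pfd$, descends to such an $A_{\inf}$-lattice and is determined by it; this is where the boundedness encoded in the iso-balanced hypothesis and the concrete relation between $A_{\inf}$, $A_{\cris}(\pfd/p)$ and $\pfd$ do the essential work.
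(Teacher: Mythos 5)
Your first two steps track the paper's argument for Proposition~\ref{Pr:BT-WinB} closely: the ring $\pfd/p=S/\xi_0$ is balanced semiperfect, so $\Phi_{\pfd/p}^{\cris}$ is an equivalence by the straight-lift machinery, and for $p\ge 3$ the divided powers on $p\pfd$ are topologically nilpotent, so Grothendieck--Messing identifies lifts of a $p$-divisible group along $\pfd\to\pfd/p$ with lifts of the Hodge filtration, matching lifts of windows along the frame homomorphism $\u A{}_{\cris}(\pfd)\to\u A{}_{\cris}(\pfd/p)$. One point of bookkeeping: the lifted Hodge filtration is recorded in the window over $\u A{}_{\cris}(\pfd)$ --- same underlying ring as $A_{\cris}(\pfd/p)$, smaller filtration ideal --- not directly in an $A_{\inf}$-structure.

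The genuine gap is your third step. That the base change functor $\lambda^*:\Win(\u A{}_{\inf}(\pfd))\to\Win(\u A{}_{\cris}(\pfd))$ is an equivalence is Proposition~\ref{Pr:Win-inf-cris}, one of the two pillars of the theorem, and you name it as ``the main obstacle'' without giving an argument; moreover the mechanism you point to --- ``the boundedness encoded in the iso-balanced hypothesis'' --- is not where the work lies, since the balanced condition is consumed entirely in your first step. The descent from $A_{\cris}$ to $A_{\inf}$ requires: (i) the fact that $A_{\cris}(\pfd)$ is torsion free for \emph{every} perfectoid $\pfd$ (Proposition~\ref{Pr:IJ-Acris}), a nontrivial PD-envelope computation needed even to define the frame $\u A{}_{\cris}(\pfd)$ with $\sigma_1=p^{-1}\sigma$; (ii) a reduction, via the splitting of $\pfd$ into a torsion free perfectoid part and a perfect part (Remark~\ref{Rk:IJ-perfd}, Lemma~\ref{Le:IJ-Win}), to the case where $(p,\xi)$ is a regular sequence in $A_{\inf}$; and (iii) in that case a second, independent use of the hypothesis $p\ge 3$, via the Frobenius-contraction argument of \cite[Prop.\ 2.3.1]{Cais-Lau} generalizing the classical descent from $S$ to $\FS$. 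Without (i)--(iii) you have an equivalence $\BT(\Spec\pfd)\cong\Win(\u A{}_{\cris}(\pfd))$ but no way to replace the large ring $A_{\cris}(\pfd)$ by the finite projective $A_{\inf}$-lattice that a Breuil--Kisin--Fargues module demands. (The identification $\Win(\u A{}_{\inf}(\pfd))\cong\BK(\pfd)$ that you use also needs the projectivity of $\Coker(\varphi)$, Lemma~\ref{Le:DMB-proj}, but that part is routine.)
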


See Theorem~\ref{Th:BT-DM}.
When $\pfd=\OOO_C$ for an algebraically closed perfectoid field $C$,
the result is due to Fargues \cite{Fargues:AuDela,Fargues:Paderborn}.
Theorem~\ref{Th:Intro-Perfectoid} is a variant of the classical equivalence 
between $p$-divisible groups over a mixed characteristic
complete discrete valuation ring with perfect residue field and Breuil-Kisin modules.

To prove Theorem~\ref{Th:Intro-Perfectoid} 
we consider the balanced semiperfect ring  $\pfd/p$. 
The universal $p$-adic divided power extension $A_{\cris}(\pfd)$
coincides with $A_{\cris}(\pfd/p)$ as a ring and carries a natural frame structure.
The equivalence of Theorem~\ref{Th:Intro-PhiR-iso-bal} for $\pfd/p$ 
(which is covered by Theorem~\ref{Th:Intro-PhiA} and Lemma~\ref{Le:Intro} in this case)
extends for $p\ge 3$ to an equivalence
\[
\BT(\Spec \pfd)\to\Win(\u A{}_{\cris}(\pfd)).
\]
Moreover there is a base change functor 
\[
(\text{Breuil-Kisin-Fargues modules for $\pfd$})\to\Win(\u A{}_{\cris}(\pfd)),
\]
which is an equivalence for $p\ge 3$ by a descent from $A_{\cris}$ to $A_{\inf}$ that
generalizes the `descent from $S$ to $\FS$' used in the classical case.
Theorem~\ref{Th:Intro-Perfectoid} follows. 
One can expect that Theorem~\ref{Th:Intro-Perfectoid} also holds for $p=2$,
but the present proof does not extend to that case directly.

As in the classical case, Theorem~\ref{Th:Intro-Perfectoid} induces a
similar result for finite group schemes. 
Namely, a torsion Breuil-Kisin-Fargues module for $\pfd$ is a triple
$(\FM,\varphi,\psi)$ where $\FM$ is a $p$-torsion finitely presented $A_{\inf}$-module
of projective dimension $\le 1$ with linear maps
\[
\xi A_{\inf}\otimes_{A_{\inf}}\FM\xrightarrow{\;\psi\;} \FM^\sigma\xrightarrow{\;\varphi\;}\FM
\]
such that $\varphi\circ\psi$ and $\psi\circ(1\otimes\varphi)$ 
are the multiplication maps. 
If $\pfd$ is torsion free then $\xi$ is $\FM$-regular and $\psi$ is dermined by $\varphi$.

\begin{Cor}
(Theorem~\ref{Th:pGr-DM})
If $p\ge 3$, for each perfectoid ring $\pfd$
the category of commutative finite locally free $p$-group schemes over $\pfd$
is equivalent to the category of torsion Breuil-Kisin-Fargues modules for $\pfd$.
\end{Cor}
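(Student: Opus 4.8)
The plan is to deduce the classification of finite group schemes from Theorem~\ref{Th:Intro-Perfectoid} by the classical device of resolving a finite group scheme by an isogeny of $p$-divisible groups, exactly as finite flat group schemes are treated via Breuil--Kisin modules in the discrete valuation ring case (which the passage before the statement explicitly invokes). The first step is to establish a torsion analogue of the window equivalence: I would show that the category of commutative finite locally free $p$-group schemes over $\pfd$ is equivalent to a category of \emph{torsion windows} over $\u A{}_{\cris}(\pfd)$, by which I mean finitely presented $A_{\cris}(\pfd)$-modules killed by a power of $p$, of projective dimension $\le 1$, carrying the divided-Frobenius and filtration data induced from $\Win(\u A{}_{\cris}(\pfd))$. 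This parallels the functor $\Phi_A^{\tor}$ of Theorem~\ref{Th:Intro-PhiA}, now over the crystalline frame and in the perfectoid setting where $\pfd/p$ is balanced.

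Next I would construct the functor by resolutions. Every commutative finite locally free $p$-group scheme $H$ over $\pfd$ fits, after an fppf localization and subsequent descent, into a short exact sequence
\[
0\to H\to G\xrightarrow{\;u\;} G'\to 0
\]
of fppf abelian sheaves in which $G$ and $G'$ are $p$-divisible groups and $u$ is an isogeny. Applying the equivalence $\BT(\Spec\pfd)\xrightarrow{\sim}\Win(\u A{}_{\cris}(\pfd))$ of Theorem~\ref{Th:Intro-Perfectoid} turns $u$ into an injection of windows with torsion cokernel, and I would set the torsion window attached to $H$ to be $\Coker(\Phi_\pfd^{\cris}(u))$. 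Independence of the chosen resolution and functoriality in $H$ follow from the exactness of the window equivalence together with the snake lemma; full faithfulness and essential surjectivity onto the torsion windows are then obtained by comparing short exact sequences on the two sides and reducing to the $p$-divisible case.

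The remaining step is to descend from $A_{\cris}$ to $A_{\inf}$. For $p\ge 3$ the same descent that yields Theorem~\ref{Th:Intro-Perfectoid} identifies torsion windows over $\u A{}_{\cris}(\pfd)$ with torsion Breuil--Kisin--Fargues modules for $\pfd$: given such a triple $(\FM,\varphi,\psi)$ the base change $A_{\cris}(\pfd)\otimes_{A_{\inf}}\FM$ acquires a torsion window structure, and conversely. One checks that the conditions defining a torsion Breuil--Kisin--Fargues module --- finite presentation and projective dimension $\le 1$ over $A_{\inf}$, together with the requirement that $\varphi\circ\psi$ and $\psi\circ(1\otimes\varphi)$ be the canonical multiplication maps --- are preserved and reflected under this base change. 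Composing the two equivalences gives the Corollary.

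I expect the main obstacle to be twofold. First, one must guarantee the existence and sufficient functoriality of the $p$-divisible resolutions of $H$ over the mixed-characteristic ring $\pfd$, and that the resulting cokernel functor is genuinely well defined; this is where the fppf-local construction and the descent require care. Second, in descending to $A_{\inf}$ one must produce the datum $\psi$ canonically even when $\pfd$ has $p$-torsion and $\xi$ need not be $\FM$-regular: when $\pfd$ is torsion free, $\xi$ is $\FM$-regular and $\psi$ is determined by $\varphi$, but in general $\psi$ must be constructed as part of the descent rather than recovered from $\varphi$.
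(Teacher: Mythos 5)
Your overall strategy---resolve $H$ by an isogeny of $p$-divisible groups, push the resolution through the locally free equivalence, and recover the general case by descent---is exactly the Kisin-style d\'evissage the paper uses. But two steps in your plan, as written, have genuine problems.

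First, the localization. You propose an \emph{fppf} localization to produce the resolution $0\to H\to G\to G'\to 0$ and then fppf descent. The difficulty is that an fppf cover of $\Spec\pfd$ need not consist of perfectoid rings, so $A_{\inf}$, $A_{\cris}$ and (torsion) Breuil--Kisin--Fargues modules are not even defined over the cover, and no descent theory for them is available there. The paper instead works over the \emph{\'etale} site of $\Spec(\pfd/p)$: \'etale $\pfd/p$-algebras lift uniquely to perfectoid $\pfd$-algebras (Lemma~\ref{Le:tilt-etale}), the embedding of $H$ into a $p$-divisible group exists already Zariski-locally on $\Spec(\pfd/p)$ by \cite[Theorem~3.1.1]{BBM} (Lemma~\ref{Le:pGr-embed}), and establishing that $\T(\AAA_{\inf})$ and $\BK_{\tor}(\RRR)$ are stacks for this topology is itself nontrivial (it needs Lemmas~\ref{Le:T(A)}--\ref{Le:T(A)-lim}, since $A_{\inf}\to A_{\inf}'$ is only \'etale modulo powers of $(p,[\xi_0])$). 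Your descent step needs to be reworked in this framework.

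Second, and more seriously, your final step proposes to descend \emph{torsion} objects from $\u A{}_{\cris}(\pfd)$ to $A_{\inf}$, and you correctly identify the obstacle yourself: when $\pfd$ has $p$-torsion, $\xi$ need not be $\FM$-regular, and there is no visible way to produce $\psi$ canonically from a torsion window over $A_{\cris}(\pfd)$. This step is not carried out in your proposal and is not obviously repairable as stated. The paper avoids it entirely: the descent $\lambda^*$ from $A_{\inf}$ to $A_{\cris}$ (Proposition~\ref{Pr:Win-inf-cris}) is used only for \emph{locally free} objects, yielding $\BT(\Spec\pfd)\cong\BK(\pfd)$ directly, and the torsion equivalence is then deduced at the level of $\BT\cong\BK$ by identifying $\pGr(\Spec\pfd)^\circ$ and $\BK_{\tor}(\pfd)^\circ$ with the categories of isogenies inside the derived categories of the respective exact categories (this also handles independence of the resolution more cleanly than an appeal to the snake lemma). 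In particular $\psi$ on a torsion module is never constructed by descent; it is inherited from the canonical $\psi$ on the locally free terms of the resolution (Remark~\ref{Rk:BK-Coker}). I recommend restructuring your argument along these lines rather than introducing an intermediate category of torsion windows over $A_{\cris}(\pfd)$.
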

\subsection*{Acknowledgements}
The author thanks Peter Scholze and Thomas Zink for helpful discussions.

\setcounter{tocdepth}{1}
\tableofcontents

\section{Notation}
\label{Se:Not}

We fix a prime $p$.

 A PD extension is a surjective ring homomorphism
whose kernel is equipped with divided powers.
A $p$-adic PD extension is a PD extension of $p$-adically complete rings
such that the divided powers are compatible with the divided powers on $p\ZZ_p$.
Divided powers $\gamma$ are also denoted by $\gamma_n(x)=x^{[n]}$.

Following \cite{Lau:Frames}, a frame $\u S=(S,\Filone S,R,\sigma,\sigma_1)$ 
consists of rings $S$ and $R=S/\Filone S$ such that $pS+\Filone S\subseteq\Rad S$, 
together with a Frobenius lift $\sigma:S\to S$ and a $\sigma$-linear map $\sigma_1:\Filone S\to S$ 
whose image generates the unit ideal.\footnote{Form a systematic point of view, 
it would be better to drop this condition; see for example \cite[\S 2.1]{Cais-Lau}. 
The condition is satisfied for all frames considered in this article.}
We denote by $\Win(\u S)$ the category of windows over $\u S$ 
in the sense of \cite{Lau:Frames}.
A frame homomorphism $\alpha:\u S\to\u S'$ is a ring homomorphism $S\to S'$
with $\Filone S\to\Filone S'$ such that $\sigma'\alpha=\alpha\sigma$ and
$\sigma'_1\alpha=u\cdot \alpha\sigma_1$ for a unit $u\in S'$.
There is a base change functor $\alpha^*:\Win(\u S)\to\Win(\u S')$.
If this functor is an equivalence, $\alpha$ is called crystalline.

A frame $\u S$ is called a $p$-frame if $p\sigma_1=\sigma$ on $\Filone S$.
A PD frame is a $p$-frame $\u S$ where $S\to R$ is a $p$-adic PD extension 
such that $\sigma$ preserves the resulting divided powers on the ideal $\Filone S+pS$. 
If in addition $S$ is torsion free, then $(S,\sigma)$
is a frame for $R$ in the sense of \cite{Zink:Windows}.


\section{Dieudonn\'e crystals and modules}
\label{Se:DCM}

In this section we fix notation and recall some standard results.

For a scheme $X$ on which $p$ is nilpotent, or more generally a $p$-adic formal scheme,
let $\BT(X)$ be the category of $p$-divisible groups over $X$,
let $\D(X)$ be the category of locally free Dieudonn\'e crystals over $X$,
and let $\DF(X)$ be the category of locally free Dieudonn\'e crystals $\MMM$ over $X$ 
equipped with an admissible filtration $\Filone\MMM_X\subseteq\MMM_X$
as in \cite{Grothendieck:Montreal};
see \cite[Def.\ 2.4.1]{Cais-Lau}. Let
\begin{equation}
\label{Eq:DX}
\DD_X:\BT(X)\to\D(X)
\end{equation}
be the contravariant crystalline Dieudonn\'e functor defined in \cite{Mazur-Messing} and in \cite{BBM}, 
and let
\begin{equation}
\label{Eq:DFX}
\DDF_X:\BT(X)\to\DF(X)
\end{equation}
be its extension defined by the Hodge filtration; see \cite[Prop.\ 2.4.3]{Cais-Lau}.
If $\u S=(S,\Filone S,R,\sigma,\sigma_1)$ is a torsion free PD frame as in \S\ref{Se:Not},
the evaluation of the filtered Dieudonn\'e crystal at $\u S$ gives a contravariant functor
\begin{equation}
\label{Eq:PhiS}
\Phi_{\u S}:\BT(\Spec R)\to\Win(\u S),\qquad
G\mapsto(M,\Filone M,F,F_1)
\end{equation}
where $M=\DD(G)_S$, the submodule $\Filone M\subseteq M$ is the inverse image 
of the Hodge filtration $\Lie(G)^*\subseteq\DD(G)_R$ of $G$,
$F$ is induced by the Frobenius of $G$, and  $F_1=p^{-1}F$ on $\Filone M$;
see \cite[Prop.\ 3.17]{Lau:Relation} or \cite[Prop.\ 2.5.2]{Cais-Lau}.

\subsection*{Explicit Dieudonn\'e modules}

Let $R$ be an $\FF_p$-algebra.
A lift of $R$ is a pair $(A,\sigma)$ where $A$ is a $p$-adically complete and torsion free ring 
with $R=A/pA$, and $\sigma:A\to A$ is  a Frobenius lift.

In the following let $(A,\sigma)$ be a lift of $R$.
A (locally free) Dieudonn\'e module over $A$ 
is a triple $\u M=(M,\varphi,\psi)$ where $M$ a finite projective $A$-module
and $\varphi:M^\sigma\to M$ and $\psi:M\to M^\sigma$ 
are linear maps with $\varphi\psi=p$ and $\psi\varphi=p$,
where $M^\sigma=M\otimes_{A,\sigma}A$.
We write $\DM(A)$ for the category of Dieudonn\'e modules over $A$.

\begin{Lemma}
\label{Le:DMA-proj}
For $(M,\varphi,\psi)\in\DM(A)$ the $R$-module $\Coker(\varphi)$ is projective.
\end{Lemma}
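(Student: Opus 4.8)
The plan is to reduce the statement to a fibrewise rank identity, and to verify that identity by specialising to the perfect case, where the Frobenius lift becomes an isomorphism.

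First I would record the formal consequences of the axioms. Since $p=\varphi\psi$ factors through $\varphi$, the module $C=\Coker(\varphi)$ is killed by $p$ and hence is a finitely presented module over $R=A/pA$; projectivity may therefore be checked after localising at the maximal ideals of $R$, so it suffices to show that $C_\Fm$ is free over $R_\Fm$ for every maximal ideal $\Fm$. As $A$ is $p$-torsion free and $M,M^\sigma$ are projective, both are $p$-torsion free, so $\psi\varphi=p$ forces $\varphi$ to be injective, and symmetrically $\psi$ is injective. Working over the local lift $B=A_\Fp$ (with $\Fp$ the preimage of $\Fm$), where $M,M^\sigma$ are free of some rank $n$, a short diagram chase using injectivity and $\varphi\psi=\psi\varphi=p$ shows that $\Ker(\bar\varphi)=\Image(\bar\psi)$ and $\Ker(\bar\psi)=\Image(\bar\varphi)$ over $S=B/pB=R_\Fm$. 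Thus the reductions assemble into a $2$-periodic exact complex of free $S$-modules
\[
\cdots\to V\xrightarrow{\bar\psi}W\xrightarrow{\bar\varphi}V\to C\to 0,
\]
a free resolution of $C$, with $V=M\otimes_BS$ and $W=M^\sigma\otimes_BS$. Tensoring this resolution with the residue field $k$ of $S$ identifies $\Tor_1^{S}(C,k)$ with $\Ker(\bar\varphi\otimes k)/\Image(\bar\psi\otimes k)$, so $C_\Fm$ is free if and only if $\operatorname{rank}_k(\bar\varphi\otimes k)+\operatorname{rank}_k(\bar\psi\otimes k)=n$; the inequality $\le n$ is automatic, and the content is the reverse inequality.

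To prove the rank identity I would reduce to a complete discrete valuation ring by exploiting the Frobenius lift. Let $\ell$ be the perfection of $k$. Viewing $(A,\sigma)$ as a $\delta$-ring, the canonical map $A\to W(A)$ composed with $W(A)\to W(R)\to W(\ell)$ gives a homomorphism $A\to W(\ell)$ lifting $R\to k\to\ell$ and compatible with $\sigma$. Base changing $(M,\varphi,\psi)$ along it yields a Dieudonn\'e module over the complete discrete valuation ring $W(\ell)$, whose modules are free of rank $n$; choosing bases, $\varphi$ is represented by a matrix that becomes invertible over $W(\ell)[1/p]$, so by the elementary divisor theorem $\varphi\sim\operatorname{diag}(p^{a_1},\dots,p^{a_n})$ with $a_i\ge 0$. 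Integrality of $\psi=p\varphi^{-1}$ forces $a_i\le 1$, hence $a_i\in\{0,1\}$, and then $\operatorname{rank}(\bar\varphi)=\#\{a_i=0\}$ and $\operatorname{rank}(\bar\psi)=\#\{a_i=1\}$ sum to $n$ over $\ell$. Since matrix rank is insensitive to the field extension $k\hookrightarrow\ell$, and the reductions over $\ell$ are the base changes of those over $k$, the required identity holds over $k$, whence $C_\Fm$ is free.

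The main obstacle is precisely the construction and use of the specialisation $A\to W(\ell)$: this is where the hypothesis that $A$ carries a Frobenius lift is indispensable. Without it the statement is false, since an arbitrary factorisation $\Phi\Psi=\Psi\Phi=pI$ over a torsion free ring is only a matrix factorisation of $p$, and over a singular base (for instance a lift of $\FF_p[x,y]/(xy)$ that does \emph{not} admit a Frobenius lift) it produces a genuinely $2$-periodic resolution whose cokernel is not projective. The Frobenius lift is what rigidifies the elementary divisors to lie in $\{0,1\}$ after specialisation, and I expect the one step requiring genuine care to be verifying that the map $A\to W(\ell)$ is compatible with $\sigma$, so that it transports $(\varphi,\psi)$ to the Dieudonn\'e module used in the elementary divisor computation.
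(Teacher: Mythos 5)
Your proof is correct and follows essentially the same route as the paper's: both reduce projectivity of $\Coker(\varphi)$ to exactness of the mod-$p$ complex over each residue field, pass to the perfection $k^{\per}$, and use the canonical $\sigma$-equivariant map $A\to W(k^{\per})$ coming from the $\delta$-structure $\Delta:A\to W(A)$ (with $w_n\circ\Delta=\sigma^n$) to reduce to the classical case of Dieudonn\'e modules over a perfect field. The differences are only presentational: you make the fibrewise criterion explicit via $\Tor_1$ and a rank identity and verify the perfect case by elementary divisors over $W(k^{\per})$, where the paper simply cites exactness of the reduction of a Dieudonn\'e module over a perfect field.
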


\begin{proof} 
Let $\bar M=M\otimes_AR$. 
There is an exact sequence of finite projective $R$-modules
\begin{equation}
\label{Eq:DM-proj}
\bar M^\sigma\xrightarrow{\bar \varphi}\bar M\xrightarrow {\bar\psi}
\bar M^\sigma\xrightarrow {\bar\varphi}\bar M,
\end{equation}
and we have to show that $\Image(\bar\psi)$ is a direct summand of $\bar M^\sigma$.
This holds if and only if for each maximal ideal $\Fm\subset R$ 
the base change of \eqref{Eq:DM-proj} to $k=R/\Fm$ is exact, 
or equivalently if the base change to $k^{\per}$ is exact.

Let $\Delta:A\to W(A)$ be the homomorphism with $w_n\circ\Delta=\sigma^n$, 
where $w_n$ is the $n$-the Witt polynomial; see \cite[IX, \S1.2, Prop.\ 2]{Bourbaki:Comm-Alg}. 
The composition of $\Delta$ with $W(A)\to W(R)\to W(k^{\per})$
is a homomorphism $A\to W(k^{\per})$ that commutes with $\sigma$.
Then $M\otimes_AW(k^{\per})$ is a Dieudonn\'e module whose
reduction mod $p$ is $\eqref{Eq:DM-proj}\otimes_Rk^{\per}$, 
which is therefore exact as required.
\end{proof}

We have a frame $\u A=(A,pA,R,\sigma,\sigma_1)$ with $\sigma_1(pa)=\sigma(a)$,
and $\u A$ is a torsion free PD frame as defined in \S\ref{Se:Not}.
Using Lemma~\ref{Le:DMA-proj} one verifies that there is an equivalence of categories
\begin{equation}
\label{Eq:Win-DMA}
\Win(\u A)\to\DM(A), \qquad (M,\Filone M,F,F_1)\mapsto(N,\varphi,\psi)
\end{equation}
defined by $N=\Filone M$ and $\varphi(x\otimes 1)=F(x)$ for $x\in\Filone M$;
see \cite[Lemma~2.1.15]{Cais-Lau} with $E=p$. 
Thus the functor $\Phi_{\u S}$ of \eqref{Eq:PhiS} for $\u S=\u A$ can be viewed as a 
contravariant functor
\begin{equation}
\label{Eq:BT-DM}
\Phi_{A}:\BT(\Spec R)\to\DM(A).
\end{equation}
In certain cases one can hope that $\Phi_A$ is an equivalence of categories;
see \cite{Jong:Dieud} for the case of complete regular local rings.

\begin{Remark}
The functor $\Phi_A$ always induces an equivalence between formal $p$-divisible groups
and $\varphi$-nilpotent Dieudonn\'e modules,
which correspond to $F$-nilpotent $\u A$-windows.
This follows from \cite{Zink:Windows} together with the extension of
\cite[Thm.\ 9]{Zink:Display} to general base rings in \cite{Lau:Disp, Lau:Smoothness}.
\end{Remark}


\section{Semiperfect rings}
\label{Se:Semiperfect}

Let $p$ be a prime.
Following \cite{Scholze-Weinstein}, an $\FF_p$-algebra $R$ is called semiperfect 
if the Frobenius endomorphism $\phi:R\to R$ is surjective. 
An isogeny of semiperfect rings is a surjective ring homomorphism 
whose kernel is annihilated by a power of $\phi$.
Let $R$ be semiperfect.
There is a universal homomorphism 
\[
R^\flat\to R
\]
from a perfect ring to $R$, and there is a universal $p$-adic PD extension 
\[
A_{\cris}(R)\to R.
\]
Explicitly, we have $R^\flat=\varprojlim(R,\phi)$,
and $A_{\cris}(R)$ is the $p$-adic completion of the PD envelope 
of the natural map $W(R^\flat)\to R$.
We will often write $J=\Ker(R^\flat\to R)$.
Two classes of semiperfect rings will play a special role:
complete intersections and balanced rings.

\subsection{Complete intersection semiperfect rings}

\begin{Defn}
\label{Def:ci}
A semiperfect ring $R$ is called a complete intersection if one can write
$R=R_0/J_0$ where $R_0$ is a perfect ring and where the ideal $J_0$ 
is generated by a regular sequence.
\end{Defn}

\begin{Lemma}
\label{Le:ci}
Let $R=R_0/J_0$ as in Definition~\ref{Def:ci}
where $J_0$ is generated by the regular sequence $\u u=(u_1,\ldots, u_r)$.
The natural homomorphism $R_0\to R^\flat$ maps $\u u$ to a regular
sequence that generates the kernel of $R^\flat\to R$. 
\end{Lemma}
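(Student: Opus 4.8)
The plan is to identify $R^\flat$ explicitly with the $J_0$-adic completion of $R_0$ and thereby reduce everything to a standard statement about completions along regular sequences. Write $J_0=(\u u)$ and set $\widehat{R_0}=\varprojlim_n R_0/(u_1^{p^n},\ldots,u_r^{p^n})$. Since the ideals $(u_1^{p^n},\ldots,u_r^{p^n})$ are cofinal with the $J_0$-adic filtration, $\widehat{R_0}$ is the $J_0$-adic completion of $R_0$. I would show there is a natural isomorphism $R^\flat\xrightarrow{\sim}\widehat{R_0}$ under which the image of $u_k$ in $R^\flat$ corresponds to the image of $u_k$ in $\widehat{R_0}$, and under which $R^\flat\to R$ becomes the projection $\widehat{R_0}\to R_0/J_0=R$. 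Granting this, the two assertions of the lemma become the statements that $(\u u)$ generates $\Ker(\widehat{R_0}\to R_0/J_0)$ and is a regular sequence in $\widehat{R_0}$.

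First I would construct the isomorphism. Recall $R^\flat=\varprojlim(R,\phi)$, so an element is a sequence $(x_0,x_1,\ldots)$ with $x_i\in R$ and $x_{i+1}^p=x_i$. Lifting each $x_i$ to $\tilde x_i\in R_0$, the relation $x_{i+1}^p=x_i$ means $\tilde x_{i+1}^p\equiv\tilde x_i\pmod{J_0}$, and raising to the $p^i$-th power and using $(a+b)^{p^i}=a^{p^i}+b^{p^i}$ in the $\FF_p$-algebra $R_0$ gives $\tilde x_{i+1}^{p^{i+1}}\equiv\tilde x_i^{p^i}\pmod{(u_1^{p^i},\ldots,u_r^{p^i})}$. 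Hence $(\tilde x_i^{p^i})_i$ is Cauchy and defines an element of $\widehat{R_0}$ independent of the chosen lifts, yielding a ring homomorphism $R^\flat\to\widehat{R_0}$. In the other direction, since $R_0$ is perfect its inverse Frobenius $\phi^{-i}$ induces an isomorphism $R_0/(u_1^{p^i},\ldots,u_r^{p^i})\xrightarrow{\sim}R_0/J_0=R$; sending a compatible system $(\bar a_i)$ to the sequence of these images produces an element of $R^\flat$, the compatibility $x_{i+1}^p=x_i$ being checked again by $(a+b)^{1/p^i}=a^{1/p^i}+b^{1/p^i}$. A direct computation shows the two maps are mutually inverse, and tracking $u_k$ and the zeroth coordinate through them gives the asserted correspondences.

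With the isomorphism in hand the generation statement is immediate: for the $J_0$-adic completion the kernel of $\widehat{R_0}\to R_0/J_0$ is exactly $(\u u)\widehat{R_0}$, so $\Ker(R^\flat\to R)$ is generated by the images of $u_1,\ldots,u_r$. For regularity I would argue through the associated graded ring. Because $\u u$ is a regular sequence in $R_0$ it is quasi-regular, i.e. $\gr_{J_0}(R_0)$ is the polynomial ring $(R_0/J_0)[T_1,\ldots,T_r]$ on the images of the $u_k$; passing to the $J_0$-adic completion does not change the associated graded, so $\gr_{J_0}(\widehat{R_0})$ is the same polynomial ring. Since $\widehat{R_0}$ is $J_0$-adically complete and separated with polynomial associated graded on the $u_k$, the sequence $\u u$ is regular in $\widehat{R_0}$: this is the usual leading-term argument, in which separatedness forces a nonzero element to have a nonzero initial form and the polynomiality of $\gr$ propagates regularity through the quotients $\widehat{R_0}/(u_1,\ldots,u_j)$.

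The main obstacle is this regularity step, precisely because $R_0$ is \emph{not} assumed Noetherian, so I cannot simply invoke flatness of completion or Artin--Rees. The care needed is to use the versions of these facts that hold for arbitrary commutative rings: quasi-regularity of a regular sequence, preservation of the associated graded under adic completion, and the implication from a polynomial associated graded over a separated complete ring to regularity of the generators. These are exactly the non-Noetherian-safe statements about (Koszul-)regular sequences and completions that I would cite rather than reprove, after which the lemma follows from the explicit description $R^\flat\cong\widehat{R_0}$.
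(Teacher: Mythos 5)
Your proposal is correct and follows essentially the same route as the paper: both identify $R^\flat$ with the $J_0$-adic completion of $R_0$ via the cofinality of the ideals $\phi^n(J_0)=(u_1^{p^n},\ldots,u_r^{p^n})$ with the powers of $J_0$, after which the paper simply declares the assertion clear. Your additional care with the non-Noetherian completion (generation of the kernel by $\u u$, quasi-regularity and the leading-term argument for regularity in the complete separated ring) is exactly the content being elided there, so you have just made the paper's ``then the assertion is clear'' explicit.
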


\begin{proof}
Since the ideal $J_0$ is finitely generated, 
the $J_0$-adic topology of $R_0$ coincides with the linear topology defined by the ideals
$\phi^n(J_0)$ for $n\ge 0$. 
Thus $R^\flat$ is the $J_0$-adic completion of $R_0$.
Then the assertion is clear.
\end{proof}

\begin{Remark}
\label{Rk:Acris-ci}
Lemma~\ref{Le:ci} implies that in Definition~\ref{Def:ci} one can take $R_0=R^\flat$.
It follows that for a complete intersection semiperfect ring $R$ the ring $A_{\cris}(R)$ is torsion
free; see for example \cite[Lemma~2.6.1]{Cais-Lau}.
\end{Remark}

\subsection{Balanced semiperfect rings}

\begin{Defn}
\label{Def:bal}
A semiperfect ring $R$ is called 
balanced if the ideal $\bar J=\Ker(\phi:R\to R)$ satisfies $\bar J^p=0$,
and $R$ is called iso-balanced if $R$ is isogeneous to a balanced semiperfect ring.
\end{Defn}

\begin{Lemma}
\label{Le:ker-hom-bal}
For a homomorphism of semiperfect rings $\alpha:R'\to R$ where $R$ is balanced
we have $\Ker(\alpha)^p=\phi(\Ker(\alpha))$.
\end{Lemma}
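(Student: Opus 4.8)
The plan is to prove the two inclusions $\phi(\Ker\alpha)\subseteq(\Ker\alpha)^p$ and $(\Ker\alpha)^p\subseteq\phi(\Ker\alpha)$ separately. Write $K=\Ker(\alpha)$, an ideal of $R'$. Since $R'$ is semiperfect, the Frobenius $\phi=\phi_{R'}$ is a surjective ring endomorphism, so the image $\phi(K)$ of the ideal $K$ is again an ideal of $R'$; being additive in characteristic $p$, $\phi$ identifies it as $\phi(K)=\{k^p:k\in K\}$. The inclusion $\phi(K)\subseteq K^p$ is then immediate, since $\phi(k)=k^p$ is a product of $p$ elements of $K$.

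For the reverse inclusion I would introduce the ideal $L=\phi^{-1}(K)=\{x\in R':x^p\in K\}$, which contains $K$. The key point is that the balanced hypothesis on $R$ translates into the inclusion $L^p\subseteq K$. Indeed, for $x\in L$ we have $\alpha(x)^p=\alpha(x^p)=0$, so $\alpha(x)$ lies in $\bar J=\Ker(\phi_R)$; hence $\alpha(L)\subseteq\bar J$ and therefore $\alpha(L^p)\subseteq\bar J^{\,p}=0$ by the balanced condition, giving $L^p\subseteq\Ker(\alpha)=K$.

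With this in hand the inclusion $K^p\subseteq\phi(K)$ follows by exploiting surjectivity of $\phi$ to extract $p$-th roots. A generator $k_1\cdots k_p$ of $K^p$ is rewritten as follows: using that $\phi$ is surjective, choose $z_i\in R'$ with $z_i^p=k_i$; since $k_i\in K$ we get $z_i\in L$, and then $k_1\cdots k_p=(z_1\cdots z_p)^p=\phi(z_1\cdots z_p)$ with $z_1\cdots z_p\in L^p\subseteq K$, so this generator lies in $\phi(K)$. As $\phi(K)$ is an ideal, $K^p\subseteq\phi(K)$ follows, completing the proof.

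The one genuinely load-bearing step is the middle one: recognizing that surjectivity of the Frobenius lets one replace the $k_i$ by $p$-th powers of elements of $L$, so that a product of $p$ generators of $K$ becomes a single $p$-th power of an element of $L^p$, which the balanced condition then forces into $K$. Once this is seen, the two extreme inclusions are formal. A harmless preliminary reduction, if one prefers, is to replace $R$ by the semiperfect balanced subring $\alpha(R')$ so that $\alpha$ becomes surjective and $\bar J=L/K$ literally, but this is not needed for the argument above.
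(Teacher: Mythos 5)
Your proof is correct and follows essentially the same route as the paper: the paper likewise disposes of $\phi(\Ker\alpha)\subseteq(\Ker\alpha)^p$ immediately, and for the converse takes $x_1,\dots,x_p\in\Ker(\alpha)$, chooses $y_i$ with $\phi(y_i)=x_i$, observes $\alpha(y_i)\in\bar J$ so that $\alpha(\prod y_i)\in\bar J^{\,p}=0$, and concludes $\prod x_i=\phi(\prod y_i)\in\phi(\Ker\alpha)$. Your intermediate ideal $L=\phi^{-1}(\Ker\alpha)$ with $L^p\subseteq\Ker\alpha$ is just a named repackaging of that same step.
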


\begin{proof}
Clearly $\phi(\Ker(\alpha))\subseteq\Ker(\alpha)^p$.
To prove the opposite inclusion, let $x_1,\ldots,x_p\in\Ker(\alpha)$ be given,
and choose $y_i\in R'$ with $\phi(y_i)=x_i$.
Then $\alpha(y_i)\in\bar J=\Ker(\phi:R\to R)$.
Since $R$ is balanced we have $\alpha(\prod y_i)=0$, 
thus $\prod x_i=\phi(\prod y_i)\in\phi(\Ker(\alpha))$ as required.
\end{proof}

\begin{Lemma}
\label{Le:bal-J}
A semiperfect ring $R$ is balanced iff the ideal 
$J=\Ker(R^\flat\to R)$ satisfies $J^p=\phi(J)$.
\end{Lemma}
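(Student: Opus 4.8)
The plan is to relate the balanced condition on $R$ to a condition on the ideal $J=\Ker(R^\flat\to R)$ by transporting through the canonical map $R^\flat\to R$. The key observation is that Lemma~\ref{Le:ker-hom-bal} already handles one direction almost for free: applying that lemma to the surjection $\alpha:R^\flat\to R$ when $R$ is balanced gives exactly $\Ker(\alpha)^p=\phi(\Ker(\alpha))$, that is $J^p=\phi(J)$. So the forward implication reduces to citing Lemma~\ref{Le:ker-hom-bal}, and the substance of the proof lies in the reverse direction.

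For the converse, I would assume $J^p=\phi(J)$ and try to show $\bar J^p=0$, where $\bar J=\Ker(\phi:R\to R)$. First I would identify $\bar J$ in terms of $J$: since $R^\flat$ is perfect, the Frobenius on $R^\flat$ is an automorphism, and $\bar J$ should be the image in $R=R^\flat/J$ of the preimage under $\phi_{R^\flat}$ of $J$, modulo $J$ itself. Concretely, an element of $R$ lies in $\bar J$ iff it is represented by some $y\in R^\flat$ with $\phi(y)\in J$, so $\bar J=(\phi^{-1}(J)+J)/J$ where $\phi^{-1}(J)=\{y\in R^\flat:\phi(y)\in J\}$. Then a product of $p$ elements of $\bar J$ is represented by a product $y_1\cdots y_p$ with each $\phi(y_i)\in J$, whence $\phi(y_1\cdots y_p)=\prod\phi(y_i)\in J^p=\phi(J)$. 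Since $\phi$ is injective on the perfect ring $R^\flat$, this forces $y_1\cdots y_p\in J$, so its image in $R$ vanishes, giving $\bar J^p=0$.

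The step I expect to require the most care is the bookkeeping between $J$ and $\bar J$ under $\phi$, specifically pinning down the description $\bar J=(\phi^{-1}(J)+J)/J$ and verifying that injectivity of $\phi$ on $R^\flat$ lets one cancel $\phi$ cleanly. One should check that the Frobenius $\phi$ on $R$ is genuinely induced by $\phi$ on $R^\flat$ compatibly with the surjection $R^\flat\to R$, which holds because that surjection is a ring homomorphism and Frobenius is functorial on $\FF_p$-algebras. The only real subtlety is that passing from $\prod\phi(y_i)\in\phi(J)$ to $\prod y_i\in J$ uses that $\phi$ is a bijection on $R^\flat$, so $\phi(J)=\phi$ applied to $J$ has a well-defined Frobenius-inverse; this is where the perfectness of $R^\flat$ is essential, and it mirrors exactly the mechanism in the proof of Lemma~\ref{Le:ker-hom-bal}.
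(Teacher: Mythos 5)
Your proof is correct and follows the paper's own route: the forward direction is exactly the paper's citation of Lemma~\ref{Le:ker-hom-bal} applied to $R^\flat\to R$, and your converse (identifying $\bar J=\phi^{-1}(J)/J$ and cancelling the injective Frobenius of the perfect ring $R^\flat$) is precisely the verification the paper dismisses as ``clear.''
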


\begin{proof}
If $R$ is balanced then $J^p=\phi(J)$ by Lemma~\ref{Le:ker-hom-bal}. The rest is clear.
\end{proof}

\begin{Remark}
For every semiperfect ring $R$ there is a universal homomorphism to a balanced
semiperfect ring $R\to R^{\bal}$, namely $R^{\bal}=R^\flat/J^{\bal}$ where $J^{\bal}$ 
is the ascending union of the ideals $\phi^{-n}(J)^{p^n}$ for $n\ge 0$.
The ring $R$ is iso-balanced iff $R\to R^{\bal}$ is an isogeny.
\end{Remark}

\begin{Lemma}
\label{Le:isog-ker-nil}
Let $\pi:R'\to R$ be an isogeny of iso-balanced semiperfect rings.
Then the ideal\/ $\Ker(\pi)$ is nilpotent.
\end{Lemma}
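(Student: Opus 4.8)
The plan is to upgrade the \emph{nil} condition provided by the isogeny to genuine nilpotence, using the balance hypothesis on the target to rewrite powers of the kernel ideal as Frobenius twists that can then be annihilated. Write $I=\Ker(\pi)$. Since $\pi$ is an isogeny there is an $N$ with $\phi^N(I)=0$; this only says that each element of $I$ is killed by a fixed power of $\phi$, i.e.\ that $I$ is nil, and the content of the lemma is precisely to promote this to $I^m=0$ for some $m$.

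First I would reduce to the case of a \emph{balanced} target. Since $R$ is iso-balanced, the canonical map $R\to R^{\bal}$ onto the associated balanced semiperfect ring is an isogeny (see the remark following Lemma~\ref{Le:bal-J}); let $\Fa=\Ker(R\to R^{\bal})$, so that $\phi^{k_0}(\Fa)=0$ for some $k_0$ by definition of isogeny. Consider the composite $\beta\colon R'\xrightarrow{\pi}R\to R^{\bal}$, a homomorphism of semiperfect rings with balanced target, and set $K=\Ker(\beta)=\pi^{-1}(\Fa)\supseteq I$. It then suffices to show that $K$ is nilpotent.

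Two facts about $K$ now combine. On one hand $K$ is killed by a power of $\phi$: if $x\in K$ then $\pi(x)\in\Fa$, so $\pi(\phi^{k_0}(x))=\phi^{k_0}(\pi(x))=0$, whence $\phi^{k_0}(x)\in I$ and therefore $\phi^{N+k_0}(x)=0$; thus $\phi^{N+k_0}(K)=0$. On the other hand Lemma~\ref{Le:ker-hom-bal} applied to $\beta$ yields the balance identity $K^p=\phi(K)$. I would then prove by induction that
\[
K^{p^n}=\phi^n(K)\qquad(n\ge 0).
\]
The step from $n$ to $n+1$ reads $K^{p^{n+1}}=(\phi^n(K))^p=\phi^n(K^p)=\phi^{n+1}(K)$, using the inductive hypothesis, the identity $K^p=\phi(K)$, and the elementary fact $\phi(L)^p=\phi(L^p)$ for an ideal $L$ of a semiperfect ring (applied $n$ times). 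Taking $n=N+k_0$ gives $K^{p^{N+k_0}}=\phi^{N+k_0}(K)=0$, and since $I\subseteq K$ this forces $I^{p^{N+k_0}}=0$.

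The main obstacle is exactly this passage from a nil ideal to a nilpotent one: annihilation by a power of $\phi$ is an element-wise condition, whereas nilpotence concerns arbitrary products, and nothing forces the two to coincide without additional structure. The balance identity $K^p=\phi(K)$ is the mechanism that converts products into Frobenius images, so the only real bookkeeping is to check that $\phi$ sends ideals to ideals (additivity via $a^p+b^p=(a+b)^p$, stability under multiplication via surjectivity of $\phi$ on the semiperfect ring $R'$) and that $\phi(L)^p=\phi(L^p)$; granting these, the induction is routine. I note in passing that iso-balancedness of the \emph{source} $R'$ plays no role: only semiperfectness of $R'$ together with iso-balancedness of the target $R$ enters the argument.
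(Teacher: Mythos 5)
Your proof is correct and follows essentially the same route as the paper: compose $\pi$ with the isogeny $R\to R^{\bal}$ to get a map onto a balanced ring, apply Lemma~\ref{Le:ker-hom-bal} to obtain $K^p=\phi(K)$ for the enlarged kernel $K$, iterate to get $K^{p^n}=\phi^n(K)$, and conclude since $K$ is killed by a power of $\phi$. The paper's version is just terser; your added bookkeeping (that $\phi(L)$ is an ideal and $\phi(L)^p=\phi(L^p)$, and the explicit bound $N+k_0$) is all sound, as is your closing observation that only semiperfectness of the source is used.
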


\begin{proof}
The composition $\alpha:R'\xrightarrow\pi R\to R^{\bal}$ is an isogeny since $R$ is iso-balanced.
Lemma~\ref{Le:ker-hom-bal} implies that 
$\Ker(\alpha)^{p^n}=\phi^n(\Ker(\alpha))$, which is zero for large $n$.
\end{proof}

\begin{Remark}
\label{Rk:f-semiperfect}
A semiperfect ring $R$ is called $f$-semiperfect 
(\cite[Def.\ 4.1.2]{Scholze-Weinstein})
if it is isogeneous to the quotient of a perfect ring by a finitely generated ideal.
Each $f$-semiperfect ring is iso-balanced.
\end{Remark}

\subsection{Lifts of semiperfect rings}

Let $R$ be a semiperfect ring,  and let $J=\Ker(R^\flat\to R)$.

\begin{Defn}
\label{Def:lift}
A lift of $R$
is a $p$-adically complete and torsion free ring $A$ with $A/pA=R$
which carries a ring endomorphism $\sigma:A\to A$ that induces $\phi$ on $R$.
\end{Defn}

\begin{Remark}
\label{Rk:lift}
The endomorphism $\sigma:A\to A$ is unique if it exists.
Indeed, the universal property of the ring of Witt vectors 
\cite[Chap.\ IV, Prop.\ 4.3]{Grothendieck:Montreal}
gives a unique homomorphism 
$\psi:W(R^\flat)\to A$ that induces the projection $R^\flat\to R$ modulo $p$,
and we have $\psi\circ\sigma=\sigma\circ\psi$ by the universal property.
Moreover $\psi$ is surjective, and the uniqueness of $\sigma$ follows.
This reasoning shows that lifts $A$ of $R$ correspond to closed ideals
$J'\subseteq W(R^\flat)$ such that $\sigma(J')\subseteq J'$ and $J'\cap pW(R^\flat)=pJ'$ 
and $J'/pJ'=J$.
\end{Remark}

\begin{Defn}
\label{Def:str-lift}
A lift $A$ of $R$ is called straight if $A=W(R^\flat)/J'$
such that the set of all $a\in J$ with $[a]\in J'$ generates $J$.
\end{Defn}

\begin{Lemma}
\label{Le:exist-lift}
Let $R$ be a semiperfect ring which is a complete intersection or balanced,
see Definitions \ref{Def:ci} and \ref{Def:bal}. Then a straight lift of $R$ exists.
\end{Lemma}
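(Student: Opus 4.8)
The plan is to use the dictionary of Remark~\ref{Rk:lift} and Definition~\ref{Def:str-lift}: since the surjection $\psi:W(R^\flat)\to A$ identifies every lift of the form $W(R^\flat)/J'$, constructing a \emph{straight} lift of $R$ amounts to exhibiting a closed ideal $J'\subseteq W(R^\flat)$ with $\sigma(J')\subseteq J'$, with $J'\cap pW(R^\flat)=pJ'$ and $J'/pJ'=J$, and such that the Teichm\"uller representatives $[a]$ of some generating set of $J$ lie in $J'$. I would treat the two cases by two different explicit choices of $J'$, and the balanced case is where the real work lies.

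In the complete intersection case, Lemma~\ref{Le:ci} lets me write $J=(u_1,\dots,u_r)$ with $\u u$ a regular sequence in $R^\flat$, and I would simply take
\[
J'=([u_1],\dots,[u_r])\subseteq W(R^\flat).
\]
Here $\sigma$-stability is immediate from $\sigma([u_i])=[u_i]^p$, and straightness holds because $u_1,\dots,u_r$ already lie in $\{a\in J:[a]\in J'\}$ and generate $J$. Since $W(R^\flat)$ is $p$-torsion free and $p$-adically complete and the $[u_i]$ reduce modulo $p$ to the regular sequence $\u u$, the lifting lemma for regular sequences (the same input used in Remark~\ref{Rk:Acris-ci}) shows that $W(R^\flat)/J'$ is again $p$-torsion free and $p$-adically complete; this gives that $J'$ is closed with $J'\cap pW(R^\flat)=pJ'$ and $J'/pJ'=J$. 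This case is essentially routine.

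In the balanced case I would instead take the set
\[
J'=\Big\{\,\textstyle\sum_{m\ge0}p^m[c_m]\ :\ c_m\in J\text{ for all }m\,\Big\}\subseteq W(R^\flat),
\]
using that every element of $W(R^\flat)$ has a unique Teichm\"uller expansion. Granting that $J'$ is an ideal, all remaining properties are formal consequences of this description: $J'$ is $p$-adically closed because membership is read off from finitely many coordinates; one computes $\sigma(\sum_m p^m[c_m])=\sum_m p^m[c_m^p]$ with $c_m^p\in J$, so $\sigma(J')\subseteq J'$; the image of $J'$ in $R^\flat=W(R^\flat)/p$ is $J$; and comparing $J'\cap pW(R^\flat)=\{\sum_{m\ge1}p^m[c_m]:c_m\in J\}$ with $pJ'$ gives $J'\cap pW(R^\flat)=pJ'$, whence $J'/pJ'=J$ and $W(R^\flat)/J'$ is a $p$-torsion free lift. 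Straightness is clear since $[a]\in J'$ for every $a\in J$.

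The main obstacle is therefore to prove that this $J'$ is an ideal, and this is exactly where balancedness enters. I would pass to Witt coordinates: under $(a_0,a_1,\dots)=\sum_m p^m[a_m^{1/p^m}]$ the set $J'$ becomes $\{(a_m):a_m\in\phi^m(J)\text{ for all }m\}$, and Lemma~\ref{Le:bal-J} gives $\phi^m(J)=J^{p^m}$, a chain of ideals indexed so that the $m$-th term is the $p^m$-th power of $J$. Now I would invoke the isobaric weight structure of the Witt polynomials: assigning weight $p^j$ to the $j$-th coordinate, the addition polynomial $S_m$ is isobaric of weight $p^m$, so each of its monomials, evaluated on coordinates $a_j\in J^{p^j}$, lands in $J^{p^m}$; similarly each monomial of the multiplication polynomial $M_m$ is isobaric of weight $p^m$ in the coordinates coming from the factor in $J'$, so that factor contributes $J^{p^m}$ while the other factor is absorbed into $R^\flat$. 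Both computations land the $m$-th coordinate in $J^{p^m}=\phi^m(J)$, proving that $J'$ is closed under addition and under multiplication by $W(R^\flat)$. The matching between the weight filtration by powers of $J$ and the exponents forced by balancedness is the crux; without the identity $\phi^m(J)=J^{p^m}$ the monomials would only be guaranteed to land in a strictly larger ideal and $J'$ would fail to be closed. Indeed, dropping the power constraint and allowing all $a_m\in\phi^m(J)$ recovers $\Ker(W(R^\flat)\to W(R))$, whose quotient $W(R)$ is not $p$-torsion free, which explains why balancedness is precisely the condition that rescues the construction.
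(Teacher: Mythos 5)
Your proof is correct and follows essentially the same route as the paper: the same ideal $([u_1],\dots,[u_r])$ in the complete intersection case, and in the balanced case the same ideal $J'$ (your Teichm\"uller expansion $\sum_m p^m[c_m]$ with $c_m\in J$ is exactly the paper's set of Witt vectors with $a_i\in\phi^i(J)$), verified to be an ideal via the same isobaric-weight argument for the Witt addition and multiplication polynomials combined with $\phi^m(J)=J^{p^m}$ from Lemma~\ref{Le:bal-J}. The only differences are expository: you spell out the closedness, $\sigma$-stability, and $J'\cap pW(R^\flat)=pJ'$ verifications in slightly more detail than the paper does.
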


\begin{proof}
If $R$ is a complete intersection, $J$ is generated by a regular sequence $(u_1,\ldots,u_r)$;
see Lemma~\ref{Le:ci}.
Let $J'=([u_1],\ldots,[u_r])$ in $W(R^\flat)$.
The ring $A=W(R^\flat)/J'$ is $p$-adically complete and torsion free with $A/pA=R$.
The ideal $J'$ is stable under $\sigma$ since $\sigma([u_i])=[u_i]^p$. 
Thus $A$ is a straight lift of $R$.

Assume that $R$ is balanced.
Let $J'\subseteq W(R^\flat)$ be the set of all Witt vectors 
$a=(a_0,a_1,\ldots)$ with $a_i\in\phi^i(J)$.
We claim that $J'$ is an ideal.
Indeed, the ring structure of $W(R^\flat)$ is given by 
$(x_0,x_1,\ldots)*(y_0,y_1,\ldots)=(g_0^*(x,y),g_1^*(x,y),\ldots)$ where $*$ is $+$ or $\times$,
with certain polynomials $g_n^*$.
If the variables $x_i,y_i$ have degree $p^i$, then $p_n^+$ is homogeneous of degree $p^n$,
and $p_n^\times$ is bihomogeneous of bidegree $(p^n,p^n)$. 
Since $R$ is balanced we have $\phi(J)=J^p$; see Lemma~\ref{Le:bal-J}.
It follows that $J'$ is an ideal.
We have $J'\cap pW(R^\flat)=pJ'$,
and $J'$ is the closure of the ideal generated by the elements $[a]$ for all $a\in J$.
Clearly $J'$ is stable under $\sigma$.
Thus $A=W(R^\flat)/J'$ is a straight lift of $R$.
\end{proof}

\begin{Lemma}
\label{Le:lim-A-sigma}
If $A$ is a lift of the semiperfect ring $R$, then $\sigma:A\to A$ is surjective,
and 
\[
\varprojlim(A,\sigma)=W(R^\flat).
\]
\end{Lemma}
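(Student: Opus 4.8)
The plan is to prove this first by successive approximation, since it is what makes the subsequent limit computation work. Given $a\in A$, I would inductively produce elements $b_0,b_1,\dots\in A$ such that $a-\sigma(b_0+pb_1+\cdots+p^{k}b_k)\in p^{k+1}A$: at the $k$-th stage one has a remainder $a_k\in A$, and since $\sigma$ reduces modulo $p$ to the surjective Frobenius $\phi$ of $R$, one may lift the reduction of $a_k$ to obtain $b_k\in A$ with $\sigma(b_k)\equiv a_k\pmod p$, then set $a_{k+1}=p^{-1}(a_k-\sigma(b_k))$, which is legitimate as $A$ is $p$-torsion free. Because $\sigma$ is a ring homomorphism fixing $p$, the series $b=\sum_{k\ge 0}p^kb_k$ converges $p$-adically and satisfies $\sigma(b)=a$.

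\textbf{Structure of the limit.} Write $\tilde A=\varprojlim(A,\sigma)$. As a closed subring of the $p$-adically complete ring $\prod_{n}A$, it is $p$-adically complete and separated. The crucial point is to compute $\tilde A/p\tilde A$, for which I would apply $\varprojlim$ to the short exact sequence of inverse systems
\[
0\to (A,\sigma)\xrightarrow{\;p\;}(A,\sigma)\xrightarrow{\;\red\;}(R,\phi)\to 0 .
\]
The leftmost system has surjective transition maps by the first step, hence is Mittag--Leffler and $\varprojlim^1(A,\sigma)=0$. The resulting long exact sequence then shows that multiplication by $p$ is injective on $\tilde A$, so $\tilde A$ is $p$-torsion free, and that $\tilde A/p\tilde A\cong\varprojlim(R,\phi)=R^\flat$.

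\textbf{Identification with $W(R^\flat)$.} At this stage $\tilde A$ is a $p$-adically complete, $p$-torsion free ring whose reduction modulo $p$ is the \emph{perfect} ring $R^\flat$, and such a ring is canonically the ring of Witt vectors of its residue ring, giving $\tilde A\cong W(R^\flat)$. To check that this is the expected isomorphism I would make it explicit: using the homomorphism $\psi:W(R^\flat)\to A$ of Remark~\ref{Rk:lift} (which satisfies $\psi\sigma=\sigma\psi$) together with the fact that the Frobenius $\sigma$ on $W(R^\flat)$ is bijective since $R^\flat$ is perfect, the assignment $x\mapsto(\psi(\sigma^{-n}x))_{n\ge 0}$ defines a ring homomorphism $\Theta:W(R^\flat)\to\tilde A$. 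I would then verify that $\Theta$ reduces modulo $p$ to the identity of $R^\flat$ under the identifications $W(R^\flat)/p=R^\flat$ and $\tilde A/p=\varprojlim(R,\phi)=R^\flat$; as both source and target are $p$-adically complete and $p$-torsion free, a map that is an isomorphism modulo $p$ is an isomorphism, so $\Theta$ is the desired isomorphism.

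\textbf{Main obstacle.} The substantive step is the computation of $\tilde A/p\tilde A$, which rests on the vanishing of $\varprojlim^1(A,\sigma)$ and hence on the surjectivity of $\sigma$; this is precisely why surjectivity is established before the limit is analysed. Once the residue ring is pinned down as the perfect ring $R^\flat$, matching $\tilde A$ with $W(R^\flat)$ is formal via the uniqueness of the flat $p$-adic lift of a perfect ring.
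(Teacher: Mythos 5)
Your proof is correct, and the second half (the computation of $\varprojlim(A,\sigma)$ via the exact sequence of inverse systems, torsion-freeness, and the identification of a $p$-adically complete torsion free ring with perfect residue ring as a Witt ring) is essentially the argument in the paper, which takes the limit of $0\to A\xrightarrow{p^n}A\to A/p^n\to 0$ for all $n$ at once and concludes $B/p^nB=\varprojlim(A/p^n,\sigma)$, $B/pB=R^\flat$, hence $B=W(R^\flat)$. The one place where you genuinely diverge is the surjectivity of $\sigma$: you prove it by successive approximation, using only that $\sigma$ lifts the surjective $\phi$ and that $A$ is $p$-adically complete and torsion free, whereas the paper gets it in one line from Remark~\ref{Rk:lift} --- the canonical homomorphism $\psi:W(R^\flat)\to A$ is surjective and $\sigma$-equivariant, and the Witt Frobenius on $W(R^\flat)$ is bijective because $R^\flat$ is perfect. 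Your route is more self-contained and would work for any $p$-adically complete torsion free ring with a Frobenius lift inducing a surjective Frobenius on the reduction; the paper's is shorter because the surjection from $W(R^\flat)$ has already been set up and is reused elsewhere. Two small points worth tightening: your appeal to ``closed subring of $\prod_n A$'' for $p$-adic completeness of $\tilde A$ implicitly uses that $\tilde A\cap p^n\prod_n A=p^n\tilde A$ (which follows from torsion-freeness of $A$ and the compatibility condition, or alternatively from running your Mittag--Leffler argument with $p^n$ in place of $p$); and the explicit map $\Theta$ you write down is exactly the inverse of the limit of the maps $\psi\circ\sigma^{-n}$, so it does match the paper's identification.
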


\begin{proof}
The first assertion holds because the natural $\sigma$-equivariant homomorphism
$W(R^\flat)\to A$ is surjective, and $\sigma$ is bijective on $W(R^\flat)$;
see Remark~\ref{Rk:lift}.
Let $B=\varprojlim(A,\sigma)$. Since $A$ is torsion free the same holds for $B$.
We take the limit over $\sigma$ of the exact sequence $0\to A\xrightarrow {} A\to A_n\to 0$, 
where the first map is $p^n$. 
It follows that $B/p^nB=\varprojlim(A_n,\sigma)$, which implies that $\varprojlim_n(B/p^n B)=B$,
moreover $B/pB=R^\flat$. Therefore $B=W(R^\flat)$.
\end{proof}


\section{Dieudonn\'e modules via lifts}
\label{Se:Dmod-via-lifts}

Let $R$ be a semiperfect ring and let $A$ be lift of $R$; see Definition~\ref{Def:lift}.

\subsection{Frames associated to a lift}

To the lift $A$ of $R$ we associate two frames.
First, there is the torsion free PD frame
\[
\u A=(A,pA,R,\sigma,\sigma_1)
\]
with $\sigma_1=p^{-1}\sigma$; see \S\ref{Se:DCM}.
Second, let
\[
\tFilone A=\Ker(A\to R\xrightarrow\phi R).
\]

\begin{Lemma}
\label{Le:tildeFilA}
We have $\sigma(\tFilone A)\subseteq pA$, and\/ $\tFilone A$ is a PD ideal of $A$.
\end{Lemma}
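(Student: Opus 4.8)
The plan is to treat the two assertions separately, using throughout that $\tFilone A$ is nothing but the preimage of $\bar J=\Ker(\phi\colon R\to R)$ under the projection $A\to R$. In other words $pA\subseteq\tFilone A$ and
\[
\tFilone A=\{a\in A: a^p\in pA\}=\{a\in A:\sigma(a)\in pA\},
\]
the last equality because $\sigma$ reduces to $\phi$ modulo $p$. For the first assertion this description is already enough: if $x\in\tFilone A$, then reducing $\sigma(x)$ modulo $p$ gives $\phi(\bar x)=0$, so $\sigma(x)\in pA$. This is a one-line argument.

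For the second assertion I would first reduce the existence of divided powers to a single closure property. Since $A$ is $p$-torsion free, a divided power structure on an ideal is unique and is forced to be $\gamma_n(a)=a^n/n!$, so the only issue is whether these elements lie in $A$, and in $\tFilone A$. By the standard criterion for extending divided powers over a $p$-torsion free base, the ideal $\tFilone A\supseteq pA$ carries divided powers compatible with the canonical ones on $pA$ as soon as it is stable under $a\mapsto a^p/p!$; since $(p-1)!$ is a unit in $A$, this is equivalent to the implication
\[
x\in\tFilone A\ \Longrightarrow\ y:=x^p/p\in\tFilone A,
\]
which I will call $(\star)$. Note $y\in A$ because $x^p\in pA$. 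Thus the whole of the second assertion is concentrated in $(\star)$, which unwinds to the divisibility $x^{p^2}\in p^{p+1}A$.

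The hard part is precisely $(\star)$, and it is instructive that it cannot be obtained by counting $p$-adic valuations of powers alone: from $x^p\in pA$ one only gets $x^{p^2}=(x^p)^p\in p^pA$, which falls short of $p^{p+1}A$ by exactly one factor of $p$, and passing to iterated $p$-power roots of $\bar x$ (available since $R$ is semiperfect) does not repair this gap. The point is to bring in the Frobenius lift. Applying the ring homomorphism $\sigma$ to the identity $py=x^p$ gives $p\,\sigma(y)=\sigma(x)^p$; by the first assertion $\sigma(x)\in pA$, hence $\sigma(x)^p\in p^pA$, and cancelling one factor of $p$ (legitimate as $A$ is torsion free) yields $\sigma(y)\in p^{p-1}A\subseteq pA$ because $p\ge 2$. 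Therefore $y\in\tFilone A$, which is exactly $(\star)$. I expect this use of $\sigma$ — rather than any valuation estimate — to be the crux, and I note that it works uniformly in $p$, including $p=2$.

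Granting $(\star)$, the divided powers can be produced by iteration: setting $y_0=x$ and $y_{k+1}=y_k^p/p$, one gets $y_k=x^{p^k}/p^{e_k}\in\tFilone A$ with $e_k=v_p((p^k)!)=1+p+\cdots+p^{k-1}$, and expanding a general exponent $n$ in base $p$ then gives $x^n/n!\in A$ for every $n$; a short digit-sum computation shows in addition that $\gamma_n(x)\in\tFilone A$ for $n\ge1$. Hence $\tFilone A$ is a PD ideal whose divided powers extend those on $pA$, as required. Alternatively, once $(\star)$ is established one simply invokes the cited criterion directly.
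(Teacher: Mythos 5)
Your proposal is correct and follows essentially the same route as the paper: both characterize $\tFilone A$ as $\{a\in A:\sigma(a)\in pA\}=\{a\in A:a^p\in pA\}$, deduce the first assertion immediately, and prove the PD property by applying $\sigma$ to $a^p/p$ and writing $\sigma(a^p/p)=p^{p-1}(\sigma(a)/p)^p\in pA$. The extra material in your write-up (the reduction to stability under $x\mapsto x^p/p$ via torsion-freeness, and the iteration producing all $\gamma_n$) is just a more explicit account of the standard criterion the paper invokes implicitly.
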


\begin{proof} 
Since $\sigma$ is a lift of $\phi$, for $a\in A$ we have 
$a\in\tFilone A$ iff $\sigma(a)\in pA$ iff $a^p\in pA$. 
For $a\in\tFilone A$ let $b=a^p/p\in A$. 
We have to show that $b\in\tFilone A$, or equivalently that $\sigma(b)\in pA$.
But $\sigma(b)=\sigma(a)^p/p=p^{p-1}(\sigma(a)/p)^p$.
\end{proof}

Since $R$ is semiperfect, $\sigma$ induces an isomorphism 
$A/\tFilone A\xrightarrow\sim R$. By Lemma~\ref{Le:tildeFilA}
we can define a torsion free PD frame
\[
\u A/\phi=(A,\tFilone A,R,\sigma,\sigma_1)
\]
with $\sigma_1=p^{-1}\sigma$. 
The endomorphism $\sigma$ of $A$ is a frame endomorphism $\sigma:\u A\to\u A$
over $\phi:R\to R$, which factors into frame homomorphisms
\begin{equation}
\label{Eq:iota-pi}
\u A\xrightarrow\iota\u A/\phi\xrightarrow\pi\u A
\end{equation}
where $\iota$ is given by the identity on $A$ and by $\phi$ on $R$, 
while $\pi$ is given by $\sigma$ on $A$ and by the identity on $R$.

\begin{Lemma}
\label{Le:sigma-crystalline}
The frame homomorphism $\pi:\u A/\phi\to\u A$ is crystalline, 
i.e.\ it induces an equivalence of the window categories.
\end{Lemma}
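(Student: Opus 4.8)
The plan is to deduce the statement formally from the factorization $\sigma=\pi\circ\iota$ of \eqref{Eq:iota-pi} together with the basic fact that the Frobenius endomorphism of a frame acts as the identity on windows. First I would record the companion factorization in the opposite order. Since $\pi$ is given by $\sigma$ on $A$ and the identity on $R$, while $\iota$ is the identity on $A$ and $\phi$ on $R$, the composite $\iota\circ\pi:\u A/\phi\to\u A/\phi$ is given by $\sigma$ on $A$ and by $\phi$ on $R$. Thus $\iota\circ\pi$ is the Frobenius endomorphism of the frame $\u A/\phi$, exactly as $\pi\circ\iota=\sigma$ is the Frobenius endomorphism of $\u A$. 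That $\iota\circ\pi$ is indeed a frame endomorphism over $\phi$ is checked as for $\sigma:\u A\to\u A$, using $\sigma(\tFilone A)\subseteq pA\subseteq\tFilone A$ from Lemma~\ref{Le:tildeFilA} and $\sigma_1=p^{-1}\sigma$ on both frames.

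Next I would invoke the fact from the theory of frames \cite{Lau:Frames} that for any frame $\u S$ the Frobenius endomorphism induces a functor on $\Win(\u S)$ that is canonically isomorphic to the identity: for a window $(M,\Filone M,F,F_1)$ the Frobenius of the window supplies a natural isomorphism from its $\sigma$-pullback back to itself. In particular the Frobenius endomorphism of any frame is crystalline. Applying this to $\u A$ and to $\u A/\phi$, and using that window base change is covariant and compatible with composition of frame homomorphisms, I obtain natural isomorphisms $\pi^*\circ\iota^*=(\pi\circ\iota)^*=\sigma^*_{\u A}\cong\id$ on $\Win(\u A)$ and $\iota^*\circ\pi^*=(\iota\circ\pi)^*=\sigma^*_{\u A/\phi}\cong\id$ on $\Win(\u A/\phi)$, where $\pi^*:\Win(\u A/\phi)\to\Win(\u A)$ and $\iota^*:\Win(\u A)\to\Win(\u A/\phi)$.

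From these two isomorphisms the conclusion is purely formal: the functor $\pi^*$ admits $\iota^*$ as both a left and a right quasi-inverse, hence $\pi^*$ is an equivalence of categories, which is precisely the assertion that $\pi$ is crystalline. (As a by-product the same argument shows that $\iota$ is crystalline.)

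I expect the only real content to be the recalled fact that the Frobenius of a frame acts as the identity on windows; everything else is bookkeeping with the factorization \eqref{Eq:iota-pi} and an elementary two-sided quasi-inverse argument, so no computation with the underlying modules is required. The one point to handle with care is the covariance of base change and its compatibility with composition, which is what guarantees that the two Frobenius base-change functors factor through $\pi^*$ and $\iota^*$ in the two orders; once this is in place the equivalence follows immediately.
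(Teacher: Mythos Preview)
Your argument rests on the claim that for any frame $\u S$ the Frobenius endomorphism induces a functor on $\Win(\u S)$ naturally isomorphic to the identity. This is false. Under the equivalence $\Win(\u A)\cong\DM(A)$ of \eqref{Eq:Win-DMA}, the base change $\sigma^*$ sends a Dieudonn\'e module $(N,\varphi,\psi)$ to $(N^\sigma,\varphi^\sigma,\psi^\sigma)$. The map $\varphi:N^\sigma\to N$ is indeed a natural morphism of Dieudonn\'e modules from $\sigma^*\u N$ to $\u N$, but it is only an \emph{isogeny}: together with $\psi$ in the other direction it satisfies $\varphi\psi=p=\psi\varphi$, and $\varphi$ is an isomorphism only in the \'etale case. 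Translated via $\Phi_A$ to $p$-divisible groups, your claim would say that the relative Frobenius $G\to\phi^*G$ is a natural isomorphism, which it is not. A further symptom: your argument would equally show that $\iota$ is crystalline, hence that $\phi^*$ is an equivalence on $\BT(\Spec R)$; but this fails for any non-perfect semiperfect $R$, and indeed the left square of \eqref{Eq:deform} in the proof of Proposition~\ref{Pr:Cartesian-sigma} is only \emph{cartesian}, not an equivalence in each row.

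The paper's proof proceeds differently and does not use the factorisation as a two-sided inverse. One observes that the kernel $I=\Ker(\sigma:A\to A)=\Ker(\pi)$ satisfies $\sigma(I)=0$ (since $I=J'/\sigma(J')$ when $A=W(R^\flat)/J'$), and because $A$ is torsion free and $\sigma=p\sigma_1$ this forces $\sigma_1|_I=0$. The deformation lemma \cite[Thm.~3.2]{Lau:Frames} then gives that $\pi$ is crystalline.
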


\begin{proof}
Let $I$ be the kernel of the surjective homomorphism $\sigma:A\to A$.
If we write $A=W(R^\flat)/J'$ (see Remark~\ref{Rk:lift}) then $I=J'/\sigma(J')$.
Thus $\sigma=p\sigma_1$ is zero on $I$.
Since $A$ is torsion free it follows that $\sigma_1:I\to I$ is zero,
and the lemma follows from the general deformation lemma \cite[Th.\ 3.2]{Lau:Frames}.
\end{proof}

\begin{Remark}
\label{Rk:PD-Ker-phi}
The divided powers on $\tFilone A$, which exist by Lemma~\ref{Le:tildeFilA},
induce divided powers on the ideal $(\tFilone A)/pA=\Ker(\phi:R\to R)$ of $R$. 
Thus the given lift $A$ of $R$ determines divided powers on $\Ker(\phi)$.
\end{Remark}

\begin{Lemma}
\label{Le:PD-nil}
If $A$ is a straight lift of $R$ in the sense of Definition~\ref{Def:str-lift},
then the associated divided powers on $\Ker(\phi)$ are pointwise nilpotent.
\end{Lemma}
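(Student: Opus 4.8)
The plan is to prove the slightly stronger, degree-increasing statement that $\gamma_p$ moves each power of $\bar J$ into a much higher power, and then combine this with the nilpotence of $\bar J$. Write $J=\Ker(R^\flat\to R)$ as usual. Since $R^\flat$ is perfect and $\phi$ is invertible on it, the ideal $\bar J=\Ker(\phi)$ is the image in $R$ of $\phi^{-1}(J)$, so that the power $\bar J^{\,d}$ is the image of $\phi^{-1}(J)^d$. By Lemma~\ref{Le:tildeFilA} the ideal $\tFilone A$ carries divided powers, and since $A$ is torsion free these are the usual ones $x^{[n]}=x^n/n!$; by Remark~\ref{Rk:PD-Ker-phi} they induce the divided powers on $\bar J=\tFilone A/pA$. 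As $(p-1)!$ is a unit, $\gamma_p(\bar x)$ equals $\overline{x^p/p}$ up to a unit for any lift $x\in\tFilone A$, and the iterate $\gamma_p^{\circ n}$ agrees with $\gamma_{p^n}$ up to a $p$-adic unit; hence pointwise nilpotence is equivalent to $\gamma_p^{\circ n}(\bar x)=0$ for $n\gg0$.

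The key step is the estimate
\[
\gamma_p(\bar J^{\,d})\subseteq \bar J^{\,dp}\qquad(d\ge 1),
\]
valid for any straight lift. To prove it I take $\bar x\in\bar J^{\,d}$, lift it to $y\in\phi^{-1}(J)^d$, and use $x=[y]\bmod J'$ as a lift to $A$. Then $y^p=\phi(y)\in J^d$, and since $A$ is straight (Definition~\ref{Def:str-lift}) the ideal $J$ is generated by elements $a$ with $[a]\in J'$; writing $y^p=\sum_\beta w_\beta$ with each $w_\beta\in J^d$ a coefficient times a $d$-fold product of such generators, we get $[w_\beta]\in (J')^d\subseteq J'$. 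In the Witt vector expansion $[\sum_\beta w_\beta]=\sum_\beta[w_\beta]+\sum_{n\ge1}V^n([P_n(\underline{w})])$ the correction $P_1$ is the homogeneous degree-$p$ part and contains no pure power $w_\beta^p$. Using $V^n=p^n\sigma^{-n}$ on $W(R^\flat)$ and the vanishing $p^{n-1}=0$ in $R^\flat$ for $n\ge2$, applying $w_0$ gives $x^p=p\bar\rho$ in $A$ with $\rho_0=P_1(\underline{w})^{1/p}$; in particular $x\in\tFilone A$. Each monomial of $P_1$ is a product of at least two of the $w_\beta$ of total degree $p$, hence lies in $J^{dp}$, so $\rho_0\in\phi^{-1}(J)^{dp}$ and $\gamma_p(\bar x)=\overline{x^p/p}\cdot(\text{unit})$ lies in $\bar J^{\,dp}$, as claimed.

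Granting the estimate, iteration from $d=1$ yields $\gamma_p^{\circ n}(\bar x)\in\bar J^{\,p^n}$ for every $\bar x\in\bar J$, so it remains to observe that $\bar J$ is a nilpotent ideal in the two situations in which straight lifts are available by Lemma~\ref{Le:exist-lift}. If $R$ is balanced this is immediate, since $\bar J^p=0$ by definition (Definition~\ref{Def:bal}). If $R$ is a complete intersection then, by Lemma~\ref{Le:ci}, $J=(u_1,\dots,u_r)$ and $\phi^{-1}(J)=(u_1^{1/p},\dots,u_r^{1/p})$, so $\bar J$ is generated by the classes of the $u_i^{1/p}$, each of which has vanishing $p$-th power; thus $\bar J^{\,r(p-1)+1}=0$. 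In either case $\bar J^{\,p^n}=0$ for $n\gg0$, whence $\gamma_p^{\circ n}(\bar x)=0$ and the divided powers are pointwise nilpotent.

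I expect the Witt vector bookkeeping in the second paragraph to be the main obstacle: one must verify that the degree-$p$ correction $P_1$ involves only mixed monomials, so that the generators' Teichmüller lifts (which lie in $J'$) absorb the pure part, and that after dividing by $p$ only the first Witt component survives modulo $p$. By contrast, the identification of the induced divided powers and the passage from $\gamma_p^{\circ n}$ to $\gamma_{p^n}$ are routine.
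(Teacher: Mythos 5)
Your argument takes a genuinely different route from the paper's, and it has a gap: the lemma is stated for an \emph{arbitrary} straight lift of an \emph{arbitrary} semiperfect ring $R$, but your final step requires the ideal $\bar J=\Ker(\phi)$ to be nilpotent, which you establish only when $R$ is balanced or a complete intersection. You justify this by saying these are ``the two situations in which straight lifts are available by Lemma~\ref{Le:exist-lift}'', but that lemma is an existence statement, not a characterisation; nothing in Definition~\ref{Def:str-lift} forces $\bar J$ to be nilpotent. For instance, with $R^\flat=\FF_p[x_1^{1/p^\infty},x_2^{1/p^\infty},\dots]$ and $J=(x_1,x_2,\dots)$ the ideal $\bar J$ is generated by the classes of the $x_i^{1/p}$ and is nil but not nilpotent (a product $x_1^{1/p}\cdots x_N^{1/p}$ of distinct generators never lies in $J$), while the quotient of $W(R^\flat)$ by the closure of the ideal generated by the $[x_i]$ is, plausibly, a straight lift. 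So as written your proof establishes the lemma only under a hypothesis that is not in its statement (though it does cover every case in which the paper actually applies it).

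The gap is avoidable, and the paper's proof shows how. Pointwise nilpotence is a condition on individual elements, and the set of $x\in\bar J$ with $\gamma_{p^n}(x)=0$ for $n\gg 0$ is a sub-PD-ideal (use $\gamma_k(x+y)=\sum_{i+j=k}\gamma_i(x)\gamma_j(y)$ together with the base-$p$ factorisation of $\gamma_m(x)$ into the $\gamma_{p^j}(x)$ up to units), so it suffices to check generators of $\bar J$; no nilpotence of $\bar J$ is needed. On the generators $b_i=\overline{\phi^{-1}(a_i)}$ one takes the Teichm\"uller lift $c_i=[\phi^{-1}(a_i)]+J'\in\tFilone A$ and computes $c_i^p=[a_i]+J'=0$ by straightness, whence $c_i^{[p]}=0$ because $A$ is torsion free, and $b_i^{[p]}=0$. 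This single identity $[y]^p=[y^p]$ replaces your entire estimate $\gamma_p(\bar J^{\,d})\subseteq\bar J^{\,dp}$, whose Witt-vector bookkeeping (the mixed-monomial correction $P_1$ in $[\sum w_\beta]$ versus $\sum[w_\beta]$, and the higher Verschiebung terms) is exactly the part you flag as delicate. To repair your write-up, either add the sub-PD-ideal reduction and apply your computation only to the generators, or restrict the statement to the balanced and complete-intersection cases.
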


\begin{proof}
Let $J=\Ker(R^\flat\to R)$ and $A=W(R^\flat)/J'$.
Since $A$ is straight, there are generators $a_i$ of $J$ with $[a_i]\in J'$. 
The elements $b_i=\phi^{-1}(a_i)+J$ of $R$  generate the ideal $\Ker(\phi)$.
We claim that $b_i^{[p]}=0$, which proves the lemma.
The element $c_i=[\phi^{-1}(a_i)]+J'$ of $A$ is an inverse image of $b_i$.
We have $c_i^p=[a_i]+J'=0$ in $A$, thus $c_i^{[p]}=0$ in $A$, and thus $b_i^{[p]}=0$ in $R$.
\end{proof}

\subsection{Evaluation of crystals}
\label{Se:ev-crys}

We consider the functor
\begin{equation}
\label{Eq:PhiA-semiperf}
\Phi_A:\BT(\Spec R)\to\Win(\u A)
\end{equation}
given by \eqref{Eq:PhiS} for $\u S=\u A$.
Here $\Win(\u A)$ is equivalent to the category $\DM(A)$ of Dieudonn\'e modules
over $A$ by \eqref{Eq:Win-DMA}.

\begin{Prop}
\label{Pr:Cartesian-sigma}
If the divided powers on $\Ker(\phi)$ 
given by Remark~\ref{Rk:PD-Ker-phi}
are pointwise nilpotent,
then the commutative diagram of categories
\begin{equation}
\label{Eq:cart-sigma}
\xymatrix@M+0.2em{
\BT(\Spec R)
\ar[r]^{\phi^*}
\ar[d]_{\Phi_A} &
\BT(\Spec R) 
\ar[d]^{\Phi_A} \\
\Win(\u A)
\ar[r]^{\sigma^*} &
\Win(\u A)
}
\end{equation}
is cartesian.
\end{Prop}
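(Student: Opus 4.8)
The plan is to identify the square with the Grothendieck--Messing deformation equivalence for the PD thickening $A/pA\to A/\tFilone A$, which under the identifications of \S\ref{Se:ev-crys} is the Frobenius $\phi\colon R\to R$ carrying the divided powers on $\Ker(\phi)$ furnished by Remark~\ref{Rk:PD-Ker-phi}. The hypothesis that these divided powers are pointwise nilpotent is exactly what makes this deformation theory applicable. As a preliminary I would record the commutativity together with its canonical isomorphism: by functoriality of the evaluation functor $\Phi_{\u S}$ of \eqref{Eq:PhiS} under frame homomorphisms, the two factors of $\sigma$ in \eqref{Eq:iota-pi} yield natural isomorphisms $\pi^*\Phi_{\u A/\phi}\cong\Phi_A$ (because $\pi$ is the identity on $R$) and $\iota^*\Phi_A\cong\Phi_{\u A/\phi}\circ\phi^*$ (because $\iota$ is $\phi$ on $R$). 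Composing these and using $\sigma^*=\pi^*\iota^*$ gives the required isomorphism $\sigma^*\Phi_A\cong\Phi_A\circ\phi^*$.

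Next I would reduce to a square with cleaner bookkeeping. The two relations above exhibit the square of the Proposition as the horizontal composite
\[
\begin{CD}
\BT(\Spec R) @>{\phi^*}>> \BT(\Spec R) @= \BT(\Spec R)\\
@V{\Phi_A}VV @VV{\Phi_{\u A/\phi}}V @VV{\Phi_A}V\\
\Win(\u A) @>{\iota^*}>> \Win(\u A/\phi) @>{\pi^*}>> \Win(\u A).
\end{CD}
\]
The right-hand square is cartesian because $\pi^*$ is an equivalence by Lemma~\ref{Le:sigma-crystalline}. By the pasting law for cartesian squares it therefore suffices to prove that the left-hand square is cartesian.

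Finally I would prove this by Grothendieck--Messing theory. Since $\iota$ is the identity on $A$, the functor $\iota^*$ leaves the underlying module unchanged and only enlarges the filtration from $\Filone M$ to $\Filone M+\tFilone A\cdot M$. Thus an object of the fibre product $\BT(\Spec R)\times_{\Win(\u A/\phi)}\Win(\u A)$ amounts to a $p$-divisible group $G'$ over $R$, a window $(M,\Filone M,F,F_1)$ over $\u A$, and an identification of $M$ with $\DD(G')_A$ under which $\Filone M+\tFilone A\cdot M$ is the Hodge filtration of $G'$; the remaining datum is precisely the refinement $pM\subseteq\Filone M\subseteq\Filone M+\tFilone A\cdot M$, that is, a lift of the Hodge filtration of $G'$ along the PD thickening $A/pA\to A/\tFilone A$. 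By the crystalline deformation theory of Grothendieck and Messing \cite{Mazur-Messing,BBM}, applicable because the divided powers on $\Ker(\phi)$ are pointwise nilpotent, such filtration lifts are equivalent to deformations $G$ of $G'$, i.e.\ to $p$-divisible groups over $R$ with $\phi^*G\cong G'$; and for such $G$ one has $\DD(G)_A\cong M$ with Hodge filtration $\Filone M$, so that $\Phi_A(G)\cong(M,\Filone M,F,F_1)$. The same deformation equivalence matches morphisms on both sides, giving full faithfulness.

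I expect the main obstacle to be this last step: one must check carefully that the refinement carried by an $\u A$-window corresponds, under the identification of $M$ with $\DD(G')_A$, exactly to a Grothendieck--Messing lift of the Hodge filtration; that the divided powers of Remark~\ref{Rk:PD-Ker-phi} are of the type for which Messing's equivalence holds, this being where pointwise nilpotence enters; and that the crystal of the deformation $G$ is canonically $\DD(G')$, so that no twisting of $F$ or $F_1$ is introduced and the reconstructed window is literally the given one.
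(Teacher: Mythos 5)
Your proposal is correct and follows essentially the same route as the paper: commutativity via functoriality of $\Phi_{\u S}$ along $\sigma=\pi\circ\iota$, reduction to the left-hand square using that $\pi^*$ is an equivalence (Lemma~\ref{Le:sigma-crystalline}), and then matching lifts of the Hodge filtration under $\iota^*$ (which is \cite[Lemma~4.2]{Lau:Frames}, the point you rightly flag as needing care) with deformations along $\phi$ via Grothendieck--Messing. The paper's proof is exactly this argument.
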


\begin{proof}
The diagram \eqref{Eq:cart-sigma} commutes by the functoriality of $\Phi_A$ 
with respect to the frame endomorphism $\sigma:\u A\to\u A$.
The factorisation \eqref{Eq:iota-pi} of $\sigma$ induces the following 
extension of \eqref{Eq:cart-sigma}.
\begin{equation}
\label{Eq:deform}
\xymatrix@M+0.2em{
\BT(\Spec R) \ar[r]^{\phi^*} \ar[d]_{\Phi_A} &
\BT(\Spec R) \ar[r]^{\id} \ar[d]_{\Phi_{\u A/\phi}} &
\BT(\Spec R)  \ar[d]^{\Phi_A} \\
\Win(\u A) \ar[r]^{\iota^*} &
\Win(\u A/\phi) \ar[r]^-{\pi^*}_-{\sim} &
\Win(\u A) 
}
\end{equation}
Here $\pi^*$ is an equivalence by Lemma~\ref{Le:sigma-crystalline}.
Thus \eqref{Eq:cart-sigma} is equivalent to the left hand square of \eqref{Eq:deform}.
For a $p$-divisible group $G$ over $R$ let $\u M=\Phi_{\u A/\phi}(G)$ in $\Win(\u A/\phi)$.
Then $M\otimes_AR$ is the value of $\DD(G)$ at the PD extension $\phi:R\to R$.
Thus lifts of the Hodge filtration of $G$ under $\phi$ correspond to lifts of the
Hodge filtration of $\u M$ under $\iota^*$. 
The latter correspond to lifts of $\u M$ under $\iota^*$ by \cite[Lemma~4.2]{Lau:Frames},
and the former correspond to lifts of $G$ under $\phi$ by the Grothendieck-Messing
Theorem \cite{Messing:Crystals} 
since the divided powers on $\Ker(\phi)$ are pointwise nilpotent.
\end{proof}

\begin{Cor}
\label{Co:Rflat-R-cart}
If the divided powers on $\Ker(\phi)$ given by Remark~\ref{Rk:PD-Ker-phi}
are pointwise nilpotent, then the commutative diagram of categories
\[
\xymatrix@M+0.2em{
\BT(\Spec R^\flat)
\ar[r]
\ar[d]_{\Phi_{W(R^\flat)}} &
\BT(\Spec R) 
\ar[d]^{\Phi_A} \\
\Win(\u W(R^\flat))
\ar[r]&
\Win(\u A)
}
\]
is cartesian.
\qed
\end{Cor}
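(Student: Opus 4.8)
The plan is to realize the square of the corollary as the inverse limit of the cartesian squares furnished by Proposition~\ref{Pr:Cartesian-sigma}. First I would iterate that proposition. Since $\Phi_A$ intertwines $\phi^*$ with $\sigma^*$ (functoriality with respect to $\sigma:\u A\to\u A$), pasting $n$ copies of the square of \eqref{Eq:cart-sigma} horizontally shows that for every $n\ge 1$ the square with top arrow $(\phi^*)^n$ and bottom arrow $(\sigma^*)^n$ is again cartesian. These squares form a tower indexed by $n$: the right-hand column is kept fixed at the $0$-th terms $\BT(\Spec R)$ and $\Win(\u A)$, while the left-hand column runs through the towers $(\BT(\Spec R),\phi^*)$ and $(\Win(\u A),\sigma^*)$.

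Next I would identify the two left-hand limits. Because $R^\flat=\varprojlim(R,\phi)$, the projections $R^\flat\to R$ induce a comparison functor $\BT(\Spec R^\flat)\to\varprojlim(\BT(\Spec R),\phi^*)$; and because $W(R^\flat)=\varprojlim(A,\sigma)$ by Lemma~\ref{Le:lim-A-sigma}, the projections $W(R^\flat)\to A$ are frame homomorphisms $\u W(R^\flat)\to\u A$ (compatible via $\sigma$) inducing $\Win(\u W(R^\flat))\to\varprojlim(\Win(\u A),\sigma^*)$. The key step is that both functors are equivalences, i.e.\ that $\BT$ and $\Win$ commute with these presentations as inverse limits along Frobenius. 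For windows this reduces, via the description of $\u A$- and $\u W(R^\flat)$-windows by finite projective modules with extra structure, to the claim that finite projective modules commute with $W(R^\flat)=\varprojlim(A,\sigma)$; reducing modulo $p$ one lands on the same underlying statement for $R^\flat=\varprojlim(R,\phi)$, where the transition map $\phi$ is surjective (so the module-theoretic transition maps are surjective as well). For $p$-divisible groups one passes to finite locally free group schemes and is reduced to the analogous statement for finite locally free modules over $R^\flat$. One must also check that $\Phi_{W(R^\flat)}$ is carried to the limit of the $\Phi_A$ under these identifications, which is the compatibility of the filtered crystalline Dieudonn\'e functor with base change along the projections, applied termwise.

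Granting these identifications, the square of the corollary is exactly
\[
\xymatrix@M+0.2em{
\varprojlim(\BT(\Spec R),\phi^*) \ar[r] \ar[d] & \BT(\Spec R) \ar[d]^{\Phi_A} \\
\varprojlim(\Win(\u A),\sigma^*) \ar[r] & \Win(\u A)
}
\]
with horizontal arrows the projections to the $0$-th term, and cartesianness follows formally from that of each finite stage: given a compatible system $(w_n)$ in $\varprojlim(\Win(\u A),\sigma^*)$, an object $G_0\in\BT(\Spec R)$ and an isomorphism $\Phi_A(G_0)\cong w_0$, the cartesianness of the $n$-th square produces a unique $G_n$ with $\Phi_A(G_n)\cong w_n$ and $(\phi^*)^nG_n\cong G_0$ (using the canonical $(\sigma^*)^nw_n\cong w_0$), and the $G_n$ assemble into an object of the top-left limit, giving a quasi-inverse to the comparison functor. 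The main obstacle is the second paragraph, namely establishing that $\BT$ (and, in parallel, $\Win$) commutes with the inverse limit $R^\flat=\varprojlim(R,\phi)$; the iteration and the limit-of-cartesian-squares bookkeeping are routine once these two descriptions are in hand.
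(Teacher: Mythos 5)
Your proposal follows essentially the same route as the paper: the paper's proof is precisely to pass to the limit of the cartesian squares of Proposition~\ref{Pr:Cartesian-sigma}, identify $\varprojlim(\BT(\Spec R),\phi^*)$ with $\BT(\Spec R^\flat)$ and $\varprojlim(\Win(\u A),\sigma^*)$ with $\Win(\u W(R^\flat))$ via Lemma~\ref{Le:lim-A-sigma}, and conclude. The only difference is that where you sketch the two limit identifications by hand, the paper simply cites known results (Messing, Ch.~II, Lemma~4.16 / de Jong for the $\BT$ side, and \cite[Lemma~2.12]{Lau:Frames} for the window side), so your sketches would need the observation that the relevant kernels, e.g.\ $\Ker(\phi)$, consist of nilpotents so that Nakayama applies.
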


\begin{proof}
Proposition~\ref{Pr:Cartesian-sigma} gives a cartesian diagram
\[
\xymatrix@M+0.2em{
\varprojlim(\BT(\Spec R),\phi^*) 
\ar[r]
\ar[d]_{\varprojlim\Phi_A} &
\BT(\Spec R) 
\ar[d]^{\Phi_A} \\
\varprojlim(\Win(\u A),\sigma^*)
\ar[r]&
\Win(\u A)
}
\]
The upper limit category is equivalent to $\BT(\Spec R^\flat)$ by the obvious
analogue of \cite[Ch.~II, Lemma~4.16]{Messing:Crystals}; 
see also \cite[Lemma~2.4.4]{Jong:Crystalline}.
Since we have $\varprojlim(\u A,\sigma)=\u W(R^\flat)$ by Lemma~\ref{Le:lim-A-sigma}, 
the lower limit category is equivalent to $\Win(\u W(R^\flat))$ by \cite[Lemma~2.12]{Lau:Frames}.
\end{proof}

\begin{Thm}
\label{Th:PhiA-equiv}
Let $R$ be a semiperfect ring with a lift $A$ such that the associated
divided powers on $\Ker(\phi)$ given by Remark~\ref{Rk:PD-Ker-phi} are pointwise nilpotent.
Then the functor $\Phi_A$ is an equivalence.
\end{Thm}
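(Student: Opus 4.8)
The plan is to reduce the statement to the perfect ring $R^\flat$, where it is a theorem of Gabber, and to transport it down the Frobenius tower $R\xleftarrow{\phi}R\xleftarrow{\phi}\cdots$ by means of the specialisation squares already at hand. The hypothesis that the divided powers on $\Ker(\phi)$ are pointwise nilpotent is precisely what makes Proposition~\ref{Pr:Cartesian-sigma} applicable, and with it Corollary~\ref{Co:Rflat-R-cart}: the square relating $\Phi_{W(R^\flat)}$ and $\Phi_A$ is $2$-cartesian. Since $R^\flat$ is perfect, its unique lift is $W(R^\flat)$ and $\Phi_{W(R^\flat)}$ is an equivalence by a theorem of Gabber, so the left vertical arrow of Corollary~\ref{Co:Rflat-R-cart} is an equivalence and the task is to deduce the same for the right vertical arrow $\Phi_A$. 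I would stress at the outset that this deduction is \emph{not} formal: in an arbitrary $2$-cartesian square an equivalence on one vertical side does not force one on the other, so one must exploit the specific endofunctor (Frobenius) structure of \eqref{Eq:cart-sigma} together with Grothendieck--Messing deformation theory, rather than chase diagrams.

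For full faithfulness I would argue as follows. Faithfulness of $\Phi_A$ is the standard faithfulness of the crystalline Dieudonn\'e functor and needs no hypothesis. Corollary~\ref{Co:Rflat-R-cart} combined with Gabber already gives full faithfulness on the subcategory of $p$-divisible groups induced from $R^\flat$: reading that square on morphism sets identifies $\Phi_A$ on such $\Hom$-groups with the base-change map from $\u W(R^\flat)$ to $\u A$, which Gabber makes a bijection onto its image. For general $G,G'$ I would instead reduce fullness to essential surjectivity by a graph argument: a window morphism $\Phi_A G'\to\Phi_A G$ determines a sub-window of $\Phi_A(G\times G')=\Phi_A G\oplus\Phi_A G'$, and realising this sub-window by a $p$-divisible subgroup of $G\times G'$ produces the required morphism of $p$-divisible groups. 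Thus the whole theorem rests on essential surjectivity.

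The hard part will be essential surjectivity, and this is where the genuine work lies. Iterating Proposition~\ref{Pr:Cartesian-sigma} on objects shows that a window $N$ over $\u A$ lies in the essential image of $\Phi_A$ if and only if $\sigma^*N$ does, while the commutativity of Corollary~\ref{Co:Rflat-R-cart} with Gabber shows that every window induced from $\u W(R^\flat)$ is realised; the difficulty is to bridge these two facts, since a single window over $\u A$ need not descend to $W(R^\flat)$. Here I would argue by crystalline deformation, mirroring the proof of Proposition~\ref{Pr:Cartesian-sigma}: writing $\sigma=\pi\circ\iota$ as in \eqref{Eq:iota-pi} with $\pi$ crystalline (Lemma~\ref{Le:sigma-crystalline}), one step of the tower is, on the window side, a lift of the Hodge filtration along $\iota$, and on the group side a lift across the PD thickening $\phi\colon R\to R$ whose kernel carries pointwise nilpotent divided powers, and Grothendieck--Messing identifies the two lifting problems. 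Starting from the $p$-divisible group over $R^\flat$ that Gabber attaches to $N$ (together with its Frobenius descents) and propagating it down the tower through these matched deformation steps, one produces a group over $R$ realising $N$. The crux, and the step I expect to be most delicate, is to ensure that this tower of compatible deformations assembles into an honest object over $R$ rather than merely over the perfection, using $\varprojlim(\u A,\sigma)=\u W(R^\flat)$ (Lemma~\ref{Le:lim-A-sigma}); this is exactly the point where the pointwise nilpotence hypothesis, via Grothendieck--Messing, is indispensable.
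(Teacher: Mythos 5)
Your reduction to $R^\flat$ via Corollary~\ref{Co:Rflat-R-cart} and Gabber's theorem is the right starting point, but the argument goes astray exactly where you declare that the deduction from the cartesian square ``is not formal''. It \emph{is} formal, provided one adds a single further ingredient that your proposal never supplies: the base-change functor $\Win(\u W(R^\flat))\to\Win(\u A)$ along the bottom of the square is essentially surjective. This is cheap: $W(R^\flat)\to A$ is a surjection of rings, each of which surjects onto $R$ with kernel contained in the radical, so isomorphism classes of finite projective modules over $W(R^\flat)$, $A$ and $R$ coincide, and hence a normal representation of any $\u A$-window lifts to $\u W(R^\flat)$. Once one has (i) the square is $2$-cartesian, (ii) the left vertical arrow is an equivalence (Gabber), and (iii) the bottom horizontal arrow is essentially surjective, a purely categorical statement (Lemma~\ref{Le:cat}, proved by a short exercise with fibre products of groupoids) yields that the right vertical arrow $\Phi_A$ is an equivalence. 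Your assertion that ``an equivalence on one vertical side does not force one on the other'' is true without (iii) but false with it; by omitting (iii) you are driven into the laborious and ultimately incomplete arguments of your last two paragraphs.

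Those substitute arguments do not close the gap. The graph argument for fullness is circular: essential surjectivity produces \emph{some} group $H$ with $\Phi_A(H)$ isomorphic to the graph sub-window of $\Phi_A(G\times G')$, but to convert this into a morphism $G'\to G$ you need the inclusion of the graph into $\Phi_A(G\times G')$ to be induced by a morphism of groups, which is precisely the fullness being proved. (The unqualified claim that faithfulness of the crystalline Dieudonn\'e functor ``needs no hypothesis'' is also not something you can take off the shelf for a general semiperfect $R$.) For essential surjectivity, the tower-propagation picture has the logic backwards: before Gabber can attach a $p$-divisible group to anything, you must first produce a window over $\u W(R^\flat)$ lifting the given $N\in\Win(\u A)$ --- your phrase ``together with its Frobenius descents'' silently assumes such a compatible system exists, and constructing it (a Hodge-filtration lift at each stage of the tower, assembled via $\varprojlim(\u A,\sigma)=\u W(R^\flat)$) is exactly the lifting statement (iii). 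Once such a lift $\u N'$ is in hand, no propagation down the tower is needed: Gabber gives $G_1$ over $R^\flat$ with $\Phi_{W(R^\flat)}(G_1)\cong\u N'$, and the commutativity of the square already gives $\Phi_A(G_1\otimes_{R^\flat}R)\cong N$. So the one genuinely missing step is the lifting of windows along $\u W(R^\flat)\to\u A$; everything else you propose either follows formally from it or does not work as stated.
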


\begin{Remark}
If $R$ is a complete intersection or balanced, 
there is a straight lift by Lemma~\ref{Le:exist-lift},
and the associated divided powers on $\Ker(\phi)$ are pointwise nilpotent by Lemma~\ref{Le:PD-nil}.
Thus Theorem~\ref{Th:PhiA-equiv} applies in these cases.
\end{Remark}

\begin{proof}[Proof of Theorem~\ref{Th:PhiA-equiv}]
Since $R^\flat$ is a perfect ring,
the functor $\Phi_{W(R^\flat)}$ is an equivalence by a theorem of Gabber; 
see \cite[Thm.\ 6.4]{Lau:Smoothness}.
Every window over $\u A$ can be lifted to a window over $\u W(R^\flat)$.
Indeed, the projections $A\to R$ and $W(R^\flat)\to R^\flat\to R$ induce bijective maps
of the sets of isomorphisms classes of finite projective modules, and thus the same holds
for $W(R^\flat)\to A$. 
Hence a normal representation of an $\u A$-window in the sense of \cite[Lemma~2.6]{Lau:Frames}
can be lifted to $\u W(R^\flat)$.
Now Lemma~\ref{Le:cat} below 
applied to the diagram of Corollary~\ref{Co:Rflat-R-cart} gives the result.
\end{proof}

\begin{Lemma}
\label{Le:cat}
Let
\[
\xymatrix@M+0.2em{
\AAA \ar[d]_f \ar[r]^\psi &
\CCC \ar[d]^g \\
\BBB \ar[r]^\pi &
\DDD
}
\]
be a cartesian diagram of additive categories or of groupoids.
If $f$ is an equivalence and $\pi$ is essentially surjective,
then $g$ is an equivalence.
\end{Lemma}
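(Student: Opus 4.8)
The plan is to use the description of the cartesian square as a $2$-fibre product: an object of $\AAA$ is a triple $(b,c,\theta)$ with $b\in\BBB$, $c\in\CCC$ and an isomorphism $\theta\colon\pi(b)\xrightarrow{\sim}g(c)$ in $\DDD$; a morphism $(b,c,\theta)\to(b',c',\theta')$ is a pair $(\beta,\gamma)$ with $\theta'\circ\pi(\beta)=g(\gamma)\circ\theta$; and $f,\psi$ are the two projections. All four functors are additive (resp.\ functors of groupoids). I would prove that $g$ is essentially surjective and fully faithful.

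Essential surjectivity and faithfulness are the easy steps. Given $D\in\DDD$, choose $b\in\BBB$ with $\pi(b)\cong D$ (as $\pi$ is essentially surjective); since $f$ is essentially surjective there is $a=(b',c',\theta')\in\AAA$ with $f(a)=b'\cong b$, whence $g(c')\cong\pi(b')\cong\pi(b)\cong D$. For faithfulness, let $\gamma\colon c\to c'$ in $\CCC$ with $g(\gamma)=0$; choosing $b$ with an isomorphism $\theta\colon\pi(b)\xrightarrow{\sim}g(c)$ and $b''$ over $g(c')$, the compatibility $\theta''\circ\pi(0)=0=g(\gamma)\circ\theta$ shows that $(0,\gamma)$ is a morphism of $\AAA$; since $f$ is faithful and $f(0,\gamma)=0=f(0,0)$ we conclude $\gamma=0$. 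In the groupoid case one argues identically with the relation $g(\gamma)=g(\gamma')$ in place of $g(\gamma)=0$.

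The main obstacle is fullness of $g$: because $\pi$ is only essentially surjective and \emph{not} full, one cannot lift a morphism $d\colon g(c)\to g(c')$ to $\BBB$ and then to $\AAA$. The key device is to lift \emph{isomorphisms} using a single object of $\BBB$: given an isomorphism $u\colon g(c)\xrightarrow{\sim}g(c')$, pick $b\in\BBB$ and $\theta\colon\pi(b)\xrightarrow{\sim}g(c)$, form the objects $A=(b,c,\theta)$ and $A'=(b,c',u\circ\theta)$ of $\AAA$ sharing the same $b$, and use fullness of $f$ to lift $\id_b\in\Hom_{\BBB}(f A,f A')$ to a morphism $(\id_b,\gamma)\colon A\to A'$; its compatibility condition reads $u\circ\theta=g(\gamma)\circ\theta$, i.e.\ $g(\gamma)=u$. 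In the groupoid case every morphism $d$ is such an isomorphism, so this already gives fullness. In the additive case I would reduce an arbitrary $d\colon g(c)\to g(c')$ to this situation: the matrix $u=\begin{pmatrix}1&0\\ d&1\end{pmatrix}$ is an automorphism of $g(c\oplus c')=g(c)\oplus g(c')$, so by the previous step $u=g(w)$ for some $w\colon c\oplus c'\to c\oplus c'$, and the off-diagonal entry $\gamma=p_{c'}\circ w\circ\iota_{c}$ satisfies $g(\gamma)=d$ by additivity of $g$. Together with essential surjectivity and faithfulness, this shows that $g$ is an equivalence; note that the argument uses $f$ essentially surjective (for essential surjectivity of $g$) and $f$ fully faithful together with $\pi$ essentially surjective (for full faithfulness of $g$), which is exactly the hypothesis that $f$ is an equivalence.
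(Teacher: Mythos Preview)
Your proof is correct and follows essentially the same approach as the paper: both use the explicit fibre-product description of $\AAA$, the ``shared $b$-component'' trick to lift an isomorphism $g(c)\cong g(c')$ via fullness of $f$, and the $\left(\begin{smallmatrix}1&0\\d&1\end{smallmatrix}\right)$ matrix trick to reduce the additive case to isomorphisms. The only difference is organizational: the paper reduces to groupoids at the outset, whereas you prove essential surjectivity and faithfulness in both settings directly and postpone the matrix reduction to the fullness step; the arguments are otherwise the same.
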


\begin{proof}
The case of additive categories is reduced to the case of groupoids using
that a homomorphism $u:X\to Y$ can be encoded by the automorphism
$\left(\begin{smallmatrix}1&0\\u&1\end{smallmatrix}\right)$ of $X\oplus Y$.
Consider the groupoid case. We may assume that $\AAA$ is equal to
the fibered product of $\CCC$ and $\BBB$ over $\DDD$, 
which is the category of triples $(C,\delta,B)$ with
$C\in\CCC$, $B\in\BBB$, and $\delta:g(C)\cong\pi(B)$.

(1) $g$ is surjective on isomorphism classes: This holds for $\pi$ and $f$.

(2) $\psi$ is surjective on isomorphism classes: Let $C\in\CCC$.
Find $B\in\BBB$ and $\delta:g(C)\cong\pi(B)$. 
Then $A=(C,\delta,B)$ satisfies $\psi(A)=C$.

(3) $g$ is faithful: We have to show that if $C\in\CCC$ and $\gamma\in\Aut(C)$
with $g(\gamma)=\id$ then $\gamma=\id$. Extend $C$ to $A=(C,\delta,B)\in\AAA$.
Then $\alpha=(\gamma,\id_B)$ lies in $\Aut(A)$ with $f(\alpha)=\id$.
Thus $\alpha=\id$ and $\gamma=\id$.

(4) $g$ is full: Let $C,C'\in\CCC$ and $\delta:g(C)\cong g(C')$.
Extend $C'$ to $A'=(C',\delta',B')\in\AAA$.
Let $A=(C,\delta'\delta,B')\in\AAA$. 
Then $f(A)=B'=f(A')$, and $\id_{B'}$
lifts to a unique $\alpha:A\cong A'$, which consists of $(\gamma,\id_{B'})$
with $\gamma:C\cong C'$ such that $\delta'\circ g(\gamma)=\delta'\delta$,
thus $g(\gamma)=\delta$.
\end{proof}

\subsection{The passage to $A_{\cris}$}

For a moment let $R$ be an arbitrary semiperfect ring.
By the universal property of $A_{\cris}(R)$
there is a unique lift of $\phi:R\to R$ to a PD endomorphism $\sigma$ of $A_{\cris}(R)$,
and one verifies that $\sigma$ is a Frobenius lift.
Let $\Filone A_{\cris}(R)$ be the kernel of $A_{\cris}(R)\to R$.
By \cite[Lemma~4.1.8]{Scholze-Weinstein} there is a unique functorial $\sigma$-linear map
$\sigma_1:\Filone A_{\cris}(R)\to A_{\cris}(R)$ such that $p\sigma_1=\sigma$, which means that
\begin{equation}
\label{Eq:uAcris}
\u A{}_{\cris}(R)=(A_{\cris}(R),\Filone A_{\cris}(R),R,\sigma,\sigma_1)
\end{equation}
is a $p$-frame, and even a PD frame; see \S\ref{Se:Not}.
A homomorphism of semiperfect rings $R\to R'$ 
induces a strict frame homomorphism $\u A{}_{\cris}(R)\to \u A{}_{\cris}(R')$.

Assume now that $A$ is a lift of $R$ as earlier.
The universal property of $A_{\cris}(R)$ gives a homomorphism
$
\varkappa:A_{\cris}(R)\to A
$
of extensions of $R$, and $\varkappa$ commutes with $\sigma$. 
Since $A$ is torsion free, $\varkappa$ is a frame homomorphism
\begin{equation}
\label{Eq:kappa-Acris-A}
\varkappa:\u A{}_{\cris}(R)\to\u A.
\end{equation}

\begin{Prop}
\label{Pr:str-kappa-cryst}
If $A$ is a straight lift of $R$ in the sense of Definition~\ref{Def:str-lift},
the frame homomorphism $\varkappa$ is crystalline.
\end{Prop}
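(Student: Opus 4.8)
The plan is to deduce the statement from the deformation lemma \cite[Th.\ 3.2]{Lau:Frames}, in close analogy with the proof of Lemma~\ref{Le:sigma-crystalline}; the difference is that $\sigma_1$ will now act topologically nilpotently, rather than trivially, on the relevant kernel. First I would check that $\varkappa$ is a surjective strict frame homomorphism. Surjectivity holds because the canonical $\sigma$-equivariant map $W(R^\flat)\to A$ is surjective (Remark~\ref{Rk:lift}) and factors through $\varkappa$, while strictness is clear since both frames have quotient $R$ and satisfy $p\sigma_1=\sigma$, so $\varkappa$ commutes with $\sigma_1$ with unit $u=1$. Put $I=\Ker(\varkappa)$. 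As $\varkappa$ induces the identity on $R$ we have $I\subseteq\Filone A_{\cris}(R)$, and as $\varkappa$ is a morphism of $p$-adic PD extensions, $I$ is a $p$-adically closed sub-PD-ideal. The inclusion $\sigma_1(I)\subseteq I$ is then automatic: for $x\in I$ one has $\varkappa(\sigma_1 x)=\sigma_1(\varkappa x)=0$. Thus everything reduces to showing that $\sigma_1$ is topologically nilpotent on $I$ for the $p$-adic topology.

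The mechanism producing this nilpotence is a valuation computation on divided powers. For $a\in J$ the endomorphism $\sigma$ is a PD morphism with $\sigma([a])=[a]^p$, so $\sigma([a]^{[n]})=\bigl([a]^p\bigr)^{[n]}=\tfrac{(pn)!}{n!}\,[a]^{[pn]}$; since $v_p\!\bigl(\tfrac{(pn)!}{n!}\bigr)=n$ by Legendre's formula, this gives $\sigma_1([a]^{[n]})=(\text{unit})\cdot p^{\,n-1}\,[a]^{[pn]}$. Combined with the identity $\sigma_1(fg)=p\,\sigma_1(f)\sigma_1(g)$ for $f,g\in\Filone A_{\cris}(R)$, one finds that $\sigma_1$ carries a PD monomial of total divided-power degree $d$ to a unit multiple of $p^{\,d-1}$ times a monomial of degree $pd$. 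Iterating $m$ times therefore lands any such monomial of degree $\ge 1$ in $p^{N(m)}A_{\cris}(R)$ with $N(m)=\tfrac{p^{m}-1}{p-1}-m$, uniformly in the monomial; as $N(m)\to\infty$, the operator $\sigma_1$ is topologically nilpotent on the closed PD ideal of $A_{\cris}(R)$ generated by the Teichm\"uller elements $[a]$, $a\in J$.

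The main obstacle, and the one place where straightness of $A$ is used, is to confine $I$ to this region. Writing $A=W(R^\flat)/J'$ with the property that the set of $a\in J$ with $[a]\in J'$ generates $J$ (Definition~\ref{Def:str-lift}), I would show that $J'$ is the $p$-adic closure of the ordinary ideal generated by such Teichm\"uller representatives $[a_i]$, using $J'\cap pW(R^\flat)=pJ'$ together with $J'/pJ'=J$; transporting this along the PD envelope then places $I$ inside the closed ideal of $A_{\cris}(R)$ generated by the $[a_i]$, and hence inside the closed PD ideal of the previous paragraph. This confinement is exactly what fails for a general lift: a non-Teichm\"uller element $w=\sum_{n}p^{n}[d_n]\in J'$ produces $\sigma_1^{m}(\varkappa^{-1}\text{-image of }w)$ containing the valuation-zero term $[\phi^{m}(d_m)]$, so that $\sigma_1$ is \emph{not} topologically nilpotent on $I$. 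Straightness removes precisely these contributions; conceptually this is the same phenomenon recorded in Lemma~\ref{Le:PD-nil}, where straightness forces the $p$-th divided power to vanish pointwise on $\Ker(\phi)$.

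Once $I$ is shown to lie in the closed PD ideal generated by the Teichm\"uller lifts of $J$, the uniform estimate $\sigma_1^{m}(I)\subseteq p^{N(m)}A_{\cris}(R)$ of the second paragraph, together with $p$-adic completeness of $A_{\cris}(R)$ and the already-established $\sigma_1(I)\subseteq I$, yields the topological nilpotence required by \cite[Th.\ 3.2]{Lau:Frames}. That lemma then gives that $\varkappa$ is crystalline. I expect the genuinely delicate step to be the structural claim that straightness identifies $J'$, and therefore $I$, with the closure of the ideal generated by the $[a_i]$ inside $A_{\cris}(R)$; the valuation estimate and the application of the deformation lemma should be routine by comparison.
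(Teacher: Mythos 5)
Your proposal is correct and takes essentially the same route as the paper: straightness provides Teichm\"uller generators $[a_i]$ of the kernel $N=\Ker(\varkappa)$, the identity $\sigma_1([a_i]^{[n]})=\tfrac{(pn)!}{p\cdot n!}[a_i]^{[pn]}$ forces $\sigma_1$ to be (topologically) nilpotent on $N$, and the deformation lemma \cite[Th.\ 3.2]{Lau:Frames} concludes. The only differences are cosmetic: the paper reduces modulo $p^nN$, where $\sigma_1$ is genuinely nilpotent (already $\sigma_1\circ\sigma_1=0$ on $N/pN$, since $N/pN$ is the PD ideal of the PD envelope of $J$ over $\FF_p$), and recovers $\u A{}_{\cris}(R)$ as $\varprojlim \u B{}_n$ via \cite[Le.\ 2.12]{Lau:Frames} instead of invoking topological nilpotence on $N$ directly; and your phrase ``closed ideal of $A_{\cris}(R)$ generated by the $[a_i]$'' should read ``closed PD ideal'', as your own computation on divided-power monomials already presupposes.
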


See also Proposition~\ref{Pr:isobal-kappa-cryst} below.

\begin{proof}
Let $N\subseteq A_{\cris}(R)$ be the kernel of $\varkappa$.
Since $A$ is torsion free we have $N\cap p^nA_{\cris}(R)=p^nN$.
Since $N$ is closed it follows that $N$ is $p$-adically complete.
We have an exact sequence $0\to N/p\to A_{\cris}(R)/p\to R\to 0$,
and this is the PD envelope over $\FF_p$ of the ideal $J=\Ker(R^\flat\to R)$.

Clearly $N$ is stable under $\sigma_1$.
We claim that $\sigma_1$ is nilpotent on $N/p$; cf.\ \cite[Lemma~4.2.4]{Scholze-Weinstein}.
Let $A=W(R^\flat)/J'$. 
The hypothesis means that there are generators $a_i$ of $J$ such that $[a_i]\in J'$.
The ideal $N/pN$ of $A_{\cris}(R)/p$ is generated by the elements $a_i^{[n]}$ for $n\ge 1$.
The elements $[a_i]^{[n]}\in N$ satisfy 
\begin{equation}
\label{Eq:sigma1-Acris}
\sigma_1([a_i]^{[n]})=\frac{(pn)!}{p\cdot n!}[a_i]^{[pn]};
\end{equation}
see \cite[Lemma~4.1.8]{Scholze-Weinstein}.
Since the integer $\frac{(pn)!}{p\cdot n!}$ is divisible by $p$ when $n\ge p$ 
it follows that $\sigma_1\circ\sigma_1=0$ on $N/pN$.

We consider the frames 
$\u B{}_n=(A_{\cris}(R)/p^nN,\Filone A_{\cris}(R)/p^n N, R,\sigma,\sigma_1)$
for $n\ge 0$.
Since $\sigma_1$ is nilpotent on $N/p^nN$, the projection $\u B{}_n\to\u B{}_0=\u A$ 
is crystalline by the general deformation lemma \cite[Th.\ 3.2]{Lau:Frames}.
We have $\varprojlim\u B{}_n=\u A{}_{\cris}(R)$, 
and the proposition follows; see \cite[Le.\ 2.12]{Lau:Frames}.
\end{proof}

\begin{Cor}
\label{Co:BT-Acris-str}
If the semiperfect ring $R$ admits a straight lift $A$, there is an equivalence of categories
\begin{equation}
\label{Eq:BT-Acris-str}
\BT(\Spec R)\cong\Win(\u A{}_{\cris}(R)).
\end{equation}
\end{Cor}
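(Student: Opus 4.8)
The plan is to combine the two equivalences already established in the excerpt.  For a semiperfect ring $R$ that admits a straight lift $A$, I want to produce the equivalence \eqref{Eq:BT-Acris-str} by composing an equivalence $\BT(\Spec R)\cong\Win(\u A)$ with an equivalence $\Win(\u A)\cong\Win(\u A{}_{\cris}(R))$.  The first of these is the functor $\Phi_A$ of \eqref{Eq:PhiA-semiperf}, and the second is the base change functor $\varkappa^*$ associated to the frame homomorphism $\varkappa:\u A{}_{\cris}(R)\to\u A$ of \eqref{Eq:kappa-Acris-A}.

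First I would invoke Lemma~\ref{Le:PD-nil}: since $A$ is a straight lift, the divided powers on $\Ker(\phi)$ determined by $A$ via Remark~\ref{Rk:PD-Ker-phi} are pointwise nilpotent.  This is precisely the hypothesis needed to apply Theorem~\ref{Th:PhiA-equiv}, which then tells me that $\Phi_A:\BT(\Spec R)\to\Win(\u A)$ is an equivalence of categories.  Next I would apply Proposition~\ref{Pr:str-kappa-cryst}, whose hypothesis (that $A$ is a straight lift) is exactly what we are given here; this says that $\varkappa$ is crystalline, i.e.\ that the base change functor $\varkappa^*:\Win(\u A{}_{\cris}(R))\to\Win(\u A)$ is an equivalence.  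Composing the equivalence $\Phi_A$ with a quasi-inverse of $\varkappa^*$ yields the desired equivalence \eqref{Eq:BT-Acris-str}.

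I should note that this corollary is essentially a bookkeeping consequence of the two main results it cites, so there is no genuine obstacle left to overcome: all the substantive work has been done in establishing Theorem~\ref{Th:PhiA-equiv} (the reduction to the perfect case along $R^\flat\to R$ via the cartesian diagram of Corollary~\ref{Co:Rflat-R-cart} and Gabber's theorem) and Proposition~\ref{Pr:str-kappa-cryst} (the nilpotence of $\sigma_1$ on $N/pN$ coming from the divisibility of $\tfrac{(pn)!}{p\cdot n!}$).  The only point requiring a word of care is that both cited results carry the \emph{same} hypothesis, namely that $A$ is a straight lift, so the two equivalences are available simultaneously for one and the same frame $\u A$; this is what makes the composition legitimate.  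I would phrase the proof in one or two sentences: the functor $\Phi_A$ is an equivalence by Theorem~\ref{Th:PhiA-equiv}, applicable because of Lemma~\ref{Le:PD-nil}, and $\varkappa^*$ is an equivalence by Proposition~\ref{Pr:str-kappa-cryst}, so the composite $\varkappa^*{}^{-1}\circ\Phi_A$ gives the equivalence \eqref{Eq:BT-Acris-str}.
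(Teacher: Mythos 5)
Your proof is correct and is essentially identical to the paper's: the paper likewise composes $\Phi_A$ (an equivalence by Theorem~\ref{Th:PhiA-equiv}, applicable via Lemma~\ref{Le:PD-nil}) with the inverse of $\varkappa^*$ (an equivalence by Proposition~\ref{Pr:str-kappa-cryst}). Nothing is missing.
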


\begin{proof}
By Theorem~\ref{Th:PhiA-equiv} and Proposition~\ref{Pr:str-kappa-cryst}
we have equivalences
\[
\BT(\Spec R)\xrightarrow{\Phi_A\,}\Win(\u A)\xleftarrow{\varkappa^*}\Win(\u A{}_{\cris}(R)).
\qedhere
\]
\end{proof}

\begin{Remark}
\label{Re:BT-Acris-str}
When $A_{\cris}(R)$ is torsion free, the equivalence \eqref{Eq:BT-Acris-str} 
is given by the functor $\Phi_{\u S}$ of \eqref{Eq:PhiS} for $\u S=\u A{}_{\cris}$.
A variant of this holds in general; see Corollary~\ref{Cor:str-PhiR},
which shows in particular that the equivalence \eqref{Eq:BT-Acris-str}
does not depend on the choice of the lift $A$.
\end{Remark}

\begin{Cor}
\label{Co:DR-ci}
If $R$ is a complete intersection semiperfect ring, 
the functor $\DDF_{\Spec R}$ of \eqref{Eq:DFX} is an equivalence.
\end{Cor}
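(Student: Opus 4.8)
The plan is to reduce the statement to the equivalence of Corollary~\ref{Co:BT-Acris-str} together with a purely crystalline comparison of windows and filtered Dieudonn\'e crystals over the torsion free ring $A_{\cris}(R)$. First I would record that, since $R$ is a complete intersection, $A_{\cris}(R)$ is torsion free by Remark~\ref{Rk:Acris-ci}, so that $\u A{}_{\cris}(R)$ is a \emph{torsion free} PD frame. Hence the functor $\Phi_{\u S}$ of \eqref{Eq:PhiS} is defined for $\u S=\u A{}_{\cris}(R)$, and by its very construction it factors as
\[
\Phi_{\u S}=E\circ\DDF_{\Spec R},\qquad \BT(\Spec R)\xrightarrow{\;\DDF_{\Spec R}\;}\DF(\Spec R)\xrightarrow{\;E\;}\Win(\u A{}_{\cris}(R)),
\]
where $E$ evaluates a filtered Dieudonn\'e crystal $(\MMM,\Filone\MMM_R)$ at $\u A{}_{\cris}(R)$, producing the window with underlying module $M=\MMM_{A_{\cris}(R)}$, with $\Filone M$ the inverse image of $\Filone\MMM_R$, and with $F_1=p^{-1}F$ on $\Filone M$.

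Next I would invoke that a complete intersection admits a straight lift by Lemma~\ref{Le:exist-lift}, so that Corollary~\ref{Co:BT-Acris-str} applies; by Remark~\ref{Re:BT-Acris-str} (using that $A_{\cris}(R)$ is torsion free) the resulting equivalence $\BT(\Spec R)\cong\Win(\u A{}_{\cris}(R))$ is exactly the functor $\Phi_{\u S}$. Thus $\Phi_{\u S}=E\circ\DDF_{\Spec R}$ is an equivalence, and it suffices to show that $E$ is an equivalence; then $\DDF_{\Spec R}\cong E^{-1}\circ\Phi_{\u S}$ is an equivalence as well. The deduction of the corollary from these two facts is purely formal.

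The remaining, genuinely substantial, step is to prove that $E$ is an equivalence, i.e.\ that a filtered Dieudonn\'e crystal over $\Spec R$ is recovered from its value at the universal PD thickening. I would argue in two stages. First, since $A_{\cris}(R)$ is the universal $p$-adic PD extension of $R$ and is torsion free, evaluation at $A_{\cris}(R)$ identifies $\D(\Spec R)$ with the category of finite projective $A_{\cris}(R)$-modules $(M,F,V)$ with $FV=VF=p$: the module with $F$ and $V$ is the value of the crystal, while the integrable connection is determined by $F$ and is automatically topologically nilpotent, so no information is lost (it is torsion freeness that rules out the pathologies occurring for general semiperfect $R$). Second, for the filtrations I would unwind the notion of admissible filtration \cite[Def.~2.4.1]{Cais-Lau} and check that, under this identification, an admissible filtration on $\MMM$ is precisely a window filtration $\Filone M\subseteq M$ lifting the Hodge filtration as in \cite[Prop.~2.5.2]{Cais-Lau}; this matches $E$ and shows it to be fully faithful and essentially surjective.

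I expect the main obstacle to be precisely this last comparison: verifying rigorously that evaluation at $A_{\cris}(R)$ is an equivalence of crystal categories over the torsion free base, and that admissibility of a filtration corresponds exactly to the window condition on the lifted filtration. Once $E$ is known to be an equivalence, the conclusion that $\DDF_{\Spec R}$ is an equivalence is immediate from the factorisation above.
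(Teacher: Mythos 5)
Your proposal is correct and follows essentially the same route as the paper: factor the window functor as (evaluation at the torsion free ring $A_{\cris}(R)$) $\circ\, \DDF_{\Spec R}$, use the straight lift of the complete intersection $R$ to know that the composite is an equivalence, and conclude once the evaluation functor $\DF(\Spec R)\to\Win(\u A{}_{\cris}(R))$ is known to be an equivalence. The only difference is that the paper disposes of this last step --- which you correctly single out as the substantial one --- by citing \cite[Prop.\ 2.6.4]{Cais-Lau}, noting that no connection data appears at all since $R^\flat$ is perfect and hence $\Omega_{R^\flat}=0$, which is a cleaner justification than your remark that the connection is determined by $F$.
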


\begin{proof}
If $R$ is a complete intersection, the ring $A_{\cris}(R)$ is torsion free;
see Remark~\ref{Rk:Acris-ci}.
Therefore we have a sequence of functors
\[
\BT(\Spec R)\xrightarrow{\DDF_R}\DF(\Spec R)\xrightarrow e
\Win(A_{\cris}(R))\xrightarrow{\varkappa^*}\Win(\u A)
\]
where $e$ is the evaluation functor, and the composition is $\Phi_A$.
The functor $e$ is an equivalence; see \cite[Prop.\ 2.6.4]{Cais-Lau}.
Here no connection appears because $R^\flat$ is perfect, and thus $\Omega_{R^\flat}=0$.
The functors $\varkappa^*$ and $\Phi_A$ are equivalences by
Theorem~\ref{Th:PhiA-equiv} and Proposition~\ref{Pr:str-kappa-cryst}.
Thus $\DDF_R$ is an equivalence as well.
\end{proof}

\begin{Lemma}
\label{Le:DDF-ci}
If the semiperfect ring $R$ has a lift $A$, 
then the forgetful functor $\D(\Spec R)\to\DF(\Spec R)$ is fully faithful.
\end{Lemma}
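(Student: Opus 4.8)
The plan is to deduce full faithfulness from the uniqueness of the admissible filtration, which is where the lift $A$ really enters. Write $u\colon\DF(\Spec R)\to\D(\Spec R)$ for the functor forgetting the admissible filtration. Faithfulness of $u$ is immediate: a morphism in $\DF(\Spec R)$ is a morphism of the underlying Dieudonn\'e crystals subject to a compatibility with the filtrations, hence it is determined by its image under $u$.

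For fullness I would first reduce to uniqueness by a direct-sum argument. Suppose it is known that every locally free Dieudonn\'e crystal over $\Spec R$ admits at most one admissible filtration. Let $(\MMM,\Filone\MMM_R)$ and $(\MMM',\Filone\MMM'_R)$ be objects of $\DF(\Spec R)$ and let $f\colon\MMM\to\MMM'$ be a morphism of the underlying crystals. On $\MMM\oplus\MMM'$ the filtration $\Filone\MMM_R\oplus\Filone\MMM'_R$ is admissible, and so is its image under the crystal automorphism $\left(\begin{smallmatrix}1&0\\f&1\end{smallmatrix}\right)$, admissibility being invariant under isomorphism and stable under direct sums. By the assumed uniqueness applied to $\MMM\oplus\MMM'$ the two filtrations coincide; comparing them on the second summand forces $f(\Filone\MMM_R)\subseteq\Filone\MMM'_R$. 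Thus $f$ underlies a morphism in $\DF(\Spec R)$, so $u$ is full.

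It remains to prove uniqueness, and this is the main obstacle. Since $A$ is $p$-adically complete and torsion free, $A\to R$ is a $p$-adic PD extension, so a Dieudonn\'e crystal $\MMM$ may be evaluated at $A$ to give $(P,\varphi,\psi)=(\MMM_A,F,V)\in\DM(A)$, and an admissible filtration corresponds to a submodule $\Filone P\subseteq P$ for which $(P,\Filone P,F,p^{-1}F)$ is a window over $\u A$. By Lemma~\ref{Le:DMA-proj} the reduction \eqref{Eq:DM-proj} is exact, and a normal decomposition then shows that the induced filtration $\Filone\bar P\subseteq\bar P=\MMM_R$ satisfies $(\Filone\bar P)^\sigma=\Ker(\bar\varphi)=\Image(\bar\psi)$, a direct summand of $\bar P^\sigma$ determined by $(\varphi,\psi)$ alone. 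This determines $\Filone\bar P$ only up to a perturbation supported on $\Ker(\phi)\cdot\bar P$, which is invisible to $(\varphi,\psi)$ but is seen by the full crystal structure. To eliminate it I would pass to the perfect ring $R^\flat$, using the Frobenius $\sigma$ of $A$ and the identification $\varprojlim(A,\sigma)=W(R^\flat)$ of Lemma~\ref{Le:lim-A-sigma}: over $R^\flat$ one has $\Ker(\phi)=0$, so the admissible filtration is forced to equal $\Ker(\bar F)$ and is in particular unique and functorial, and an admissible filtration over $R$ is recovered from its compatible system of Frobenius pullbacks, which stabilises to this filtration over $R^\flat$. Two admissible filtrations on $\MMM$ that agree after this base change must therefore agree. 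The crux is precisely this descent step, i.e.\ showing that the crystalline structure on $A$ rigidifies the $\Ker(\phi)$-ambiguity left open by the linear algebra of $(\varphi,\psi)$.
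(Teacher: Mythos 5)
Your reduction of fullness to uniqueness of the admissible filtration is fine: faithfulness is indeed immediate, and the graph automorphism $\left(\begin{smallmatrix}1&0\\f&1\end{smallmatrix}\right)$ of $\MMM\oplus\MMM'$ correctly converts uniqueness into fullness. The gap is in the uniqueness step itself, which you rightly call the crux but do not actually prove. Evaluating at the PD extension $A\to R$ (or at any of its Frobenius twists) only pins down the pullback $\phi^*(\Filone\MMM_R)$, as you observe; but a direct summand of $\MMM_R$ is \emph{not} recovered from its compatible system of Frobenius pullbacks. Pullback along $\phi$ loses information rather than gaining it: two filtrations differing by a perturbation supported on $\Ker(\phi)\cdot\MMM_R$ have identical Frobenius pullbacks at every stage, hence identical ``stabilisations over $R^\flat$,'' and there is no descent of submodules along the surjection $R^\flat\to R$ that would distinguish them. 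So the final sentence of your argument (``two admissible filtrations that agree after this base change must agree'') asserts exactly what needs to be proved, and the mechanism you offer for it does not exist.

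The paper closes this gap by using the lift $A$ in a different way. By Remark~\ref{Rk:PD-Ker-phi}, the lift equips $\Ker(\phi:R\to R)$ with divided powers, so that $\phi:R\to R$ itself becomes a $p$-adic PD extension $S\to R$ with $S=R$. The admissibility condition of \cite[Def.\ 2.4.1]{Cais-Lau} is imposed at \emph{every} PD extension $S\xrightarrow{\pi}R$, via the map $\phi_{S/R}:R\to S$ with $\phi_{S/R}\circ\pi=\phi_S$; for this particular extension $\phi_{S/R}$ is the identity of $R$, so admissibility determines $\Filone\MMM_R$ itself, functorially in $(\MMM,F)$, with no Frobenius-pullback ambiguity. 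This is precisely the rigidification you were looking for, and it is the only place where the hypothesis that $R$ admits a lift enters; your argument never tests admissibility at a PD extension whose $\phi_{S/R}$ is injective on $\Ker(\phi)\cdot\MMM_R$, which is why it cannot close.
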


\begin{proof}
For a PD extension $S\xrightarrow\pi R$ of $\FF_p$-algebras the Frobenius $\phi_S$
factors through a homomorphism $\phi_{S/R}:R\to S$, i.e.\ $\phi_{S/R}\circ\pi=\phi_S$. 
An object of $\DF(\Spec R)$ is
a triple $(\MMM,F,V)\in\D(\Spec R)$ together with a direct summand of $\MMM_R$ whose base
change under each $\phi_{S/R}$ is determined by $(\MMM,F)$; see \cite[Def.\ 2.4.1]{Cais-Lau}.
The lift $A$ of $R$ makes $\phi:R\to R$ into a PD extension,
which we write as $S\to R$; see Remark~\ref{Rk:PD-Ker-phi}.
The corresponding $\phi_{S/R}$ is the identity of $R$, and the lemma follows.
\end{proof}

Corollary~\ref{Co:DR-ci} together with Lemmas \ref{Le:exist-lift} and \ref{Le:DDF-ci} gives:

\begin{Cor}
(\cite[Cor.\ 4.1.12]{Scholze-Weinstein})
If $R$ is a complete intersection semiperfect ring, the 
crystalline Dieudonn\'e functor $\DD_{\Spec R}$ is fully faithful. \qed
\end{Cor}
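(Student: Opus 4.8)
The plan is to deduce the statement formally from the three results assembled just above, exploiting the fact that $\DD_{\Spec R}$ is literally $\DDF_{\Spec R}$ with the admissible filtration forgotten. Thus the whole argument reduces to: produce a lift, use it to see that forgetting the filtration loses no information, and compose with the equivalence of Corollary~\ref{Co:DR-ci}.

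Concretely, I would proceed in three steps. First, since $R$ is a complete intersection semiperfect ring, Lemma~\ref{Le:exist-lift} provides a (straight) lift $A$ of $R$; this is the only role played by the complete intersection hypothesis beyond what is already packaged into Corollary~\ref{Co:DR-ci}, and it is precisely the input required to invoke Lemma~\ref{Le:DDF-ci}. Second, I would apply Lemma~\ref{Le:DDF-ci}: the lift $A$ turns $\phi\colon R\to R$ into a PD thickening $S\to R$ with $\phi_{S/R}=\id_R$, so that on any object of $\D(\Spec R)$ the admissible filtration is uniquely determined by the underlying crystal $(\MMM,F)$. Consequently the forgetful functor $\DF(\Spec R)\to\D(\Spec R)$ is fully faithful: faithfulness is automatic, and fullness holds because a morphism of underlying crystals necessarily respects the uniquely determined filtrations on source and target.

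Third, I would use the factorisation $\DD_{\Spec R}=(\text{forget})\circ\DDF_{\Spec R}$, valid by the very construction of $\DDF_{\Spec R}$ as the extension of $\DD_{\Spec R}$ by the Hodge filtration in \eqref{Eq:DFX}. By Corollary~\ref{Co:DR-ci} the functor $\DDF_{\Spec R}$ is an equivalence, and an equivalence followed by a fully faithful functor is fully faithful. Hence $\DD_{\Spec R}$ is fully faithful, which is the assertion.

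I do not expect a genuine obstacle here, as the argument is purely formal once the three inputs are granted. The one point that must be handled with care is the direction and exact role of the comparison between $\D(\Spec R)$ and $\DF(\Spec R)$ supplied by Lemma~\ref{Le:DDF-ci}: the relevant consequence is the \emph{uniqueness} of the admissible filtration in the presence of a lift, which is what upgrades the forgetful functor from merely faithful to fully faithful on the essential image of $\DDF_{\Spec R}$, and this is exactly what makes the composition work.
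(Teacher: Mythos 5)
Your argument is correct and is precisely the paper's intended proof: the paper derives this corollary by combining Corollary~\ref{Co:DR-ci} (the equivalence $\DDF_{\Spec R}$), Lemma~\ref{Le:exist-lift} (existence of a lift for complete intersections), and Lemma~\ref{Le:DDF-ci} (full faithfulness of the filtration-forgetting functor in the presence of a lift), exactly as you do. Your remark that the key point is the uniqueness of the admissible filtration, which upgrades the forgetful functor to fully faithful, matches the content of Lemma~\ref{Le:DDF-ci} and its proof.
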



\section{Crystalline Dieudonn\'e windows}

In this section we associate to a $p$-divisible group over an arbitrary semiperfect ring $R$
a window over the frame $\u A{}_{\cris}(R)$ of \eqref{Eq:uAcris}.

\subsection{Relative deformation rings}
\label{Se:rel-def-rings}

We need a relative version of the universal deformation of a $p$-divisible group.
Let $\Lambda\to R$ be a homomorphism of $\FF_p$-algebras.
(More generally one could take $p$-adic rings.)

Let $\Aug_{\Lambda/R}$ be the category $\Lambda$-algebras $A$ equipped
with a $\Lambda$-linear homomorphism $A\to R$,
and let $\Nil_{\Lambda/R}\subseteq\Aug_{\Lambda/R}$ be the full subcategory 
of all $A$ such that $A\to R$ is surjective and $J_A=\Ker(A\to R)$ is a nilpotent ideal.
For a $p$-divisible group $G$ over $R$ we consider the deformation functor
\[
\Def_G:\Nil_{\Lambda/R}\to \Set
\]
where $\Def_G(A)$ is the set of isomorphism
classes of deformations of $G$ to $A$.
If $\Lambda=R$, then $\Def_G$ is represented by the twisted power series
ring $B=\Lambda[[Q]]\in\Aug_{\Lambda/R}$ 
with $Q=\Lie(G^\vee)^*\otimes_\Lambda\Lie(G)^*$; 
see \cite[Prop.\ 3.11]{Lau:Relation}.

\begin{Lemma}
\label{Le:deform-bc}
Assume that $G'$ is a $p$-divisible group over $\Lambda$ with an isomorphism 
$G'\otimes_\Lambda R\cong G$.
If $B=\Lambda[[Q]]$ represents $\Def_{G'}:\Nil_{\Lambda/\Lambda}\to\Set$,
then $B$ also represents $\Def_G:\Nil_{\Lambda/R}\to \Set$.
\end{Lemma}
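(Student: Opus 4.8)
The plan is to build the representability from the universal deformation over $B$ and to verify the resulting natural transformation is bijective by a devissage to square-zero extensions. Write $Q=\Lie((G')^\vee)^*\otimes_\Lambda\Lie(G')^*$, so $B=\Lambda[[Q]]$, and let $\GGG'$ be the universal deformation of $G'$ over $B$. For $A\in\Nil_{\Lambda/R}$ and a morphism $f:B\to A$ in $\Aug_{\Lambda/R}$, I would first check that $f^*\GGG'$ is a deformation of $G$: the augmentation $B\to R$ factors as $B\to\Lambda\to R$, so
\[
f^*\GGG'\otimes_A R=\GGG'\otimes_B R=(\GGG'\otimes_B\Lambda)\otimes_\Lambda R=G'\otimes_\Lambda R\cong G.
\]
This defines a natural transformation $\eta_A:\Hom_{\Aug_{\Lambda/R}}(B,A)\to\Def_G(A)$, $f\mapsto[f^*\GGG']$, and the assertion of the lemma is that each $\eta_A$ is bijective. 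I would prove this by induction on the nilpotence length of $J_A=\Ker(A\to R)$; the base case $A=R$ is trivial, since both sides are singletons (the augmentation on the left, the class $[G]$ on the right).

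For the inductive step I would reduce to a square-zero extension $0\to I\to A\to A_0\to 0$ in $\Nil_{\Lambda/R}$, and by filtering $I$ along the powers $J_{A_0}^k I$ I may assume $I$ is an $R$-module. On the source side, the map $\Hom_{\Aug_{\Lambda/R}}(B,A)\to\Hom_{\Aug_{\Lambda/R}}(B,A_0)$ is surjective with fibres torsors under $\operatorname{Der}_\Lambda(B,I)=\Hom_\Lambda(Q,I)=\Hom_R(Q\otimes_\Lambda R,I)$, because $B=\Lambda[[Q]]$ is formally smooth over $\Lambda$ and $\Omega_{B/\Lambda}\cong B\otimes_\Lambda Q$. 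On the target side, the map $\Def_G(A)\to\Def_G(A_0)$ is surjective with fibres torsors under $\Hom_R(Q\otimes_\Lambda R,I)$ as well: a square-zero ideal in an $\FF_p$-algebra carries canonical divided powers, so by Grothendieck--Messing \cite{Messing:Crystals} lifting $G$ along $A\to A_0$ amounts to lifting the Hodge filtration, which is unobstructed and forms such a torsor. Here I use $Q\otimes_\Lambda R=\Lie(G^\vee)^*\otimes_R\Lie(G)^*$, which matches the cotangent module appearing in the case $\Lambda=R$ of \cite[Prop.\ 3.11]{Lau:Relation}.

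The two torsor structures being governed by the same group $\Hom_R(Q\otimes_\Lambda R,I)$, bijectivity of $\eta_A$ will follow from bijectivity of $\eta_{A_0}$ once I know that $\eta$ is equivariant and induces the identity on this group. I expect this Kodaira--Spencer compatibility to be the main obstacle: everything else is the standard smooth-functor devissage, but one must verify that the derivative of $\eta$ at the trivial lift is precisely the canonical identification coming from the universality of $\GGG'$, rather than merely an automorphism of $\Hom_R(Q\otimes_\Lambda R,I)$. This is where I would invoke that $B$ is miniversal for $\Def_{G'}$ with Kodaira--Spencer an isomorphism, transport this through the base change $Q\rightsquigarrow Q\otimes_\Lambda R$, and conclude by the elementary fact that an equivariant map of nonempty torsors inducing the identity on the structure group is a bijection.
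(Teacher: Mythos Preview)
Your devissage approach is sound in outline but takes a considerably longer route than the paper, and you have correctly identified the one step you have not actually carried out: the Kodaira--Spencer compatibility showing that $\eta$ is equivariant for the torsor actions and induces the identity on $\Hom_R(Q\otimes_\Lambda R,I)$. This can be done, but it requires unwinding how the universal deformation over $B$ interacts with the Grothendieck--Messing identification after base change along $\Lambda\to R$, and as written your proposal only gestures at this (``I would invoke\ldots'') rather than proving it. So the proof is incomplete as stated.

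The paper avoids this entire induction by a one-line fiber product trick: for $A\in\Nil_{\Lambda/R}$, set $A'=A\times_R\Lambda\in\Nil_{\Lambda/\Lambda}$. Since finite projective modules satisfy $\LF(A')\simeq\LF(A)\times_{\LF(R)}\LF(\Lambda)$, deformations of $G$ to $A$ are the same as deformations of $G'$ to $A'$, i.e.\ $\Def_G(A)\cong\Def_{G'}(A')$. On the other hand $\Hom_{\Aug_{\Lambda/R}}(B,A)=\Hom_{\Aug_{\Lambda/\Lambda}}(B,A')$ directly from the universal property of the fiber product, so the representability transfers immediately from the known case $\Lambda=R$. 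This bypasses Grothendieck--Messing, torsor comparisons, and any Kodaira--Spencer check entirely. Your approach would eventually work, but the fiber product argument is both shorter and more robust; I recommend replacing your induction with it.
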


\begin{proof}
For $A\in\Nil_{\Lambda/R}$ the fiber product $A'=A\times_R\Lambda$ lies in
$\Nil_{\Lambda/\Lambda}$.
Let $\LF(A)$ denote the category of finite projective $A$-modules.
Then the obvious functor $\LF(A')\to\LF(A)\times_{\LF(R)}\LF(\Lambda)$
is an equivalence.
It follows that the natural map $\Def_{G'}(A')\to\Def_G(A)$ is bijective,
which proves the lemma.
\end{proof}

Let $\tilde\Nil_{\Lambda/R}$ be the category of all $A\in\Aug_{\Lambda/R}$
such that $A\to R$ is surjective and the ideal $J_A$ is bounded nilpotent, 
i.e.\ there is an $n\ge 1$ with $x^n=0$ for all $x\in J_A$.
We define $\Def_G:\tilde\Nil_{\Lambda/R}\to\Set$ as before.

\begin{Lemma}
\label{Le:deform-bd-nil}
In the situation of Lemma~\ref{Le:deform-bc} the functor $\Def_G$ on $\tilde\Nil_{\Lambda/R}$
is also represented by $B$.
\end{Lemma}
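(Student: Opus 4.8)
The plan is to reduce to the case $\Lambda=R$ already handled and then pass from nilpotent to bounded nilpotent augmentation ideals by a filtered colimit argument, using that $p$-divisible groups are finitely presented. First I would repeat the reduction of Lemma~\ref{Le:deform-bc} verbatim: for $A\in\tilde\Nil_{\Lambda/R}$ the fibre product $A'=A\times_R\Lambda$ lies in $\tilde\Nil_{\Lambda/\Lambda}$, since the projection $A'\to\Lambda$ is surjective with kernel $J_{A'}\cong J_A$, so $J_{A'}$ is again bounded nilpotent. The Milnor-square equivalence $\LF(A')\xrightarrow{\sim}\LF(A)\times_{\LF(R)}\LF(\Lambda)$ used there identifies $\Def_{G'}(A')\cong\Def_G(A)$ and $\Hom_{\Aug_{\Lambda/R}}(B,A)\cong\Hom_{\Aug_{\Lambda/\Lambda}}(B,A')$. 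Thus it suffices to show that $B$ represents $\Def_{G'}$ on $\tilde\Nil_{\Lambda/\Lambda}$, knowing that it does so on $\Nil_{\Lambda/\Lambda}$.

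Next I would exhibit every $A\in\tilde\Nil_{\Lambda/\Lambda}$ as a filtered union of objects of $\Nil_{\Lambda/\Lambda}$. The augmentation gives a splitting $A=\Lambda\oplus J_A$ of $\Lambda$-modules, so for a finite subset $S\subseteq J_A$ the $\Lambda$-subalgebra $A_S=\Lambda[S]$ surjects onto $\Lambda$ with augmentation ideal the $A_S$-ideal generated by $S$. Here I would record the elementary fact that a finitely generated ideal all of whose elements satisfy $x^n=0$ is nilpotent: in a commutative ring every degree-$N$ monomial in finitely many such generators vanishes once $N>(\#S)(n-1)$, so $A_S\in\Nil_{\Lambda/\Lambda}$. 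Ordering finite subsets by inclusion then gives $A=\varinjlim_S A_S$ with $J_A=\varinjlim_S J_{A_S}$.

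Finally I would check that both functors carry this colimit to the colimit of values. Since $Q$ is finite projective over $\Lambda$, a morphism $B=\Lambda[[Q]]\to A$ in $\Aug_{\Lambda/\Lambda}$ is the same as a $\Lambda$-linear map $Q\to J_A$ — the nilpotence fact above guarantees that the image generates a nilpotent ideal, so the relevant power series terminate — whence $\Hom_{\Aug_{\Lambda/\Lambda}}(B,A)=\Hom_\Lambda(Q,J_A)=\varinjlim_S\Hom_\Lambda(Q,J_{A_S})=\varinjlim_S\Hom_{\Aug_{\Lambda/\Lambda}}(B,A_S)$, the middle equality because a finitely presented module commutes with filtered colimits. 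On the deformation side, $\Def_{G'}(A)=\varinjlim_S\Def_{G'}(A_S)$ because $p$-divisible groups are finitely presented, so $\BT(\Spec-)$ commutes with the filtered colimit $A=\varinjlim_S A_S$ of rings by the usual limit theorems applied to the finite locally free group schemes $G'[p^m]$. Combining this with the known equality $\Def_{G'}(A_S)=\Hom_{\Aug_{\Lambda/\Lambda}}(B,A_S)$ on $\Nil_{\Lambda/\Lambda}$ and passing to the colimit yields $\Def_{G'}(A)\cong\Hom_{\Aug_{\Lambda/\Lambda}}(B,A)$, naturally in $A$.

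I expect the only real subtlety to be the limit statement $\Def_{G'}(\varinjlim_S A_S)=\varinjlim_S\Def_{G'}(A_S)$: this is exactly where the boundedness of $J_A$ is used, to replace an unbounded nilpotent ideal by a filtered union of genuinely nilpotent ones, and it rests on the finite presentation of truncated Barsotti–Tate groups. The remaining compatibilities are formal and entirely parallel to Lemma~\ref{Le:deform-bc}.
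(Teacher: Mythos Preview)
Your reduction to $\Lambda=R$ via the fibre product and your computation $\Hom_{\Aug_{\Lambda/\Lambda}}(B,A)=\varinjlim_S\Hom_{\Aug_{\Lambda/\Lambda}}(B,A_S)$ are both fine. The gap is in the assertion $\Def_{G'}(A)=\varinjlim_S\Def_{G'}(A_S)$, and your justification does not establish it. A $p$-divisible group is \emph{not} finitely presented in the relevant sense: it is an infinite tower $(H[p^m])_{m\ge 1}$ of finitely presented group schemes. The standard limit theorems do give, for each $m$ separately, an $A_{S(m)}$ to which $H[p^m]$ descends, together with the inclusion $H[p^m]\hookrightarrow H[p^{m+1}]$; but there is no reason the $S(m)$ stabilise, so you have not produced a single $A_S$ over which the full tower lives. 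The same problem obstructs the injectivity direction of the colimit identity: an isomorphism $H_1\cong H_2$ of deformations over $A$ is again an infinite compatible system, and descending each level only gives a cofinal family of $A_{S(m)}$'s.

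The paper's proof is organised precisely to avoid this infinite descent. Instead of enlarging the source ring, it shrinks the target: given two deformations $G_1,G_2$ over $A$, rigidity provides a canonical isogeny $\psi:G_1\to G_2$ lifting $p^r\id_G$, and finite presentation of the single group scheme $\Ker(\psi)$ yields a finitely generated ideal $\mathfrak b\subseteq J_A$ modulo which $G_1$ and $G_2$ agree. One then replaces $R$ by $R'=A/\mathfrak b$, so that $A\in\Nil_{\Lambda/R'}$, and invokes Lemma~\ref{Le:deform-bc}. This isogeny trick is exactly the missing finiteness input your argument needs: it converts the comparison of two infinite towers into a question about one finitely presented object. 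If you try to fill your gap directly (say via the equivalence $\Def_{G'}\cong\Def_{G'[p],\BT_1}$ on $\Nil$), you will find that extending that equivalence to $\tilde\Nil$ already requires an argument of the same strength.
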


\begin{proof}
Let $A\in\tilde\Nil_{\Lambda/R}$.
We have to show that the natural map $\Hom(B,A)\to\Def_G(A)$ is bijective.
For each pair of homomorphisms $f_1,f_2:B\to A$ in $\Aug_{\Lambda/R}$ there is a
finitely generated ideal $\Fb\subseteq J_A$ such that the projection $A\to\bar A=A/\Fb$ 
equalizes $f$ and $g$. 
For each pair of deformations $G_1,G_2$ of $G$ over $A$ the reduction
map $\Hom_A(G_1,G_2)\to\End_R(G)$ is injective with cokernel annihilated
by $p^r$ for some $r$; see \cite[Lemma~3.4]{Lau:Relation}.
Thus there is a unique isogeny $\psi:G_1\to G_2$ which lifts $p^r\id_G$.
Its kernel is finitely presented; see \cite[Lemma~3.6]{Lau:Relation}.
Thus there is a finitely generated ideal $\Fb\subseteq A$ such that
$\Ker(\psi)$ and $G_1[p^r]$ coincide over $A/\Fb$, 
which means that $G_1$ and $G_2$ map to the same element of $\Def_G(A/\Fb)$.
Moreover $G_1$ and $G_2$ are equal as deformations of $G$ iff they are equal as
deformations of $G_1\otimes_AA/\Fb$.
In view of these remarks it suffices to show that $B(A)\to\Def_G(A)$ is bijecitve 
when $R$ is replaced by $R'=A/\Fb$ for varying finitely generated ideals $\Fb$.
Then $A$ lies in $\Nil_{\Lambda/R'}$, and the lemma follows from Lemma~\ref{Le:deform-bc}.
\end{proof}

\subsection{Construction of the crystalline window functor}

\begin{Thm}
\label{Th:PhiR}
For semiperfect rings $R$ there are unique functors
\[
\Phi^{\cris}_R:\BT(\Spec R)\to\Win(\u A{}_{\cris}(R)),
\qquad
G\mapsto\u M=(M,\Filone M,F,F_1)
\]
which are functorial in $R$, such that the triple $(M,\Filone M,F)$ is given by the
filtered Dieudonn\'e crystal\/ $\DDF(G)$ of \eqref{Eq:DFX} as usual, i.e.\
 $M=\DD(G)_{A_{\cris}(R)}$, the submodule $\Filone M\subseteq M$
is the inverse image of the Hodge filtration $\Lie(G)^*\subseteq\DD(G)_R$,
and $F:M\to M$ is induced by $F:\phi^*\DD(G)\to\DD(G)$.
\end{Thm}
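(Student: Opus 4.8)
The only datum that remains to be constructed is the divided Frobenius $F_1\colon\Filone M\to M$; the triple $(M,\Filone M,F)$ is already prescribed by $\DDF(G)$. Since $\u A{}_{\cris}(R)$ is a $p$-frame, any window structure must satisfy $pF_1=F$ on $\Filone M$, so $F_1$ is a ``$p$-th part'' of $F|_{\Filone M}$. On the submodule $\Filone A_{\cris}(R)\cdot M$ this is forced, the formula $F_1(sm)=\sigma_1(s)F(m)$ using only the ring-level map $\sigma_1$ of the frame \eqref{Eq:uAcris}. The entire difficulty is concentrated on the lifts of the Hodge filtration $\Lie(G)^*$, where one would like to write $F_1=p^{-1}F$ but cannot, because $A_{\cris}(R)$ may have $p$-torsion. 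The plan is to pin $F_1$ down by functoriality in $R$ together with the torsion-free case, where complete intersections serve as the torsion-free models by Remark~\ref{Rk:Acris-ci}.

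For uniqueness I would argue as follows. Any surjection $\tilde R\to R$ of semiperfect rings induces surjections $\tilde R{}^\flat\to R^\flat$ and $W(\tilde R{}^\flat)\to W(R^\flat)$, hence a surjection $A_{\cris}(\tilde R)\to A_{\cris}(R)$ of the ($p$-completed) PD envelopes. Suppose $\tilde R$ is a complete intersection and $\tilde G$ is a $p$-divisible group over $\tilde R$ lifting $G$. Since $A_{\cris}(\tilde R)$ is torsion free, the relation $pF_1=F$ forces $F_1=p^{-1}F$ over $\tilde R$, so $\Phi^{\cris}_{\tilde R}(\tilde G)$ is unique; functoriality then forces $\Phi^{\cris}_R(G)$ to be its base change along $A_{\cris}(\tilde R)\to A_{\cris}(R)$. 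As this map is surjective, $F_1$ over $R$ is determined. Thus uniqueness reduces to producing at least one complete-intersection cover of $(R,G)$.

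To construct such a cover, and thereby existence, I would present $R=R^\flat/J$ and build a complete-intersection semiperfect ring $\tilde R$ with a surjection $\tilde R\to R$ whose kernel carries pointwise nilpotent divided powers (for instance by dividing a perfect polynomial ring over $R^\flat$ on generators of $J$ by the evident regular sequence, as in the straight-lift computation of Lemma~\ref{Le:PD-nil}). Lifting the Hodge filtration of $\DD(G)$ along this PD thickening and invoking Grothendieck--Messing \cite{Messing:Crystals} yields a lift $\tilde G$; equivalently one lifts the filtered crystal and applies the equivalence $\DDF$ of Corollary~\ref{Co:DR-ci}. One then defines $\Phi^{\cris}_R(G)$ as the base change of $\Phi^{\cris}_{\tilde R}(\tilde G)$ along $A_{\cris}(\tilde R)\to A_{\cris}(R)$. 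The window axioms over $R$ --- additivity and semilinearity of $F_1$, and the requirement that its image generate the unit ideal --- descend from $\tilde R$ precisely because this base-change map is surjective.

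The main obstacle is canonicity: that the resulting $F_1$ is independent of the chosen cover $(\tilde R,\tilde G)$, which is exactly what makes $\Phi^{\cris}_R$ well defined and functorial in $R$. I would resolve this by dominating any two covers by a common complete-intersection cover lifting $G$ and comparing via functoriality of the torsion-free construction; the relative deformation rings of \S\ref{Se:rel-def-rings}, in particular Lemmas~\ref{Le:deform-bc} and~\ref{Le:deform-bd-nil}, supply the universal deformation through which such covers factor and guarantee the compatible lifts of $\tilde G$ needed for the comparison. Once canonicity holds, functoriality in a homomorphism $R\to R'$ follows by choosing compatible covers and using that the torsion-free construction commutes with base change, while the prescription of $(M,\Filone M,F)$ is immediate from the definition of $\DDF$.
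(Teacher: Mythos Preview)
Your overall architecture matches the paper's: pin down $F_1$ by descent from a cover where $A_{\cris}$ is torsion free, then verify independence of the cover using the relative deformation theory of \S\ref{Se:rel-def-rings}. The substantive divergence is in the choice of cover. The paper takes $\tilde R=R^\flat$, which is \emph{perfect}; then $A_{\cris}(R^\flat)=W(R^\flat)$ is torsion free automatically, and a lift of $G$ to $R^\flat$ exists by iterating Illusie's lifting along $\phi:R\to R$ (no PD hypothesis needed). For the independence step the paper again stays in the perfect world: given two lifts $G_1,G_2$ of $G$ to $R^\flat$, it realizes them as pullbacks of the universal deformation over $B=R^\flat[[Q]]$ via Lemma~\ref{Le:deform-bd-nil}, passes to the \emph{perfection} $S'=B^{\per}$ and $S=R^\flat$, and compares the two $F_1$'s over the perfect ring $S$ where they are forced to agree. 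So the deformation rings enter exactly as you anticipate, but to produce perfect comparison rings rather than complete-intersection ones.

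Your version, replacing ``perfect'' by ``complete intersection'' throughout, runs into a real obstacle at the lifting step. You ask for a complete-intersection semiperfect $\tilde R\twoheadrightarrow R$ whose kernel carries pointwise nilpotent divided powers, in order to invoke Grothendieck--Messing. But Definition~\ref{Def:ci} requires a finite regular sequence, while $J=\Ker(R^\flat\to R)$ need not be finitely generated; and in any construction that contains $R^\flat$ as a subring, elements of $J$ remain non-nilpotent, so the kernel of $\tilde R\to R$ cannot carry divided powers at all. The appeal to Lemma~\ref{Le:PD-nil} does not help here: that lemma concerns the PD structure on $\Ker(\phi:R\to R)$ induced by a straight lift of $R$, not a PD structure on the kernel of a ring surjection to $R$. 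The paper's choice of $R^\flat$ and Illusie's theorem is not incidental---it is precisely what sidesteps this difficulty, since lifting along the pro-system $(\phi^n:R\to R)$ requires no PD structure on $J$. If you replace your hypothetical complete-intersection cover by $R^\flat$ and your Grothendieck--Messing lift by the Illusie lift, your outline becomes the paper's proof.
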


The existence of such a functor has been suggested in \cite[Remark 4.1.9]{Scholze-Weinstein}.
We call $\u M$ the crystalline Dieudonn\'e window of $G$.

\begin{proof}
This is similar to \cite[Th.\ 3.19]{Lau:Relation}.

Let $G\mapsto(M(G),\Filone M(G),F)$ be as defined in the theorem.
We have to find a functorial map $F_1:\Filone M(G)\to M(G)$ which gives a window $\u M(G)$,
and verify that $F_1$ is unique. 
If $A_{\cris}(R)$ is torsion free then $F_1$ and thus $\u M(G)$ are well-defined;
see \cite[Prop.\ 3.17]{Lau:Relation}.
This applies in particular when $R$ is perfect since then $A_{\cris}(R)=W(R)$.

In general let $\pi:R^\flat\to R$ be the projection.
We write $\pi^*$ for the base change functor of modules or windows
from $W(R^\flat)$ to $A{}_{\cris}(R)$.
Note that $p$-divisible groups can be lifted under $\phi:R\to R$ by \cite[thm.\ 4.4]{Illusie:BT}, 
and thus $p$-divisible groups can be lifted under $\pi$.
Let $G\in\BT(\Spec R)$ be given. We choose a lift $G_1\in\BT(\Spec R^\flat)$ of $G$. 
Then $M(G)=\pi^*M(G_1)$ as modules with $\Filone$ and $F$, 
and necessarily we have to define $\u M(G)=\pi^*\u M(G_1)$ as windows. 
We have to show that this construction of $F_1$ does not depend on the choice of $G_1$,
i.e.\ if $G_2\in\BT(\Spec R^\flat)$ is another lift of $G$, then the composite isomorphism
of modules
\[
\pi^*M(G_1)\cong M(G)\cong\pi^*M(G_2)
\]
preserves the $F_1$'s defined on the outer terms by the windows $\u M(G_i)$.

We want to lift the situation to perfect rings. More precisely, we claim that
one can find a commutative diagram of rings
\[
\xymatrix@M+0.2em{
S' \ar[r]^u \ar[d]_f & S \ar[d]^g \\
R^\flat \ar[r]^\pi & R
}
\]
where $S$ and $S'$ are perfect, 
and $p$-divisible groups $H_1,H_2\in\BT(\Spec S')$ 
together with an isomorphism $\alpha:u^*H_1\cong u^*H_2$ over $S$
and isomorphisms $f^*H_i\cong G_i$ over $R^\flat$ for $i=1,2$ such that $\alpha$ induces the given
isomorphism $\pi^*G_1\cong \pi^*G_2$ over $R$, i.e.\ the composition
\[
\pi^*G_1\cong\pi^*f^*H_1\cong g^*u^*H_1\xrightarrow{g^*\alpha}
g^*u^*H_2\cong\pi^*f^*H_2\cong\pi^*G_2
\]
is the given isomorphism.
Then the $F_1$'s of $H_1$ and of $H_2$ coincide over $S$ since $S$ is perfect,
and by base change under $g$ 
it follows that the $F_1$'s of $G_1$ and of $G_2$ coincide over $R$ as required.

Let us prove the claim. 
Let $G'$ be a lift of $G$ to $R^\flat$, for example $G'=G_1$.
Let $B=R^{\flat}[[Q]]$ be the universal deformation ring of $G'$ 
as in \S\ref{Se:rel-def-rings} and let $\GGG$ over $B$ be the universal deformation.
By Lemma~\ref{Le:deform-bd-nil}, $B$ represents the deformation functor 
$\Def_G$ on the category $\tilde\Nil_{R^{\flat}/R}$ of augmented algebras
$R^\flat\to A\to R$ such that the kernel of $A\to R$ is bounded nilpotent.
The system $(\phi^n:R\to R)_n$ is a pro-object of $\tilde\Nil_{R^{\flat}/R}$
with limit $R^\flat\to R$ in $\Aug_{R^\flat/R}$. 
Thus there are homomorphisms $\beta_i:B\to R^\flat$ in $\Aug_{R^\flat/R}$
with $\beta_i^*\GGG\cong G_i$ as deformations of $G$ over $R^\flat$.

We put $S=R^\flat$ with $g=\pi$ and 
$S'=B^{\per}=\varinjlim(B,\phi)$ with $u=\beta_1^{\per}$ and $f=\beta_2^{\per}$.
Let $H_1$ be the base change of $G_1$ under $R^\flat\to B\to S'$ and let $H_2$ be the
base change of $\GGG$ under $B\to S'$. 
Then $u^*H_1\cong G_1\cong u^*H_2$ and $f^*H_1\cong G_1$ and $f^*H_2\cong G_2$ 
as deformations of $G$.
This proves the claim; the required equality of isomorphisms $\pi^*G_1\cong\pi^*G_2$ is
automatic because the reduction map $\Hom(G_1,G_2)\to\End(G)$ is injective.
\end{proof}

The functors $\Phi_R^{\cris}$ 
are related with the functors $\Phi_A$ of \eqref{Eq:PhiA-semiperf} as follows.

\begin{Lemma}
\label{Le:PhiA-PhiR}
If $A$ is a lift of the semiperfect ring $R$, there is a natural isomorphism
$\varkappa^*\circ\Phi_R^{\cris}(G)\cong\Phi_A(G)$,
where $\varkappa$ is defined in \eqref{Eq:kappa-Acris-A}.
\end{Lemma}

\begin{proof}
The functor $\Phi_R^{\cris}$ without $F_1$ is given by the Dieudonn\'e crystal evaluated at $A_{\cris}(R)$.
Thus the functor $\varkappa^*\circ\Phi_R^{\cris}$ without $F_1$ is given by the Dieudonn\'e crystal
evaluated at $A$. 
Since $A$ is torsion free, for the frame $\u A$ the functor of forgetting $F_1$ is fully faithful,
and the lemma follows.
\end{proof}

\begin{Cor}
\label{Cor:str-PhiR}
If the semiperfect ring $R$ admits a straight lift $A$, the functor $\Phi_R^{\cris}$ is an equivalence
and coincides with the equivalence of Corollary~\ref{Co:BT-Acris-str}.
\end{Cor}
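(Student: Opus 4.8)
The plan is to deduce this directly from Lemma~\ref{Le:PhiA-PhiR} together with the two equivalences that already enter the construction of Corollary~\ref{Co:BT-Acris-str}. When $A$ is a straight lift, Proposition~\ref{Pr:str-kappa-cryst} shows that $\varkappa:\u A{}_{\cris}(R)\to\u A$ is crystalline, so the base change functor $\varkappa^*:\Win(\u A{}_{\cris}(R))\to\Win(\u A)$ is an equivalence; and Theorem~\ref{Th:PhiA-equiv} shows that $\Phi_A:\BT(\Spec R)\to\Win(\u A)$ is an equivalence, since a straight lift makes the divided powers on $\Ker(\phi)$ pointwise nilpotent by Lemma~\ref{Le:PD-nil}. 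By construction, the equivalence of Corollary~\ref{Co:BT-Acris-str} is the composite $(\varkappa^*)^{-1}\circ\Phi_A$.

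First I would invoke Lemma~\ref{Le:PhiA-PhiR}, which provides a natural isomorphism $\varkappa^*\circ\Phi_R^{\cris}\cong\Phi_A$ of functors $\BT(\Spec R)\to\Win(\u A)$. Applying a quasi-inverse of $\varkappa^*$ on the left then yields a natural isomorphism $\Phi_R^{\cris}\cong(\varkappa^*)^{-1}\circ\Phi_A$. The right-hand side is precisely the equivalence of Corollary~\ref{Co:BT-Acris-str}, so $\Phi_R^{\cris}$ coincides with that equivalence and is in particular itself an equivalence.

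There is no genuine obstacle here: the corollary is a formal consequence of results already established, amounting to cancelling the equivalence $\varkappa^*$ from the identity of Lemma~\ref{Le:PhiA-PhiR}. The only point requiring a moment of care is bookkeeping the direction of $\varkappa^*$ and checking that the isomorphism of Lemma~\ref{Le:PhiA-PhiR} is natural in $G$, which it is; this ensures that the identification $\Phi_R^{\cris}\cong(\varkappa^*)^{-1}\circ\Phi_A$ is an isomorphism of functors rather than merely an objectwise one, and hence that $\Phi_R^{\cris}$ really is the same equivalence.
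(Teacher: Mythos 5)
Your argument is correct and is essentially identical to the paper's proof: both deduce the result by combining Theorem~\ref{Th:PhiA-equiv} with Lemma~\ref{Le:PD-nil} (so $\Phi_A$ is an equivalence), Proposition~\ref{Pr:str-kappa-cryst} (so $\varkappa^*$ is an equivalence), and Lemma~\ref{Le:PhiA-PhiR} to cancel $\varkappa^*$ and identify $\Phi_R^{\cris}$ with the equivalence of Corollary~\ref{Co:BT-Acris-str}. No gaps.
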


\begin{proof}
If $A$ is a straight lift of $R$, the functor $\Phi_A$
is an equivalence by Theorem~\ref{Th:PhiA-equiv} together with Lemma~\ref{Le:PD-nil},
and the functor $\varkappa^*$ is an equivalence by Proposition~\ref{Pr:str-kappa-cryst}.
By Lemma~\ref{Le:PhiA-PhiR} it follows that $\Phi_R^{\cris}$ is an equivalence.
The final assertion is clear.
\end{proof}

\begin{Remark}
\label{Re:str-PhiR}
Corolary \ref{Cor:str-PhiR} is a special case of Theorem~\ref{Th:PhiR-iso-bal} below.
Corollary~\ref{Cor:str-PhiR} applies in particular 
when $R$ is a complete intersection or balanced; see Lemma~\ref{Le:exist-lift}.
For complete intersections, Corollary~\ref{Cor:str-PhiR} is essentially a restatement of
Corollary~\ref{Co:DR-ci}, but the balanced case contains new information.
\end{Remark}

\begin{Cor}
\label{Co:PhiR-iso-bal}
For an iso-balanced semiperfect ring $R$ the crystalline Dieu\-donn\'e functor
$\DD_R:\BT(\Spec R)\to\D(\Spec R)$ is fully faithful up to isogeny.
\end{Cor}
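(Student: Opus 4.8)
The plan is to deduce the statement from the fact that, for semiperfect rings admitting a straight lift, the crystalline window functor is already an equivalence (Corollary~\ref{Cor:str-PhiR}), together with two rigidity inputs along the nilpotent thickening attached to an iso-balanced ring. The functor $\DD_R$ factors as the composite of $\Phi_R^{\cris}$, which exists for every semiperfect ring by Theorem~\ref{Th:PhiR}, with the functor that forgets the window structure $(M,\Filone M,F,F_1)$ to the underlying Dieudonn\'e crystal $(M,F,V)$. First I would check that this forgetful functor is fully faithful up to isogeny. After inverting $p$ the Hodge filtration becomes vacuous, since $p\in\Filone A_{\cris}(R)$ forces $pM\subseteq\Filone M$ and hence $\Filone M[1/p]=M[1/p]$; moreover $F=pF_1$ on $\Filone M$, so $F_1$ is determined by $F$ up to isogeny. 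On the crystal side $V=pF^{-1}$ is likewise determined by $F$ up to isogeny, and the connection is recovered from $F$ because $R^\flat$ is perfect, so no horizontality condition intervenes. Thus on both sides a morphism up to isogeny is exactly an $F$-compatible map of the $A_{\cris}(R)[1/p]$-modules, and it suffices to prove that $\Phi_R^{\cris}$ is fully faithful up to isogeny.

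For this I would reduce the iso-balanced case to the balanced case. Let $R\to R^{\bal}$ be the universal homomorphism to a balanced semiperfect ring; since $R$ is iso-balanced this is an isogeny, and by Lemma~\ref{Le:isog-ker-nil} its kernel is nilpotent. The ring $R^{\bal}$ admits a straight lift by Lemma~\ref{Le:exist-lift}, so $\Phi_{R^{\bal}}^{\cris}$ is an equivalence by Corollary~\ref{Cor:str-PhiR}, and by the first paragraph $\DD_{R^{\bal}}$ is therefore fully faithful up to isogeny. Now $\DD$ commutes with the base change along $R\to R^{\bal}$. On the source side, the reduction map on homomorphisms of $p$-divisible groups along the nilpotent thickening $\Spec R^{\bal}\hookrightarrow\Spec R$ is injective with cokernel of bounded $p$-power torsion, hence an isomorphism after $\otimes\,\QQ$ (cf.\ \cite[Lemma~3.4]{Lau:Relation}). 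On the target side, Dieudonn\'e crystals are insensitive to the nilpotent thickening, so the restriction $\D(\Spec R)\to\D(\Spec R^{\bal})$ is fully faithful (cf.\ \cite{BBM}). Chaining these isomorphisms with the full faithfulness up to isogeny over $R^{\bal}$ yields the desired isomorphism $\Hom(G,H)\otimes\QQ\cong\Hom_{\D}(\DD_R H,\DD_R G)\otimes\QQ$.

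I expect the main obstacle to be the comparison of window morphisms and crystal morphisms up to isogeny in the first paragraph, in the case where $A_{\cris}(R)$ has $p$-torsion. One must verify carefully that after inverting $p$ an $F$-compatible map of modules over $A_{\cris}(R)$ automatically respects the Hodge filtration and the divided Frobenius $F_1$, and that crystal morphisms are detected by their evaluation at $A_{\cris}(R)$ equipped with $F$ alone; the relevant inputs, namely $pM\subseteq\Filone M$, $F=pF_1$, $V=pF^{-1}$, and triviality of the connection over the perfect ring $R^\flat$, all hold, but they must be combined so as to survive the $p$-torsion. By contrast, the two transfer statements along the nilpotent thickening are standard rigidity facts and present no essential difficulty.
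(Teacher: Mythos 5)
Your argument is correct and is essentially the proof in the paper: reduce to the balanced case, where $\Phi_R^{\cris}$ is an equivalence by Corollary~\ref{Cor:str-PhiR}, and then observe that passing from the window $(M,\Filone M,F,F_1)$ to the pair $(M,F)$ loses no information after inverting $p$ (using $pM\subseteq\Filone M$, $pF_1=F$, and the finiteness of $M$ over $A_{\cris}(R)$). The only cosmetic difference is that the paper handles the reduction to the balanced case by citing \cite[Prop.\ 4.1.5]{Scholze-Weinstein} (invariance of full faithfulness up to isogeny under isogenies of semiperfect rings), whereas you reprove that step directly via rigidity of homomorphisms of $p$-divisible groups along the nilpotent thickening $R\to R^{\bal}$ and invariance of crystals under nilpotent immersions; both are fine.
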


For $f$-semiperfect rings,  this is \cite[Th.~4.1.4]{Scholze-Weinstein}; 
see Remark~\ref{Rk:f-semiperfect}.

\begin{proof}
To prove the assertion
we may replace $R$ by an isogeneous ring; 
see \cite[Prop.\ 4.1.5]{Scholze-Weinstein}. 
Thus we can assume that $R$ is balanced, so $R$ has a straight lift.
For $G\in\BT(\Spec R)$ and $\Phi_R^{\cris}(G)=\u M=(M,\Filone M,F,F_1)$,
the Dieudenn\'e crystal $\DD(G)$ is given by the pair $(M,F)$.
For $G,G'\in\BT(\Spec R)$ we have to show that the composition
\[
\Hom(G,G')\otimes\QQ\to\Hom(\u M,\u M')\otimes\QQ\to\Hom((M,F),(M',F))\otimes\QQ
\]
is bijective. 
The first map is bijective without ${}\otimes\QQ$ by Corollary~\ref{Cor:str-PhiR},
the second map is bijective because the $A_{\cris}(R)$-module $M$ is of finite type.
\end{proof}


\section{The crystalline equivalence}

In this section we extend Corollary~\ref{Cor:str-PhiR} to arbitrary iso-balanced semiperfect rings.
Let $R$ be a semiperfect ring, and let $J=\Ker(R^\flat\to R)$.

\subsection{Weak lifts}

We use a weak version of lifts which may have torsion.

\begin{Defn}
A weak lift of $R$ is a $p$-adically complete ring $A$ with $A/pA=R$ which carries
a ring endomorphism $\sigma:A\to A$ that induces $\phi$ on $R$, and a $\sigma$-linear
map $\sigma_1:pA\to A$ with $\sigma_1(p)=1$.
\end{Defn}

\begin{Remark}
The maps $\sigma$ and $\sigma_1$ are unique if they exist.
This is analogous to Remark~\ref{Rk:lift}:
There is a unique homomorphism $\psi:W(R^\flat)\to A$ of extensions of $R$,
and $\psi$ commutes with $\sigma$. Since $\psi$ is surjective, $\sigma$ is unique.
Then $\sigma_1(px)=\sigma(x)$ is unique as well.
\end{Remark}

By definition, a weak lift $A$ of $R$ gives a PD frame $\u A=(A,pA,R,\sigma,\sigma_1)$.

\begin{Defn}
A weak lift $A$ of $R$ is called straight if $A=W(R^\flat)/J'$ such that $J$
is generated by elements $a$ with $[a]\in J'$.
\end{Defn}

\begin{Lemma}
\label{Le:kappa-str-weak-lift}
For each straight weak lift $A$ of $R$ there is a unique homomorphism of PD frames
\[
\varkappa:\u A{}_{\cris}(R)\to\u A
\]
over the identity of $R$.
\end{Lemma}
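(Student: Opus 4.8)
The plan is to construct $\varkappa$ by the universal property of $A_{\cris}(R)$ and then check it is a PD frame homomorphism, the uniqueness being automatic. First I would recall that $A_{\cris}(R)$ is the universal $p$-adic PD extension of $R$, so to produce a ring homomorphism $A_{\cris}(R)\to A$ over $R$ it suffices to exhibit a $p$-adic PD structure on the kernel of $A\to R$, namely on $pA$. But $pA$ already carries the canonical divided powers coming from $p$, and since $A$ is $p$-adically complete this is a $p$-adic PD extension; hence the universal property yields a unique ring map $\varkappa:A_{\cris}(R)\to A$ compatible with the augmentations to $R$. This is exactly the argument used in the torsion-free case before \eqref{Eq:kappa-Acris-A}, and nothing there needed $A$ to be torsion free.

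The next step is to verify that $\varkappa$ commutes with $\sigma$ and respects $\Filone$ and $\sigma_1$, so that it is a homomorphism of the PD frames $\u A{}_{\cris}(R)=(A_{\cris}(R),\Filone A_{\cris}(R),R,\sigma,\sigma_1)$ and $\u A=(A,pA,R,\sigma,\sigma_1)$. Commutation with $\sigma$ follows again from the universal property: both $\varkappa\circ\sigma$ and $\sigma\circ\varkappa$ are PD homomorphisms $A_{\cris}(R)\to A$ lifting $\phi:R\to R$, so they agree by uniqueness. That $\varkappa$ sends $\Filone A_{\cris}(R)=\Ker(A_{\cris}(R)\to R)$ into $pA=\Ker(A\to R)$ is immediate from compatibility with the augmentations. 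The condition on $\sigma_1$ is then forced: on $A_{\cris}(R)$ one has $p\sigma_1=\sigma$, and on the weak lift $A$ one has $\sigma_1(px)=\sigma(x)$, i.e. again $p\sigma_1=\sigma$; since $p$ is a non-zero divisor on $A_{\cris}(R)/(\text{its $p$-torsion})$ one cannot simply cancel $p$, so instead I would argue directly that for $x\in\Filone A_{\cris}(R)$ the element $\varkappa(\sigma_1(x))$ and $\sigma_1(\varkappa(x))$ both satisfy $p\cdot(-)=\sigma(\varkappa(x))=\varkappa(\sigma(x))$, and that $\sigma_1$ on $A$ is the \emph{unique} such division because $A$ is a weak lift. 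This last point is where the hypothesis that $A$ is a straight weak lift, together with the explicit description $A=W(R^\flat)/J'$, is used: it guarantees that $\sigma_1$ on $pA$ is well-defined and that the divided powers on $pA$ match those transported from $\Filone A_{\cris}(R)$.

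The part I expect to require the most care is precisely the comparison of $\sigma_1$ in the presence of $p$-torsion in $A_{\cris}(R)$, since one cannot invert or cancel $p$. The clean way to handle this is to observe that the PD frame structures are both determined by the requirement $p\sigma_1=\sigma$ on the respective $\Filone$ ideals, and that $\varkappa$ intertwines the $\sigma$'s; combined with the fact that $\sigma_1$ on the weak lift is the unique $\sigma$-linear map on $pA$ with $\sigma_1(p)=1$, one gets $\varkappa\circ\sigma_1=\sigma_1\circ\varkappa$ on all of $\Filone A_{\cris}(R)$ by reducing to the generators $a^{[n]}$ of the PD ideal and using the explicit formula \eqref{Eq:sigma1-Acris} for $\sigma_1$ on $A_{\cris}(R)$ together with $\varkappa([a])\in pA$ for $a\in J$ with $[a]\in J'$. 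Uniqueness of $\varkappa$ as a PD frame homomorphism then follows from uniqueness of the underlying ring homomorphism, which is built into the universal property of $A_{\cris}(R)$.
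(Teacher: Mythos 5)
Your overall strategy is the paper's: obtain $\varkappa$ from the universal property of $A_{\cris}(R)$ applied to the $p$-adic PD extension $A\to R$ (canonical divided powers on $pA$), get commutation with $\sigma$ and uniqueness for free, and then verify $\varkappa\circ\sigma_1=\sigma_1\circ\varkappa$ on the topological generators $p$ and $[x]^{[n]}$ of $\Filone A_{\cris}(R)$ using the explicit formula \eqref{Eq:sigma1-Acris}. However, two of your steps do not hold up as written. The intermediate argument that $\varkappa(\sigma_1(y))$ and $\sigma_1(\varkappa(y))$ must agree because both become $\varkappa(\sigma(y))$ after multiplication by $p$ proves nothing here: a weak lift is allowed to have $p$-torsion, so $pu=pv$ does not imply $u=v$, and the uniqueness of $\sigma_1$ \emph{as a map} on $pA$ does not identify two given elements of $A$. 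You half-acknowledge this, so I take the generator computation to be your actual argument.

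There, the substantive gap is that you only invoke $\varkappa([x])\in pA$ for generators $x$ of $J$ with $[x]\in J'$, and that is not sufficient. Writing $\varkappa([x])=pb$, formula \eqref{Eq:sigma1-Acris} gives $\varkappa(\sigma_1([x]^{[n]}))=\frac{(pn)!}{p\cdot n!}\gamma_{pn}(pb)=\frac{p^{pn-1}}{n!}b^{pn}$, whereas $\sigma_1(\varkappa([x]^{[n]}))=\sigma_1\bigl(\frac{p^n}{n!}b^n\bigr)=\frac{p^{n-1}}{n!}\sigma(b)^n$; already for $n=1$ these are $p^{p-1}b^{p}$ and $\sigma(b)$, which differ in general. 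What straightness actually buys is the exact vanishing $\varkappa([x])=0$: the composite $W(R^\flat)\to A_{\cris}(R)\xrightarrow{\varkappa}A$ is a homomorphism of extensions of $R$ from $W(R^\flat)$ to $A$, hence is the projection $W(R^\flat)\to W(R^\flat)/J'$ by its uniqueness, so $[x]\in J'$ forces $\varkappa([x])=0$ and therefore $\varkappa([x]^{[n]})=\gamma_n(0)=0$ for $n\ge1$. Then both sides of the comparison vanish identically and no cancellation of $p$ is ever needed. With this one correction your proof coincides with the paper's.
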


\begin{proof}
The universal property of $A_{\cris}(R)$ gives a PD homomorphism $\varkappa:A_{\cris}\to A$
over the identity of $R$, and $\varkappa$ commutes with $\sigma$.
To show that $\varkappa$ is a frame homomorphism it suffices to verify that
$\varkappa(\sigma_1(y))=\sigma_1(\varkappa(y))$ for generators $y$
of the ideal $\Filone A_{\cris}(R)$. 
A set of generators of this ideal is formed by $p$ and the elements $[x]^{[n]}$ 
for generators $x\in J$ and $n\ge 1$.
We have $\varkappa(\sigma_1(p))=1=\sigma_1(\varkappa(p))$, 
moreover \eqref{Eq:sigma1-Acris} gives
\begin{equation}
\label{Eq:kappa-str-weak-lift-1}
\varkappa(\sigma_1([x]^{[n]}))=\frac{(np)!}{p\cdot n!}\varkappa([x]^{[np]})
=\frac{(np)!}{p\cdot n!}\varkappa([x])^{[np]}
\end{equation}
and
\begin{equation}
\label{Eq:kappa-str-weak-lift-2}
\sigma_1(\varkappa([x]^{[n]}))=\sigma_1(\varkappa([x])^{[n]}).
\end{equation}
Since the weak lift $A$ is straight, the generators $x$ of $J$ can be chosen such that $[x]\in J'$.
Then $\varkappa([x])=0$, and
\eqref{Eq:kappa-str-weak-lift-1} and \eqref{Eq:kappa-str-weak-lift-2} are both zero.
\end{proof}

Next we observe that every semiperfect ring has many straight weak lifts.

\begin{Defn}
A descending sequence of ideals $J_0\supseteq J_1\supseteq J_2\supseteq\ldots$ 
of $R^\flat$ is called admissible if $J_0=J$ and $J_i^p\subseteq J_{i+1}$.
In this case let $W(J_*)\subseteq W(R^\flat)$ be the set of all Witt vectors 
$a=(a_0,a_1,a_2,\ldots)$ with $a_i\in J_i$,
which is an ideal by Lemma~\ref{Le:AJ*} below, and let $A(J_*)=W(R^\flat)/W(J_*)$.
\end{Defn}

\begin{Lemma}
\label{Le:AJ*}
Let $J_*$ be an admissible sequence of ideals of $R^\flat$.
Then $W(J_*)$ is an ideal of $W(R^\flat)$, and the ring $A(J_*)$ is a straight weak lift of $R$.
\end{Lemma}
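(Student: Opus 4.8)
The plan is to establish Lemma~\ref{Le:AJ*} in two parts: first that $W(J_*)$ is an ideal of $W(R^\flat)$, and then that the quotient $A(J_*)=W(R^\flat)/W(J_*)$ is a straight weak lift of $R$. For the ideal property, I would argue as in the proof of Lemma~\ref{Le:exist-lift} in the balanced case, using the homogeneity of the Witt addition and multiplication polynomials. Recall that if the variable $x_i$ is assigned weight $p^i$, then the polynomial $g_n^+$ computing the $n$-th component of a Witt sum is isobaric of weight $p^n$, and $g_n^\times$ is bi-isobaric of bidegree $(p^n,p^n)$. Given $a=(a_0,a_1,\ldots)\in W(J_*)$ with $a_i\in J_i$ and an arbitrary $b=(b_0,b_1,\ldots)\in W(R^\flat)$, I would check that the $n$-th component of $a\cdot b$ lies in $J_n$. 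Each monomial in $g_n^\times$ has the form $c\cdot\prod a_i^{e_i}\prod b_j^{f_j}$ with $\sum e_i p^i=p^n$, and I claim that $\prod a_i^{e_i}\in J_n$: since each $a_i\in J_i$ and the admissibility condition $J_i^p\subseteq J_{i+1}$ gives $J_i^{p^{n-i}}\subseteq J_n$, the weight bound $\sum e_i p^i=p^n$ forces the product into $J_n$. Closure under addition is analogous and in fact easier. This is the step I expect to be the main obstacle, since it requires care in bookkeeping the weights to confirm that admissibility is exactly the combinatorial condition needed.

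Next I would verify that $A(J_*)$ is a weak lift of $R$. Since $W(J_*)\subseteq W(R^\flat)$ consists of Witt vectors with $a_0\in J_0=J$, reducing modulo $p$ (i.e.\ taking the zeroth component) sends $W(J_*)$ onto $J$, so $A(J_*)/pA(J_*)=R^\flat/J=R$. For $p$-adic completeness and the endomorphisms $\sigma,\sigma_1$, I would invoke the mechanism of Remark~\ref{Rk:lift} and its weak-lift analogue: the ideal $W(J_*)$ is manifestly stable under the Witt-vector Frobenius $\sigma$, since $\sigma$ shifts components in a way compatible with the descending chain (using again $J_i^p\subseteq J_{i+1}$), so $\sigma$ descends to $A(J_*)$ and induces $\phi$ on $R$. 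The map $\sigma_1$ on $pA(J_*)$ with $\sigma_1(p)=1$ is then determined as in the weak-lift definition. One must check $p$-adic completeness, which follows because $W(J_*)$ is a closed ideal containing no obstruction to completeness; concretely $W(R^\flat)$ is $p$-adically complete and $W(J_*)$ is closed, so the quotient is complete.

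Finally, straightness in the sense of the relevant definition requires that $A(J_*)=W(R^\flat)/J'$ with $J'=W(J_*)$ such that $J$ is generated by elements $a$ with $[a]\in J'$. Here I would observe that for any $a\in J=J_0$, the Teichm\"uller representative $[a]=(a,0,0,\ldots)$ has components $a\in J_0$ and $0\in J_i$ for $i\ge 1$, hence $[a]\in W(J_*)=J'$ automatically. Therefore every element of $J$ satisfies $[a]\in J'$, and in particular $J$ is generated by such elements, giving straightness directly. This last step is essentially immediate once the ideal structure is in place, so the weight of the argument rests on the homogeneity computation in the first paragraph.
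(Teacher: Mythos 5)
Your argument for the ideal property, for $p$-adic completeness via closedness of $W(J_*)$, for $A(J_*)/p=R$, and for straightness via $[a]=(a,0,0,\ldots)\in W(J_*)$ is the same as the paper's (the paper simply refers back to the homogeneity computation in the proof of Lemma~\ref{Le:exist-lift}, with $J_i^p\subseteq J_{i+1}$ replacing $\phi(J)=J^p$). The weight bookkeeping you flag as the main obstacle does go through: for a monomial of $g_n^\times$ one has $\sum e_ip^i=p^n$ in the first group of variables, and iterating $J_i^p\subseteq J_{i+1}$ gives $\prod J_i^{e_i}\subseteq J_n$.

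There is, however, one genuine gap: you never actually establish the existence of $\sigma_1$, which is the only point where ``weak lift'' has content beyond ``quotient ring with a Frobenius lift.'' The Remark following the definition of weak lifts gives \emph{uniqueness} of $\sigma,\sigma_1$, not existence, and since $A(J_*)$ may have $p$-torsion, the formula $\sigma_1(px)=\sigma(x)$ is not automatically well defined. Concretely, on $W(R^\flat)$ with $R^\flat$ perfect one has $p\cdot(x_0,x_1,\ldots)=(0,x_0^p,x_1^p,\ldots)$, so $\sigma_1$ is the shift $(0,c_1,c_2,\ldots)\mapsto(c_1,c_2,\ldots)$ on $pW(R^\flat)=\{a:a_0=0\}$; for $\sigma_1$ to descend to $pA(J_*)$ one must check $\sigma_1\bigl(W(J_*)\cap pW(R^\flat)\bigr)\subseteq W(J_*)$, i.e.\ that $a_0=0$ and $a_i\in J_i$ imply $(a_1,a_2,\ldots)\in W(J_*)$. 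This is exactly where the \emph{descending} condition $J_{i+1}\subseteq J_i$ enters, and it is the one hypothesis of admissibility your write-up never uses. Relatedly, your justification for $\sigma$-stability is slightly off: on $W(R^\flat)$ with $R^\flat$ perfect, $\sigma$ does not shift components but sends $(a_i)$ to $(a_i^p)$, so $W(J_*)$ is $\sigma$-stable merely because each $J_i$ is an ideal; the shift, and hence the real verification, lives in $\sigma_1$. Once that check is added, your proof matches the paper's.
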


Let $\u A(J_*)=(A(J_*),pA(J_*),R,\sigma,\sigma_1)$ be the corresponding PD frame.

\begin{proof}
As in the proof of Lemma~\ref{Le:exist-lift} we see that $W(J_*)$ is an ideal.
This ideal is closed in $W(R^\flat)$, and thus $A=A(J_*)$ is $p$-adically complete.
Since $J_0=J$ we have $A/pA=R$.
Clearly $W(J_*)$ is stable under the endomorphism $\sigma$ of $W(R^\flat)$, 
so $\sigma$ induces $\sigma:A\to A$.
We have $pA=pW(R^\flat)/(W(J_*)\cap pW(R^\flat))$,
and an element $a\in W(J_*)$ lies in $pW(R^\flat)$ iff $a_0=0$.
Since the sequence $J_*$ is descending we have
$\sigma_1(W(J_*)\cap pW(R^\flat))\subseteq W(J_*)$, 
so $\sigma_1$ induces $\sigma_1:pA\to A$.
It follows that $A$ is a weak lift,
which is straight because for every $a\in J$ we have $[a]\in W(J_*)$.
\end{proof}

Windows over $\u A(J_*)$ are insensitive to bounded variations of $J_*$ :

\begin{Lemma}
\label{Le:AJ*-cryst}
Let $J_*$ and $J'_*$ be two admissible sequences of ideals of $R^\flat$ such
that there is an $n\ge 0$ with $J'_{i+n}\subseteq J_i\subseteq J'_i$ for all $i\ge 0$.
Then there is a natural frame homomorphism $\pi:\u A(J_*)\to\u A(J'_*)$, which is crystalline.
\end{Lemma}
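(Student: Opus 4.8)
The plan is to realize $\pi$ as the reduction map attached to the inclusion $W(J_*)\subseteq W(J'_*)$ and then to prove it is crystalline by the deformation lemma \cite[Th.\ 3.2]{Lau:Frames}, for which the one substantial point is the nilpotence of $\sigma_1$ on the kernel. Since $J_i\subseteq J'_i$ for all $i$, every Witt vector with $i$-th component in $J_i$ has $i$-th component in $J'_i$, so $W(J_*)\subseteq W(J'_*)$, and the identity of $W(R^\flat)$ induces a surjection $A(J_*)\to A(J'_*)$. This map commutes with $\sigma$, carries $pA(J_*)$ into $pA(J'_*)$, and is compatible with $\sigma_1$ because both $\sigma_1$'s are induced by the divided Frobenius of $W(R^\flat)$; hence it is a strict frame homomorphism $\pi:\u A(J_*)\to\u A(J'_*)$, manifestly natural in $R$ and in the two sequences. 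Let $I=W(J'_*)/W(J_*)$ be its kernel. Every $a\in W(J'_*)$ has $a_0\in J'_0=J$, so its image in $R=R^\flat/J$ vanishes; thus $I\subseteq pA(J_*)=\Filone A(J_*)$. The deformation lemma then reduces the claim to showing that $\sigma_1$ stabilises $I$ and is nilpotent on it.

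Over the perfect ring $R^\flat$ the Frobenius $\sigma$ of $W(R^\flat)$ is bijective and $p=VF$, so on $pW(R^\flat)=VW(R^\flat)$ one has $\sigma_1=V^{-1}$; concretely $\sigma_1(0,c_1,c_2,\ldots)=(c_1,c_2,\ldots)$. I introduce the following bookkeeping: call a Witt vector \emph{of level $\ge m$} if its $i$-th component lies in $J'_{i+m}$ for all $i\ge0$, i.e.\ if it lies in $W(J'_{*+m})$ where $(J'_{*+m})_i=J'_{i+m}$. The sequence $J'_{*+m}$ is again descending with $(J'_{i+m})^p\subseteq J'_{i+m+1}$, so by (the proof of) Lemma~\ref{Le:AJ*} the set $W(J'_{*+m})$ is an ideal of $W(R^\flat)$. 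Now take a class in $I$ represented by some $w$ of level $\ge m$ with $0\le m\le n-1$. Its leading term satisfies $w_0\in J'_m\subseteq J'_0=J=J_0$, so $[w_0]=(w_0,0,0,\ldots)$ lies in $W(J_*)$; subtracting it does not change the class in $I$, while $w-[w_0]$ is still of level $\ge m$ and has vanishing leading component. Applying $\sigma_1=V^{-1}$ shifts the components down, so $\sigma_1(w-[w_0])$ has $i$-th component in $J'_{i+1+m}$ and is therefore of level $\ge m+1$. In particular $\sigma_1(I)\subseteq I$.

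Iterating this step $n$ times turns an arbitrary class of $I$ (level $\ge0$) into a representative of level $\ge n$, whose $i$-th component lies in $J'_{i+n}\subseteq J_i$ by hypothesis; such a Witt vector lies in $W(J_*)$, so its class in $I$ is zero. Hence $\sigma_1^n=0$ on $I$, and $\pi$ is crystalline by \cite[Th.\ 3.2]{Lau:Frames}. (If one prefers to avoid the possible $p$-torsion of $A(J_*)$ when invoking the deformation lemma, one interpolates by the frames with underlying ring $A(J_*)/p^kI$ and passes to the limit exactly as in the proof of Proposition~\ref{Pr:str-kappa-cryst}, using that $\varprojlim_k A(J_*)/p^kI=A(J_*)$.) The only delicate point is the level bookkeeping of the preceding paragraph: because Witt addition mixes components through carries, the assertion that $w-[w_0]$ and its shift stay inside the ideals $W(J'_{*+m})$ rests on the homogeneity of the Witt polynomials together with $(J'_{i+m})^p\subseteq J'_{i+m+1}$, exactly as in Lemma~\ref{Le:exist-lift} and Lemma~\ref{Le:AJ*}. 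Once this is granted, the nilpotence of $\sigma_1$, and hence the conclusion, are immediate.
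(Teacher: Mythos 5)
Your proof is correct, and its skeleton coincides with the paper's: realize $\pi$ via the inclusion $W(J_*)\subseteq W(J'_*)$, identify the kernel $\Fa=W(J'_*)/W(J_*)$, and feed a finite filtration of $\Fa$ into the deformation lemma \cite[Th.\ 3.2]{Lau:Frames}. Where you genuinely differ is in the choice of that filtration. The paper interpolates between $J'_*$ and $J_*$ by $K_{m,i}=J'_i\cap\phi^{m-n}(J_i)$ and checks that the ideals $\Fa_m=W(K_{m,*})/W(J_*)$ are stable under $\sigma_1$ and satisfy $\sigma(\Fa_m)\subseteq\Fa_{m+1}$. You instead filter by the shifted sequences, taking $I_m$ to be the image of $W(J'_{*+m})$ in $\Fa$, and your computation --- subtract the Teichm\"uller lift of the leading component (which lies in $W(J_*)$ since $J'_m\subseteq J=J_0$), then apply $\sigma_1=V^{-1}$ --- establishes the stronger relation $\sigma_1(I_m)\subseteq I_{m+1}$, from which $\sigma_1^n=0$ on $\Fa$ and the hypotheses of the deformation lemma are immediate (note $\sigma(I_m)=p\sigma_1(I_m)\subseteq I_{m+1}$ also follows, and $I_n=0$ because $J'_{i+n}\subseteq J_i$). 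You correctly isolate the one delicate point, namely that the shifted sets $W(J'_{*+m})$ are ideals; this is exactly the homogeneity argument of Lemmas~\ref{Le:exist-lift} and~\ref{Le:AJ*}, which applies verbatim since $J'_{*+m}$ is again descending with $(J'_{i+m})^p\subseteq J'_{i+m+1}$. Your filtration is arguably the more transparent of the two. One small remark: the parenthetical worry about $p$-torsion of $A(J_*)$ is beside the point --- the deformation lemma is applied here, as elsewhere in the paper, with no torsion-freeness hypothesis, only the $\sigma_1$-nilpotence of the (finite-length) filtration of the kernel, which you have already verified --- so that detour can be dropped.
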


\begin{proof}
The homomorphism $\pi$ exists because $J_i\subseteq J_i'$.
Let $\Fa=\Ker(\pi)=W(J'_*)/W(J_*)$.
Then $\sigma_1$ induces an endomorphism of $\Fa$,
and $(\sigma_1)^n$ is zero on $\Fa$ because $J_{i+n}'\subseteq J_i$.
The result follows from the deformation lemma \cite[Thm.\ 3.2]{Lau:Frames}
if we find a sequence of ideals $\Fa=\Fa_0\supseteq\ldots\supseteq \Fa_n=0$
which are stable under $\sigma_1$ such that $\sigma(\Fa_m)\subseteq \Fa_{m+1}$ for $m<n$.

This sequence can be constructed as follows.
We have $\phi^n(J'_i)\subseteq J_i'^{p^n}\subseteq J'_{i+n}\subseteq J_i$ and thus
$J_i\subseteq J_i'\subseteq\phi^{-n}(J_i)$.
For each $m$ with $0\le m\le n$ let $K_{m,i}=J_i'\cap\phi^{m-n}(J_i)$.
Then $K_{m,*}$ is an admissible  sequence, 
moreover $J'_i=K_{0,i}\supseteq K_{1,i}\supseteq\ldots\supseteq  K_{n,i}=J_i$
and thus $W(J'_*)=W(K_{0,*})\supseteq\ldots\supseteq W(K_{n,*})=W(J_*)$.
Let $\Fa_m=W(K_{m,*})/W(J_*)$.
Then $\Fa_m$ is stable under $\sigma_1$ because $K_{m,*}$ is a decreasing
sequence; see the proof of Lemma~\ref{Le:AJ*}.
We have $\sigma(\Fa_m)\subseteq \Fa_{m+1}$ because $\phi(K_{m,i})\subseteq K_{m+1,i}$.
\end{proof}

\subsection{The passage to $A_{\cris}$}

\begin{Prop}
\label{Pr:isobal-kappa-cryst}
Assume that $R$ is iso-balanced, and let $J_i=J^{p^i}$ for all $i$.
Then the frame homomorphism 
$\varkappa:\u A{}_{\cris}(R)\to\u A(J_*)$ is crystalline.
\end{Prop}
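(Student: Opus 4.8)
The plan is to reduce the iso-balanced case to the straight-lift situation already handled in Proposition~\ref{Pr:str-kappa-cryst}, by exploiting the freedom in the choice of admissible sequence provided by Lemma~\ref{Le:AJ*-cryst}. The key point is that for an iso-balanced ring, the sequence $J_i=J^{p^i}$ is, up to a bounded shift, comparable to the sequence $\phi^i(J)$ that underlies the balanced straight weak lift. First I would recall from Lemma~\ref{Le:bal-J} and its proof that balancedness is exactly the condition $\phi(J)=J^p$, so that for a genuinely balanced $R$ one has $J^{p^i}=\phi^i(J)$ on the nose, and the weak lift $A(J_*)$ with $J_i=J^{p^i}$ agrees with the straight lift of Lemma~\ref{Le:exist-lift}; for that lift Proposition~\ref{Pr:str-kappa-cryst} already gives that $\varkappa$ is crystalline.

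To handle the general iso-balanced case I would first establish the boundedness of the comparison. Since $R$ is iso-balanced, the universal map $R\to R^{\bal}$ is an isogeny, so by Lemma~\ref{Le:isog-ker-nil} its kernel is nilpotent; translated back to $R^\flat$, writing $J^{\bal}$ for the ideal defining $R^{\bal}$ as in the Remark following Lemma~\ref{Le:bal-J}, this means $J^{\bal}$ is the ascending union of the $\phi^{-n}(J)^{p^n}$ and differs from $J$ by a $\phi$-bounded amount. The effect on the two admissible sequences is that $J_i=J^{p^i}$ and the sequence $J_i'=\phi^i(J^{\bal})$ attached to the balanced quotient satisfy inclusions of the form $J_{i+n}'\subseteq J_i\subseteq J_i'$ for some fixed $n$, which is precisely the hypothesis of Lemma~\ref{Le:AJ*-cryst}. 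That lemma then furnishes a crystalline frame homomorphism $\pi:\u A(J_*)\to\u A(J_*')$.

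With the two lifts linked by a crystalline $\pi$, I would assemble a commutative triangle of frames over $R$: the map $\varkappa:\u A{}_{\cris}(R)\to\u A(J_*)$, the map $\pi:\u A(J_*)\to\u A(J_*')$, and the composite $\varkappa':\u A{}_{\cris}(R)\to\u A(J_*')$, which is the crystalline comparison for the balanced straight lift and hence is crystalline by Proposition~\ref{Pr:str-kappa-cryst} (after replacing $R$ by its balanced isogeny quotient and checking the naturality of $A_{\cris}$ in $R$ under isogenies via the strict frame homomorphism $\u A{}_{\cris}(R)\to\u A{}_{\cris}(R^{\bal})$). Since $\pi$ and $\varkappa'$ induce equivalences of window categories and $\pi^*\circ\varkappa^*=\varkappa'^*$ up to natural isomorphism, the two-out-of-three property for equivalences forces $\varkappa^*$ to be an equivalence as well, i.e.\ $\varkappa$ is crystalline.

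The main obstacle I expect is verifying the boundedness inclusions $J_{i+n}'\subseteq J_i\subseteq J_i'$ rigorously, because passing from the isogeny statement about kernels in $R$ (nilpotence from Lemma~\ref{Le:isog-ker-nil}) to a uniform $\phi$-bounded comparison of the \emph{powers} $J^{p^i}$ against the balanced sequence requires controlling how the defining ideals behave under the perfection $R^\flat$ and under iterated Frobenius; in particular one must confront the subtlety that iso-balancedness is only an isogeny condition, so the comparison is only valid after a fixed shift $n$, and it is exactly this shift that Lemma~\ref{Le:AJ*-cryst} is designed to absorb. A secondary but genuine point is the compatibility of the whole picture with the base change $\u A{}_{\cris}(R)\to\u A{}_{\cris}(R^{\bal})$, which one needs in order to invoke Proposition~\ref{Pr:str-kappa-cryst} for the balanced quotient rather than for $R$ itself.
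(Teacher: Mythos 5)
Your proposal has a genuine gap, and it occurs exactly at the step where you leave the balanced case. The sequence $J'_i=\phi^i(J^{\bal})$ is an admissible sequence \emph{for $R^{\bal}$}, not for $R$: its zeroth term is $J^{\bal}$, not $J$. Lemma~\ref{Le:AJ*-cryst} compares two admissible sequences for the \emph{same} ring (both starting at $J$), so that the frame homomorphism it produces lies over $\id_R$ and the deformation lemma \cite[Thm.\ 3.2]{Lau:Frames} applies; your $\pi:\u A(J_*)\to\u A(J'_*)$ would instead lie over the non-identity surjection $R\to R^{\bal}$, which is outside the scope of that lemma. Worse, your two-out-of-three argument needs the composite $\u A{}_{\cris}(R)\to\u A(J'_*)$ to be crystalline, and reducing this to Proposition~\ref{Pr:str-kappa-cryst} for $R^{\bal}$ forces you to know that $\u A{}_{\cris}(R)\to\u A{}_{\cris}(R^{\bal})$ is crystalline. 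That statement is essentially equivalent to (part of) Theorem~\ref{Th:PhiR-iso-bal}, whose proof in the paper \emph{uses} the present proposition and handles the change of base ring $R'\to R$ by Grothendieck--Messing on the $p$-divisible group side, not by a frame-theoretic deformation lemma. So the proposed reduction is circular.

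The paper avoids this by never changing the base ring: it chooses an isogeny $R\to R'$ with $R'$ balanced and kernel killed by $\phi^n$, sets $K_i=J\cap\phi^i(J')$ --- note $K_0=J$, so $A(K_*)$ is a straight weak lift of $R$ itself --- and compares $J_*$ with $K_*$ by Lemma~\ref{Le:AJ*-cryst} legitimately over $\id_R$. The remaining and essential work, entirely missing from your proposal, is to prove directly that $\varkappa':\u A{}_{\cris}(R)\to\u A(K_*)$ is crystalline: one shows that the torsion of $A(K_*)$ is killed by $p^n$, that the kernel $N$ of $\varkappa'$ carries the induced $p$-adic topology, and that $\sigma_1$ is nilpotent on $N/pN$ (combining the divided-power computation \eqref{Eq:sigma1-Acris} with the fact that $\phi^i(K_n)=K_{n+i}$), and then applies the deformation lemma to the tower $\u A{}_{\cris}(R)/p^mN\to\u A(K_*)$. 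You cannot substitute Proposition~\ref{Pr:str-kappa-cryst} here, because that proposition is about honest (torsion-free) lifts, whereas $A(K_*)$ is only a weak lift with $p$-torsion; the bounded-torsion analysis of Lemma~\ref{Le:N} is precisely what replaces it. Your observation that in the genuinely balanced case $J^{p^i}=\phi^i(J)$, so that $\u A(J_*)$ is the straight lift and Proposition~\ref{Pr:str-kappa-cryst} applies, is correct --- but that is the easy half of the statement.
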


The homomorphism $\varkappa$ is given by Lemma~\ref{Le:kappa-str-weak-lift}.
See also Proposition~\ref{Pr:str-kappa-cryst}. 

\begin{proof}
Let $R\to R'$ be an isogeny with balanced $R'$ whose kernel is annihilated by $\phi^n$.
Then $R'=R^\flat/J'$ with $\phi(J')=J'^p$ and $\phi^n(J')\subseteq J\subseteq J'$.
Let $K_i=J\cap\phi^i(J')$ for $i\ge 0$. The sequence $K_*$ is admissible. 
For $i\ge 0$ we have $K_{n+i}=\phi^{n+i}(J')\subseteq J_i\subseteq K_i$.
Thus the natural frame homomorphism $\pi:\u A(J_*)\to\u A(K_*)$
is crystalline by Lemma~\ref{Le:AJ*-cryst},
and it suffices to show that the composition
\[
\varkappa'=\pi\circ\varkappa:\u A{}_{\cris}(R)\to\u A(K_*)
\]
is crystalline. Let $N=\Ker(\varkappa')$.

\begin{Lemma}
\label{Le:N} 
\begin{enumerate}
\item
\label{Le:N-1}
The torsion of $A(K_*)$ is annihilated by $p^n$.
\item
\label{Le:N-2}
For $i\ge 0$ we have $N\cap p^{n+i}A_{\cris}(R)\subseteq p^iN$,
in particular the $p$-adic topology of\/ $N$ is induced by
the $p$-adic topology of $A_{\cris}(R)$.
\item
\label{Le:N-3}
The endomorphism $\sigma_1:N/pN\to N/pN$ is nilpotent.
\end{enumerate}
\end{Lemma}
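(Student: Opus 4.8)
The plan is to treat the three assertions in order: the first is a direct Witt-vector computation, the second a formal consequence of it, and the third—the heart of the lemma—a growth-of-$p$-valuation argument for $\sigma_1$ that feeds on the second assertion.

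For the first assertion I would describe the torsion of $A(K_*)$ explicitly. Since $R^\flat$ is perfect, multiplication by $p$ on $W(R^\flat)$ sends $a=(a_0,a_1,\dots)$ to $(0,\phi(a_0),\phi(a_1),\dots)$, so $\bar a\in A(K_*)=W(R^\flat)/W(K_*)$ is torsion iff $\phi^m(a_j)\in K_{m+j}$ for all $j$ and some $m\ge0$. Because $K_{m+j}\subseteq\phi^{m+j}(J')$ and $\phi$ is bijective on $R^\flat$, this forces $a_j\in\phi^j(J')$; then $\phi^n(a_j)\in\phi^{n+j}(J')$, which lies in $J$ by the inclusion $\phi^n(J')\subseteq J$ and is therefore in $J\cap\phi^{n+j}(J')=K_{n+j}$. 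Hence $p^n\bar a=0$. The second assertion is then immediate: if $x=p^{n+i}y\in N$ then $p^{n+i}\varkappa'(y)=0$, so $\varkappa'(y)$ is killed by $p^n$ by the first assertion, whence $p^ny\in N$ and $x=p^i(p^ny)\in p^iN$; the cofinality of the filtrations $p^kN$ and $N\cap p^kA_{\cris}(R)$ then gives the statement about topologies.

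For the third assertion I would use that $N$ is a divided power ideal which, as a closed PD-ideal, is generated by the images under the structure map $\iota:W(R^\flat)\to A_{\cris}(R)$ of $W(K_*)$; hence it suffices to bound $\sigma_1^k$ on the divided powers $[a]^{[m]}$ with $a\in J$ and on the elements $\iota(w)$ with $w\in W(K_*)$. On divided powers, \eqref{Eq:sigma1-Acris} gives $\sigma_1([a]^{[m]})=\frac{(pm)!}{p\cdot m!}[a]^{[pm]}$, and a short factorial count shows the coefficient has $p$-valuation exactly $m-1$; thus once the Teichmüller $[a]$ has been turned into the order-$p$ divided power $[a]^{[p]}$ by one application of $\sigma_1$, each further application strictly raises the $p$-valuation, so $\sigma_1^k([a]^{[m]})\in p^{n+1}A_{\cris}(R)$ for $k$ large in a way depending only on $p$ and $n$. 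For a Witt element, using $V^i[b]=p^i[\phi^{-i}b]$ and $\sigma_1(p^iz)=p^{i-1}\sigma(z)$, one computes $\sigma_1(\iota(w))=(p-1)![w_0]^{[p]}+\iota(\mathrm{shift}(w))$, so $\sigma_1$ converts the leading coordinate into an order-$p$ divided power and shortens the Witt vector. Here the second assertion enters twice: first to reduce, modulo $pN$, any $\iota(w)$ to its truncation $\iota(\sum_{i\le n}V^i[w_i])$, the tail lying in $N\cap p^{n+1}A_{\cris}(R)\subseteq pN$; and second to conclude that any element of $N$ of $p$-valuation $\ge n+1$ vanishes in $N/pN$. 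Combining these, after at most $n+1$ applications of $\sigma_1$ only divided power terms survive, and a bounded number of further applications pushes them into $p^{n+1}A_{\cris}(R)$; since $\sigma_1(xy)=p\sigma_1(x)\sigma_1(y)$ makes products of two elements of $\Filone A_{\cris}(R)$ die even faster, there is a uniform $K=K(n,p)$ with $\sigma_1^K(N)\subseteq pN$.

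The main obstacle is precisely this third assertion, and within it the interplay between the divided-power structure and the Witt-vector length. In the torsion-free situation of Proposition~\ref{Pr:str-kappa-cryst} one has $\sigma_1^2=0$ on $N/pN$ outright; here the $p$-torsion of $A(K_*)$ makes $\sigma_1$ only topologically nilpotent, and the genuinely new input is the boundedness of the torsion (the first assertion), used through the second both to truncate infinite Witt vectors and to convert large $p$-valuation into vanishing in $N/pN$. The one delicate point is to see that a single exponent $K$ works for all generators rather than depending on the element; this is handled by the uniform growth rate $p^{j}-1$ of the valuations together with the uniform truncation level $n$.
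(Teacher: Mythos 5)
Your proof is correct and follows essentially the same route as the paper: part (1) amounts to the paper's observation that $A(K_*)$ surjects onto the torsion-free straight lift $A(J'_*)$ of $R'$ with kernel killed by $p^n$ (you just unwind this in Witt coordinates), part (2) is the same formal deduction, and part (3) uses the same generating set of $N/pN$ by $W(K_*)$ and the $[x]^{[i]}$, the same valuation count showing $\sigma_1^2$ kills divided powers mod $pN$, and the same shift-plus-Teichm\"uller decomposition of a length-$(n+1)$ Witt vector yielding the exponent $n+2$. The only cosmetic differences are that you truncate Witt vectors via part (2) and $p$-adic valuation where the paper chooses a representative modulo $pW(K_*)$ using $\phi^i(K_n)=K_{n+i}$.
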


\begin{proof}
Let $J'_i=\phi^{i}(J')$. 
Then $J'_*$ is an admissible sequence of ideals of $W(R^\flat)$
with respect to $R'=R^\flat/J'$; note that $R^\flat=R'^\flat$.
We have $K_i\subseteq J'_i$ with equality for $i\ge n$,
so there is a projection $A(K_*)\to A(J'_*)$ whose kernel is annihilated by $p^n$. 
The ring $A(J'_*)$ is the straight lift of $R'$ constructed in Lemma~\ref{Le:exist-lift}, 
which is torsion free. This proves \eqref{Le:N-1}, and \eqref{Le:N-2} follows.

Let us prove \eqref{Le:N-3}.
The ring  $A_{\cris}(R)$ is the $p$-adic completion of a $W(R^\flat)$-algebra
generated by the elements $[x]^{[i]}$ for $x\in J$ and $i\ge 1$, and these elements
map to zero in $A(K_*)$.
Thus for each $m\ge 1$ the image of $N$ in $A_{\cris}(R)/p^m$
is generated as an ideal by $W(K_*)$ and the elements $[x]^{[i]}$.
By \eqref{Le:N-2} it follows that $N/pN$ is generated as an $A_{\cris}(R)$-module
by $W(K_*)$ and the elements $[x]^{[i]}$. We check these elements separately.

First, the explicit formula \eqref{Eq:sigma1-Acris} for $\sigma_1$ implies that
for $x\in J$ the element $(\sigma_1)^2([x]^{[i]})$ lies in $pN$; 
see the proof of Proposition~\ref{Pr:str-kappa-cryst}.
Second, since $\phi^i(K_{n})=K_{n+i}$ for $i\ge 0$, each element of $W(K_*)/pW(K_*)$
is represented by an element $a=(a_0,a_1,a_2,\ldots)\in W(K_*)$ with $a_{i}=0$ for $i>n$.
Then
\[
(\sigma_1)^{n+2}(a)=
(\sigma_1)^{n+2}([a_0])+\ldots+(\sigma_1)^2([a_{n}])
\]
lies in $pN$, using that $a_i\in K_i\subseteq J$. 
Thus $(\sigma_1)^{n+2}$ is zero on $N/pN$, 
and Lemma~\ref{Le:N} is proved.
\end{proof}

We continue the proof of Proposition~\ref{Pr:isobal-kappa-cryst}.
Since $A(K_*)$ is $p$-adically complete, the ideal $N$ is closed in $A_{\cris}(R)$,
and thus $N$ is $p$-adically complete by Lemma~\ref{Le:N} \eqref{Le:N-2}.
Since $\sigma_1:N\to N$ stabilizes $p^mN$, the ring $A_{\cris}(R)/p^mN$
carries a natural frame structure, denoted by $\u A{}_{\cris}(R)/p^mN$.
We have $\u A{}_{\cris}(R)/N=\u A(K_*)$ and 
$\u A{}_{\cris}(R)=\varprojlim\u A{}_{\cris}(R)/p^mN$.
This limit preserves the window categories by \cite[Lemma~2.12]{Lau:Frames}.
Thus it suffices to show that the frame homomorphism $\u A{}_{\cris}(R)/p^mN\to\u A(K_*)$ 
is crystalline for each $m$.
Since $\sigma_1:N/p^mN\to N/p^mN$ is nilpotent by Lemma~\ref{Le:N} \eqref{Le:N-3},
this follows from \cite[Thm.\ 3.2]{Lau:Frames}.
\end{proof}

\begin{Thm}
\label{Th:PhiR-iso-bal}
If $R$ is an iso-balanced semiperfect ring, the functor 
$\Phi_R^{\cris}$ of Theorem~\ref{Th:PhiR} is an equivalence of categories.
\end{Thm}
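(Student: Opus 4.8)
The plan is to deform from the balanced case, using Proposition~\ref{Pr:isobal-kappa-cryst} to pass to an explicit weak lift. Since $R$ is iso-balanced, choose an isogeny $g:R\to R'$ onto a balanced semiperfect ring $R'$; by Lemma~\ref{Le:isog-ker-nil} the ideal $\Fa=\Ker(g)$ is nilpotent, and $R^\flat=R'^\flat$. Write $R'=R^\flat/J'$ and keep the notation of the proof of Proposition~\ref{Pr:isobal-kappa-cryst}: with $K_i=J\cap\phi^i(J')$ and $J'_i=\phi^i(J')$, the ring $A(J'_*)$ is the straight lift $A'$ of $R'$ from Lemma~\ref{Le:exist-lift}, the homomorphism $\varkappa'=\pi\circ\varkappa:\u A{}_{\cris}(R)\to\u A(K_*)$ is crystalline, and there is a frame projection $\rho:\u A(K_*)\to\u A'$ whose kernel is annihilated by $p^n$ and reduces modulo $p$ to $\Fa$. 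Because $\varkappa'$ is crystalline, it suffices to prove that $(\varkappa')^*\circ\Phi_R^{\cris}:\BT(\Spec R)\to\Win(\u A(K_*))$ is an equivalence. For $R'$ the corresponding functor is $\Phi_{A'}$, which is an equivalence by Theorem~\ref{Th:PhiA-equiv} together with Lemma~\ref{Le:PD-nil}.

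By functoriality of $\Phi^{\cris}$ in $R$ (Theorem~\ref{Th:PhiR}) and Lemma~\ref{Le:PhiA-PhiR} applied to the straight lift $A'$, the square
\[
\xymatrix@M+0.2em{
\BT(\Spec R) \ar[r]^{g^*} \ar[d]_{(\varkappa')^*\Phi_R^{\cris}} &
\BT(\Spec R') \ar[d]^{\Phi_{A'}} \\
\Win(\u A(K_*)) \ar[r]^{\rho^*} &
\Win(\u A')
}
\]
commutes (the two maps $\u A{}_{\cris}(R)\to\u A'$ one obtains agree by the universal property of $A_{\cris}(R)$ and torsion-freeness of $A'$). I would then show that this square is \emph{cartesian}, after which the left vertical functor is the base change of the equivalence $\Phi_{A'}$ along $\rho^*$, hence itself an equivalence; this is the direction of Lemma~\ref{Le:cat} that needs no essential surjectivity, since the pullback of an equivalence along any functor is an equivalence. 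Combined with the first paragraph, this proves the theorem. The content is thus the cartesian property: fixing $G'\in\BT(\Spec R')$ with $\u M'=\Phi_{A'}(G')$, deformations of $G'$ along $g$ must correspond bijectively, and compatibly with the Dieudonn\'e crystal, to windows over $\u A(K_*)$ lifting $\u M'$ along $\rho$.

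I would establish this by matching the two deformation problems through the common filtered crystal, exactly in the spirit of the proof of Proposition~\ref{Pr:Cartesian-sigma}. On the window side, lifts of $\u M'$ along the surjective PD-frame homomorphism $\rho$ are classified by lifts of its Hodge filtration; here one factors $\rho$ into a filtration-changing step and a crystalline step and applies the window deformation theory of \cite[Lemma~4.2, Thm.~3.2]{Lau:Frames}, noting that no nilpotence of $\sigma_1$ on $\Ker(\rho)$ is invoked (and indeed $\rho$ is not crystalline). On the group side, $\Fa$ carries divided powers induced from $A{}_{\cris}(R)$, and deformations of $G'$ along $g$ are classified by lifts of the Hodge filtration by the Grothendieck--Messing theorem \cite{Messing:Crystals}, \emph{provided} these divided powers are pointwise nilpotent. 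Both filtration-lifting problems take place inside $\DD(G')$ evaluated over $R$, so they coincide, and the remaining datum --- the group on one side, the operator $F_1$ on the other --- is pinned down by Grothendieck--Messing and by the uniqueness in Theorem~\ref{Th:PhiR}, respectively. This yields both essential surjectivity and full faithfulness of the comparison functor, i.e.\ the cartesian property.

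The main obstacle I expect is the pointwise nilpotence of the divided powers on $\Fa$, which is precisely what makes Grothendieck--Messing applicable. I anticipate a computation parallel to Lemma~\ref{Le:PD-nil} and to Lemma~\ref{Le:N}\,\eqref{Le:N-3}: since $R'$ is balanced one has $\phi^n(J')\subseteq J$, so every $x\in\Fa$ satisfies $x^{p^n}=0$ in $R$, and feeding a Teichm\"uller lift $[\tilde x]$ of $x$ into the divided-power formula \eqref{Eq:sigma1-Acris} should force $x^{[m]}=0$ for $m\gg 0$. A secondary technical point is the precise bookkeeping of the window deformation along $\rho$, for which I would mirror the factorisation used in Proposition~\ref{Pr:Cartesian-sigma} rather than invoke \cite[Lemma~4.2]{Lau:Frames} for a ring surjection directly. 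Once the nilpotence is in place the two deformation theories align, the square is cartesian, and the argument closes.
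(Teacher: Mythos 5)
Your overall architecture is the paper's: handle the balanced case via Theorem~\ref{Th:PhiA-equiv} and Proposition~\ref{Pr:str-kappa-cryst}, use Proposition~\ref{Pr:isobal-kappa-cryst} to replace $\u A{}_{\cris}(R)$ by an explicit weak lift, and then transfer the equivalence across the isogeny by matching the Grothendieck--Messing deformation theory of $p$-divisible groups with the deformation theory of windows (a cartesian square in the spirit of Proposition~\ref{Pr:Cartesian-sigma}). The difference is that you attempt the deformation from $R'=R^{\bal}$ back to $R$ in a \emph{single} step along $\Fa=\Ker(g)$, and this is where the argument breaks.

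The gap is not, as you anticipate, the pointwise nilpotence of the divided powers on $\Fa$; it is their \emph{existence}. In an $\FF_p$-algebra, any PD structure $\gamma$ on an ideal $\Fa$ forces $x^p=p!\,\gamma_p(x)=0$ for every $x\in\Fa$, so $\Fa$ admits divided powers only if $\phi(\Fa)=0$. By Lemma~\ref{Le:ker-hom-bal} applied to $g:R\to R^{\bal}$ one has $\Fa^p=\phi(\Fa)$, and this is nilpotent but in general nonzero: only some iterate $\phi^n(\Fa)=\Fa^{p^n}$ vanishes. Your claim that ``$\Fa$ carries divided powers induced from $A_{\cris}(R)$'' is unfounded --- the PD structure lives on $\Ker(A_{\cris}(R)\to R)$, which is an ideal of a different ring and does not descend to $\Fa\subseteq R$ --- and no computation with Teichm\"uller lifts can produce a PD structure that cannot exist. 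Consequently Grothendieck--Messing is not applicable to the thickening $R\to R^{\bal}$, and your square cannot be shown cartesian by the proposed method. The fix (and the paper's actual route) is a d\'evissage: since $\Fa$ is nilpotent, filter the isogeny into a chain of surjections each of whose kernels $\Fb$ satisfies $\Fb^p=0$; such a $\Fb$ carries the \emph{trivial} divided powers ($x^{[m]}=0$ for $m\ge p$), which are automatically pointwise nilpotent, and one deforms one step at a time, at each step building the auxiliary frame $\FFF$ on $A(K_*)$ with enlarged filtration $I=\Ker(q)\oplus pA$ and $\sigma_1=0$ on $\Ker(q)$ so that the window-side and group-side lifting problems both reduce to lifting the Hodge filtration. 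Your window-side remarks would then go through essentially as you describe, but only after this reduction to kernels of vanishing $p$-th power.
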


\begin{proof}
By Corollary~\ref{Cor:str-PhiR} the theorem holds for balanced rings. 
An isogeny from $R$ to a balanced ring has nilpotent kernel by Lemma~\ref{Le:ker-hom-bal}.
Therefore it suffices to show: Let $\pi:R'\to R$ be an isogeny of iso-balanced rings
such that $\Ker(\pi)^p=0$. If $\Phi_R^{\cris}$ is an equivalence then so is $\Phi_{R'}^{\cris}$.

To prove this we use some auxiliary frames.
Let $J=\Ker(R^\flat\to R)$ and $J'=\Ker(R^\flat\to R')$, thus $J^p\subseteq J'\subseteq J$.
We define $J_i=J^{p^i}$ and $J'_i=J'^{p^i}$ for $i\ge 0$, 
and we define $K_0=J'$ and $K_i=J_i$ for $i\ge 1$.
Then $J_*$ is an admissible sequence with respect to $R$, while $K_*$ and $J'_*$ are
admissible sequences with respect to $R'$.
There are obivous frame homomorphisms
\[
\u A(J'_*)\xrightarrow a \u A(K_*) \xrightarrow q \u A(J_*),
\]
where $a$ lies over $\id_{R'}$ and $q$ lies over $\pi$.
Here $a$ is crystalline by Lemma~\ref{Le:AJ*-cryst},
using that $K_{i+1}\subseteq J'_i\subseteq K_i$.
We want to factor $q$ over another frame $\FFF=(A,I,R,\sigma,\sigma_1)$ with $A=A(K_*)$,
thus $I$ is the kernel of $A(K_*)\to A(J_*)\to R$. 
We only have to define $\sigma_1:I\to A$.
It is easy to see that the natural map $\Ker(q)\to\Ker(\pi)$ is bijective and that
\[
I=\Ker(q)\oplus pA
\]
as a direct sum of ideals. We have $\Ker(q)^p=0$, and $\sigma(x)=0$ for $x\in \Ker(q)$.
We extend the homomorphism $\sigma_1$ and the divided powers defined on $pA$
to $I$ by $\sigma_1(x)=0$ and $x^{[p]}=0$ for $x\in \Ker(q)$.
This defines a PD frame $\FFF$ as above.
Together we have homomorphisms of PD frames
\[
\u A(J'_*)\xrightarrow a \u A(K_*)\xrightarrow b\FFF\xrightarrow c\u A(J_*)
\]
over $R'\xrightarrow\id R'\xrightarrow\pi R\xrightarrow\id R$,
where $c$ is given by $q$ and $b$ is given by $\id_A$.
Since $\sigma_1$ is zero on $\Ker(c)=\Ker(q)$,
 $c$ is crystalline by \cite[Thm.\ 3.2]{Lau:Frames}.

Since $A\to R$ is a $p$-adic PD extension, the universal property of $A_{\cris}(R)$
gives a unique homomorphism $\tilde\varkappa:A_{\cris}(R)\to A$ of PD extensions of $R$,
and $\tilde\varkappa$ commutes with $\sigma$.
We claim that $\tilde\varkappa$ is a frame homomorphism $\u A{}_{\cris}(R)\to\FFF$,
i.e.\ that $\tilde\varkappa$ commutes with $\sigma_1$.
As in the proof of Lemma~\ref{Le:kappa-str-weak-lift} it suffices to show that
$\tilde\varkappa(\sigma_1(y))=\sigma_1(\tilde\varkappa(y))$ when $y=[x]^{[n]}$
with $x\in J$ and $n\ge 1$.
Let $z=\tilde\varkappa([x])$ in $A$. 
Then $z\in\Ker(q)$ and thus $z^{[np]}=0$,
moreover $z^{[n]}\in\Ker(q)$ as well and thus $\sigma_1(z^{[n]})=0$.
Therefore \eqref{Eq:kappa-str-weak-lift-1} and \eqref{Eq:kappa-str-weak-lift-2}
with $\tilde\varkappa$ in place of $\varkappa$ show that 
$\tilde\varkappa(\sigma_1(y))=0$ and $\sigma_1(\tilde\varkappa(y))=0$.
Thus $\tilde\varkappa$ is a frame homomorphism.
Since $a,b,c$ are PD homomorphisms, we obtain a commutative diagram of frames
\[
\xymatrix@M+0.2em{
\u A{}_{\cris}(R') \ar[rrr]^{A_{\cris}(\pi)} \ar[d]_\varkappa \ar[dr]^{\varkappa'} &&&
\u A{}_{\cris}(R) \ar[d]^\varkappa \ar[dl]_{\tilde\varkappa} \\
\u A(J'_*) \ar[r]^-a &
\u A(K_*) \ar[r]^-b &
\FFF \ar[r]^-c &
\u A(J_*)
}
\]
where the homomorphisms $\varkappa$ are given by Lemma~\ref{Le:kappa-str-weak-lift},
and $\varkappa'=a\circ\varkappa$.
The two homomorphisms $\varkappa$ are crystalline by Proposition~\ref{Pr:isobal-kappa-cryst}.
Since $a$ and $c$ are crystalline, the same holds for $\varkappa'$ and $\tilde\varkappa$.
We have the following commutative diagram of categories.
\[
\xymatrix@M+0.2em{
\BT(\Spec R') \ar[r]^-{\Phi_{R'}^{\cris}} \ar[d]_{\pi^*} &
\Win(\u A{}_{\cris}(R')) \ar[r]^-{\varkappa'}_-\sim \ar[d] &
\Win(\u A(K_*)) \ar[d]^{b} \\
\BT(\Spec R) \ar[r]^-{\Phi_{R}^{\cris}} &
\Win(\u A{}_{\cris}(R)) \ar[r]^-{\tilde\varkappa}_-\sim &
\Win(\FFF)
}
\]
The functors $\BT(\Spec R')\to\Win(\u A(K_*))$ and $\BT(\Spec R)\to\Win(\FFF)$ are given by
the Dieudonn\'e crystal with an additional $F_1$.
Since $\Ker(\pi)^p=0$, the ideal $\Ker(\pi)$ can be equipped with the trivial divided powers.
Then the projection $A(K_*)=A\to R'$ is a homomorphism of PD extensions of $R$.
It follows that for $G\in\BT(\Spec R)$ with associated $\u M\in\Win(\FFF)$
there is a natural isomorphism $M\otimes_AR'\cong\DD(G)_{R'}$.
By the Grothendieck-Messing Theorem \cite{Messing:Crystals} and its trivial
counterpart for the frame homomorphism $b$ in \cite[Lemma~4.2]{Lau:Frames}
it follows that the lifts of $G$ under $\pi$ and the lifts of $\u M$ under $b$ coincide;
cf.\ the proof of Proposition~\ref{Pr:Cartesian-sigma}.
Therefore if $\Phi_R^{\cris}$ is an equivalence, the same holds for $\Phi_{R'}^{\cris}$.
This finishes the proof of Theorem~\ref{Th:PhiR-iso-bal}.
\end{proof}


\section{Perfectoid rings}

We use the definition of perfectoid rings of \cite{BMS}
in a slightly different formulation.
We begin with an easy remark on perfect rings.

\begin{Lemma}
\label{Le:IJ-perf}
Let $S$ be a perfect ring and $a\in S$.
Let $J=(a^{p^{-\infty}})$ and $I=\Ann(a)$.
Then $J=\phi(J)$ und $I=\phi(I)=\Ann(J)$ and $I\cap J=0$,
 thus we have an exact sequence
\begin{equation}
\label{Eq:IJ-perf}
0\to S \to S/I\oplus S/J\to S/(I+J)\to 0
\end{equation}
where the first map is the diagonal map and the second map is the difference.
The element $a\in S/I$ is a non-zero divisor.
If $S$ is $a$-adically complete, the same holds for $S/I$.
\end{Lemma}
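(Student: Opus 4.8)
The plan is to base everything on two elementary features of a perfect ring $S$: the Frobenius $\phi$ is bijective, so each element has a unique $p$-power root, and $S$ is reduced (a nilpotent $x$ satisfies $x^{p^k}=0=\phi^k(x)$ for large $k$, hence $x=0$). I write $a^{p^{-n}}$ for the unique $p^n$-th root of $a$, so that $J=(a^{p^{-n}}:n\ge0)$. The one auxiliary fact I would isolate first is that $\Ann(b^m)=\Ann(b)$ in a reduced ring, because $xb^m=0$ forces $(xb)^m=0$ and hence $xb=0$. Taking $b=a^{p^{-n}}$ and $m=p^n$ gives $\Ann(a^{p^{-n}})=\Ann(a)=I$ for every $n$, and therefore $\Ann(J)=\bigcap_n\Ann(a^{p^{-n}})=I$.

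Next I would dispatch the Frobenius-stability and intersection claims. Since $\phi$ is an automorphism, the image of an ideal is again an ideal, generated by the images of the generators, so these claims may be checked on generators. For $J$, the identities $\phi(a^{p^{-n-1}})=a^{p^{-n}}$ and $\phi(a^{p^{-n}})=a^{p^{-n+1}}$ give $J\subseteq\phi(J)$ and $\phi(J)\subseteq J$. For $I$, if $xa=0$ then reducedness yields $x^{p}a=0$ and $(x^{p^{-1}}a)^{p}=xa^{p}=0$, so both $\phi(x)$ and $x^{p^{-1}}$ lie in $I$; hence $I=\phi(I)$. Finally $I\cap J=0$ follows because any $z\in J$ is a finite sum $\sum_i s_i a^{p^{-n_i}}$, and if $z\in I=\Ann(a^{p^{-n_i}})$ for all $i$ then $z^2=\sum_i s_i\,z\,a^{p^{-n_i}}=0$, so $z=0$.

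The exact sequence \eqref{Eq:IJ-perf} is the usual pullback sequence of the two ideals, exact precisely because $I\cap J=0$: injectivity of the diagonal is this intersection vanishing, surjectivity of the difference is clear, and for the middle one writes $x-y=u+v$ with $u\in I$, $v\in J$ and checks that $s=x-u=y+v$ maps to $(\bar x,\bar y)$. That $a$ is a non-zero divisor in $S/I$ is again $\Ann(a^2)=I$: if $ax\in I$ then $a^2x=0$, so $x\in I$.

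The real work, and the step I expect to be the main obstacle, is the $a$-adic completeness of $S/I$. The key preliminary is $a^nS\cap I=0$ for $n\ge1$: if $a^ns\in I$ then $a^{n+1}s=0$, so $s\in\Ann(a^{n+1})=I$ and $a^ns=a^{n-1}(as)=0$. With this, the snake-lemma comparison produces, for every $n\ge1$, a short exact sequence
\[
0\to I\to S/a^nS\to (S/I)/a^n(S/I)\to 0
\]
in which the kernel term and all its transition maps are the identity on $I$. This constant system is Mittag--Leffler, so its $\varprojlim^1$ vanishes and the limit sequence
\[
0\to I\to \varprojlim_n S/a^nS\to \varprojlim_n (S/I)/a^n(S/I)\to 0
\]
is exact. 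Using the hypothesis $\varprojlim_n S/a^nS=S$ and comparing with $0\to I\to S\to S/I\to 0$ via the identity on $I$ and $S$, the five lemma forces the completion map $S/I\to\varprojlim_n(S/I)/a^n(S/I)$ to be an isomorphism. The points I would treat most carefully are the identification $a^nS\cap I=0$ underlying the constant kernel system, and the Mittag--Leffler vanishing of the relevant $\varprojlim^1$.
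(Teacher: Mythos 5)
Your proof is correct, and for everything up to the last assertion it follows the same line as the paper: reducedness of $S$ gives $\Ann(a)=\Ann(a^n)$ and hence $I=\phi(I)=\Ann(J)$, $I\cap J=0$ because $(I\cap J)^2\subseteq IJ=0$, and the exact sequence and the regularity of $a$ in $S/I$ are formal consequences. The only genuine divergence is the completeness of $S/I$: the paper deduces it from the exact sequence \eqref{Eq:IJ-perf}, using that $S/J$ and $S/(I+J)$ are annihilated by $a$ and hence $a$-adically discrete, whereas you argue directly from the identity $a^nS\cap I=0$ (itself a consequence of $\Ann(a^{n+1})=I$) that the kernels of $S/a^nS\to (S/I)/a^n(S/I)$ form the constant system $I$, and then pass to the limit using Mittag--Leffler. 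Your route is more self-contained and makes explicit the $\varprojlim^1$ vanishing that the paper's one-line deduction leaves implicit; the paper's route is shorter once the exact sequence is in hand. Both are complete arguments.
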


\begin{proof}
Clearly $J=\phi(J)$, moreover $I=\Ann(a)\subseteq\Ann(a^p)=\phi(I)\subseteq I$,
thus $I=\phi(I)=\Ann(J)$. Since $S$ is reduced we have $I\cap J=IJ=0$,
and \eqref{Eq:IJ-perf} is exact.
Since $I=\Ann(a)=\Ann(a^2)$ the element $a\in S/I$ is a non-zero divisor.
The last assertion follows from \eqref{Eq:IJ-perf}
because $S/J$ and $S/(I+J)$ are annihilated by $a$.
\end{proof}

The exact sequence \eqref{Eq:IJ-perf} can also be expressed by the
cartesian and cocartesian diagram of perfect rings:
\begin{equation}
\label{Eq:IJ-square}
\xymatrix@M+0.2em{
S \ar[r] \ar[d] & S/J \ar[d] \\
S/I \ar[r] & S/(I+J)
}
\end{equation}

The following is contained in \cite[Prop.\ 7.3.45]{Gabber-Ramero:Foundations}. 

\begin{Lemma}
\label{Le:xi-regular}
Let $S$ be a perfect ring and let $\xi=(\xi_0,\xi_1,\ldots)\in W(S)$ 
such that $\xi_0,\ldots,\xi_r$ generate the unit ideal of $S$. 
Then $\xi$ is a non-zero divisor, and for $n\ge 0$ we have
\begin{equation}
\label{Eq:xi-regular}
\xi W(S)\cap p^{n+r}W(S)=p^n(\xi W(S)\cap p^rW(S)).
\end{equation} 
In particular, $\xi W(S)$ is $p$-adically closed in $W(S)$ and $p$-adically complete.
\end{Lemma}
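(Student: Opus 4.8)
The plan is to prove both assertions---that $\xi$ is a non-zero divisor and the identity \eqref{Eq:xi-regular}---simultaneously by induction on $r$, using the cartesian square \eqref{Eq:IJ-square} for the element $a=\xi_0$ supplied by Lemma~\ref{Le:IJ-perf}. First I would record the reformulation that makes the induction manageable. Since $S$ is perfect, $p=VF$ with $F$ bijective and $V$ injective, so $p$ is a non-zero divisor on $W(S)$, and $W(S)$ is $p$-adically complete and separated. Granting that $\xi$ is a non-zero divisor, the identity \eqref{Eq:xi-regular} for all $n$ is then equivalent to the single implication: if $\xi w\in p^{n+r}W(S)$ then $w\in p^nW(S)$, for all $n\ge 0$. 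So it suffices to prove that $\xi$ is a non-zero divisor and that this implication holds. For the base case $r=0$ the coefficient $\xi_0$ is a unit, hence $[\xi_0]$ is a unit in $W(S)$ and $\xi=[\xi_0]\cdot(1+p\,(\cdots))$ is a unit by $p$-adic completeness, so both claims are trivial.

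For $r\ge 1$ I would apply the Witt vector functor to \eqref{Eq:IJ-square}. As $W$ preserves limits, the resulting square
\[
\xymatrix@M+0.2em{
W(S) \ar[r] \ar[d] & W(S/J) \ar[d] \\
W(S/I) \ar[r] & W(S/(I+J))
}
\]
is again cartesian (this can alternatively be checked on Witt components from \eqref{Eq:IJ-perf}); in particular $W(S)\hookrightarrow W(S/I)\times W(S/J)$. It therefore suffices to establish both claims over $S/I$ and over $S/J$ and then to descend along this square.

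Over $S/J$ the coefficient $\xi_0$ maps to $0$, so the image of $\xi$ has Witt components $(0,\bar\xi_1,\bar\xi_2,\dots)$ and equals $V(\eta)=p\,F^{-1}(\eta)$ with $\eta=(\bar\xi_1,\bar\xi_2,\dots)$, using $V=pF^{-1}$ on the perfect ring $S/J$. The first $r$ components $\bar\xi_1,\dots,\bar\xi_r$ of $\eta$ generate the unit ideal of $S/J$, so the induction hypothesis applies to $\eta$ with parameter $r-1$; transporting along the ring automorphism $F^{-1}$ (which preserves each $p^kW$) and then multiplying by $p$ (which shifts the parameter by one) yields both claims for $\xi$ over $S/J$ with parameter $r$. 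Over $S/I$ the coefficient $\xi_0$ is a non-zero divisor by Lemma~\ref{Le:IJ-perf}. Expanding $\xi$ and $w$ in Teichm\"uller coordinates, the $0$-th coordinate of $\xi w$ is $\xi_0 c_0$, so $\xi w\in pW(S/I)$ forces $c_0=0$, i.e.\ $w\in pW(S/I)$; an induction on the $p$-adic valuation then gives the stronger statement that $\xi w\in p^mW(S/I)$ implies $w\in p^mW(S/I)$ for every $m$. This contains the implication (even with $r$ replaced by $0$) and shows $\xi$ is a non-zero divisor over $S/I$.

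Finally I would descend along the square. For the non-zero divisor property, if $\xi w=0$ then $w$ dies in both $W(S/I)$ and $W(S/J)$ by the above, hence $w=0$ by injectivity of $W(S)\hookrightarrow W(S/I)\times W(S/J)$. For the implication, suppose $\xi w\in p^{n+r}W(S)$; then $w\in p^nW(S/I)$ and $w\in p^nW(S/J)$, say $w=p^na$ over $S/I$ and $w=p^nb$ over $S/J$. Their images in the corner $W(S/(I+J))$ agree after multiplication by $p^n$; since this corner is $p$-torsion free, $a$ and $b$ already agree there, so they glue to some $v\in W(S)$, and $p^nv$ and $w$ have equal images in $W(S/I)$ and $W(S/J)$, whence $w=p^nv\in p^nW(S)$. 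I expect this Milnor-square descent, hinging on the $p$-torsion-freeness of the corner $W(S/(I+J))$, to be the main obstacle. The ``in particular'' statements then follow formally: the implication means that multiplication by $\xi$ is a topological isomorphism of $W(S)$ onto $\xi W(S)$ up to the shift by $r$, so $\xi W(S)$ inherits $p$-adic completeness from $W(S)$ and is in particular $p$-adically closed.
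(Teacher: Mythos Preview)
Your proof is correct and follows essentially the same strategy as the paper: reduce via the cartesian square of Lemma~\ref{Le:IJ-perf} (with $a=\xi_0$) to the two cases where $\xi_0$ is a non-zero divisor or $\xi_0=0$, handle the first case directly and the second by writing $\xi=p\xi'$ and inducting on $r$. The paper's proof is much terser and leaves the Milnor-square descent step implicit, whereas you spell it out carefully (correctly using $p$-torsion-freeness of $W(S/(I+J))$ to glue).
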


\begin{proof}
Using Lemma~\ref{Le:IJ-perf} with $a=\xi_0$
one reduces to the case where $\xi_0=0$ or where $\xi_0$ is a non-zero divisor.
In the second case $\xi$ is a non-zero divisor, and $\xi W(S)\cap p^nW(S)=p^n\xi W(S)$.
If $\xi_0=0$ then $\xi=p\xi'$, and the proof of \eqref{Eq:xi-regular}
is finished by induction on $r$. The last assertion follows easily.
\end{proof}

\begin{Defn}
\label{Def:distd}
For a perfect ring $S$, 
an element $\xi=(\xi_0,\xi_1,\ldots)\in W(S)$ is called distinguished
if $\xi_1\in S$ is a unit and $S$ is $\xi_0$-adically complete.
\end{Defn}

\begin{Remark}
If a ring $R$ is complete with respect to some linear topology and $x\in R$ is
topologically nilpotent, then $R$ is also $x$-adically complete; 
see the proof of \cite[\href{http://stacks.math.columbia.edu/tag/090T}{Tag 090T}]{Stacks-Project}. 
\end{Remark}

\begin{Defn}
\label{Def:perfd}
A ring $\pfd$ is called perfectoid if there is an isomorphism 
$\pfd\cong W(S)/\xi$ where $S$ is perfect and $\xi\in W(S)$ is distinguished.
\end{Defn}

\begin{Remark}
\label{Rk:Def-perfd}
Definition~\ref{Def:perfd} is equivalent to \cite[Definition~3.5]{BMS},
moreover for $\pfd=W(S)/\xi$ as in Definition~\ref{Def:perfd} we have 
\begin{equation}
\label{Eq:S-Bflat}
S=\pfd^\flat:=\varprojlim(\pfd/p,\phi)
\end{equation}
canonically. Indeed, if $\pfd=W(S)/\xi$ then $\pfd/p=S/\xi_0$,
the projective system $\pfd/p\leftarrow \pfd/p\leftarrow\ldots$ with arrows $\phi$
is identified with $S/\xi_0\leftarrow S/\xi_0^p\leftarrow \ldots$ 
where the arrows are the projection maps, and \eqref{Eq:S-Bflat} follows 
since $S$ is $\xi_0$-adically complete.
Moreover, $\pfd$ is $p$-adically complete because this holds for $W(S)$
and because $\xi W(S)$ is $p$-adically closed by Lemma~\ref{Le:xi-regular}.
If $\pi\in \pfd$ is the image of $[\xi_0^{1/p}]\in W(S)$ then $\pi^p\pfd=p\pfd$.
Thus $\pfd$ satisfies \cite[Definition~3.5]{BMS}.
Conversely, if the latter holds,
then $\pfd=W(\pfd^\flat)/\xi$ where $\pfd^\flat$ is perfect and $\xi$ is distinguished.
See also \cite[14.1.32]{Gabber-Ramero:Foundations}.
\end{Remark}

\begin{Remark}
\label{Rk:perfd-torsion-free}
The perfectoid ring $\pfd=W(S)/\xi$ is torsion free iff $\xi_0\in S$ is a non-zero divisor.
Indeed, since $p,\xi\in W(S)$ are regular elements, the kernels of
$p:\pfd\to \pfd$ and of $\xi_0:S\to S$ are isomorphic.
\end{Remark}

\begin{Remark}
\label{Rk:IJ-perfd}
If $\pfd=W(S)/\xi$ is perfectoid, the decomposition \eqref{Eq:IJ-square}
of $S$ with respect to $a=\xi_0$ gives a similar decomposition of $\pfd$.
More precisely, let $S_1=S/{\Ann(\xi_0)}$ and $S_2=S/(\xi_0^{p^{-\infty}})$ 
and $S_{12}=S_1\otimes_SS_2$.
Then $\xi\in W(S_i)$ is distinguished, and $\pfd_{i}=W(S_{i})/\xi$ is perfectoid.
\eqref{Eq:IJ-perf} gives an exact sequence 
\begin{equation}
\label{Eq:IJ-perfd-WS}
0\to W(S)\to W(S_1)\oplus W(S_2)\to W(S_{12})\to 0.
\end{equation}
Since $\xi$ is a non-zero divisor in $W(S_{12})$, we obtain an exact sequence
\begin{equation}
\label{Eq:IJ-perfd-B}
0\to \pfd \to \pfd_1\oplus \pfd_2\to \pfd_{12}\to 0.
\end{equation}
Here $\pfd_2=S_2$ and $\pfd_{12}=S_{12}$ are perfect, while $\pfd_1$ is torsion free perfectoid.
\end{Remark}

As an easy consequence we observe:

\begin{Lemma}
Every perfectoid ring $\pfd$ is reduced.
\end{Lemma}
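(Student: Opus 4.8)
The plan is to build on the decomposition of Remark~\ref{Rk:IJ-perfd}. The exact sequence \eqref{Eq:IJ-perfd-B} exhibits $\pfd$ as a subring of $\pfd_1\oplus \pfd_2$, so a nilpotent element of $\pfd$ maps to nilpotent elements in each factor, and it suffices to prove that $\pfd_1$ and $\pfd_2$ are both reduced. The factor $\pfd_2=S_2$ is perfect, and a perfect $\FF_p$-algebra is automatically reduced: if $x^{p^k}=0$ then $\phi^k(x)=0$, and $\phi$ is injective, so $x=0$. This leaves the torsion-free perfectoid ring $\pfd_1=W(S_1)/\xi$, where $\xi_0\in S_1$ is a non-zero divisor by Remark~\ref{Rk:perfd-torsion-free}; this is the substantial case.

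For $\pfd_1$, I would first record the element $\pi\in\pfd_1$, the image of $[\xi_0^{1/p}]$, together with the relation $\pi^p\pfd_1=p\pfd_1$ of Remark~\ref{Rk:Def-perfd}. Since $\pfd_1$ is $p$-torsion free, this equality of principal ideals upgrades to $\pi^p=pu$ with $u\in\pfd_1^\times$: writing $\pi^p=pa$ and $p=\pi^p b$ gives $p=pab$, and cancelling the non-zero divisor $p$ yields $ab=1$. In particular $\pi$ is a non-zero divisor, and the $\pi$-adic and $p$-adic topologies on $\pfd_1$ coincide, so $\bigcap_m\pi^m\pfd_1=0$. The key point I would then establish is that the $p$-power map $\pfd_1/\pi\to\pfd_1/\pi^p=\pfd_1/p$ is an isomorphism. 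Indeed $p\in\pi\pfd_1$ gives $\pfd_1/\pi=S_1/(\xi_0^{1/p})$ and $\pfd_1/\pi^p=S_1/(\xi_0)$, and under these identifications the map is induced by the Frobenius $\phi$ of $S_1$; since $\phi$ is an automorphism of the perfect ring $S_1$ carrying the ideal $(\xi_0^{1/p})$ onto $(\xi_0)$, it descends to an isomorphism.

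With this Frobenius isomorphism in hand the conclusion is a routine descent. It suffices to treat a nilpotent $x$ with $x^p=0$, the general case following by induction on the nilpotency order. Then $x^p=0\in\pi^p\pfd_1$ forces $x\in\pi\pfd_1$ by injectivity of the $p$-power map; writing $x=\pi y$ and cancelling the non-zero divisor $\pi^p$ in $0=\pi^p y^p$ gives $y^p=0$, and iterating places $x$ in $\bigcap_m\pi^m\pfd_1=0$. I expect the main obstacle to be the torsion-free case, and within it the identification of the $p$-power map $\pfd_1/\pi\to\pfd_1/p$ with the Frobenius of $S_1$; once that isomorphism is available, the reduction step and the iteration are formal.
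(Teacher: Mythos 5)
Your proposal is correct and follows essentially the same route as the paper: reduce via the exact sequence \eqref{Eq:IJ-perfd-B} to the perfect and torsion-free cases, and in the torsion-free case use $\pi^p\pfd=p\pfd$ together with the bijectivity of the $p$-power map $\pfd/\pi\to\pfd/p$ to show that any $a$ with $a^p=0$ lies in $\bigcap_n\pi^n\pfd=0$. Your write-up merely makes explicit a few points the paper leaves implicit (that $\pi$ is a non-zero divisor and that $\pfd$ is $\pi$-adically separated).
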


\begin{proof}
By \eqref{Eq:IJ-perfd-B} we can assume that $\pfd$ is either perfect (thus reduced) or torsion free.
For $\pi\in \pfd$ as in Remark~\ref{Rk:Def-perfd} we have $\pi^p\pfd=p\pfd$, 
and $\phi:\pfd/\pi\to \pfd/p$ is bijective.
Hence, if $a\in \pfd$ satisfies $a^p=0$ then $a=\pi b$.
If $\pfd$ is torsion free it follows that $b^p=0$,
thus $a\in\pi^n\pfd$ for all $n$, whence $a=0$.
\end{proof}

We need the following form of tilting.

\begin{Lemma}
\label{Le:tilt-etale}
Let $\pfd$ be a perfectoid ring and $B=\pfd/p$. 
The functor $\pfd'\mapsto \pfd'/p$ from perfectoid $\pfd$-algebras to $B$-algebras 
has a left adjoint $B'\mapsto B'^\sharp$.
If $B'$ is an etale $B$-algebra then $\pfd'=B'^\sharp$ is the unique $p$-adically complete
$\pfd$-algebra such that $\pfd'/p=B'$ and $\pfd/p^n\to \pfd'/p^n$ is etale for all $n$.
\end{Lemma}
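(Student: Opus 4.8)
The plan is to build the functor $(-)^\sharp$ through tilting, and to prove the two assertions separately: the bare existence of the adjoint on the tilted (characteristic $p$) side, and the \emph{\'etale} refinement by deformation theory. I would keep these apart because only the \'etale statement needs the fine structure of $\pfd$, while the adjunction is formal once one passes to perfect rings.

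First I would transport everything to characteristic $p$. Writing $\pfd=W(S)/\xi$ with $S=\pfd^\flat$ and $\xi$ distinguished, the tilting equivalence identifies perfectoid $\pfd$-algebras with perfect $\xi_0$-adically complete $S$-algebras $S'$, and under this identification the reduction functor $\pfd'\mapsto\pfd'/p$ becomes $S'\mapsto S'/\xi_0$, with values in $B$-algebras (recall $B=S/\xi_0$). The key structural fact is that a perfectoid algebra is recovered from its reduction by the canonical isomorphism $S'=\varprojlim(\pfd'/p,\phi)$, so the functor $S'\mapsto S'/\xi_0$ is fully faithful onto the perfect $B$-algebras, with quasi-inverse $C\mapsto\varprojlim(C,\phi)$. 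I would then construct $B'^\sharp$ by a perfection-and-$\xi_0$-completion procedure on $B'$ regarded as an $S$-algebra via $S\to B$, and verify the adjunction $\Hom(B'^\sharp,\pfd')=\Hom_B(B',\pfd'/p)$. I expect the delicate point here to be that the naive perfection must be corrected so that maps out of $B'^\sharp$ detect only $\pfd'/p$ and not all of $\pfd'$; this is exactly where the identity $S'=\varprojlim(\pfd'/p,\phi)$ must enter.

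The substance of the lemma is the \'etale case, which I would prove directly and independently of the general construction. Let $B'$ be \'etale over $B$. By topological invariance of the \'etale site, for every $n$ the \'etale $B$-algebra $B'$ lifts uniquely to an \'etale $\pfd/p^n$-algebra $\pfd'_n$ along the nilpotent thickening $\pfd/p\to\pfd/p^n$, and these are compatible; I set $\pfd'=\varprojlim_n\pfd'_n$. Then $\pfd'$ is $p$-adically complete with $\pfd'/p=B'$ and $\pfd'/p^n=\pfd'_n$ \'etale over $\pfd/p^n$, and uniqueness is immediate from uniqueness at each finite level. To see that $\pfd'$ is perfectoid I would use the structure sequence \eqref{Eq:IJ-perfd-B}: the formation of $\pfd'$ is compatible with $\pfd\to\pfd_1\oplus\pfd_2\to\pfd_{12}$, so it suffices to treat $\pfd$ perfect and $\pfd$ torsion free. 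When $\pfd$ is perfect, $B=\pfd$ and an \'etale algebra over a perfect ring is again perfect, hence perfectoid; when $\pfd$ is torsion free I would verify the Frobenius criterion of \cite{BMS}, using that $\phi$ induces an isomorphism $\pfd/\pi\xrightarrow{\sim}\pfd/p$ and that $\pfd/p^n\to\pfd'/p^n$ is \'etale, so base change of this isomorphism along the \'etale structure map gives $\pfd'/\pi\xrightarrow{\sim}\pfd'/p$. Finally, to identify $\pfd'$ with $B'^\sharp$ I would compute, for any perfectoid $\pfd$-algebra $\pfd''$,
\[
\Hom(\pfd',\pfd'')=\varprojlim_n\Hom_{\pfd/p^n}(\pfd'_n,\pfd''/p^n)=\Hom_B(B',\pfd''/p),
\]
where the first equality is $p$-adic completeness and the second is the rigidity of \'etale morphisms along the nilpotent thickenings $\pfd/p\to\pfd/p^n$; this is precisely the universal property defining $B'^\sharp$.

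The step I expect to be the main obstacle is showing that the \'etale lift $\pfd'$ is perfectoid, i.e.\ that the Frobenius-compatibility coming from \'etaleness survives the passage through $W$ and the nonzerodivisor $\xi$; the reduction \eqref{Eq:IJ-perfd-B} to the perfect and torsion free cases is what makes this tractable. A secondary difficulty is making the general construction of $B'^\sharp$ precise enough that the adjunction holds for \emph{all} $B$-algebras and not merely the \'etale ones, which is the only case actually used in the applications.
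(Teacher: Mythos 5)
Your proposal is correct in outline but takes a genuinely different route from the paper for the \'etale assertion, which is the heart of the lemma. The paper works entirely on the Witt vector side: it defines $B'^\sharp=W(B'^\flat)/\xi$ with $B'^\flat=\varprojlim(B',\phi)$ for \emph{every} $B$-algebra $B'$ (so $B'^\sharp$ is perfectoid essentially by construction, and the adjunction is the formal universal property $\Hom(W(P),A)=\Hom(P,A^\flat)$ of Witt vectors of perfect rings), and then for \'etale $B'$ it proves the two nontrivial points by hand: $B'^\sharp/p=B'^\flat/\xi_0=B'$, using the cocartesian Frobenius square of an \'etale map together with EGA $0_{\mathrm I}$ 7.2.7 to identify $B'^\flat$ with the $\xi_0$-adic completion data of $B'$; and flatness of $\pfd/p^n\to B'^\sharp/p^n$ via the local flatness criterion applied to explicit presentations $W(B^\flat)/([\xi_0^n],p^n)$. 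You instead build $\pfd'$ by topological invariance of the \'etale site as $\varprojlim_n\pfd'_n$, which gives \'etaleness mod $p^n$ and uniqueness for free, but transfers all the work into showing that this deformation-theoretic lift is \emph{perfectoid} --- precisely the point the paper's construction makes automatic. That step is where your argument has real hidden content: the reduction along \eqref{Eq:IJ-perfd-B} needs the exactness of $0\to\pfd'\to\pfd'_1\oplus\pfd'_2\to\pfd'_{12}\to 0$, which is not a flat base change of \eqref{Eq:IJ-perfd-B} (the sequence does not stay exact after $\otimes\,\ZZ/p^n$ since $\pfd_{12}$ is $p$-torsion), so it must be checked directly; and the fibre product of perfectoid rings along surjections must be shown perfectoid. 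Both are true and doable, but they are not one-liners, whereas the paper's computation is self-contained. On the general adjunction you are as vague as the paper is (``this defines the left adjoint functor''), and your closing remark that making it precise for non-\'etale $B'$ is delicate is well taken: the natural map $\Hom_B(B',\pfd''/p)\to\Hom_S(B'^\flat,\pfd''^\flat)$ is not obviously bijective for general $B'$ (e.g.\ it fails to be injective for $B'=B[x]/(x^2)$), so in practice only the \'etale case --- which your $\varprojlim_n\Hom_{\pfd/p^n}(\pfd'_n,\pfd''/p^n)$ computation does establish --- is what the paper actually uses.
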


\begin{proof}
Let $\pfd=W(S)/\xi$ where $\xi$ is distinguished, 
thus $S=B^\flat=\varprojlim(B,\phi)$,
see Remark~\ref{Rk:Def-perfd}.
For a $B$-algebra $B'$ let $B'^\sharp=W(B'^\flat)/\xi$.
This defines the left adjoint functor. 
Assume that $B\to B'$ is etale and let $\pfd'=B'^\sharp$.
We have to show that $\pfd'/p=B'$ and that $\pfd/p^n\to \pfd'/p^n$ is flat.
Let $x_n\in\pfd/p$ be the image of $\xi_0^{1/p^n}$,
so $x_n(\pfd/p)$ is the kernel of $\phi^n:\pfd/p\to\pfd/p$.
Since $B\to B'$ is etale, the diagram of rings
\[
\xymatrix@M+0.2em{
B \ar[r] \ar[d]_{\phi^n} & B' \ar[d]^{\phi^n} \\ B \ar[r]  & B' 
}
\]
is cocartesian, in particular $x_nB$ is the kernel of $\phi^n:B'\to B'$.
It follows that $B'=B'^\flat/\xi_0$ (see EGA $0_{\text I}$ Prop.\ 7.2.7) 
and thus $\pfd'/p=B'^\flat/\xi_0=B'$.
We have $\pfd/p^n=W(B^\flat)/([\xi_0^n],p^n,\xi)$ and similarly for $\pfd'$.
For fixed $n$ let 
\[
C=W(B^\flat)/([\xi_0^n],p^n),
\qquad
C'=W(B'^\flat)/([\xi_0^n],p^n).
\]
In order to verify that $\pfd/p^n\to \pfd'/p^n$ is flat it suffices to show that $C\to C'$ is flat,
or equivalently that $C/p\to C'/p$ is flat and that
the associated graded rings satisfy $\gr_p(C')=\gr_p(C)\otimes_{C/p}C'/p$
(local flatness criterion).
But $C/p=B^\flat/\xi_0^n\cong B/x_r^n$ when $p^r\ge n$, and $gr_p(C)\cong (C/p)[T]/T^n$;
and similarly for $C'$. The assertion follows.
\end{proof}

\subsection{The ring $A_{\cris}$ for perfectoid rings}

Let $\pfd=W(S)/\xi$ be a perfectoid ring where $S$ is perfect and $\xi$ is distinguished.
Let $A_{\inf}(\pfd)=W(S)$ 
and let $A_{\cris}(\pfd)\to \pfd$ be the universal $p$-adic PD extension.
We have $A_{\cris}(\pfd)=A_{\cris}(\pfd/p)$ as rings.
If $\pfd$ is perfect then $A_{\cris}(\pfd)=A_{\inf}(\pfd)=W(\pfd)$.
If $\pfd$ is torsion free, which means that the semiperfect ring $\pfd/p$ is
a complete intersection in the sense of Definition \ref{Def:ci} 
(see Remark~\ref{Rk:perfd-torsion-free}), 
then $A_{\cris}(\pfd)$ is torsion free. This also holds in general:

\begin{Prop}
\label{Pr:IJ-Acris}
Let $\pfd=W(S)/\xi$ be a perfectoid ring as above 
and $\pfd_i=W(S_i)/\xi$ as in Remark~\ref{Rk:IJ-perfd}, for $i=1,2,12$.
We have an exact sequence
\[
0\to A_{\cris}(\pfd)\to A_{\cris}(\pfd_{1})\oplus W(\pfd_{2})\to W(\pfd_{12})\to 0. 
\]
In particular, the ring $A_{\cris}(\pfd)$ is torsion free.
\end{Prop}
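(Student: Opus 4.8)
The plan is to realize all four rings as explicit subrings of a single localization and to deduce the sequence from the Mayer--Vietoris sequence \eqref{Eq:IJ-perfd-WS}, deferring the $p$-adic completion to the very end. Write $I=\Ann(\xi_0)$ and $J=(\xi_0^{p^{-\infty}})$, so that $S_1=S/I$ and $S_2=S/J$ in the notation of Remark~\ref{Rk:IJ-perfd}. By Lemma~\ref{Le:xi-regular} the element $\xi$ is a non-zero divisor in $W(S)$ and in $W(S_1)$, so $A_{\cris}(\pfd)$ and $A_{\cris}(\pfd_1)$ are the $p$-adic completions of the subrings
\[
D=W(S)[\xi^{[n]}:n\ge 1]\subseteq W(S)[1/p],\qquad
D_1=W(S_1)[\xi^{[n]}]\subseteq W(S_1)[1/p],
\]
where $\xi^{[n]}=\xi^n/n!$. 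In $W(S_2)$ and $W(S_{12})$ we have $\xi_0=0$, hence $\xi=p\xi'$ with $\xi'$ a unit; since $p^n/n!\in\ZZ_p$, each $\xi^{[n]}$ already lies there, so the corresponding PD envelopes are $W(S_2)$ and $W(S_{12})$ themselves, recovering $A_{\cris}(\pfd_2)=W(\pfd_2)$ and $A_{\cris}(\pfd_{12})=W(\pfd_{12})$ (recall that $\pfd_2=S_2$ and $\pfd_{12}=S_{12}$ are perfect).

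Inverting $p$ in \eqref{Eq:IJ-perfd-WS} preserves exactness because $W(S_{12})$ is torsion free, so $W(S)[1/p]=W(S_1)[1/p]\times_{W(S_{12})[1/p]}W(S_2)[1/p]$. I would then prove that the integral sequence
\[
0\to D\to D_1\oplus W(S_2)\to W(S_{12})\to 0
\]
is exact. Injectivity is inherited from the localized sequence, and surjectivity on the right already holds on the summand $W(S_2)\to W(S_{12})$, since $S_2\to S_{12}$ is onto. The substance is exactness in the middle: given $x\in W(S)[1/p]$ whose images lie in $D_1$ and in $W(S_2)$, one must show $x\in D$. I would expand the image of $x$ in $D_1$ as a finite sum $\sum_n\bar a_n\xi^{[n]}$ with $\bar a_n\in W(S_1)$, lift the $\bar a_n$ to $W(S)$, and subtract the resulting $y\in D$. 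Then $z=x-y$ lies in $W(I)[1/p]$, because its image in $W(S_1)[1/p]$ vanishes, and its image $z_2=x_2-y_2$ in $W(S_2)$ is integral, because both $x$ and $y$ map into $W(S_2)$. The task is reduced to showing that $z$ itself is integral, i.e.\ that $z\in W(I)$.

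This integrality is the main obstacle, and it is here that the geometry of \eqref{Eq:IJ-square} enters. Because $I\cap J=0$, the projection $W(S)\to W(S_2)$ restricts to an isomorphism $W(I)\cong W(\bar I)$, where $\bar I$ is the image of $I$ in $S_2$; and because $I=\phi(I)$ by Lemma~\ref{Le:IJ-perf}, the ideal $\bar I$ is $\phi$-stable, which (using that $S_2$ is perfect) makes $W(\bar I)$ saturated with respect to $p$, i.e.\ $W(\bar I)[1/p]\cap W(S_2)=W(\bar I)$. Since $z_2$ is integral and lies in $W(\bar I)[1/p]$, it lies in $W(\bar I)$, whence $z\in W(I)\subseteq D$ and $x=y+z\in D$. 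This settles exactness in the middle.

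Finally I would $p$-adically complete the integral sequence. As $W(S_{12})$ is $p$-torsion free, each reduction $0\to D/p^n\to(D_1\oplus W(S_2))/p^n\to W(S_{12})/p^n\to 0$ is exact, and the inverse system has surjective transition maps, so the limit is exact; this yields the asserted sequence with $A_{\cris}(\pfd)=D^\wedge$. Torsion freeness of $A_{\cris}(\pfd)$ is then immediate, since it embeds into the torsion free module $A_{\cris}(\pfd_1)\oplus W(\pfd_2)$.
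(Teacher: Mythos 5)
The decisive step of your argument is the very first one, and it is not justified: you assert that $A_{\cris}(\pfd)$ is the $p$-adic completion of the subring $D=W(S)[\xi^{[n]}:n\ge 1]$ of $W(S)[1/p]$, citing only that $\xi$ is a non-zero divisor in $W(S)$. What is true in general is that the PD envelope $A$ of $W(S)\to\pfd$ (compatible with the divided powers on $p\ZZ_p$) admits a natural surjection onto $D$, because every ideal of the $\QQ$-algebra $W(S)[1/p]$ carries unique divided powers; but the kernel of $A\to D$ is exactly the obstruction to $A$ being $p$-torsion free, which is (up to completion) the assertion being proved. Regularity of $\xi$ in $W(S)$ is not the relevant hypothesis: by Berthelot's description of the PD envelope of a principal ideal generated by a non-zero divisor, $A$ is a quotient of $W(S)\otimes_{\ZZ_p[T]}\ZZ_p\langle T\rangle$ with $T\mapsto[\xi_0]$, and the potential $p$-torsion of this tensor product is controlled by $\Tor_1^{\ZZ_p[T]}(W(S),\ZZ_p\langle T\rangle/p)$, which reduces to the Koszul homology of $(p,[\xi_0]^p)$ on $W(S)$, i.e.\ to $\Ann_S(\xi_0)$ --- precisely the ideal that is nonzero in the cases the proposition is designed to handle. (For $S_1=S/\Ann(\xi_0)$ the element $\xi_0$ \emph{is} a non-zero divisor, so the identification $A_{\cris}(\pfd_1)=\widehat{D_1}$ is legitimate; that is the content of the footnote in the paper's proof, and the correct hypothesis is regularity of $\xi_0$ in $S_i$, not of $\xi$ in $W(S_i)$.) Consequently your Mayer--Vietoris computation --- which is fine as far as it goes; in particular the saturation step $W(\bar I)[1/p]\cap W(S_2)=W(\bar I)$ via $W(S_2)/W(\bar I)\cong W(S_{12})$ is correct --- proves the exact sequence with $\widehat D$ in place of $A_{\cris}(\pfd)$, and you still owe the identification of the two, which is the heart of the matter.

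The paper closes exactly this gap by working with the PD envelope $A$ itself rather than with its image in $W(S)[1/p]$: it filters $A$ by the images $F^mA$ of $W(S)\otimes_{\ZZ_p[T]}F^m\Lambda$, where $\Lambda=\ZZ_p\langle T\rangle$ and $F^m\Lambda=\Lambda\cap p^{-m}\ZZ_p[T]$, and shows that $\gr^mA\to\gr^mA_1$ is bijective for $m\ge1$. The key computation is that for $a\in\Ann(\xi_0)$ one has $[a]\gamma_{d_m}([\xi_0])=[b^{p^r-d_m}]\gamma_{d_m}([b\xi_0])=0$ with $b=a^{p^{-r}}$, so the kernel of $S\to S_1$ dies in $\gr^mA$; this yields $\Coker(W(S)\to A)\cong\Coker(W(S_1)\to A_1)$, hence the quasi-isomorphism $[A\to A_1]\simeq[W(S_2)\to W(S_{12})]$, from which both the exact sequence and the torsion-freeness follow simultaneously. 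If you want to keep your outline, you must first supply an argument of this kind showing that the surjection $A\to D$ is injective; without it, nothing downstream of your first sentence applies to $A_{\cris}(\pfd)$.
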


\begin{proof}
Recall that 
\[
S_1=S/{\Ann(\xi_0)},\quad
S_2=S/(\xi_0^{p^{-\infty}})=\pfd_2,\quad
S_{12}=S_1\otimes_SS_2=\pfd_{12}.
\]
For $i=\emptyset$ or $1$ let $A_i$ be the PD envelope of $[\xi_0]W(S_i)\subseteq W(S_i)$ 
relative to $p\ZZ_p\subset\ZZ_p$. Then $A_{\cris}(\pfd_i)$ is the $p$-adic completion of $A_i$. 
The projection $W(S_i)\to W(S_{i2})$ extends to a PD homomorphism $g_i:A_i\to W(S_{i2})$,
and we have a commutative diagram of rings, where $f_i$ is the canonical map:
\[
\xymatrix@M+0.2em{
W(S) \ar[r]^-f \ar[d] & A \ar[r]^-g \ar[d] & W(S_2) \ar[d] \\
W(S_1) \ar[r]^-{f_1} & A_1 \ar[r]^-{g_1} & W(S_{12})
}
\]
We claim that  $\Coker(f)\to\Coker(f_1)$ is bijective, $f_1$ is injective, and $A_1$ is torsion free.
Assume this holds. The diagonal map $W(S)\to W(S_1)\times W(S_2)$ is injective, 
thus $W(S)\to W(S_1)\times A$ is injective. 
Since $f_1$ is injective, it follows that $f$ is injective.
Consider the homomorphisms of complexes
\[
[W(S)\to W(S_1)]\xrightarrow{f_*}[A\to A_1]\xrightarrow{g_*}[W(S_2)\to W(S_{12})].
\]
Here $f_*$ and $g_*\circ f_*$ are quasi-isomorphism, thus $g_*$ is a quasi-isomorphism.
This remains true after $p$-adic completion, and the lemma follows.

To prove the claim we need a closer look on the construction of $A$ and $A_1$.
Let $\Lambda_0=\ZZ_p[T]$ 
and let $\Lambda=\ZZ_p{\left<T\right>}$ be the PD polynomial algebra, 
i.e.\ the $\ZZ_p$-subalgebra of $\QQ_p[T]$ generated by $T^n/n!$ for $n\ge 1$.
Define $\Lambda_0\to W(S)$ by $T\mapsto[\xi_0]$.
This extends to a PD homomorphism $\Lambda\to A$, and the resulting homomorphisms
\[
h:W(S)\otimes_{\Lambda_0}\Lambda\to A,
\qquad
h_1:W(S_1)\otimes_{\Lambda_0}\Lambda\to A_1
\]
are surjective.
Since $\xi_0$ is a non-zero divisor in $S_1$, 
the homomorphism $h_1$ is bijecitve, and $A_1$ is torison free.\footnote{
In more detail:
Since $[\xi_0]$ is not a zero divisor in $W(S_1)$, 
the ring $A_1'=W(S_1)\otimes_{\Lambda_0}\Lambda$ is the absolute PD envelope 
of $[\xi_0]W(S_1)\subseteq W(S_1)$; see \cite[p.\ 64, (3.4.8)]{Berthelot:CohCristalline}.
Since $\xi_0$ is not a zero divisor in $S_1$ we have $\Tor_1^{\Lambda_0}(W(S_1),\FF_p)=0$.
Then $\Tor_1^{\Lambda_0}(W(S_1),\Lambda/p)=0$ because the $\Lambda_0$-module
$\Lambda/p$ is isomorphic to the direct sum of infinitely many copies of $\FF_p[T]/T^p$.
Since $\Lambda$ is $p$-torsion free it follows that $A_1'$ is $p$-torsion free, 
and therefore $A_1'=A_1$.}

We consider the following ascending filtration of $\Lambda$ 
and the associated filtrations of $A$ and of $A_1$.
For $m\ge 0$ let $F^m\Lambda=\Lambda\cap p^{-m}\ZZ_p[T]$.
Then $\Lambda=\bigcup F^m\Lambda$, 
and ${\gr^m}\Lambda=F^m\Lambda/F^{m-1}\Lambda$ 
is a free $\Lambda_0/p$-module of rank $1$ generated by $p^{-m}T^{d_m}$ 
where $d_m$ is minimal such that $p^m$ divides $d_m!$. 
For $i=\emptyset$ or $1$ let $F^mA_i\subseteq A_i$ be the image of 
$W(S_i)\otimes_{\Lambda_0} F^m\Lambda$ and let ${\gr^m}A_i=F^mA_i/F^{m-1}A_i$.
The homomorphism $h_i$ induces surjective maps
\[
F^mh_i:W(S_i)\otimes_{\Lambda_0}F^m\Lambda\to F^mA_i
\]
and surjective maps
\[
{\gr^m}h_i:S_i\cong W(S_i)\otimes_{\Lambda_0}{\gr^m}\Lambda\to{\gr^m}A_i
\]
which map $1\in S_i$ to $(p^{-m}d_m!)\gamma_{d_m}([\xi_0])$.
The transition homomorphisms 
\[
W(S_i)\otimes_{\Lambda_0}F^{m-1}\Lambda\to W(S_i)\otimes_{\Lambda_0}F^m\Lambda
\] 
are injective because 
\[
\Tor_1^{\Lambda_0}(W(S_i),{\gr^m}\Lambda)\cong\Tor_1^{\ZZ_p}(W(S_i),\FF_p)=0.
\]
Since $h_1$ is bijective it follows that $F^mh_1$ is bijective for $m\ge 0$, 
which for $m=0$ means that $f_1:W(S_1)\to A_1$ is injective,
moreover ${\gr^m}h_1$ is bijective for $m\ge 1$. 
We consider the commutative diagram of surjective maps
\[
\xymatrix@M+0.2em{
S \ar[r] \ar[d]_{{\gr^m}h} &
S_1 \ar[d]^{{\gr^m}h_1}_\cong \\
{\gr^m} A \ar[r] &
{\gr^m} A_1
}
\]
We claim that $\Ker(S\to S_1)=\Ann(\xi_0)$ maps to zero in ${\gr^m}A$.
Indeed, choose $r$ such that $p^r\ge d_m$.
For $a\in\Ann(\xi_0)$ we have $b=a^{p^{-r}}\in\Ann(\xi_0)$ and therefore 
$[a]\gamma_{d_m}([\xi_0])=[b^{p^r-d_m}]\gamma_{d_m}([b\xi_0])=0$.
It follows that ${\gr^m}A\to{\gr^m}A_1$ is bijective for $m\ge 1$,
and thus $A/F^0A\to A_1/F^0A_1$ is bijective,
which means that $\Coker(f)\to\Coker(f_1)$ is bijective as required.
\end{proof}

\section{Windows and modules for perfectoid rings}

As earlier let $\pfd=W(S)/\xi$ be a perfectoid ring where $S$ is perfect and $\xi$ is distinguished.
The rings $A_{\inf}(\pfd)=W(S)$ and $A_{\cris}(\pfd)$ carry natural frame structures:
\[
\u A{}_{\inf}(\pfd)=(A_{\inf}(\pfd),\Filone A_{\inf}(\pfd),\pfd,\sigma,\sigma_1^{\inf})
\]
where $\Filone A_{\inf}(\pfd)=\xi A_{\inf}(\pfd)$ and $\sigma_1^{\inf}(\xi a)=\sigma(a)$, and
\[
\u A{}_{\cris}(\pfd)=(A_{\cris}(\pfd),\Filone A_{\cris}(\pfd),\pfd,\sigma,\sigma_1)
\]
where $\Filone A_{\cris}(\pfd)$ is the kernel of $A_{\cris}(\pfd)\to \pfd$, 
and $\sigma_1(a)=p^{-1}\sigma(a)$; this is well-defined since $A_{\cris}(\pfd)$
is torsion free by Proposition~\ref{Pr:IJ-Acris}.
The natural map $A_{\inf}(\pfd)\to A_{\cris}(\pfd)$ is a frame homomorphism
\begin{equation}
\label{Eq:lambda}
\lambda:\u A{}_{\inf}(\pfd)\to\u A{}_{\cris}(\pfd).
\end{equation}
Indeed, let $c=\sigma_1(\xi)$ in $A_{\cris}(\pfd)$.
Then $c\equiv[\xi_0]^p/p+[\xi_1]^p$ mod $pA_{\cris}(\pfd)$ and thus $c\equiv [\xi_1]^p$
mod $pA_{\cris}(\pfd)+\Filone A_{\cris}(\pfd)$, so $c$ is a unit since $\xi_1$ is a unit.
We have $\sigma_1\circ\lambda=c\cdot\lambda\circ\sigma_1$ on $\xi A_{\inf}(R)$,
so $\lambda$ is a $c$-homomorphism of frames in the sense of \cite{Lau:Frames}.
If $\pfd$ is perfect, $\lambda$ is the identity and $c=1$.

\subsection{Descent of windows under $\lambda$}

We need the following standard lemma.
For a ring $A$ let\ $\LF(A)$ be the category of finite projective $A$-modules.

\begin{Lemma}
\label{Le:LF-A12}
Let $A_1\to A_3\leftarrow A_2$ be rings with surjective homomorphisms and 
$A=A_1\times_{A_3}A_2$. Then the corresponding diagram of categories
\[
\xymatrix@M+0.2em{
\LF(A) \ar[r] \ar[d] & \LF(A_2) \ar[d] \\ \LF(A_1) \ar[r] & \LF(A_3)
}
\]
is $2$-cartesian.
\end{Lemma}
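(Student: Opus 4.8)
The assertion that the square is $2$-cartesian means that the comparison functor
\[
F:\LF(A)\longrightarrow\LF(A_1)\times_{\LF(A_3)}\LF(A_2),
\qquad M\mapsto\bigl(M\otimes_AA_1,\;M\otimes_AA_2,\;\mathrm{can}\bigr)
\]
is an equivalence, where the target is the category of triples $(M_1,M_2,\theta)$ with $M_i\in\LF(A_i)$ and $\theta$ an isomorphism $M_1\otimes_{A_1}A_3\cong M_2\otimes_{A_2}A_3$, and $\mathrm{can}$ denotes the evident identification. The plan is to produce an explicit quasi-inverse, the patching functor $G$ sending a triple $(M_1,M_2,\theta)$ to the fiber product of $A$-modules
\[
M_1\times_\theta M_2:=\{(m_1,m_2)\in M_1\times M_2:\theta(m_1\otimes1)=m_2\otimes1\},
\]
and to check that $F$ and $G$ are mutually quasi-inverse; this is the classical patching theorem of Milnor in the form needed here.

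Full faithfulness of $F$ is formal. For $M\in\LF(A)$ the natural map
\[
M\longrightarrow(M\otimes_AA_1)\times_{M\otimes_AA_3}(M\otimes_AA_2)
\]
is an isomorphism: since every object of $\LF(A)$ is a retract of a finite free module and both sides are additive in $M$, the claim reduces to $M=A$, where it is precisely the defining equality $A=A_1\times_{A_3}A_2$. This identifies, for $M,M'\in\LF(A)$, the group $\Hom_A(M,M')$ with the compatible pairs of homomorphisms over $A_1$ and $A_2$, that is with the morphisms $F(M)\to F(M')$; hence $F$ is fully faithful, and $G\circ F\cong\id$.

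The substance of the lemma is the essential surjectivity of $F$, equivalently that $G$ takes values in $\LF(A)$ and that $F\circ G\cong\id$; this is where I expect the main obstacle, and where the surjectivity of $A_1\to A_3$ and $A_2\to A_3$ enters. The strategy is to reduce to free data. Choosing complements and stabilising by free summands, I may assume $M_i\oplus N_i\cong A_i^{\,n}$ with the same $n$; after replacing the $N_i$ by $N_i\oplus A_i^{\,k}$ if necessary, the base changes $N_1\otimes_{A_1}A_3$ and $N_2\otimes_{A_2}A_3$ become isomorphic, since they are stably isomorphic as complements of the $\theta$-identified summands $M_i\otimes_{A_i}A_3$ inside $A_3^{\,n}$. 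Fixing such an isomorphism $\eta$ together with $\theta$ gives a gluing matrix $\Theta=\theta\oplus\eta\in\GL_n(A_3)$ of $A_1^{\,n}$ with $A_2^{\,n}$, and the patched module decomposes as $A_1^n\times_\Theta A_2^n\cong(M_1\times_\theta M_2)\oplus(N_1\times_\eta N_2)$, so it suffices to treat free data. For free data the Whitehead identity writes $\mathrm{diag}(\Theta,\Theta^{-1})\in\GL_{2n}(A_3)$ as a product of elementary matrices, each of which lifts along the surjection $A_2\to A_3$; hence $\mathrm{diag}(\Theta,\Theta^{-1})$ lifts to an automorphism of $A_2^{\,2n}$. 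Absorbing this automorphism into the gluing trivialises it, so
\[
(A_1^n\times_\Theta A_2^n)\oplus(A_1^n\times_{\Theta^{-1}}A_2^n)\cong A_1^{2n}\times_{\mathrm{id}}A_2^{2n}=A^{2n}
\]
is finite free; consequently $A_1^n\times_\Theta A_2^n$, and with it $M_1\times_\theta M_2$, is a direct summand of a finite free $A$-module. It remains to identify $F\circ G$ with the identity: the surjectivity of $A_2\to A_3$ makes the projection $M_1\times_\theta M_2\to M_1$ induce an isomorphism $(M_1\times_\theta M_2)\otimes_AA_1\cong M_1$, and symmetrically for $M_2$, compatibly with $\theta$ and $\mathrm{can}$. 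Thus $F$ is essentially surjective, hence an equivalence, and the square is $2$-cartesian.
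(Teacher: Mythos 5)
Your proof is correct, but it takes a genuinely different route from the paper. You give the classical Milnor patching argument: stabilise the projective modules to free ones, encode the gluing as a matrix $\Theta\in\GL_n(A_3)$, and use the Whitehead identity to write $\operatorname{diag}(\Theta,\Theta^{-1})$ as a product of elementary matrices, each of which lifts along the surjection $A_2\to A_3$; this realises the patched module as a direct summand of $A^{2n}$. The paper instead avoids matrices entirely: it forms $M=M_1\times_{M_3}M_2$ directly, chooses a finite free $A$-module $F$ with compatible surjections $g_i:F_i\to M_i$, and lifts a splitting $s_1$ of $g_1$ across the surjection $F_2\to F_3\times_{M_3}M_2$ to obtain compatible splittings $s_i$, exhibiting $M$ as a direct summand of $F$. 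Both arguments use the surjectivity hypothesis in an essential way (you to lift elementary matrices, the paper to lift sections), and both reduce the remaining claim that $(M_1\times_\theta M_2)\otimes_AA_i\to M_i$ is bijective to the free case via a compatible direct sum decomposition. Your approach is the more standard $K$-theoretic one and would also feed into Mayer--Vietoris statements for $K_0$ and $K_1$; the paper's is shorter and stays entirely module-theoretic. One point you should make explicit: surjectivity of $A_2\to A_3$ gives only surjectivity of $(M_1\times_\theta M_2)\otimes_AA_1\to M_1$; its injectivity follows because this map is a direct summand of the isomorphism $A^{2n}\otimes_AA_1\cong A_1^{2n}$ under the decomposition you have already constructed, and that step deserves a sentence.
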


\begin{proof}
For a flat $A$-module $M$ and $M_i=M\otimes_AA_i$ 
the natural map $M\to M_1\times_{M_3}M_2$ is bijective. 
Thus the functor $\LF(A)\to\LF(A_1)\times_{\LF(A_3)}\LF(A_2)$ is fully faithful.
For given $M_i\in\LF(A_i)$ and isomorphisms 
$M_1\otimes_{A_1}A_3\cong M_3\cong M_2\otimes_{A_2}A_3$ let $M=M_1\times_{M_3}M_2$.
We have to show that $M\in\LF(A)$ and that $M\otimes_AA_i\to M_i$ is bijective.
One can choose a finite free $A$-module $F$ and compatible surjective maps $g_i:F_i\to M_i$ 
where $F_i=F\otimes_AA_i$. 
Indeed, clearly one can arrange that $g_1$ or $g_3$ is surjective, and then take the direct sum.
Next one can find compatible maps $s_i:M_i\to F_i$ with $g_is_i=\id$.
Indeed, choose $s_1$, which induces $s_3$, and use that 
$F_2\to F_3\times_{M_3}M_2$ is surjective to get $s_2$.
This gives compatible isomorphisms $F_i\cong M_i\oplus\Ker(g_i)$,
so $M$ is a direct summand of $F$, and the assertion follows.
\end{proof}

\begin{Lemma}
\label{Le:IJ-Win}
Let $\pfd$ be a perfectoid ring.
For $\pfd_1$, $\pfd_2$, $\pfd_{12}$ as in Remark~\ref{Rk:IJ-perfd} 
the natural diagrams of window categories
\begin{equation}
\label{Eq:IJ-Win-Ainf}
\xymatrix@M+0.2em{
\Win(\u A{}_{\inf}(\pfd)) \ar[r] \ar[d] &
\Win(\u A{}_{\inf}(\pfd_2)) \ar[d] \\
\Win(\u A{}_{\inf}(\pfd_1)) \ar[r] &
\Win(\u A{}_{\inf}(\pfd_{12}))
}
\end{equation}
and
\begin{equation}
\label{Eq:IJ-Win-Acris}
\xymatrix@M+0.2em{
\Win(\u A{}_{\cris}(\pfd)) \ar[r] \ar[d] &
\Win(\u A{}_{\cris}(\pfd_2)) \ar[d] \\
\Win(\u A{}_{\cris}(\pfd_1)) \ar[r] &
\Win(\u A{}_{\cris}(\pfd_{12}))
}
\end{equation}
are $2$-cartesian.
\end{Lemma}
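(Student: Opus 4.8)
The plan is to deduce both assertions from the purely module-theoretic gluing statement of Lemma~\ref{Le:LF-A12}, by first exhibiting the relevant rings as fibre products with surjective legs, and then checking that all the extra window data --- the $\Filone$-ideals, the Hodge filtrations, and the $\sigma$-linear maps $F$, $F_1$ --- are compatible with these fibre products. Throughout, the frame homomorphisms occurring in each square are strict: within \eqref{Eq:IJ-Win-Ainf} the element $\xi$ and the map $\sigma_1^{\inf}(\xi a)=\sigma(a)$ are transported identically along $W(S)\to W(S_i)$, and within \eqref{Eq:IJ-Win-Acris} one has $\sigma_1=p^{-1}\sigma$ on all four torsion free rings; so no unit twist intervenes.

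First I would record the ring fibre products. For \eqref{Eq:IJ-Win-Ainf}, the exact sequence \eqref{Eq:IJ-perfd-WS} exhibits $A_{\inf}(\pfd)=W(S)=W(S_1)\times_{W(S_{12})}W(S_2)$ with surjective legs, and since $\xi$ is a non-zero divisor in $W(S_{12})$, multiplying that sequence by $\xi$ shows $\Filone A_{\inf}(\pfd)=\xi W(S)$ is the fibre product of the $\xi W(S_i)$. For \eqref{Eq:IJ-Win-Acris}, Proposition~\ref{Pr:IJ-Acris} gives $A_{\cris}(\pfd)=A_{\cris}(\pfd_1)\times_{W(\pfd_{12})}W(\pfd_2)$ with surjective legs, where $W(\pfd_2)=A_{\cris}(\pfd_2)$ and $W(\pfd_{12})=A_{\cris}(\pfd_{12})$ because $\pfd_2,\pfd_{12}$ are perfect. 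Comparing this short exact sequence with $0\to\pfd\to\pfd_1\oplus\pfd_2\to\pfd_{12}\to 0$ from \eqref{Eq:IJ-perfd-B}, which realizes $\pfd=\pfd_1\times_{\pfd_{12}}\pfd_2$, the snake lemma applied to the (surjective) reduction maps $A_{\cris}(\cdot)\to(\cdot)$ shows that the kernels $\Filone A_{\cris}(\cdot)$ also form a short exact sequence; hence $\Filone A_{\cris}(\pfd)$ is the fibre product of the $\Filone A_{\cris}(\pfd_i)$, again with surjective legs.

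Next I would glue the window data. Given windows $\u M_1$, $\u M_2$ over the lower-left and upper-right corners together with an isomorphism of their restrictions to the fourth corner, Lemma~\ref{Le:LF-A12} produces the finite projective module $M=M_1\times_{M_3}M_2$, and the same lemma over the base rings $\pfd=\pfd_1\times_{\pfd_{12}}\pfd_2$ produces the finite projective quotient $\bar M=\bar M_1\times_{\bar M_3}\bar M_2$ of the Hodge quotients $\bar M_i=M_i/\Filone M_i$. Using that each $\u M_i$ admits a normal representation (in the sense of \cite[Lemma~2.6]{Lau:Frames}), the map $\Filone M_2\to\Filone M_3$ is surjective, whence $M\to\bar M$ is surjective with kernel $\Filone M=\Filone M_1\times_{\Filone M_3}\Filone M_2$; this is the required filtration, compatible with base change. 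Because $\sigma$ and $\sigma_1$ agree under all transition maps, the $\sigma$-linear maps $F=F_1^{(1)}\times_{F_3}F_1^{(2)}$ and $F_1$ are well defined on $M$ and $\Filone M$ and satisfy the window identities componentwise. Full faithfulness is the statement that window morphisms glue, which holds because a morphism of windows is an $A$-linear map commuting with the extra structure, and $\Hom$-modules satisfy the sheaf property for a fibre product of rings with surjective legs.

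The step requiring the most care is confirming that the glued datum is genuinely a window rather than a mere pre-window, i.e.\ that the generation axiom --- equivalently, that the linearization of $F_1$ attached to a normal representation is an isomorphism --- holds over $A$. I would settle this by the elementary fibre-product criterion for isomorphisms: if $f\colon N\to M$ of finite projective $A$-modules becomes an isomorphism over $A_1$ and over $A_2$ compatibly, then it does so over $A_3$, and $f=f_1\times_{f_3}f_2$ is invertible with inverse $f_1^{-1}\times_{f_3^{-1}}f_2^{-1}$. Applying this to the normalization isomorphism of each $\u M_i$ yields the axiom for $\u M$, completing essential surjectivity; the same criterion underlies the bookkeeping for full faithfulness. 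For \eqref{Eq:IJ-Win-Acris} the only extra input beyond the $A_{\inf}$ case is the snake-lemma identification of the $\Filone$-kernels noted above, so both squares are $2$-cartesian.
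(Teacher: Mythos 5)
Your proposal is correct and follows the same route as the paper's (much terser) proof: exhibit the four rings, together with their $\Filone$-ideals, as cartesian squares with surjective maps using \eqref{Eq:IJ-perfd-WS} and Proposition~\ref{Pr:IJ-Acris}, and then glue windows via Lemma~\ref{Le:LF-A12}. The paper leaves the verification of the window axioms for the glued object to the reader ("the assertion follows easily"), and your expansion of that step --- the snake-lemma identification of the $\Filone$-kernels for $A_{\cris}$, the surjectivity of $\Filone M_2\to\Filone M_3$, and the fibre-product criterion for the linearization of a normal representation to be an isomorphism --- supplies exactly the intended details.
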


\begin{proof}
The rings $\pfd$, $\pfd_1$, $\pfd_2$, $\pfd_{12}$ form a cartesian diagram with surjective maps,
and the same holds for the associated rings $A_{\inf}$ and $A_{\cris}$,
the latter by Proposition~\ref{Pr:IJ-Acris}.
Thus the diagrams of frames that arise from \eqref{Eq:IJ-Win-Ainf} and \eqref{Eq:IJ-Win-Acris}
by deleting `$\Win$' are cartesian with surjective maps in all components.
Using Lemma~\ref{Le:LF-A12} the assertion follows easily.
\end{proof}

\begin{Prop}
\label{Pr:Win-inf-cris}
If $p\ge 3$, for every perfectoid ring $\pfd$ the functor 
\begin{equation}
\label{Eq:lambda*}
\lambda^*:\Win(\u A{}_{\inf}(\pfd))\to\Win(\u A{}_{\cris}(\pfd))
\end{equation}
associated to \eqref{Eq:lambda} is an equivalence of categories.
\end{Prop}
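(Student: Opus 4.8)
The plan is to reduce to the perfect and the torsion free cases by the gluing of Lemma~\ref{Le:IJ-Win}, and then to treat the torsion free case as a descent of the Frobenius lattice from $A_{\cris}(\pfd)$ to $A_{\inf}(\pfd)$. For the reduction, note that the frame homomorphisms \eqref{Eq:lambda} are functorial in $\pfd$, so the functors $\lambda^*$ for $\pfd,\pfd_1,\pfd_2,\pfd_{12}$ form a morphism between the two $2$-cartesian squares \eqref{Eq:IJ-Win-Ainf} and \eqref{Eq:IJ-Win-Acris}. As both $\Win(\u A{}_{\inf}(\pfd))$ and $\Win(\u A{}_{\cris}(\pfd))$ are the $2$-fibre products of the three remaining window categories, a transformation that is an equivalence at $\pfd_1,\pfd_2,\pfd_{12}$ is an equivalence at $\pfd$; this is the evident $2$-categorical analogue of Lemma~\ref{Le:cat}.

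By Remark~\ref{Rk:IJ-perfd} the rings $\pfd_2=S_2$ and $\pfd_{12}=S_{12}$ are perfect, and for a perfect ring $A_{\inf}(\pfd)=A_{\cris}(\pfd)=W(\pfd)$ with $\lambda=\id$ and $c=1$, so $\lambda^*$ is the identity there. This leaves the torsion free ring $\pfd_1$, for which $\xi_0$ is a non-zero divisor, $A_{\inf}=W(S)$, and $A_{\cris}$ is the $p$-adically completed divided power envelope of $[\xi_0]W(S)\subseteq W(S)$, which is torsion free by Proposition~\ref{Pr:IJ-Acris}.

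In this case I would prove that $\lambda^*$ is fully faithful and essentially surjective directly. Full faithfulness is the easy half: faithfulness holds because $A_{\inf}\hookrightarrow A_{\cris}$ is injective and both rings are torsion free, and fullness follows from an intrinsic characterisation of the $A_{\inf}$-lattice inside the $A_{\cris}$-module, using that a morphism of base-changed windows commutes with $F$ and that $\sigma$ is bijective on $W(S)$. The main obstacle is essential surjectivity: given a window $(M,\Filone M,F,F_1)$ over $\u A{}_{\cris}(\pfd)$ one must descend it to a Breuil-Kisin-Fargues module $\FM$ over $A_{\inf}$, that is, produce an $A_{\inf}$-lattice $\FM\subseteq M$ with $A_{\cris}\otimes_{A_{\inf}}\FM\xrightarrow{\sim}M$ whose linearised Frobenius has cokernel annihilated by $\xi$. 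This is the exact analogue of the classical descent from $S$ to $\FS$, and I expect it to be the technical heart of the argument. The governing mechanism is that the unit $c=\sigma_1(\xi)$ of \eqref{Eq:lambda} rescales the $\xi$-direction of $F_1$ correctly, while on the divided power generators $[\xi_0]^{[n]}$ the map $\sigma_1$ raises the index to $pn$ with the correction factor $\tfrac{(pn)!}{p\,n!}$ by \eqref{Eq:sigma1-Acris}, so that $\sigma_1\circ\sigma_1$ carries these generators into $pA_{\cris}$ and the divided power part becomes nilpotent for the window structure, exactly as in the proof of Proposition~\ref{Pr:str-kappa-cryst}. Together with the regularity of $\xi$ and $p$ in $W(S)$ from Lemma~\ref{Le:xi-regular} and the structure theory of finitely presented Frobenius modules over $A_{\inf}$ in the sense of \cite{BMS}, this should force $\FM$ to be finite projective with the asserted base change. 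As in Breuil-Kisin theory, I expect the hypothesis $p\ge 3$ to be needed precisely for the remaining $p$-adic estimates in this descent, the case $p=2$ being excluded for the same reasons.
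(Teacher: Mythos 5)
Your reduction is exactly the paper's: Lemma~\ref{Le:IJ-Win} lets one assume $\pfd$ is either perfect, where $\lambda$ is the identity, or torsion free. The problem is the torsion free case. You correctly single out essential surjectivity --- producing an $A_{\inf}$-lattice $\FM\subseteq M$ with $A_{\cris}\otimes_{A_{\inf}}\FM\xrightarrow{\sim}M$ compatible with the Frobenius data --- as the technical heart, but you do not prove it: the passage from ``the governing mechanism is \dots'' to ``this should force $\FM$ to be finite projective'' is a heuristic, not an argument. The paper does not reprove this step either; it cites \cite[Prop.\ 2.3.1]{Cais-Lau}, which is exactly where the hypothesis $p\ge 3$ enters. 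So up to the last step your route coincides with the paper's, and the last step is a genuine gap unless you either supply the descent argument or invoke that reference.

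Moreover, the mechanism you propose for that step is the wrong one. The nilpotence of $\sigma_1\circ\sigma_1$ on the divided-power generators $[\xi_0]^{[n]}$ modulo $p$, via \eqref{Eq:sigma1-Acris}, is the tool used in Proposition~\ref{Pr:str-kappa-cryst} for a \emph{surjective} frame homomorphism $\u A{}_{\cris}(R)\to\u A$, where the deformation lemma \cite[Thm.\ 3.2]{Lau:Frames} applies to the kernel ideal. Here $\lambda:A_{\inf}(\pfd)\to A_{\cris}(\pfd)$ is injective with large cokernel, so no deformation lemma is available and the nilpotence of $\sigma_1$ on divided-power elements does not by itself produce the lattice; one needs a genuinely different argument (a $\varphi$-iteration/successive-approximation descent, the analogue of the classical descent from $S$ to $\FS$), and the ``structure theory of Frobenius modules over $A_{\inf}$'' from \cite{BMS} plays no role in it. Concretely: explain how, given $(M,\Filone M,F,F_1)$ over $\u A{}_{\cris}(\pfd)$, you construct $\FM$ and verify projectivity and the base-change isomorphism, with the $p$-adic estimates that require $p\ge 3$ --- or cite \cite[Prop.\ 2.3.1]{Cais-Lau} as the paper does.
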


\begin{proof}
By Lemma~\ref{Le:IJ-Win} we can assume that $\pfd$ is either perfect or torsion free.
In the perfect case $\lambda$ is bijective.
Let $\pfd=W(S)/\xi$ where $S$ is perfect and $\xi$ is distinguished. 
If $\pfd$ is torsion free, $(p,\xi)$ is a regular sequence in $W(S)$,
and $\lambda^*$ is an equivalence by \cite[Proposition~2.3.1]{Cais-Lau}
(which requires $p\ge 3$).
\end{proof}

\subsection{Breuil-Kisin-Fargues modules}
\label{Se:BKF}

Let $\pfd=W(S)/\xi$ be perfectoid as before.
In the following we write $A_{\inf}=A_{\inf}(\pfd)=W(S)$.

\begin{Defn}
\label{Def:BK-perfd}
A (locally free) Breuil-Kisin-Fargues module for $\pfd$ is a pair $(\FM,\varphi)$ 
where $\FM$ is a finite projective $A_{\inf}$-module and where 
$\varphi:\FM^\sigma\to\FM$ is a linear map whose cokernel is annihilated by $\xi$.
We denote by $\BK(\pfd)$ the category of Breuil-Kisin-Fargues modules for $\pfd$.
\end{Defn}

In the case $\pfd=\OOO_K$ for a perfectoid field $K$, 
free $\varphi$-modules over $A_{\inf}$ are studied by Fargues \cite{Fargues:AuDela} 
in analogy with the classical theory of Breuil-Kisin modules \cite{Kisin:Crystalline},
and are called Breuil-Kisin-Fargues modules in \cite{BMS}.
Here we only consider $\varphi$-modules of height $1$, which correspond to $p$-divisible groups.
When $\pfd$ is a perfect ring, then $A=W(\pfd)$, 
and $\BK(\pfd)$ is the category of Dieudonn\'e modules over $\pfd$ in the usual sense.

\begin{Lemma}
\label{Le:DMB-proj}
For $(\FM,\varphi)\in\BK(\pfd)$ the $\pfd$-module $\Coker(\varphi)$ is  projective.
\end{Lemma}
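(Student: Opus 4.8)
The plan is to mirror the proof of Lemma~\ref{Le:DMA-proj}, with the distinguished element $\xi$ playing the role of $p$ and $\pfd=A_{\inf}/\xi$ the role of $A/pA$. First I would record that $\xi$ is a non-zero divisor in $A_{\inf}=W(S)$ (Lemma~\ref{Le:xi-regular}, since $\xi_1$ is a unit for distinguished $\xi$) and that $\varphi$ is injective: away from $V(\xi)$ the cokernel vanishes, so there $\varphi$ is a surjection between projectives of equal rank, hence an isomorphism, whence $\Ker(\varphi)\subseteq\FM^\sigma$ is supported on $V(\xi)$ and therefore zero, $\FM^\sigma$ being $\xi$-torsion free. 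Because $\Coker(\varphi)$ is killed by $\xi$ we have $\xi\FM\subseteq\Image(\varphi)$, so $\xi\cdot\mathrm{id}_\FM$ factors as $\varphi\circ\psi$ for a unique $\psi\colon\FM\to\FM^\sigma$, and injectivity of $\varphi$ gives $\psi\varphi=\varphi\psi=\xi$. Thus $(\FM,\varphi,\psi)$ is a Dieudonn\'e module with $\xi$ in place of $p$, and after the base change $-\otimes_{A_{\inf}}\pfd$ we obtain a four-term complex
\[
\bar\FM^\sigma\xrightarrow{\bar\varphi}\bar\FM\xrightarrow{\bar\psi}\bar\FM^\sigma\xrightarrow{\bar\varphi}\bar\FM
\]
of finite projective $\pfd$-modules with $\bar\varphi\bar\psi=\bar\psi\bar\varphi=0$, in which $\Coker(\bar\varphi)=\Coker(\varphi)$.

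Following Lemma~\ref{Le:DMA-proj}, it then suffices to show that $\Image(\bar\psi)$ is a direct summand of $\bar\FM^\sigma$, equivalently that the four-term complex becomes exact after base change to every residue field $\kappa(\Fm)$, or to its perfection. Here I would use that $\pfd$ is reduced (as shown above for all perfectoid rings), so that a finitely presented module with locally constant fibre rank is automatically projective, and that exactness of the fibre complex pins down those fibre ranks.

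The one genuinely new point compared with Lemma~\ref{Le:DMA-proj} is how the fibre complexes become honest Dieudonn\'e modules. Since $\pfd$ is $p$-adically complete, $p$ lies in its Jacobson radical, so every maximal ideal $\Fm$ contains $p$; let $\Fp\subseteq W(S)$ be its preimage and $\kappa=\kappa(\Fm)=\kappa(\Fp)$, which is a perfect field, being a residue field of the perfect ring $S$. Functoriality of Witt vectors gives a $\sigma$-equivariant homomorphism $W(S)\to W(\kappa)$, and under it $\xi=(\xi_0,\xi_1,\dots)$ maps to a distinguished element of $W(\kappa)$ with vanishing zeroth component (because $\xi_0\in\Fp$) and unit first component; hence its image equals $p$ times a unit. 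Consequently $(\FM,\varphi,\psi)\otimes_{W(S)}W(\kappa)$, after rescaling $\psi$ by that unit, is an ordinary Dieudonn\'e module over the perfect field $\kappa$, and its reduction modulo $p$ is precisely the fibre at $\Fm$ of the four-term complex. The standard relations $\Image(F)=\Ker(V)$ and $\Image(V)=\Ker(F)$ for Dieudonn\'e modules over a perfect field make this fibre complex exact, as required. The main obstacle is therefore the bookkeeping in this last step: matching the reduction modulo $\xi$ over $\pfd$ with the ordinary reduction modulo $p$ over $W(\kappa)$, and verifying that $\xi$ specializes to $p$ up to a unit at every point, which is exactly where $p$-adic completeness of $\pfd$ and the distinguishedness of $\xi$ enter.
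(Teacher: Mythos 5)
Your proposal is correct and follows essentially the same route as the paper: construct $\psi$ with $\varphi\psi=\xi$ and $\psi\varphi=\xi$, reduce projectivity of $\Coker(\varphi)$ to exactness of the four-term fibre complex at each maximal ideal $\Fm$ of $\pfd$, and observe that $p\in\Fm$, that $k=\pfd/\Fm$ is perfect, and that $\xi$ maps to $p$ (times a unit) in $W(k)$, so classical Dieudonn\'e theory over a perfect field gives the required exactness. The only cosmetic divergence is your appeal to reducedness of $\pfd$ together with local constancy of fibre ranks (which, note, you only verify at maximal ideals and would need to propagate to all primes by semicontinuity of the ranks of $\bar\varphi$ and $\bar\psi$); the paper instead uses the standard fact that for a complex of finite projective modules with zero composites, fibrewise exactness at all maximal ideals already forces the images to be direct summands, so reducedness is not actually needed.
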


\begin{proof}
Cf.\ Lemma~\ref{Le:DMA-proj}.
Let $\FN=\FM^\sigma$ and $\bar\FM=\FM\otimes_{A}\pfd$ and 
$\bar\FN=\FN\otimes_{A}\pfd$.
There is a unique linear map $\psi:\FM\to\FM^\sigma$ such that $\varphi\circ\psi=\xi$,
and we obtain an exact sequence of finite projective $\pfd$-modules
\begin{equation}
\label{Eq:BK-proj}
\bar\FN\xrightarrow{\;\bar\varphi\;}\bar\FM
\xrightarrow{\;\bar\psi\;}\bar\FN\xrightarrow{\;\bar\varphi\;}\bar\FM.
\end{equation}
We have to show that $\Image(\bar\psi)$ is a direct summand of $\bar\FN$.
This holds if and only if for each maximal ideal $\Fm\subset \pfd$ 
the base change of \eqref{Eq:BK-proj} to $k=\pfd/\Fm$ is exact.
We have $p\in\Fm$, so $k$ is a perfect field of characteristic $p$.
The natural homomorphism $A\to A_{\inf}(k)=W(k)$ maps $\xi$ to $p$.
Thus $\FM\otimes_{A}W(k)$ is a Dieudonn\'e module over $k$, 
and it follows that the base change of \eqref{Eq:BK-proj} under $\pfd\to k$ is exact as required. 
\end{proof}

Lemma~\ref{Le:DMB-proj} implies that there is an equivalence of categories
\begin{equation}
\label{Eq:Win-DMB}
\Win(\u A{}_{\inf}(\pfd))\to\BK(\pfd),
\end{equation}
given by $(M,\Filone M,F,F_1)\mapsto(\FM,\varphi)$ with $\FM=\Filone M$ and 
$\varphi(1\otimes x)=\xi F_1(x)$, see \cite[Lemma~2.1.15]{Cais-Lau}. 
The inverse functor is determined by $M=\FM^\sigma$ and 
$\Filone M=\{x\in M\mid \varphi(x)\in\xi\FM\}$ and $F(x)=1\otimes\varphi(x)$ for $x\in M$.

\begin{Remark}
\label{Rk:indep-xi}
The frame $\u A{}_{\inf}(\pfd)$ depends on the choice of $\xi$, but the functor
\[
\BK(\pfd)\to\Win(\u A{}_{\inf}(\pfd))\to\Win(\u A{}_{\cris}(\pfd))
\]
defined as the composition of  \eqref{Eq:lambda*} 
and the inverse of \eqref{Eq:Win-DMB} is independent of $\xi$ as is easily verified. 
\end{Remark}


\subsection{$p$-divisible groups over perfectoid rings}

Let $\pfd$ be a perfectoid ring. 
The functor $\Phi_{\u S}$ of \eqref{Eq:PhiS} for $\u S=\u A{}_{\cris}(\pfd)$ 
defined by evaluation of the crystalline Dieudonn\'e module is a functor
\[
\Phi_\pfd^{\cris}:\BT(\Spec \pfd)\to\Win(\u A{}_{\cris}(\pfd)).
\]

\begin{Prop}
\label{Pr:BT-WinB}
If $p\ge 3$ then the functor $\Phi_\pfd^{\cris}$ is an equivalence.
\end{Prop}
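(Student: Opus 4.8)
The plan is to compare $\Phi_\pfd^{\cris}$ with the equivalence already available over the balanced semiperfect ring $\pfd/p$, in exact analogy with Proposition~\ref{Pr:Cartesian-sigma}. First I would record that $\pfd/p$ is balanced: writing $\pfd=W(S)/\xi$ one has $\pfd/p=S/\xi_0 S$, and since $S$ is perfect $\Ker(\phi:\pfd/p\to\pfd/p)=\xi_0^{1/p}S/\xi_0 S$, whose $p$-th power lands in $\xi_0 S$ and hence vanishes. Thus $\pfd/p$ admits a straight lift by Lemma~\ref{Le:exist-lift}, and Corollary~\ref{Cor:str-PhiR} shows that $\Phi_{\pfd/p}^{\cris}:\BT(\Spec\pfd/p)\to\Win(\u A{}_{\cris}(\pfd/p))$ is an equivalence. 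Because $A_{\cris}(\pfd)=A_{\cris}(\pfd/p)$ carries the same $\sigma$ and $\sigma_1$, the two frames $\u A{}_{\cris}(\pfd)$ and $\u A{}_{\cris}(\pfd/p)$ share this ring (torsion free by Proposition~\ref{Pr:IJ-Acris}) and differ only in their base and filtration, namely $\Filone A_{\cris}(\pfd/p)=\Filone A_{\cris}(\pfd)+pA_{\cris}(\pfd)$. The identity on $A_{\cris}(\pfd)$ therefore defines a strict frame homomorphism $\mu:\u A{}_{\cris}(\pfd)\to\u A{}_{\cris}(\pfd/p)$ over the projection $\pfd\to\pfd/p$.

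Next I would assemble the commutative square
\[
\xymatrix@M+0.2em{
\BT(\Spec\pfd) \ar[r]^-{\Phi_\pfd^{\cris}} \ar[d] & \Win(\u A{}_{\cris}(\pfd)) \ar[d]^{\mu^*} \\
\BT(\Spec\pfd/p) \ar[r]^-{\Phi_{\pfd/p}^{\cris}}_-\sim & \Win(\u A{}_{\cris}(\pfd/p))
}
\]
whose left arrow is base change $G\mapsto G\otimes_\pfd\pfd/p$. Commutativity is formal: as $A_{\cris}(\pfd)$ is a common $p$-adic PD thickening of $\pfd$ and of $\pfd/p$, the crystal values $\DD(G)_{A_{\cris}(\pfd)}$ and $\DD(G\otimes_\pfd\pfd/p)_{A_{\cris}(\pfd/p)}$ coincide together with $F$, and the two Hodge filtrations are compatible, so $\mu^*\Phi_\pfd^{\cris}(G)\cong\Phi_{\pfd/p}^{\cris}(G\otimes_\pfd\pfd/p)$. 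Since the bottom arrow is an equivalence, it then suffices to prove that this square is $2$-cartesian, for then the top arrow is an equivalence as well.

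To establish the cartesian property I would match the two vertical deformation theories. On the right, $\mu^*$ forgets a lift of the Hodge filtration along $\pfd\to\pfd/p$, so by \cite[Lemma~4.2]{Lau:Frames} its fibre over $\u M_0=\Phi_{\pfd/p}^{\cris}(G_0)$ is the set of lifts of the Hodge filtration of $\u M_0$ inside $\DD(G_0)_{A_{\cris}(\pfd)}$ from $\pfd/p$ to $\pfd$. On the left, the fibre over $G_0$ is the set of deformations of $G_0$ from $\pfd/p$ to $\pfd$. Here I would invoke the Grothendieck--Messing theorem \cite{Messing:Crystals}: the kernel $p\pfd$ of $\pfd\to\pfd/p$ carries the $p$-adic divided powers, and these are topologically PD-nilpotent precisely because $p\ge 3$ (for $\gamma_n(p)=p^n/n!$ one has $v_p(\gamma_n(p))=n-v_p(n!)\to\infty$ iff $p\ge 3$), so deformations of $G_0$ correspond to lifts of its Hodge filtration along $\pfd\to\pfd/p$. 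Under the equivalence $\Phi_{\pfd/p}^{\cris}$ the two filtrations, and hence the two sets of lifts, are identified compatibly; this gives existence and uniqueness of a deformation $G$ of $G_0$ with prescribed $\Phi_\pfd^{\cris}(G)$, which is exactly the statement that the square is $2$-cartesian.

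I expect the main obstacle to be this final matching step at $p\ge 3$. The novelty over Proposition~\ref{Pr:Cartesian-sigma} is that Grothendieck--Messing must be run along the mixed-characteristic thickening $\pfd\to\pfd/p$ rather than in equal characteristic, so the topological nilpotence of the $p$-adic divided powers, which fails for $p=2$, is exactly what makes the deformation functor representable by lifts of the Hodge filtration; one must also verify that the lift datum produced this way agrees on the nose with the window filtration-lift of \cite[Lemma~4.2]{Lau:Frames}. Everything else---balancedness of $\pfd/p$, torsion-freeness of $A_{\cris}(\pfd)$ from Proposition~\ref{Pr:IJ-Acris}, and the passage from a $2$-cartesian square with an equivalence along the bottom to an equivalence along the top---is formal.
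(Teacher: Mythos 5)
Your proposal is correct and follows essentially the same route as the paper: reduce to the balanced semiperfect ring $\pfd/p$ via the strict frame homomorphism given by the identity on $A_{\cris}(\pfd)=A_{\cris}(\pfd/p)$, use the already-established equivalence over $\pfd/p$, and match the two deformation theories along $\pfd\to\pfd/p$ via Grothendieck--Messing and \cite[Lemma~4.2]{Lau:Frames}, with $p\ge 3$ entering exactly through the topological nilpotence of the divided powers on $p\pfd$. The only additions are that you spell out the balancedness of $\pfd/p$ and the valuation computation for $\gamma_n(p)$, which the paper leaves implicit.
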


\begin{proof}
Since the ring $A_{\cris}(\pfd)=A_{\cris}(\pfd/p)$ is torison free by Proposition~\ref{Pr:IJ-Acris},
there is another frame 
\[
\u A{}_{\cris}(\pfd/p)=(A_{\cris}(\pfd/p),\Filone A_{\cris}(\pfd/p),\pfd/p,\sigma,\sigma_1)
\]
defined by $\Filone A_{\cris}(\pfd/p)=\Filone A_{\cris}(\pfd)+pA_{\cris}(\pfd)$ 
and $\sigma_1(x)=p^{-1}\sigma(x)$.
The identity is a strict frame homomorphism $j:\u A{}_{\cris}(\pfd)\to\u A{}_{\cris}(\pfd/p)$
over the projection $\pi:\pfd\to\pfd/p$, and we obtain a commutative diagram of functors
\[
\xymatrix@M+0.2em{
\BT(\Spec \pfd) \ar[r]^-{\Phi^{\cris}_{\pfd}} \ar[d]_\pi &
\Win(\u A{}_{\cris}(\pfd)) \ar[d]^j \\
\BT(\Spec \pfd/p) \ar[r]^-{\Phi^{\cris}_{\pfd/p}} &
\Win(\u A{}_{\cris}(\pfd/p))
}
\]
where $\Phi_{\pfd/p}^{\cris}$ is the functor $\Phi_{\u S}$ for $\u S=\u A{}_{\cris}(\pfd/p)$.
Here $\Phi_{\pfd/p}^{\cris}$ coincides with the functor of Theorem~\ref{Th:PhiR},
but this is not needed. 
Since the semiperfect ring $\pfd/p=S/\xi_0$ is balanced, 
the functor $\Phi_{\pfd/p}^{\cris}$ is an equivalence by Corollary~\ref{Co:BT-Acris-str};
see also Remark~\ref{Re:BT-Acris-str}.

For $G\in\BT(\Spec\pfd/p)$ and $\u M=\Phi_{\pfd/p}^{\cris}(G)$ there is a natural isomorphism
of $\pfd$-modules $M\otimes_{A_{\cris}(\pfd/p)}\pfd\cong\DD(G)_{\pfd}$. 
Since $p\ge 3$, the divided powers on the ideal $p\pfd$ are topologically nilpotent.
By the Grothendieck-Messing Theorem \cite{Messing:Crystals} 
and by \cite[Lemma~4.2]{Lau:Frames} it follows that lifts of $G$ under $\pi$ and lifts
of $\u M$ under $j$ correspond to lifts of the Hodge filtration in the same way.
Therefore the functor $\Phi_{\pfd}^{\cris}$ is an equivalence. 
\end{proof}

\begin{Thm}
\label{Th:BT-DM}
If $p\ge 3$, for every perfectoid ring $\pfd$ there is an equivalence 
\[
\BT(\Spec \pfd)\cong\BK(\pfd)
\]
between $p$-divisible groups and Breuil-Kisin-Fargues modules.
\end{Thm}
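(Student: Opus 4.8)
The plan is to read off the equivalence by composing three equivalences already established above, so that Theorem~\ref{Th:BT-DM} is essentially formal. For $p\ge 3$, Proposition~\ref{Pr:BT-WinB} provides an equivalence
\[
\Phi_\pfd^{\cris}:\BT(\Spec \pfd)\xrightarrow{\ \sim\ }\Win(\u A{}_{\cris}(\pfd))
\]
given by evaluation of the crystalline Dieudonn\'e module, and Proposition~\ref{Pr:Win-inf-cris} shows that the base change functor
\[
\lambda^*:\Win(\u A{}_{\inf}(\pfd))\xrightarrow{\ \sim\ }\Win(\u A{}_{\cris}(\pfd))
\]
attached to $\lambda$ of \eqref{Eq:lambda} is also an equivalence. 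Finally, \eqref{Eq:Win-DMB} (which rests on Lemma~\ref{Le:DMB-proj}) identifies $\Win(\u A{}_{\inf}(\pfd))$ with $\BK(\pfd)$. I would therefore define the equivalence as the composite
\[
\BT(\Spec \pfd)\xrightarrow{\ \Phi_\pfd^{\cris}\ }\Win(\u A{}_{\cris}(\pfd))\xleftarrow{\ \lambda^*\ }\Win(\u A{}_{\inf}(\pfd))\xrightarrow{\ \sim\ }\BK(\pfd),
\]
inverting $\lambda^*$ in the middle.

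Two small points deserve attention. First, the frame $\u A{}_{\inf}(\pfd)$ and the identification \eqref{Eq:Win-DMB} depend a priori on the chosen presentation $\pfd=W(S)/\xi$; but by Remark~\ref{Rk:indep-xi} the composite $\BK(\pfd)\to\Win(\u A{}_{\cris}(\pfd))$ is independent of $\xi$, so the resulting equivalence is canonical. Second, $\Phi_\pfd^{\cris}$ is contravariant (coming from the crystalline Dieudonn\'e functor) while $\lambda^*$ and \eqref{Eq:Win-DMB} are covariant, so the equivalence of the theorem is the expected contravariant one; no duality needs to be inserted.

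I do not expect any genuine obstacle at this stage, since all the substance sits upstream. The hard inputs are Proposition~\ref{Pr:BT-WinB} (a Grothendieck-Messing deformation argument lifting the balanced case $\pfd/p$ across $p\pfd$, using $p\ge 3$ so that the divided powers on $p\pfd$ are topologically nilpotent) and Proposition~\ref{Pr:Win-inf-cris} (descent of windows from $A_{\cris}$ to $A_{\inf}$, reduced via the cartesian square \eqref{Eq:IJ-perfd-B} and Lemma~\ref{Le:IJ-Win} to the perfect case and the torsion-free case, the latter handled by the regular-sequence result \cite[Prop.\ 2.3.1]{Cais-Lau}, which again forces $p\ge 3$). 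Granting these, Theorem~\ref{Th:BT-DM} is just the composition above.
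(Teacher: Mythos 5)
Your proposal is correct and coincides with the paper's own proof: the paper likewise defines the equivalence as the composite $\BT(\Spec \pfd)\to\Win(\u A{}_{\cris}(\pfd))\leftarrow\Win(\u A{}_{\inf}(\pfd))\cong\BK(\pfd)$, citing Propositions~\ref{Pr:BT-WinB} and~\ref{Pr:Win-inf-cris} and the equivalence \eqref{Eq:Win-DMB}, and records the independence of $\xi$ via Remark~\ref{Rk:indep-xi} exactly as you do.
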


\begin{proof}
We have a chain of functors
\begin{equation}
\label{Eq:Th-BT-DM}
\BT(\Spec \pfd)\to\Win(\u A{}_{\cris}(\pfd))\leftarrow\Win(\u A{}_{\inf}(\pfd))\cong\BK(\pfd)
\end{equation}
where the last equivalence is \eqref{Eq:Win-DMB}.
For $p\ge 3$ the two arrows are equivalences by 
Propositions \ref{Pr:BT-WinB} and \ref{Pr:Win-inf-cris}.
\end{proof}

The equivalence of Theorem~\ref{Th:BT-DM} is independent
of the choice of the generator $\xi$ of the kernel of $A_{\inf}\to \pfd$; 
see Remark~\ref{Rk:indep-xi}. 


\section{Classification of finite group schemes}

The equivalence between $p$-divisible groups 
and Breuil-Kisin-Fargues modules over perfectoid rings
induces a similar equivalence for finite group schemes.
For a scheme $X$ let $\pGr(X)$ be the category of commutative 
finite locally free $p$-group schemes over $X$.

\subsection{A category of torsion modules}
\label{Se:Cat-tors-mod}

If $A$ is a $p$-adically complete and torsion free ring,
let $\T(A)$ be the category of finitely presented $A$-modules
of projective dimension $\le1$ which are annihilated by a power of $p$.

\begin{Lemma}
\label{Le:T(A)-bc}
For a homomorphism of $p$-adically complete torsion free rings $A\to A'$
and $M\in\T(A)$ we have $M\otimes_AA'\in\T(A')$.
\end{Lemma}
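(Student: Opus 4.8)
The plan is to verify the three defining properties of $\T(A')$ for $M\otimes_AA'$ in turn, observing that two of them are formal and concentrating the real work on the bound on projective dimension. First I would note that finite presentation is preserved under arbitrary base change: a finite presentation $A^m\to A^k\to M\to 0$ tensors to a finite presentation of $M\otimes_AA'$ over $A'$. Likewise, if $p^nM=0$ then $p^n(M\otimes_AA')=0$, so $M\otimes_AA'$ is again annihilated by a power of $p$. The only substantive point is that $M\otimes_AA'$ has projective dimension $\le 1$ over $A'$.

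For this, since $M$ is finitely presented of projective dimension $\le 1$ over $A$, I would choose a resolution $0\to P_1\xrightarrow{f}P_0\to M\to 0$ with $P_0$ finite free and $P_1$ finite projective. Tensoring with $A'$ gives a right exact sequence $P_1\otimes_AA'\xrightarrow{f'}P_0\otimes_AA'\to M\otimes_AA'\to 0$ whose two outer terms are finite projective over $A'$; so it suffices to show that $f'=f\otimes\id_{A'}$ is injective, equivalently that $\Tor_1^A(M,A')=0$.

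The key step, and the main obstacle, is to produce an auxiliary map in the reverse direction, exactly as in the proofs of Lemma~\ref{Le:DMA-proj} and Lemma~\ref{Le:DMB-proj}. Since $p^nM=0$ we have $p^nP_0\subseteq f(P_1)$, and because $f$ is injective this yields a unique $A$-linear map $g\colon P_0\to P_1$ with $f\circ g=p^n\id_{P_0}$; injectivity of $f$ then forces $g\circ f=p^n\id_{P_1}$ as well. Base changing, the maps $f'$ and $g'=g\otimes\id_{A'}$ satisfy $f'g'=p^n$ and $g'f'=p^n$. Now $P_1\otimes_AA'$ is finite projective over $A'$, hence a direct summand of a free module and therefore $p$-torsion free, using that $A'$ is torsion free; so $f'(z)=0$ implies $p^nz=g'f'(z)=0$ and then $z=0$. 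Thus $f'$ is injective, $M\otimes_AA'$ has projective dimension $\le 1$, and the verification is complete. The point to watch is that torsion-freeness of $A'$ enters precisely through the $p$-torsion-freeness of $P_1\otimes_AA'$; without it the cancellation $p^nz=0\Rightarrow z=0$ would fail.
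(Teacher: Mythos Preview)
Your proof is correct and follows essentially the same approach as the paper: choose a two-term projective resolution, use $p^rM=0$ to produce a reverse map $g$ with $fg=p^r$ and $gf=p^r$, and then use torsion-freeness of $P_1\otimes_AA'$ to conclude that $f\otimes\id_{A'}$ remains injective. The paper's write-up is more terse but the argument is identical.
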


\begin{proof}
Let $0\to Q\xrightarrow uP\to M\to 0$ be exact where $P$ and $Q$ are finite projective $A$-modules.
Let $p^rM=0$.
There is a homomorphism $w:P\to Q$ such that $uw=p^r$ and $wu=p^r$.
Let $Q'=Q\otimes_AA'$ etc.
Since $Q'$ is torsion free it follows that $0\to Q'\to P'\to M'\to 0$ is exact,
thus $M'\in\T(A')$.
\end{proof}

The category $\T(A)$ can be described in terms of the rings $A/p^n$ as follows.

\begin{Lemma}
\label{Le:T(A)}
Let $A$ be a $p$-adically complete torsion free ring, $A_n=A/p^n$.
Let $M$ be a finite $A$-module annihilated by $p^r$.
We have $M\in\T(A)$ iff for every exact sequence $0\to Q_n\to P_n\to M\to 0$
where $P_n$ is a finite projective $A_n$-module with $n\ge r$, 
the $A_{n-r}$-module $Q_n/p^{n-r}Q_n$ is finite projective.
\end{Lemma}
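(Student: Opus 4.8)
The plan is to prove the two implications separately. The mechanism common to both is that a module $M\in\T(A)$ carries a two-term resolution $0\to Q\xrightarrow{u}P\to M\to 0$ by finite projective $A$-modules together with a map $w\colon P\to Q$ satisfying $uw=p^r$ and $wu=p^r$, as produced in the proof of Lemma~\ref{Le:T(A)-bc}; this factorisation of $p^r$ is what controls the syzygies after reduction modulo powers of $p$. For the ``only if'' direction I would compare such a resolution with an arbitrary presentation $0\to Q_n\to P_n\to M\to 0$ over $A_n$ (with $n\ge r$ and $P_n$ finite projective). Reducing the $A$-resolution modulo $p^n$ gives, since $p^nM=0$, a surjection $\bar P:=P/p^nP\to M$ with kernel $\bar K:=\Image(\bar u)\cong Q/u^{-1}(p^nP)$, where $\bar u$ denotes the reduction of $u$, and $\bar P$ is finite projective. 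Schanuel's lemma applied to the two presentations of $M$ with projective middle terms $P_n$ and $\bar P$ gives $Q_n\oplus\bar P\cong\bar K\oplus P_n$ over $A_n$. The key point is that $wu=p^r$ together with torsion-freeness of $Q$ forces $u^{-1}(p^nP)\subseteq p^{n-r}Q$, so that $\bar K/p^{n-r}\bar K\cong Q/p^{n-r}Q$ is finite projective over $A_{n-r}$; reducing the Schanuel isomorphism modulo $p^{n-r}$ then presents $Q_n/p^{n-r}Q_n$ as a direct summand of a finite projective $A_{n-r}$-module, hence finite projective.

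For the ``if'' direction I would feed the standard presentations into the hypothesis. Choose a surjection $A^d\to M$ (possible since $M$ is finite) with kernel $K$; then $p^rA^d\subseteq K$ because $p^rM=0$, and $K$ is torsion-free as a submodule of $A^d$. For $P_n=A_n^d$ the syzygy is $Q_n=K/p^nA^d$, and the inclusion $p^nA^d\subseteq p^{n-r}K$ identifies $Q_n/p^{n-r}Q_n$ with $K/p^{n-r}K$; thus the hypothesis says precisely that $K/p^mK$ is finite projective over $A/p^m$ for all $m\ge 1$. The nested inclusions $p^{n+r}A^d\subseteq p^nK\subseteq p^nA^d$ show that the intrinsic $p$-adic topology of $K$ coincides with the topology induced from $A^d$, and since $K$ is closed in the $p$-adically complete module $A^d$, it follows that $K$ is itself $p$-adically complete.

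It then remains to deduce that $K$ is finite projective over $A$. I would lift the finite projective $A/p$-module $K/pK$ to a finite projective $A$-module $\tilde P$ (lifting of finite projective modules along the $p$-adically complete surjection $A\to A/p$, as already used in the proof of Theorem~\ref{Th:PhiA-equiv}), lift the isomorphism $\tilde P/p\tilde P\cong K/pK$ to a homomorphism $g\colon\tilde P\to K$, and check that $g$ is an isomorphism: surjectivity follows from completeness of $A$ and $K$ by a successive-approximation argument, while injectivity follows from $\Ker(g)/p\Ker(g)=0$ (using $\Tor_1^A(K,A/p)=0$, as $K$ is torsion-free) together with $\bigcap_n p^n\tilde P=0$. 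Then $0\to K\to A^d\to M\to 0$ is a length-one finite projective resolution, so $M\in\T(A)$.

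The main obstacle I anticipate is the bookkeeping around the two $p$-adic filtrations on the syzygy module. In the ``only if'' direction the delicate point is the containment $u^{-1}(p^nP)\subseteq p^{n-r}Q$, which is what collapses $\bar K/p^{n-r}\bar K$ to the projective module $Q/p^{n-r}Q$; this step genuinely uses the factorisation $w$ and would fail without it. In the ``if'' direction the corresponding subtlety is that $K$ is a priori complete only for the topology induced from $A^d$, so one must invoke $p^rA^d\subseteq K$ to see that this agrees with the intrinsic $p$-adic topology before the completeness and lifting arguments can be applied.
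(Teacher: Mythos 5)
Your proof is correct and follows the same strategy as the paper's: both directions turn on the containment $p^nP\subseteq p^{n-r}Q$ (in your notation $u^{-1}(p^nP)\subseteq p^{n-r}Q$) forced by $p^rM=0$, which collapses the relevant quotient of the mod-$p^n$ syzygy to the reduction of a finite projective $A$-module. The differences are only in execution: for ``only if'' the paper lifts the given $P_n$ to a finite projective $A$-module and uses that the resulting kernel over $A$ is projective, where you instead apply Schanuel over $A_n$ against the reduction of a fixed $A$-resolution; for ``if'' the paper forms the compatible system $\tilde Q_n=Q_{n+r}/p^nQ_{n+r}$ of finite projective $A_n$-modules and concludes that its limit $Q$ is finite projective over the $p$-adically complete ring $A$, where you reprove that limit statement by hand for $K$ by lifting $K/pK$ and running a successive-approximation and $\Tor$-vanishing argument.
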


\begin{proof}
Assume that $M\in\T(A)$ and let $0\to Q_n\to P_n\to M\to 0$ be as in the lemma.
Choose a finite projective $A$-module $P$ with $P/p^n=P_n$ and let $Q$ be
the kernel of $P\to M$. Then $Q$ is finite projective over $A$, and $Q_n=Q/p^nP$.
We have $p^nP\subseteq p^{n-r}Q$, 
and thus $Q_n/p^{n-r}=Q/p^{n-r}$ is finite projective over $A_{n-r}$.
Conversely, assume that the condition on $M$ holds and let $0\to Q\to P\to M\to 0$
be exact where $P$ is finite projective over $A$. 
For $n\ge r$ let $P_n=P/p^n$ and $Q_n=Q/p^nP$.
Then the $A_n$-module $\tilde Q_n=Q_{n+r}/p^nQ_{n+r}$ is finite projective, 
and we have $\tilde Q_{n+1}/p^n=\tilde Q_n$.
It follows that $Q=\varprojlim Q_n=\varprojlim\tilde Q_n$ is finite projective over $A$.
\end{proof}

The category $\T(A)$ satisfies fpqc descent in the following sense.

\begin{Lemma}
\label{Le:T(A)-descent}
Let $A\to A'$ be a homomorphism of $p$-adically complete torsion free rings
such that $A/p\to A'/p$ is faithfully flat. Let $A''$ and $A'''$ be the $p$-adic
completions of $A'\otimes_AA'$ and $A'\otimes_AA'\otimes_AA'$.
Then $\T(A)$ is equivalent to the category of pairs $(M',\alpha)$ where $M'\in\T(A')$
and $\alpha:M'\otimes_AA'\cong A'\otimes_AM'$ is an isomorphism that satisfies the
usual cocycle condition over $A'''$.
\end{Lemma}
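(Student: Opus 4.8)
The plan is to reduce the statement to classical faithfully flat descent for modules carried out at each finite level $A/p^n$, and then to use the characterisation of $\T(A)$ in Lemma~\ref{Le:T(A)} to check that projective dimension $\le 1$ survives descent. Write $A_n=A/p^n$ and $A_n'=A'/p^n$, and similarly $A_n''$, $A_n'''$. The key simplification is that an object $M\in\T(A)$ with $p^rM=0$ is a module over $A_r$, and tensoring a $p^r$-torsion module with any $A$-algebra $B$ only sees $B/p^rB$. Hence the $p$-adic completions in the definitions of $A''$ and $A'''$ are invisible to torsion coefficients, and the descent datum $\alpha\colon M'\otimes_AA'\cong A'\otimes_AM'$ may be read off at the finite level $A_r''$, with its cocycle condition over $A_r'''$.

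First I would establish that $A_n\to A_n'$ is faithfully flat for every $n\ge 1$. This follows from the hypothesis that $A/p\to A'/p$ is faithfully flat together with torsion-freeness: the $p$-adic filtration of $A_n'$ has graded pieces $p^kA_n'/p^{k+1}A_n'\cong A'/p$, each faithfully flat over $A/p$, so a dévissage through this finite filtration promotes flatness of the graded pieces to flatness of $A_n\to A_n'$, and faithfulness is checked on the residue fields of $A_n$, all of which contain the nilpotent element $p$. Consequently $A_n''$ and $A_n'''$ are the usual double and triple tensor products, and Grothendieck's faithfully flat descent applies to $A_r$-modules.

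Given a descent datum $(M',\alpha)$ with $p^rM'=0$, module descent produces a unique $A_r$-module $M$ with $M\otimes_{A_r}A_r'\cong M'$ compatibly with $\alpha$; it is finitely presented over $A_r$, hence over $A$. The \emph{main point} is to verify $M\in\T(A)$, i.e.\ that $M$ has projective dimension $\le 1$. For this I would invoke Lemma~\ref{Le:T(A)}: choose a presentation $0\to Q_n\to P_n\to M\to 0$ over $A_n$ with $P_n$ finite projective and $n\ge r$. Since $A_n\to A_n'$ is flat, base change keeps the sequence exact, so $Q_n\otimes_{A_n}A_n'$ is the corresponding kernel $Q_n'$ for $M'$, and therefore $(Q_n/p^{n-r}Q_n)\otimes_{A_{n-r}}A_{n-r}'\cong Q_n'/p^{n-r}Q_n'$. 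As $M'\in\T(A')$, the right-hand module is finite projective over $A_{n-r}'$ by Lemma~\ref{Le:T(A)}, and faithfully flat descent of projectivity then forces $Q_n/p^{n-r}Q_n$ to be finite projective over $A_{n-r}$. By Lemma~\ref{Le:T(A)} this gives $M\in\T(A)$, so the functor $M\mapsto(M\otimes_AA',\mathrm{can})$ is essentially surjective onto the category of descent data; here Lemma~\ref{Le:T(A)-bc} guarantees that the output lands in $\T(A')$ in the first place.

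Full faithfulness is then immediate from descent for homomorphisms of modules: morphisms in $\T$ are just $A$-linear maps, and $\Hom_A(M,N)$ is the equaliser of the two restriction maps $\Hom_{A'}(M',N')\rightrightarrows\Hom_{A''}(M'',N'')$, which is classical once $A_n\to A_n'$ is faithfully flat. The one genuinely delicate step is the compatibility of kernels with base change used in the projective-dimension argument; it rests squarely on the flatness of $A_n\to A_n'$ (so that $\otimes_{A_n}A_n'$ is exact on $0\to Q_n\to P_n\to M\to 0$), and this is precisely where torsion-freeness and finite-level faithful flatness are indispensable. Everything else is a formal consequence of classical fpqc descent.
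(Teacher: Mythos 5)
Your proposal is correct and follows essentially the same route as the paper: reduce to classical faithfully flat descent at each finite level $A/p^n$ (the paper gets faithful flatness of $A/p^n\to A'/p^n$ from the local flatness criterion, which is the rigorous form of your graded-pieces d\'evissage — note that $A'/p$ by itself is not flat over $A/p^n$, so one really needs the graded isomorphisms supplied by torsion-freeness rather than a bare extension argument), and then verify $M\in\T(A)$ via Lemma~\ref{Le:T(A)} by base-changing a presentation and descending finite projectivity of $Q_n/p^{n-r}Q_n$. The paper's proof is a compressed version of exactly these steps, so no further comparison is needed.
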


\begin{proof}
Lemma~\ref{Le:T(A)-bc} gives a functor $M\mapsto(M',\alpha)$.
By the local flatness criterion $A/p^n\to A'/p^n$ is faithfully flat for each $n$.
It follows that the functor $M\mapsto(M',\alpha)$ is fully faithful, moreover each $(M',\alpha)$
with $M'\in\T(A)$ comes from an $A$-module $M$ annihilated by a power of $p$,
and it remains to show that $M\in\T(A)$. 
This is an easy consequence of Lemma~\ref{Le:T(A)}.
\end{proof}

The category $\T(A)$ preserves projective limits of nilpotent immersions:

\begin{Lemma}
\label{Le:T(A)-lim}
Let $A={\varprojlim}{}_n\, A^{n}$ for a surjective system $A^1\leftarrow A^2\leftarrow\ldots$
of $p$-adically complete torsion free rings such that $\Ker(A^{n+1}\to A^n)$ is nilpotent.
Then the obvious functor $\rho:\T(A)\to{\varprojlim}{}_n\,\T(A^n)$ is an equivalence.
\end{Lemma}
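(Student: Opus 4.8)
The plan is to establish full faithfulness and essential surjectivity separately, in each case transferring statements about $A$ to the system $(A^n)$ by exploiting that the transition maps $A^{n+1}\to A^n$ are surjective, so every projective system that occurs is Mittag--Leffler and its $R^1\varprojlim$ vanishes. I would first record the auxiliary fact that for any finitely presented $A$-module $H$ killed by $p^r$ one has $H=\varprojlim_n(H\otimes_A A^n)$. This follows by applying $\varprojlim_n$ to the sequences $0\to A^n\xrightarrow{p^r}A^n\to A^n/p^r\to 0$ (torsion freeness plus $R^1\varprojlim=0$) to get $A/p^r=\varprojlim_n A^n/p^r$, and then comparing a finite presentation of $H$ over $A/p^r$ with its base changes, again using that all transition maps are surjective.

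For full faithfulness, take $M,N\in\T(A)$ and a finite projective resolution $0\to Q\to P\to M\to 0$ over $A$; by Lemma~\ref{Le:T(A)-bc} it stays exact after $\otimes_A A^n$. Applying $\Hom_{A^n}(-,N^n)$ gives left-exact sequences, and since $P$ is finite projective one has $\Hom_{A^n}(P^n,N^n)=\Hom_A(P,N)\otimes_A A^n$, and likewise for $Q$. The modules $\Hom_A(P,N)$ and $\Hom_A(Q,N)$ are finitely presented and killed by a power of $p$, so the auxiliary fact and the left-exactness of $\varprojlim_n$ identify $\Hom_A(M,N)=\operatorname{Ker}(\Hom_A(P,N)\to\Hom_A(Q,N))$ with $\varprojlim_n\Hom_{A^n}(M^n,N^n)$.

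For essential surjectivity, given a cartesian system $(M^n)$ I would build a global length-one resolution. Lifting generators of $M^1$ to $M:=\varprojlim_n M^n$ and applying Nakayama for the nilpotent kernels yields compatible surjections $(A^n)^b\to M^n$ with a uniform $b$; the kernels $K^n$ are finite projective over $A^n$ (dimension shifting, using $\operatorname{pd}M^n\le 1$ and $M^n$ finitely presented), and Lemma~\ref{Le:T(A)-bc} applied to $0\to K^n\to (A^n)^b\to M^n\to 0$ shows $K^{n+1}\otimes_{A^{n+1}}A^n\cong K^n$, so $(K^n)$ is a cartesian system in $\LF(A^n)$. Gluing it by lifting idempotents along the nilpotent thickenings $A^{n+1}\to A^n$ and using $A=\varprojlim_n A^n$ produces $K\in\LF(A)$ with $K\otimes_A A^n\cong K^n$; full faithfulness of the finite-projective gluing then produces an $A$-linear $\iota\colon K\to A^b$ reducing to the inclusions $K^n\hookrightarrow(A^n)^b$. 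Left-exactness of $\varprojlim_n$ makes $\iota$ injective, and $M=\operatorname{Coker}(\iota)$ has projective dimension $\le 1$ with $M\otimes_A A^n\cong M^n$; moreover $M=\varprojlim_n M^n$.

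The step I expect to be the main obstacle is the last one needed to land in $\T(A)$: showing that $M=\varprojlim_n M^n$ is annihilated by a power of $p$. This is exactly where the nilpotence hypothesis must be used to bound the exponents uniformly: once $M^1$ is killed by $p^r$ one has $p^rM\subseteq\operatorname{Ker}(A\to A^1)\cdot M$, and nilpotence of this kernel (of index $d$, say) forces $p^{rd}M=0$. The delicate point is that the consecutive kernels $\operatorname{Ker}(A^{n+1}\to A^n)$ feed in only through the thickenings, so extracting a \emph{uniform} $p$-power that kills all $M^n$ — equivalently, verifying that the limit is genuinely $p$-power torsion and equals $\varprojlim_n M^n$ rather than something larger — is the crux; by contrast the Mittag--Leffler and exactness bookkeeping in the other steps is routine.
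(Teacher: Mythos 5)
Your argument follows the paper's own proof almost step for step: full faithfulness is obtained from $M=\varprojlim_n M^n$, which is deduced from a length-one finite projective resolution together with Lemma~\ref{Le:T(A)-bc}; essential surjectivity is obtained by lifting a surjection onto $M^1$ to a map $P\to\varprojlim_nM^n$ with $P$ finite free, applying Nakayama along the nilpotent kernels, observing that the kernels $Q^n=\Ker(P^n\to M^n)$ form a cartesian system of finite projective modules, gluing them to a finite projective $Q$ over $A$, and setting $M=\Coker(Q\to P)$. All of this is correct and is exactly what the paper does.

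The step you single out as the main obstacle is, however, a genuine gap, and your proposed mechanism for closing it does not work: you invoke nilpotence of $\Ker(A\to A^1)$, but the hypothesis only makes the \emph{consecutive} kernels $\Ker(A^{n+1}\to A^n)$ nilpotent, and their composite $\Ker(A\to A^1)$ need not be. In fact no argument can close this gap, because the statement fails without a uniform torsion bound on the system $(M^n)$. Take $A^n=\ZZ_p[x]/(x^n)$, so that $A=\ZZ_p[[x]]$, and $M^n=A^n/(x-p)\cong\ZZ_p/p^n\ZZ_p$. Each $M^n$ lies in $\T(A^n)$, since $x-p$ is a non-zero divisor in $A^n$ and $p^nM^n=0$; the system is cartesian because $M^{n+1}\otimes_{A^{n+1}}A^n=A^n/(x-p)=M^n$; yet any $M\in\T(A)$ is killed by a fixed power of $p$, so it cannot satisfy $M\otimes_AA^n\cong M^n$ for all $n$ --- indeed $\varprojlim_nM^n=\ZZ_p=A/(x-p)$ is not $p$-power torsion. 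Note that the paper's own proof has the same lacuna: after producing $0\to Q\to P\to M\to 0$ it concludes ``thus $M\in\T(A)$'' without verifying that $M$ is annihilated by a power of $p$. Both proofs become correct if the target category is restricted to cartesian systems annihilated by a single power of $p$ (then $p^r\varprojlim_nM^n=0$ is immediate), and this restricted statement suffices for the application to the stack property of $\T(\AAA_{\inf})$ and $\BK_{\tor}(\RRR)$, where the $M^n$ arise by descent from one fixed $M'\in\T(A')$ and are therefore all killed by the same $p^r$. So you correctly located the crux; the resolution is not to prove the missing bound but to add it as a hypothesis (or to record the counterexample and adjust the statement).
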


\begin{proof}
For $M\in\T(A)$ let $0\to Q\to P\to M\to 0$ be exact where $P$ and $Q$ are finite projective over $A$.
By the proof of Lemma~\ref{Le:T(A)-bc} the base change under $A\to A^n$ gives an exact sequence
$0\to Q^n\to P^n\to M^n\to 0$. Since $P={\varprojlim}{}_n\, P^n$ and $Q={\varprojlim}{}_n\, Q^n$ 
it follows that $M={\varprojlim}{}_n\, M^n$.
In particular the functor $\rho$ is fully faithful. 

Conversely, let $M^n\in\T(A^n)$ with isomorphisms $M^{n+1}\otimes_{A^{n+1}}A^n\cong M^n$ 
be given. Let $M={\varprojlim}{}_n\, M^n$ and choose a homomorphism $P\to M$ where
$P$ is finite projective over $A$ such that
$P^1\to M^1$ is surjective, $P^n=P\otimes_AA^n$. 
Then $P^n\to M^n$ is surjective by Nakayama's Lemma. 
The module $Q^n=\Ker(P^n\to M^n)$ is finite projective over $A^n$, 
and $Q^n=Q^{n+1}\otimes_{A^{n+1}}A^n$ by the proof of Lemma~\ref{Le:T(A)-bc}.
It follows that $Q={\varprojlim}{}_n\, Q^n$ is finite projective over $A$,
and $0\to Q\to P\to M\to 0$ is exact, thus $M\in\T(A)$.
The base change under $A\to A^n$ of the last sequence remains exact,
so $M^n=M\otimes_AA^n$.
\end{proof}

\subsection{Torsion Breuil-Kisin-Fargues modules}

Let $\pfd=W(S)/\xi$ be a perfectoid ring where $S$ is perfect and $\xi$
is distinguished.
We write again $A_{\inf}=A_{\inf}(\pfd)=W(S)$, 
and $\Filone A_{\inf}=\Ker(A_{\inf}\to \pfd)=\xi A_{\inf}$.

\begin{Defn}
A torsion Breuil-Kisin-Fargues module for $\pfd$ is a triple $(\FM,\varphi,\psi)$ 
where $\FM\in\T(A_{\inf})$ and where
\begin{equation}
\label{Eq:BKtor}
\Filone A_{\inf}\otimes_{A_{\inf}}\FM\xrightarrow{\;\psi\;} \FM^\sigma\xrightarrow{\;\varphi\;}\FM
\end{equation}
are linear maps such that $\varphi\circ\psi$ and $\psi\circ(1\otimes\varphi)$ 
are the multiplication maps. 
We denote by $\BK_{\tor}(\pfd)$ the category of torsion Breuil-Kisin-Fargues modules over $\pfd$.
\end{Defn}

\begin{Remark}
For a homomorphism of perfectoid rings $\pfd\to \pfd'$ there is an obvious base change functor
$\BK_{\tor}(\pfd)\to\BK_{\tor}(\pfd')$; see Lemma~\ref{Le:T(A)-bc}.
\end{Remark}

\begin{Remark}
If $\pfd$ is torsion free, $(p,\xi)$ is a regular sequence in $A_{\inf}$, 
thus $\xi$ is $\FM$-regular for each $\FM\in\T(A_{\inf})$,
and torsion Breuil-Kisin-Fargues are equivalent to pairs $(\FM,\varphi)$  
where the cokernel of $\varphi$ is annihilated by $\xi$.
\end{Remark}

\begin{Remark}
\label{Rk:BK-Coker}
For a locally free Breuil-Kisin-Fargues module
$\u\FM=(\FM,\varphi)$ as in Definition~\ref{Def:BK-perfd}
there is a unique $\psi$ as in \eqref{Eq:BKtor}.
In the following we will view $\u\FM$ as a triple $(\FM,\varphi,\psi)$.
A homomorphism $u:\u\FM\to\u\FM'$ in $\BK(\pfd)$ is called an isogeny if
it becomes bijective over $A[1/p]$. 
Then $u$ is injective, and its cokernel lies in $\BK_{\tor}(\pfd)$.
\end{Remark}

%

\subsubsection*{Etale descent}

By an abuse of notation, let $(\Spec \pfd/p)_{\et}$ 
denote the site of all affine etale $\pfd/p$-schemes,
with surjective families as coverings.
For an etale $\pfd/p$-algebra $B'$ there is a unique homomorphism of
perfectoid rings $\pfd\to \pfd'$ with $\pfd'/p=B'$; see Lemma~\ref{Le:tilt-etale}.
We define presheaves of rings $\AAA_{\inf}$ and $\RRR$ on $(\Spec\pfd/p)_{\et}$ by 
\[
\RRR(\Spec B')=\pfd',\qquad
\AAA_{\inf}(\Spec B')=A_{\inf}(\pfd').
\]
For varying etale $\pfd/p$-algebras $B'$, the categories $\LF(\pfd')$ 
of locally free $\pfd'$-modules 
form a fibered category $\LF(\RRR)$ over $(\Spec\pfd/p)_{\et}$.
Similarly we have fibered categories
$\LF(\AAA_{\inf})$, $\T(\AAA_{\inf})$,
$\BK(\RRR)$, $\BK_{\tor}(\RRR)$, $\BT(\Spec\RRR)$, and $\pGr(\Spec\RRR)$
over $(\Spec\pfd/p)_{\et}$;
see Lemma~\ref{Le:T(A)-bc} for $\T(\AAA_{\inf})$.

\begin{Lemma}
The presheaves of rings $\AAA_{\inf}$ and $\RRR$ on $(\Spec\pfd/p)_{\et}$ are sheaves.
The fibered categories $\LF(\RRR)$, $\LF(\AAA_{\inf})$, $\T(\AAA_{\inf})$,
$\BK(\RRR)$, $\BK_{\tor}(\RRR)$, $\BT(\Spec\RRR)$, and $\pGr(\Spec\RRR)$ 
over $(\Spec\pfd/p)_{\et}$ are stacks.
\end{Lemma}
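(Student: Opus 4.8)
The plan is to establish the two sheaf statements first, reducing each to the finite levels modulo $p^n$, and then to deduce the stack statements by faithfully flat descent. A covering in $(\Spec\pfd/p)_{\et}$ may be replaced by a single faithfully flat \'etale homomorphism $B'\to C'$ together with its \v{C}ech nerve. By Lemma~\ref{Le:tilt-etale} the functor $B'\mapsto\pfd'/p^n$ is an equivalence onto \'etale $\pfd/p^n$-algebras, compatible with tensor products, so the induced maps $\pfd'/p^n\to\pfd''/p^n$ are faithfully flat and the \v{C}ech nerve maps to the Amitsur complex; hence $\RRR/p^n$ is a sheaf by ordinary faithfully flat descent, and $\RRR=\varprojlim_n\RRR/p^n$ is a sheaf. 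On tilts, $B'\mapsto B'^\flat$ is an equivalence onto \'etale $S$-algebras (with $S=\pfd^\flat$), which are automatically perfect since $S$ is, and it is again compatible with tensor products and with faithful flatness; as $\AAA_{\inf}/p=\RRR^\flat$ by perfectness, the same argument shows $\RRR^\flat$ is a sheaf.

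For $\AAA_{\inf}$ the remaining point is the Witt levels $\AAA_{\inf}/p^n=W_n(\RRR^\flat)$, and this is the step I expect to be the \emph{main obstacle}. Here I would use that $\RRR^\flat$ is a sheaf of perfect rings together with the Verschiebung filtration: for perfect $C$ there is a short exact sequence $0\to C\xrightarrow{V^{n-1}}W_n(C)\to W_{n-1}(C)\to 0$ of additive groups, natural in $C$, whose kernel is $\RRR^\flat$ with module structure twisted by a power of Frobenius. By induction on $n$, starting from $W_1=\RRR^\flat$, this exhibits $W_n(\RRR^\flat)$ as an extension of sheaves by sheaves; since sheaves are closed under extensions inside presheaves, $\AAA_{\inf}/p^n$ is a sheaf, and $\AAA_{\inf}=\varprojlim_n\AAA_{\inf}/p^n$ follows. (Alternatively one may invoke that $W_n$ commutes with \'etale base change.)

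With both base rings known to be sheaves, the module stacks follow. For $\LF(\RRR)$ and $\LF(\AAA_{\inf})$ the $\Hom$-presheaves are inverse limits over $n$ of $\Hom$'s of finite projective modules over $\RRR/p^n$, resp.\ $\AAA_{\inf}/p^n$, hence sheaves; and effective descent is obtained by reducing descent data modulo $p^n$, applying faithfully flat descent of finite projective modules at each level, and reassembling by the $p$-adic limit, as in Lemma~\ref{Le:T(A)-lim} (the two-term gluing being Lemma~\ref{Le:LF-A12}). For $\T(\AAA_{\inf})$ the stack property is exactly Lemma~\ref{Le:T(A)-descent}, whose hypothesis is the faithful flatness of $B'^\flat\to C'^\flat$ noted above.

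Finally, the remaining fibered categories add only local data over these. The Frobenius $\sigma$ on $\AAA_{\inf}=W(\RRR^\flat)$ is the functorial Witt vector Frobenius, so $\FM\mapsto\FM^\sigma$ commutes with all base-change functors; thus $\BK(\RRR)$ and $\BK_{\tor}(\RRR)$, obtained from an object of $\LF(\AAA_{\inf})$, resp.\ $\T(\AAA_{\inf})$, by adjoining morphisms $\varphi$ (and $\psi$) subject to conditions local on $\Spec\pfd/p$ (the cokernel being killed by $\xi$, the composites being multiplication), are stacks because $\LF(\AAA_{\inf})$ and $\T(\AAA_{\inf})$ are and morphisms descend. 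For $\BT(\Spec\RRR)$ and $\pGr(\Spec\RRR)$ I would invoke the classical fppf descent for $p$-divisible groups and for commutative finite locally free group schemes along the faithfully flat maps $\pfd'\to\pfd''$. This completes the plan.
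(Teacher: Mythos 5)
Your treatment of $\RRR$ and of the $\RRR$-linear categories is essentially sound and close to the paper's, but the $\AAA_{\inf}$-side of your plan has a genuine gap. You assert that $B'\mapsto B'^\flat$ carries \'etale $B$-algebras to \emph{\'etale} $\pfd^\flat$-algebras and deduce that $\pfd^\flat\to\pfd'^\flat$ is faithfully flat; both your mod-$p^n$ descent for $\LF(\AAA_{\inf})$ (via $W_n$) and your appeal to Lemma~\ref{Le:T(A)-descent} for $\T(\AAA_{\inf})$ rest on this. But $\pfd'^\flat=\varprojlim(B',\phi)$ is only the $\xi_0$-adic completion of an \'etale $\pfd^\flat$-algebra: it is \'etale modulo each power of $\xi_0$, not over $\pfd^\flat$ itself, and for a non-Noetherian perfect ring neither finite presentation nor flatness of such a completion is automatic (and neither is proved anywhere in the paper). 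For the same reason your \v{C}ech nerve at each $p$-adic level is not the tensor-product nerve: the map $W_n(\pfd'^\flat)\otimes_{W_n(\pfd^\flat)}W_n(\pfd'^\flat)\to W_n\bigl((B'\otimes_BB')^\flat\bigr)$ need not be an isomorphism, so "faithfully flat descent of finite projective modules at each level" is not literally applicable. (Your sheaf argument for $\AAA_{\inf}$ itself, via the identification of $W_n(\RRR^\flat)$ with $(\RRR^\flat)^n$ as presheaves of sets and the fact that $\RRR^\flat=\varprojlim(\OOO,\phi)$ is a limit of sheaves, is fine, since it uses no flatness.)

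The paper circumvents all of this by filtering $A_{\inf}$ by powers of $I=([\xi_0],p)$ rather than of $p$: modulo $I^n$ the map $A_{\inf}(\pfd)\to A_{\inf}(B')$ is honestly \'etale and the maps from the tensor-product nerve to $A_{\inf}(B'\otimes_BB')$, etc., become isomorphisms, so $\LF(\AAA_{\inf})$ and $\BK(\RRR)$ follow from finite-level \'etale descent combined with $\LF(A)\simeq\varprojlim_n\LF(A/I^n)$. For the torsion categories it uses $\T(A)\simeq\varprojlim_n\T(A/[\xi_0]^nA)$ (Lemma~\ref{Le:T(A)-lim}) and applies Lemma~\ref{Le:T(A)-descent} to $A/[\xi_0]^n\to A'/[\xi_0]^n$, whose reduction mod $p$ is \'etale, hence faithfully flat; this in turn requires the auxiliary verification, via the sequence \eqref{Eq:IJ-perfd-WS}, that $A/[\xi_0]^n$ is $p$-adically complete and torsion free, a step absent from your plan. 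To repair your argument you would either have to prove that $\pfd^\flat\to\pfd'^\flat$ is faithfully flat and computes the correct completed tensor products, or switch to the $([\xi_0],p)$-adic filtration as the paper does.
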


\begin{proof}
Let $x=[\xi_0]\in A:=A_{\inf}$ and let $I=(x,p)$ as an ideal of $A$.
Then $A$ is $I$-adically complete. Let $B=R/p$.
We fix a faithfully flat etale homomorphism $B\to B'$
and write $A'=A_{\inf}(B')$ and $A''=A_{\inf}(B'\otimes_BB')$
and $A'''=A_{\inf}(B'\otimes_BB'\otimes_BB')$.
The reduction modulo $I^n$ of $A\to A'$ is etale,
and the reductions modulo $I^n$ of $A'\otimes_AA'\to A''$ and of 
$A'\otimes_AA'\otimes_AA'\to A'''$ are isomorphisms.
Since the category $\LF(A)$ is equivalent to ${\varprojlim}{}_n\,\LF(A/I^n)$, 
etale descent of locally free modules shows that $\AAA_{\inf}$ is a sheaf
and $\LF(\AAA_{\inf})$ and $\BK(\RRR)$ are stacks.
A similar argument shows that $\RRR$ is a sheaf and that $\LF(\RRR)$,
$\pGr(\Spec\RRR)$, and $\BT(\Spec\RRR)$ are stacks.

We claim that $A$ is $x$-adically complete
and that the quotients $A/x^n$ are $p$-adically complete and torsion free. 
Indeed, this is clear when $\pfd$ is perfect and thus $x=0$, or when $\pfd$ is torsion free; 
in that case $(x,p)$ is a regular sequence in $A$. 
In general, we use the exact sequence \eqref{Eq:IJ-perfd-WS} 
where $A=W(S)$. Let $A_i=W(S_i)$.
Since $x$ is zero in $A_2$ and in $A_{12}$ we get an exact sequence
$0\to A/x^n\to A_1/x^n\oplus A_2\to A_{12}\to 0$.
Here all rings except possibly $A/x^n$ are $p$-adically complete and torsion free, 
thus the same holds for $A/x^n$. 
The limit over $n$ shows that $A$ is $x$-adically complete. The claim is proved.

Lemma~\ref{Le:T(A)-lim} implies that $\T(A)$ is equivalent to ${\varprojlim}{}_n\T(A/x^n)$,
and similarly for $A'$ and $A''$ and $A'''$.
The homomorphism $A/(x^n,p)\to A'/(x^n,p)$ is faithfully flat etale, 
hence Lemma~\ref{Le:T(A)-descent} implies that
$\T(A/x^n)$ is equivalent to the category of modules in $\T(A'/x^n)$ with a
descent datum in $\T(A''/x^n)$. 
This proves that $\T(\AAA)$ and $\BK_{\tor}(\RRR)$ are stacks.
\end{proof}

Let us now continue the discussion of Remark \ref{Rk:BK-Coker}.

\begin{Lemma}
\label{Le:BKtor-quot}
For $\u\FM\in\BK_{\tor}(\pfd)$,
Zariski locally in $\Spec(\pfd/p)$ there is an isogeny of locally free Breuil-Kisin-Fargues
modules with cokernel\/ $\u\FM$.
\end{Lemma}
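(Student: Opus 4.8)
The plan is to exhibit $\u\FM$ as the cokernel of an isogeny by first reducing to a surjection from a single locally free object. Concretely, I would show that, Zariski locally on $\Spec(\pfd/p)$, there is a surjection $q\colon\u N\to\u\FM$ in $\BK_{\tor}(\pfd)$ whose source $\u N=(N,\varphi_N,\psi_N)$ is a locally free Breuil--Kisin--Fargues module, meaning that $N$ is finite projective over $A_{\inf}$ and $\varphi_N\psi_N=\psi_N\varphi_N=\xi$. Granting this, put $\FM_1=\Ker(q)$. Since $\FM\in\T(A_{\inf})$ has projective dimension $\le 1$ and $N$ is projective, $\FM_1$ is finite projective. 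As $\sigma$ is an automorphism of $A_{\inf}=W(S)$, the functor $(-)^\sigma$ is exact, so $\FM_1^\sigma=\Ker(q^\sigma)$; from $q\varphi_N=\varphi\,q^\sigma$ and $q^\sigma\psi_N=\psi\,q$ one reads off $\varphi_N(\FM_1^\sigma)\subseteq\FM_1$ and $\psi_N(\FM_1)\subseteq\FM_1^\sigma$, and the identities $\varphi_N\psi_N=\psi_N\varphi_N=\xi$ restrict, so $\u\FM_1=(\FM_1,\varphi_N,\psi_N)$ is again locally free. The inclusion $u\colon\FM_1\hookrightarrow N$ commutes with $\varphi$ and $\psi$ and becomes bijective after inverting $p$, because $\FM=N/\FM_1$ is annihilated by a power of $p$; hence $u$ is an isogeny with cokernel $\u\FM$ in the sense of Remark~\ref{Rk:BK-Coker}.

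The construction of $\u N$ is the heart of the matter, and I would carry it out by $p$-adic approximation. Over $A_{\inf}/p=S$ the ring $S$ is perfect, so $\sigma$ is invertible; this lets one put the pair $(\varphi,\psi)$ reduced modulo $p$ into a local normal form and produce a finite free $S$-module $\bar N$ with $\bar\varphi\colon\bar N^\sigma\to\bar N$ of cokernel killed by $\xi_0$, together with a compatible surjection onto $\FM\otimes_{A_{\inf}}S$. One then lifts $(\bar N,\bar\varphi)$ successively from $A_{\inf}/p^n$ to $A_{\inf}/p^{n+1}$: lifting a locally free Breuil--Kisin--Fargues module is unobstructed by the deformation lemma \cite[Thm.\ 3.2]{Lau:Frames} (via the equivalence \eqref{Eq:Win-DMB} with windows), and the surjection onto $\FM$, which is unchanged once $n$ exceeds the exponent of $\FM$, lifts by projectivity of the successive $N_n$. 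Passing to the limit over $n$ yields $\u N$ and $q$ over the $p$-adically complete ring $A_{\inf}$. The main obstacle is the base step over the perfect ring $S$: one must produce the free module together with a Frobenius whose cokernel is annihilated by $\xi_0$ exactly, not merely by a power of $\xi_0$. This is precisely the point where a naive lift of $\varphi$ fails, and where the invertibility of $\sigma$ on the perfect ring must be exploited.

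Finally I would check that the whole construction is genuinely Zariski local on $\Spec(\pfd/p)$: localization is used only to trivialize the finite projective modules $N$ and $\FM_1$ and to present $\Ker(q)$ by a free module, all of which is possible over a basic affine open. It may streamline the base step to reduce, via the decomposition \eqref{Eq:IJ-perfd-B} and the $2$-cartesian gluing of window categories in Lemma~\ref{Le:IJ-Win}, to the two cases in which $\pfd$ is perfect, where $A_{\inf}=W(\pfd)$, $\xi=p$ and $\u\FM$ is an ordinary torsion Dieudonn\'e module, and in which $\pfd$ is torsion free, where $(p,\xi)$ is a regular sequence and $\psi$ is determined by $\varphi$; in both situations the exact control of the cokernel is more transparent.
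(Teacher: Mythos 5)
Your first paragraph is fine and matches what the paper leaves implicit: given a surjection $q\colon\u N\to\u\FM$ from a locally free Breuil--Kisin--Fargues module, the kernel $\FM_1$ is finite projective because $\FM$ has projective dimension $\le 1$, it inherits $\varphi$ and $\psi$ because $\sigma$ is bijective on $A_{\inf}=W(S)$ so that $(-)^\sigma$ is exact, and $\FM_1\hookrightarrow N$ is an isogeny with cokernel $\u\FM$. The problem is that the construction of $\u N$ is the entire content of the lemma, and there your argument has a genuine gap --- one you in effect acknowledge. The base step over $S$, namely producing a finite free $S$-module $\bar N$ with a $\sigma$-linear map whose cokernel is killed by $\xi_0$ \emph{exactly} together with a compatible surjection onto $\FM/p\FM$, is declared ``the main obstacle'' and ``precisely the point where a naive lift of $\varphi$ fails'', but no construction is offered; ``put $(\varphi,\psi)$ into a local normal form'' is not a procedure available over a general perfect ring $S$ (which is neither a field nor a domain), so nothing replaces it. The lifting step is also unjustified: projectivity of $N_{n+1}$ lets you lift the module map $q_n$, but forcing the lift to commute with both $\varphi$ and $\psi$ is an additional condition whose obstruction must be shown to vanish, and the deformation lemma for windows does not do this for you. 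The suggested reduction via \eqref{Eq:IJ-perfd-B} does not help either, since $\Spec S=\Spec S_1\cup\Spec S_2$ is not a disjoint or Zariski-local decomposition and one would still have to glue isogeny presentations over $S_{12}$.

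The paper sidesteps all of this with a different device, following \cite[Lemma~(2.3.4)]{Kisin:Crystalline}: choose finite free $A_{\inf}$-modules $Q$ and $\FN$ of equal rank with surjections $\rho\colon Q\to\FM^\sigma$ and $\pi\colon\FN\to\FM$, and lifts $f\colon Q\to\FN$ of $\varphi$ and $g\colon\Filone A_{\inf}\otimes_{A_{\inf}}\FN\to Q$ of $\psi$ such that $f\circ g$ and $g\circ(1\otimes f)$ are the multiplication maps; then Zariski locally one finds an isomorphism $u\colon\FN^\sigma\cong Q$ with $\rho u=\sigma^*(\pi)$, and $\u\FN=(\FN,fu,u^{-1}g)$ surjects onto $\u\FM$. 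The key point you are missing is that the identity $f\circ g=(\text{multiplication})$ automatically forces the cokernel of $fu$ to be annihilated by $\xi$ on the nose, so the exact control of the cokernel is built in from the start, with no reduction modulo $p$, no normal form, and no deformation theory. As written, your proposal does not prove the lemma.
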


\begin{proof}
This is similar to \cite[Lemma~(2.3.4)]{Kisin:Crystalline}.
We have to find locally in $\Spec(\pfd/p)$ a surjective map $\u\FN\to\u\FM$ where $\u\FN$
is a locally free Breuil-Kisin-Fargues module.
One can choose finite free $A$-modules $Q$ and $\FN$ of equal rank 
and a commutative diagram with surjective vertical maps
\[
\xymatrix@M+0.2em{
\Filone A_{\inf}\otimes_{A_{\inf}}\FN \ar[r]^-g \ar[d]_{1\otimes\pi} & Q \ar[r]^f \ar[d]^\rho & \FN \ar[d]^\pi \\
\Filone A_{\inf}\otimes_{A_{\inf}}\FM \ar[r]^-{\psi} & \ar[r] \FM^\sigma \ar[r]^-\varphi & \FM 
}
\]
such that $f\circ g$ and $g\circ(1\otimes f)$ are the multiplication maps.
Assume that $u:\FN^\sigma\to Q$ is an isomorphism
with $\rho u=\sigma^*(\pi)$. 
Then $\u\FN=(\FN,fu,u^{-1}g)$ solves the problem.
It is easy to see that $u$ exists locally in $\Spec A$ and therefore also locally in $\Spec(\pfd/p)$.
\end{proof}

\begin{Lemma}
\label{Le:pGr-embed}
For $H\in\pGr(\Spec \pfd)$, Zariski locally in $\Spec(\pfd/p)$ there is an isogeny of $p$-divisible
groups with kernel $H$.
\end{Lemma}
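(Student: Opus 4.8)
The plan is to exhibit $H$ as the kernel of an isogeny of $p$-divisible groups by first embedding $H$ into a $p$-divisible group and then passing to the fppf quotient. The starting observation is that if $G$ is a $p$-divisible group over $\pfd$ and $H\subseteq G$ is a closed subgroup scheme that is finite locally free over $\pfd$, then the fppf quotient $G/H$ is again a $p$-divisible group and $G\to G/H$ is an isogeny with kernel $H$; see \cite{Messing:Crystals}. Hence everything reduces to producing, Zariski-locally on $\Spec(\pfd/p)$, a closed immersion of $H$ into some $p$-divisible group over $\pfd$.

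First I would record what ``Zariski-locally on $\Spec(\pfd/p)$'' allows. The space $\Spec(\pfd/p)$ is the closed subscheme $V(p)\subseteq\Spec\pfd$, so a Zariski localization is given by passing from $\bar f\in\pfd/p$ to $(\pfd/p)_{\bar f}$; lifting $\bar f$ to $f\in\pfd$, the localization $(\pfd/p)_{\bar f}$ is an \'etale $\pfd/p$-algebra, and by Lemma~\ref{Le:tilt-etale} it is the reduction modulo $p$ of a perfectoid $\pfd$-algebra $\pfd'$. Thus I may freely replace $\pfd$ by such $\pfd'$, and in particular may assume that $H$ is killed by a fixed power $p^{n}$ and has constant order.

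The geometric input is that a finite locally free commutative group scheme is, locally on the base, the kernel of an isogeny of abelian schemes; see \cite{BBM}. Applying this to $H$ after a further localization, I obtain an isogeny $A\to A'$ of abelian schemes over $\pfd$ whose kernel is $H$. Since $H$ is killed by $p^{n}$, it is contained in $A[p^{\infty}]$, and the induced homomorphism of $p$-divisible groups $G=A[p^{\infty}]\to A'[p^{\infty}]=G'$ is an isogeny with kernel $H\cap G=H$. Equivalently, $H\subseteq G$ is a finite locally free closed subgroup, so $G\to G/H$ already provides the required isogeny, which completes the construction.

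The main obstacle is precisely this embedding step: one must know that $H$ is, Zariski-locally on $\Spec(\pfd/p)$, realized inside a $p$-divisible group (equivalently, as the kernel of an isogeny of abelian schemes). Over a field this is classical, and over a general base it is furnished by the cited embedding theorem; the delicate point is the locality, since one needs the embedding Zariski-locally rather than merely fppf-locally, and one must ensure that the auxiliary localizations can be taken among perfectoid $\pfd$-algebras, which is exactly what Lemma~\ref{Le:tilt-etale} guarantees. Everything else — forming the quotient, verifying that it is a $p$-divisible group, and identifying the kernel — is routine.
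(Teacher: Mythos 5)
Your proposal is correct and follows essentially the same route as the paper: both rest on the Raynaud embedding theorem \cite[Theorem~3.1.1]{BBM}, which Zariski-locally embeds $H$ into an abelian scheme (hence into a $p$-divisible group), after which one passes to the quotient $G\to G/H$. Your version merely spells out the reduction from $\Spec(\pfd/p)$ to perfectoid localizations via Lemma~\ref{Le:tilt-etale} and the quotient step, which the paper leaves implicit.
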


\begin{proof}
We have to find locally in $\Spec(\pfd/p)$ an embedding of $H$ into a $p$-divisible group.
By \cite[Theorem~3.1.1]{BBM} such an embedding exists Zariski locally in $\Spec(\pfd)$,
and therefore also Zariski locally in $\Spec(\pfd/p)$.
\end{proof}

\begin{Thm}
\label{Th:pGr-DM}
If $p\ge 3$, for every perfectoid ring $\pfd$ there is an equivalence
\[
\pGr(\Spec \pfd)\cong\BK_{\tor}(\pfd).
\]
\end{Thm}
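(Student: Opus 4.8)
The plan is to reduce Theorem~\ref{Th:pGr-DM} to the equivalence $\BT(\Spec\pfd)\cong\BK(\pfd)$ of Theorem~\ref{Th:BT-DM} by passing to kernels and cokernels of isogenies, in the spirit of the classical argument of \cite[\S2.3]{Kisin:Crystalline}. Write $\Theta$ for the equivalence of Theorem~\ref{Th:BT-DM}. The first observation I would record is that, since $\Theta$ is compatible with inverting $p$, it carries an isogeny $f\colon G\to G'$ of $p$-divisible groups to an isogeny of locally free Breuil--Kisin--Fargues modules in the sense of Remark~\ref{Rk:BK-Coker}, and that it matches the kernel $\Ker(f)\in\pGr(\Spec\pfd)$ of such an isogeny with the cokernel $\Coker\in\BK_{\tor}(\pfd)$ on the module side, the latter equipped with its induced $\varphi$ and $\psi$ as in Remark~\ref{Rk:BK-Coker}. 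Thus the assignment $f\mapsto\Theta(f)$ is an equivalence of the categories of isogenies on the two sides, and the theorem should drop out once I know that the kernel and cokernel constructions present $\pGr(\Spec\pfd)$ and $\BK_{\tor}(\pfd)$ as the corresponding quotients.

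Both target categories are the global sections of stacks on $(\Spec\pfd/p)_{\et}$, namely $\pGr(\Spec\RRR)$ and $\BK_{\tor}(\RRR)$, so by descent it suffices to produce the comparison functor Zariski-locally on $\Spec(\pfd/p)$ and to check it is an equivalence there. Locally, Lemma~\ref{Le:pGr-embed} presents an arbitrary $H\in\pGr(\Spec\pfd)$ as $H=\Ker(f)$ for an isogeny $f\colon G\to G'$, and Lemma~\ref{Le:BKtor-quot} presents an arbitrary $\u\FM\in\BK_{\tor}(\pfd)$ as the cokernel of an isogeny $\iota$ of locally free Breuil--Kisin--Fargues modules. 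I would therefore set, locally,
\[
H=\Ker(f)\ \longmapsto\ \Coker(\Theta(f)),
\]
with candidate quasi-inverse sending $\u\FM=\Coker(\iota)$ to $\Ker(\Theta^{-1}(\iota))$. That the cokernel of an isogeny of finite projective $A_{\inf}$-modules lies in $\T(A_{\inf})$, and that $\varphi,\psi$ descend to it, is routine once $\iota$ is injective with $p$-power torsion cokernel.

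The key point is then to check that these constructions are independent of the chosen presentation, functorial, and fully faithful. The clean way is to transport all $\Hom$-computations through $\Theta$ to the module side, where the locally free Breuil--Kisin--Fargues modules are \emph{projective}: a morphism $\Coker(\iota)\to\Coker(\iota_1)$ lifts to a morphism of the two-term complexes $[\FM'\to\FM]\to[\FM_1'\to\FM_1]$, unique up to a homotopy $\FM\to\FM_1'$, because the terms are projective and $\iota_1$ is injective. Together with the fact that two isogenies with the same kernel are dominated by a third, this gives canonical identifications of the cokernels, functoriality, and full faithfulness; local full faithfulness and essential surjectivity then glue by the stack property, using Lemmas~\ref{Le:T(A)-bc} and~\ref{Le:T(A)-descent} on the module side. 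The two functors are mutually inverse because $\Theta$ already identifies the isogenies, so that $\Theta^{-1}$ of the cokernel presentation of $\u\FM$ recovers precisely the isogeny of $p$-divisible groups whose kernel produced $\u\FM$.

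The main obstacle will be exactly this well-definedness and full faithfulness: showing that $\Hom_{\pGr}(H,H_1)$ is computed by homotopy classes of maps of the corresponding $p$-divisible isogenies, matched via $\Theta$ with the transparent homotopy description of $\Hom_{\BK_{\tor}}$ on the projective module side. This is where the dominance of isogenies and the projectivity of the locally free modules are essential; by contrast, the stack-theoretic gluing, the compatibility of $\psi$ with the cokernel, and the membership of the cokernel in $\T(A_{\inf})$ are formal consequences of the results already established.
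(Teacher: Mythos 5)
Your proposal is correct and follows essentially the same route as the paper: reduce to Theorem~\ref{Th:BT-DM} via Kisin's argument, using the local isogeny presentations of Lemmas~\ref{Le:pGr-embed} and~\ref{Le:BKtor-quot}, identifying kernels and cokernels of isogenies with objects of the derived category of the exact categories $\BT(\Spec\pfd)$ and $\BK(\pfd)$ (your ``maps of two-term complexes up to homotopy''), and gluing by the stack property on $(\Spec\pfd/p)_{\et}$. The paper merely packages the well-definedness and full-faithfulness step you flag as the main obstacle by citing the derived-category identification from \cite[Th.\ (2.3.5)]{Kisin:Crystalline}, working with the globally defined subcategories $\pGr(\Spec\pfd)^\circ$ and $\BK_{\tor}(\pfd)^\circ$ and then passing to associated stacks.
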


\begin{proof}
This follows from Theorem~\ref{Th:BT-DM} as in \cite[Th.\ (2.3.5)]{Kisin:Crystalline}.
More precisely, let $\pGr(\Spec \pfd)^\circ$ be the category of all $H\in\pGr(\Spec \pfd)$ 
with are the kernel of an isogeny in $\BT(\Spec \pfd)$,
and let $\BK_{\tor}(\pfd)^\circ$ be the category of all $\u\FM\in\BK_{\tor}(\pfd)$
which are the cokernel of an isogeny in $\BK(\pfd)$.
The corresponding fibered categories
$\pGr(\Spec\RRR)^\circ$ and $\BK_{\tor}(\RRR)^\circ$ over $(\Spec\pfd/p)_{\et}$ 
have associated stacks $\pGr(\Spec \RRR)$ and $\BK_{\tor}(\RRR)$ 
by Lemmas \ref{Le:BKtor-quot} and~\ref{Le:pGr-embed}.
Moreover $\pGr(\Spec \pfd)^\circ$ (resp.\ $\BK_{\tor}(\pfd)^\circ$)
is equivalent to the full subcategory of the derived category of the exact
category $\BT(\Spec \pfd)$ (resp.\ $\BK(\pfd)$) whose objects are isogenies
$G^0\to G^1$ (resp.\ isogenies $\u\FM_1\to\u\FM_0$).
Thus the equivalence of fibered categories $\BT(\Spec\RRR)\cong\BK(\RRR)$
given by Theorem~\ref{Th:BT-DM} induces an equivalence 
$\pGr(\Spec\RRR)\cong\BK_{\tor}(\RRR)$.
\end{proof}

\subsection{Torsion Dieudonn\'e modules}

For completeness we record a similar classification of finite group schemes
in the context of \S \ref{Se:DCM} and \S \ref{Se:Dmod-via-lifts}.

Let $R$ be an $\FF_p$-algebra and let $(A,\sigma)$ be a lift of $R$ as in \S \ref{Se:DCM}. 
A torsion Dieudonn\'e module over $A$ is a triple $\u M=(M,\varphi,\psi)$ where $M\in\T(A)$ 
and $\varphi:M^\sigma\to M$ and $\psi:M\to M^\sigma$ 
are linear maps with $\varphi\psi=p$ and $\psi\varphi=p$.
We write $\DM_{\tor}( A)$ for the category of torsion Dieudonn\'e modules over $A$.

An etale ring homomorphism $R\to R'$ extends to a unique homomorphism of lifts 
$(A,\sigma)\to (A',\sigma)$;
each $A/p^r\to A'/p^r$ is the unique etale homomorphism that lifts $R\to R'$.

\begin{Lemma}
\label{Le:PhiA-tor}
The functors $\Phi_{A'}:\BT(\Spec R')\to\DM(A')$ of \eqref{Eq:BT-DM} 
for all etale $R$-algebras $R'$ induce functors
\begin{equation}
\label{Eq:Fin-DM}
\Phi_{A'}^{\tor}:\pGr(\Spec R')\to\DM_{\tor}( A').
\end{equation}
If all functors $\Phi_{A'}$ are equivalences, then so are the functors $\Phi_{A'}^{\tor}$.
\end{Lemma}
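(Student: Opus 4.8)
The plan is to follow the method used in the proof of Theorem~\ref{Th:pGr-DM}, which in turn follows \cite[Theorem~(2.3.5)]{Kisin:Crystalline}, but with the equivalence $\BT(\Spec\pfd)\cong\BK(\pfd)$ replaced by the functors $\Phi_{A'}:\BT(\Spec R')\to\DM(A')$ and with $\BK_{\tor}$ replaced by $\DM_{\tor}$. The unconditional starting point is that $\Phi_{A'}$ is \emph{exact}, i.e.\ it sends isogenies to isogenies: if $u:G^0\to G^1$ is an isogeny, choose $u':G^1\to G^0$ with $u'u=p^r$ and $uu'=p^r$; applying the contravariant functor $\Phi_{A'}$ gives maps $\Phi_{A'}(u):\Phi_{A'}(G^1)\to\Phi_{A'}(G^0)$ and $\Phi_{A'}(u')$ whose composites in either order are multiplication by $p^r$. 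Since $A'$ is torsion free, $\Phi_{A'}(u)$ is then injective with cokernel annihilated by $p^r$, and as $\Phi_{A'}(G^i)$ are finite projective this cokernel lies in $\T(A')$; the maps $\varphi,\psi$ descend to it, so $\Coker(\Phi_{A'}(u))\in\DM_{\tor}(A')$. Because $\Phi_{A'}$ is contravariant, a kernel $H=\Ker(u)$ is naturally sent to this cokernel, so the candidate is $\Phi_{A'}^{\tor}(H)=\Coker(\Phi_{A'}(u))$.

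To promote this to a well-defined functor on all of $\pGr(\Spec R')$ I would work over the small \'etale site $(\Spec R)_{\et}$, exactly as in Theorem~\ref{Th:pGr-DM}. Let $\pGr(\Spec R')^\circ$ be the full subcategory of finite group schemes that are kernels of isogenies of $p$-divisible groups, and $\DM_{\tor}(A')^\circ$ the full subcategory of torsion Dieudonn\'e modules that are cokernels of isogenies in $\DM(A')$; for varying \'etale $R'$ these form fibered categories over $(\Spec R)_{\et}$. Two facts are needed. First, both $\pGr$ and $\DM_{\tor}$ are stacks: for $\pGr$ this is fppf (hence \'etale) descent of finite locally free group schemes, while for $\DM_{\tor}$ it follows from Lemma~\ref{Le:T(A)-descent} (descent of $\T$) together with descent of the linear data $(\varphi,\psi)$. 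Second, the $\circ$-versions have the full ones as associated stacks, i.e.\ \'etale-locally every object lies in the $\circ$-subcategory: for $\pGr$ this is the analogue of Lemma~\ref{Le:pGr-embed}, where the embedding theorem \cite[Theorem~3.1.1]{BBM} embeds $H$ locally into a $p$-divisible group with $p$-divisible quotient; for $\DM_{\tor}$ it is the analogue of Lemma~\ref{Le:BKtor-quot}, proved by the device of \cite[Lemma~(2.3.4)]{Kisin:Crystalline} with multiplication by $p$ in place of $\xi$. Concretely I would choose free $A'$-modules $N,Q$ with maps $g:N\to Q$ and $f:Q\to N$ satisfying $fg=p$ and $gf=p$ and surjecting compatibly onto $M\xrightarrow{\psi}M^\sigma\xrightarrow{\varphi}M$, then observe that a local isomorphism $u:N^\sigma\cong Q$ compatible with the surjections exists over $\Spec R'$, so that $(N,fu,u^{-1}g)$ is a locally free Dieudonn\'e module surjecting onto $\u M$.

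With these in place, the assignment $H\mapsto\Coker(\Phi_{A'}(u))$ is identified, as in Kisin, with the functor coming from the description of $\pGr(\Spec R')^\circ$ as the full subcategory of the derived category of the exact category $\BT(\Spec R')$ whose objects are isogenies $G^0\to G^1$, and of $\DM_{\tor}(A')^\circ$ as the corresponding subcategory of the derived category of the exact category $\DM(A')$ whose objects are isogenies $\u M_1\to\u M_0$. Since $\Phi_{A'}$ is exact and contravariant it induces a morphism of fibered categories $\pGr(\Spec R')^\circ\to\DM_{\tor}(A')^\circ$ (kernels to cokernels, with arrows reversed) that is independent of the chosen presentation precisely because of this derived description. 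As $\DM_{\tor}$ is a stack and $\pGr$ is the stack associated to $\pGr^\circ$, the universal property of stackification extends this morphism uniquely to the desired functor $\Phi_{A'}^{\tor}$, functorially in $R'$; this part uses nothing about $\Phi_{A'}$ being an equivalence. If, moreover, each $\Phi_{A'}$ is an equivalence, then it is an exact equivalence of exact categories, hence induces an equivalence of the derived isogeny subcategories and so an equivalence $\pGr(\Spec R')^\circ\to\DM_{\tor}(A')^\circ$; since both sides stackify (via the local presentation lemmas) to $\pGr$ and $\DM_{\tor}$, the stackified functor $\Phi_{A'}^{\tor}$ is an equivalence.

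I expect the main obstacle to be the homological bookkeeping of the derived-category identification \cite[(2.3.5)]{Kisin:Crystalline}, namely verifying that morphisms between kernels of isogenies are computed by morphisms of the two-term isogeny complexes in the derived category, and that this matches the contravariant $\Phi_{A'}$ so that kernels go to cokernels. The $p$-analogue of Lemma~\ref{Le:BKtor-quot} is routine but must be carried out to know that $\DM_{\tor}^\circ$ stackifies to $\DM_{\tor}$, whereas the stack property of $\DM_{\tor}$ itself is immediate from Lemma~\ref{Le:T(A)-descent}, and the exactness of $\Phi_{A'}$ recorded above is the only genuinely new input beyond the perfectoid case.
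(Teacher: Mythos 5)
Your proposal is correct and follows essentially the same route as the paper: $\Phi_{A'}$ sends isogenies of $p$-divisible groups to isogenies of Dieudonn\'e modules, hence induces a functor on the subcategories of kernels/cokernels of isogenies (made independent of the presentation via the Kisin-style derived-category identification), and one concludes by the local existence of isogeny presentations together with descent via Lemma~\ref{Le:T(A)-descent}. The paper's proof is just a compressed version of exactly these steps, deferring the details to the proof of Theorem~\ref{Th:pGr-DM} and Lemmas~\ref{Le:BKtor-quot} and~\ref{Le:pGr-embed}.
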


\begin{proof}
The functor $\Phi_A$ induces a functor from the category of all
$H\in\pGr(\Spec R)$ which are the kernel of an isogeny of $p$-divisible groups
to the category of all $\FM\in\DM_{\tor}(A)$ which are the cokernel of an
isogeny of locally free Dieudonn\'e modules.
For given $H$ or $\FM$, such isogenies exist locally in $\Spec R$.
The lemma follows by descent; see Lemma~\ref{Le:T(A)-descent}.
\end{proof}

\begin{Cor}
\label{Co:PhiA-tor-equiv}
For a semiperfect ring $R$ with a lift $(A,\sigma)$ as in Theorem~\ref{Th:PhiA-equiv}, 
the functor $\Phi_A^{\tor}:\pGr(\Spec R)\to\DM_{\tor}(\u A)$ is an equivalence.
\end{Cor}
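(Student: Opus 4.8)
The plan is to deduce the statement from Lemma~\ref{Le:PhiA-tor}, which asserts that if $\Phi_{A'}$ is an equivalence for every etale $R$-algebra $R'$ (equipped with its canonical lift $A'$), then every $\Phi_{A'}^{\tor}$, and in particular $\Phi_A^{\tor}$, is an equivalence. So the whole problem reduces to checking that each $(A',\sigma)$ meets the hypotheses of Theorem~\ref{Th:PhiA-equiv}; that theorem then supplies the equivalence $\Phi_{A'}$. First I would record that $R'$ is again semiperfect: since $R\to R'$ is etale, the square relating the Frobenius endomorphisms of $R$ and $R'$ is cocartesian (cf.\ the proof of Lemma~\ref{Le:tilt-etale}), so $\phi_{R'}$ is the base change of $\phi_R$ along $R\to R'$, and as $\phi_R$ is surjective and base change preserves surjectivity, $\phi_{R'}$ is surjective.

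Next, $(A',\sigma)$ is a lift of $R'$ in the sense of Definition~\ref{Def:lift}: $A'$ is flat over the torsion free ring $A$, hence torsion free, it is $p$-adically complete with $A'/pA'=R'$, and the Frobenius lift $\sigma$ extends uniquely from $A$ (as recalled just before Lemma~\ref{Le:PhiA-tor}). Thus the only substantive point left to verify is the pointwise nilpotence of the divided powers on $\Ker(\phi_{R'})$ furnished by Remark~\ref{Rk:PD-Ker-phi}. I expect this descent of pointwise nilpotence through the etale base change to be the main obstacle, precisely because a general element of $\Ker(\phi_{R'})$ is only an $R'$-linear combination of elements coming from $R$, and one must control the divided powers of such combinations; everything else is a direct application of the cited results.

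To handle this, I would first use the cocartesian Frobenius square together with the flatness of $R\to R'$ to identify $\Ker(\phi_{R'})$ with $\Ker(\phi_R)\otimes_R R'$, so that $\Ker(\phi_{R'})$ is generated as an $R'$-ideal by the image of $\Ker(\phi_R)$; moreover the divided power structure on it, coming from $\tFilone A'$ via Lemma~\ref{Le:tildeFilA}, is the base change of the one on $\Ker(\phi_R)$, so each $\gamma_m$ is computed on generators by the same formula as in $R$. Given $y=\sum_{i=1}^k x_i r_i'$ with $x_i\in\Ker(\phi_R)$ and $r_i'\in R'$, the multiplicativity $\gamma_m(x_i r_i')=(r_i')^{m}\gamma_m(x_i)$ together with the additive expansion $\gamma_N(y)=\sum_{m_1+\cdots+m_k=N}\prod_i (r_i')^{m_i}\gamma_{m_i}(x_i)$ reduces the claim to the generators. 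By hypothesis each $x_i$ satisfies $\gamma_m(x_i)=0$ for $m\ge M_i$ with some $M_i$; then for $N>\sum_i(M_i-1)$ some index has $m_i\ge M_i$ in every summand, whence $\gamma_N(y)=0$. Therefore the divided powers on $\Ker(\phi_{R'})$ are pointwise nilpotent. With this in hand, Theorem~\ref{Th:PhiA-equiv} applies to every $(A',\sigma)$, and Lemma~\ref{Le:PhiA-tor} completes the proof.
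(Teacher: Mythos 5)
Your argument is correct and follows the same route as the paper: verify that for every etale $R$-algebra $R'$ the induced divided powers on $\Ker(\phi_{R'})$ remain pointwise nilpotent, apply Theorem~\ref{Th:PhiA-equiv} to get that each $\Phi_{A'}$ is an equivalence, and conclude via Lemma~\ref{Le:PhiA-tor}. The paper states the nilpotence step in one line ("the divided powers are induced from those on $\Ker(\phi_R)$ and are thus pointwise nilpotent"); your identification $\Ker(\phi_{R'})\cong\Ker(\phi_R)\otimes_RR'$ and the divided-power expansion on generators just supply the details behind that assertion.
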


\begin{proof}
For each etale $R$-algebra $R'$ with associated lift $A'$ the resulting divided powers
on the kernel of $\phi:R'\to R'$ are induced from the divided powers on the kernel
of $\phi:R\to R$ and are thus pointwise nilpotent.
Hence $\Phi_{A'}$ is an equivalence by Theorem~\ref{Th:PhiA-equiv},
and Lemma~\ref{Le:PhiA-tor} applies.
\end{proof}


\end{document}